\def\NZQ{\mathbb}               % the font for N,Z,Q,R,C
\def\NN{{\NZQ N}}
\def\QQ{{\NZQ Q}}
\def\ZZ{{\NZQ Z}}
\def\RR{{\NZQ R}}
\def\CC{{\NZQ C}}
\def\AA{{\NZQ A}}
\newtheorem{Theorem}{Theorem}[section]
\newtheorem{Lemma}[Theorem]{Lemma}
\newtheorem{Corollary}[Theorem]{Corollary}
\newtheorem{Proposition}[Theorem]{Proposition}
\newtheorem{Remark}[Theorem]{Remark}
\newtheorem{Definition}[Theorem]{Definition}
\let\epsilon\varepsilon
\let\phi=\varphi
\let\kappa=\varkappa
\begin{document}

\title{local monomialization of   analytic maps}
\author{Steven Dale Cutkosky}
\thanks{partially supported by NSF grant DMS 1360564}

\address{Steven Dale Cutkosky, Department of Mathematics,
University of Missouri, Columbia, MO 65211, USA}
\email{cutkoskys@missouri.edu}

\keywords{Local Monomialization, Analytic Maps, \'Etoile}

\begin{abstract}
In this paper local monomialization theorems are proven for   morphisms of  complex and real  analytic spaces. This gives the generalization of the local monomialization theorem for morphisms of algebraic varieties over a field of characteristic zero proven in \cite{Ast} and \cite{LMTE} to 
analytic spaces.
\end{abstract}

\maketitle

\section{Introduction}

In this paper we prove local monomialization theorems for complex and real analytic morphisms.

A local blow up of an  analytic space $X$  is a morphism $\pi:X'\rightarrow X$ determined by a triple $(U,E,\pi)$ where $U$ is an open subset of $X$, $E$ is a closed  analytic subspace  of $U$ and $\pi$ is the composition of the inclusion of $U$ into $X$ with the blowup of $E$.

Hironaka introduced  in his work on analytic sets and maps (\cite{Hcar} and \cite{H3})  the notion of an \'etoile over a complex analytic
space $X$ to generalize a valuation of a function field of an algebraic
variety. An \'etoile $e$ over an analytic space $X$ is a subcategory of sequences of local blowups over $X$ which satisfy good properties. If $\pi:X'\rightarrow X$ belongs to $e$,  a point $e_{X'}\in X'$, called the center of $e$ on $X'$ is associated to $e$. The set $\mathcal E_X$ of all \'etoiles over  $X$, with the collection of sets $\mathcal E_{\pi}=\{e\in \mathcal E_X\mid \pi\in e\}$ for all $\pi:X'\rightarrow X$ which are products of local blow ups as a basis of a topology  is the vo\^ute \'etoil\'ee over $X$. Hironaka proved  that the map $P_X:\mathcal E_X\rightarrow X$,  defined by $P_X(e)=e_X$ is continuous, surjective and proper. The Vo\^ute \'etoil\'ee can be seen as a generalization of the Zariski Riemann manifold of an algebraic function field, but the comparison is limited. A valuation of a giant field can be associated to an \'etoile, but this valuation does not enjoy many of the good properties realized by valuations on algebraic function fields (\cite{WLM}).
The basic properties of \'etoiles are reviewed in Section \ref{Pre}.

Suppose that $\phi:Y\rightarrow X$  is a morphism of 
reduced complex analytic spaces and  that $e$ is an \'etoile over $Y$. We prove that $\phi$ can be made into a
monomial mapping at the center of $e$ after performing sequences of
local blowups of nonsingular analytic  subvarieties above $Y$ and $X$. 
We derive some consequences for complex and real analytic geometry.

\begin{Definition} Suppose that $\phi:Y\rightarrow X$ is a  morphism of  complex or real analytic manifolds, and $p\in Y$. We will say that the map $\phi$ is monomial at $p$ if there exist regular parameters $x_1,\ldots,x_m,x_{m+1},\ldots,x_t$ in $\mathcal O_{X,\phi(p)}^{\rm an}$ and $y_1,\ldots,y_n$ in $\mathcal O_{Y,p}^{\rm an}$ and $c_{ij}\in \NN$ such that
$$
\phi^*(x_i)=\prod_{j=1}^ny_j^{c_{ij}}\mbox{ for }1\le i\le m
$$
with $\mathop{rank}(c_{ij})=m$ and $\phi^*(x_i)=0$ for $m<i\le t$. 
\end{Definition}

There is a related notion of an analytic morphism $\phi:Y\rightarrow X$ being monomial  on $Y$ (Definition \ref{DefMon}). 

Our principal result is the following theorem. 

\begin{Theorem}\label{TheoremB*} Suppose that $\phi:Y\rightarrow X$ is a morphism of reduced complex analytic spaces  and $e$ is an \'etoile over $Y$. Then there exists a commutative diagram of complex analytic morphisms
$$
\begin{array}{ccc}
Y_{e}&\stackrel{\phi_{e}}{\rightarrow}&X_e\\
\beta\downarrow &&\downarrow \alpha\\
Y&\stackrel{\phi}{\rightarrow}& X
\end{array}
$$
such that $\beta\in e$, the morphisms $\alpha$ and $\beta$ are  finite products of local blow ups of nonsingular analytic sub varieties, $Y_e$ and $ X_e$ are nonsingular analytic  spaces and $\phi_e$ is a  monomial  analytic morphism at the center of $e$.

There exists a nowhere dense closed analytic subspace $F_e$ of $X_e$ such that $X_e\setminus F_e\rightarrow X$ is an open embedding and $\phi_e^{-1}(F_e)$ is nowhere dense in $Y_e$.
\end{Theorem}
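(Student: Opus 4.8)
The plan is to reduce Theorem~\ref{TheoremB*} to the local monomialization theorem for algebraic morphisms proven in \cite{Ast} and \cite{LMTE}, using analytic resolution of singularities and the properties of \'etoiles recalled in Section~\ref{Pre}. \emph{First I would reduce to the case that $Y$ and $X$ are nonsingular.} Since $Y$ is reduced, analytic resolution of singularities provides a proper morphism $Y'\to Y$, a finite composition of local blow ups of nonsingular analytic subvarieties, with $Y'$ nonsingular; because an \'etoile is stable under blow ups of nonsingular centers that either contain or avoid its center, the (canonical) resolution can be realized inside $e$, so that $Y'\to Y$ lies in $e$ and $e$ induces an \'etoile $e'$ over $Y'$. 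Resolving $X$ similarly gives a nonsingular $X'$ and a composition $X'\to X$ of local blow ups of nonsingular subvarieties. As $\phi$ is everywhere defined and $X'\to X$ is proper, resolving the indeterminacy of the resulting meromorphic map from $Y'$ to $X'$ by further local blow ups taken inside $e'$, one may assume $\phi$ lifts to a morphism $Y'\to X'$. Replacing $(Y,X,\phi,e)$ by $(Y',X',\phi',e')$, I henceforth assume $Y$ and $X$ nonsingular, and write $p=e_Y$, $q=\phi(p)$.

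\emph{Next I would pass to the analytic local rings at the centers.} The induced $\phi^{\#}\colon\mathcal O^{\rm an}_{X,q}\to\mathcal O^{\rm an}_{Y,p}$ is a local homomorphism of regular, excellent, Henselian local $\CC$-algebras, and from $e$ --- via its compatible system of centers on the members of $e$ over $Y$ --- one extracts a valuation $\nu$ dominating $\mathcal O^{\rm an}_{Y,p}$; since $\mathcal O^{\rm an}_{Y,p}$ is an excellent regular local ring, $\nu$ has the good behaviour that the ``giant field'' valuation of \cite{WLM} need not have, and $\nu$ restricts along $\phi^{\#}$ to a valuation dominating $\mathcal O^{\rm an}_{X,q}$. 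We are then in a situation formally identical to local monomialization along a valuation, and I would carry out the argument of \cite{LMTE} --- reduction of $\nu$ to good form, Perron-type transformations, and construction of the monomializing towers of blow ups --- in the analytic local rings $\mathcal O^{\rm an}_{Y,p}$ and $\mathcal O^{\rm an}_{X,q}$, each blow up of a nonsingular subvariety being realized on a suitable open subset of $Y$ or $X$ as a local blow up; alternatively one may algebraize the germ of $\phi$ by Artin approximation, apply \cite{LMTE} to an algebraic model, and transport the monomializing blow ups back analytically. This produces $\alpha\colon X_e\to X$ and $\beta\colon Y_e\to Y$, each a finite product of local blow ups of nonsingular analytic subvarieties, with $X_e,Y_e$ nonsingular; $\beta\in e$ because it is the sequence of blow ups of the centers of $\nu$, and the comparison of completions shows that the induced analytic morphism $\phi_e\colon Y_e\to X_e$ is monomial at $e_{Y_e}$.

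\emph{It remains to produce $F_e$.} I would take $F_e\subset X_e$ to be the reduced closed analytic subspace of points at which $\alpha$ fails to be a local isomorphism onto an open subset of $X$ --- equivalently, the union of the total transforms, under the successive blow ups composing $\alpha$, of the centers that were blown up. Each such center has dimension strictly less than that of the component of the ambient space containing it, so $F_e$ is nowhere dense, and since a blow up of a nonsingular center is an isomorphism over, and injective on, the complement of that center, $\alpha$ restricts to an open embedding of $X_e\setminus F_e$ into $X$. Finally, $\phi_e^{-1}(F_e)$ is nowhere dense in $Y_e$: as $\beta$ is a modification, the irreducible components of $Y_e$ are the strict transforms of those of $Y$, and the analogous control of the pullback of the exceptional locus in the algebraic monomialization of \cite{LMTE} shows that $\phi_e$ carries no component of $Y_e$ into $F_e$, so that $\phi_e^{-1}(F_e)$ meets each component in a proper closed analytic subset.

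I expect the principal difficulty to lie in the interface between the \'etoile and the local monomialization argument. Two points need care. First, making the reduction to nonsingular spaces compatible with $e$: one must use the structure of the vo\^ute \'etoil\'ee together with canonical analytic desingularization so that a resolution of $Y$ genuinely lies in $e$ and $\phi$ still lifts to a morphism after the further blow ups above $Y'$. Second, and more seriously, justifying that the valuation --- or whatever partial ordering of $\mathcal O^{\rm an}_{Y,p}$ --- furnished by $e$ is tame enough for the inductive machinery of \cite{LMTE} to run in the analytic local rings; this is exactly why it is the transcendental refinement \cite{LMTE}, rather than the original theorem of \cite{Ast}, that has to be generalized. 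Granting the algebraic analogue, the assertions about $F_e$ are then routine bookkeeping with exceptional divisors under modifications.
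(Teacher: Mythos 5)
Your outer reduction steps (resolving $Y$ and $X$, lifting $\phi$, and the bookkeeping for $F_e$ via exceptional loci) are broadly in the spirit of the paper's Proposition \ref{TheoremA} and Theorem \ref{TheoremB}, although you omit an essential intermediate reduction: the paper does not merely make $Y$ and $X$ nonsingular, it minimizes the analytic rank $r^{\mathcal A}$ over all diagrams in $e$ so as to replace $X$ by a closed submanifold $Z$ on which the induced map becomes \emph{quasi regular}. Without this, $\phi^{*}$ may have a nonzero kernel, and the "valuation dominating $\mathcal O^{\rm an}_{X,q}$" you want to restrict along $\phi^{\#}$ is not even defined on the quotient field of $\mathcal O^{\rm an}_{X,q}$.

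The serious gap is the central step. You propose to "carry out the argument of \cite{LMTE} in the analytic local rings," or alternatively to algebraize by Artin approximation, asserting that the restriction of the \'etoile's valuation to the quotient field of $\mathcal O^{\rm an}_{Y,p}$ is tame enough for that machinery. This is precisely the point at which the paper states the transfer fails, and your proposal offers no way around it. Under a local blow up $Y'\rightarrow Y$ in $e$, the quotient field of $\mathcal O^{\rm an}_{Y',e_{Y'}}$ is an infinite (transcendental) extension of that of $\mathcal O^{\rm an}_{Y,e_Y}$, so one is never working with a single valuation of a fixed function field: at each stage one only has the restriction of $\nu_e$ from the gigantic field $\Omega_e$, its behaviour on the new transcendentals is not determined by its behaviour on the old field, and the reduction to rank one and the use of rational rank that drive the induction in \cite{Ast} and \cite{LMTE} break down (the paper points to \cite{WLM} for explicitly wild behaviour of $\nu_e$ in rank greater than one). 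Artin approximation does not rescue this: a general analytic germ is not algebraizable, and even where an algebraic model exists one would still have to arrange that the centers prescribed by the \'etoile on every stage of the tower coincide with the centers of a valuation on the model, which is not automatic. The paper's actual proof replaces the valuation-theoretic invariants by the notion of independence of variables for the \'etoile (Definition \ref{Def53}) and runs a new induction on the prepared type $(s,r,l)$ via generalized monoidal transforms, the series decomposition of Section \ref{DecSeries}, and the ten transformation types of Section \ref{SecTrans}. "Granting the algebraic analogue" is exactly what cannot be granted here.
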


The last condition on $F_e$ is always true if $\alpha, \beta$ are sequences of local blow ups and $\phi$ is regular (this concept is  defined in equation (\ref{eqout3})). A regular morphism is the analog in analytic geometry of a dominant morphism in algebraic geometry.

A stronger version of Theorem \ref{TheoremB*} is proven in Theorem \ref{TheoremB}.
The analogue of Theorem \ref{TheoremB*} for dominant morphisms of algebraic varieties (over a field of characteristic 0) dominated by a valuation was proven earlier in \cite{Ast} and \cite{LMTE}. The fact that the theorem is not true in positive characteristic was proven in \cite{C4}. It is not difficult to extend the proof of local monomialization along a valuation for dominant morphisms of characteristic zero algebraic varieties to arbitrary (not necessarily dominant) morphisms, using standard theorems from resolution of singularities.

We deduce the following Theorem \ref{TheoremC*} from Theorem \ref{TheoremB*}, using the fact that the set of \'etoiles (La Vo\^ute \'Etoil\'ee) on a complex analytic space has  some good topological properties (\cite{Hcar} and \cite{H3}).
We use in this and the following theorems stated in this introduction  the  notion of an analytic morphism $\phi:Y\rightarrow X$ of manifolds being monomial on $Y$ which is defined in Definition  \ref{DefMon}. 
The proof of Theorem \ref{TheoremC*} is obtained from Theorem \ref{TheoremB*} by utilizing techniques from \cite{HLT} and \cite{H3}. Let $K$ be a compact neighborhood of the point $p\in Y$. Theorem \ref{TheoremB*} produces for each \'etoile $e\in\mathcal E_X$ a morphism $\pi_e:Y_e\rightarrow Y$ which  lifts the initial morphism $\phi:Y\rightarrow X$ to a morphism $\phi_e:Y_e\rightarrow X_e$ which is monomial at the point $e_Y$. Since $P_Y:\mathcal E_Y\rightarrow Y$ is proper, the set $K'=P_Y^{-1}(K')$ is compact. Theorem \ref{TheoremC*} follows by extracting a finite sub cover from an open cover of $K'$ by the preimages of  open sets obtained  from the $Y_e$.

\begin{Theorem}\label{TheoremC*} Suppose that $\phi:Y\rightarrow X$ is a morphism of reduced  complex  analytic spaces and $p\in Y$. Then there exists a finite number $t$ of commutative diagrams of complex  analytic morphisms
$$
\begin{array}{ccc}
Y_{i}&\stackrel{\phi_{i}}{\rightarrow}&X_i\\
\beta_i\downarrow &&\downarrow \alpha_i\\
Y&\stackrel{\phi}{\rightarrow}& X
\end{array}
$$
for $1\le i \le t$ 
such that each $\beta_i$ and $\alpha_i$ are finite products of local blow ups of nonsingular analytic sub varieties, $Y_i$ and $ X_i$ are smooth analytic  spaces and $\phi_i$ is a  monomial  analytic morphism. Further, there exist compact subsets $K_i$ of $Y_i$ such that $\cup_{i=1}^t\beta_i(K_i)$ is a compact neighborhood of $p$ in $Y$.

There exist nowhere dense closed analytic subspaces $F_i$ of $X_i$ such that $X_i\setminus F_i\rightarrow X$ are open embeddings and $\phi_i^{-1}(F_i)$ is nowhere dense in $Y_i$.
\end{Theorem}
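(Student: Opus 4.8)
The plan is to derive Theorem \ref{TheoremC*} from Theorem \ref{TheoremB*} by a compactness argument over the vo\^ute \'etoil\'ee of $Y$, following the strategy sketched in the introduction and the techniques of \cite{HLT} and \cite{H3}. First I would fix a compact neighborhood $K$ of $p$ in $Y$ and, since $P_Y:\mathcal E_Y\rightarrow Y$ is proper, observe that $K'=P_Y^{-1}(K)$ is a compact subset of $\mathcal E_Y$. For each \'etoile $e\in\mathcal E_Y$, Theorem \ref{TheoremB*} applied to $\phi$ and $e$ yields a commutative diagram with $\beta=\beta_e\in e$, $\alpha=\alpha_e$ finite products of local blow ups of nonsingular centers, $Y_e, X_e$ nonsingular, $\phi_e$ monomial at the center $e_{Y_e}$, and a nowhere dense closed analytic $F_e\subseteq X_e$ with $X_e\setminus F_e\rightarrow X$ an open embedding and $\phi_e^{-1}(F_e)$ nowhere dense. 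Being monomial is an open condition on the target manifold (the regular parameters and the monomial expressions $\phi_e^*(x_i)=\prod_j y_j^{c_{ij}}$ persist on a neighborhood of $e_{Y_e}$, up to the vanishing coordinates $\phi_e^*(x_i)=0$ for $i>m$, which also persists on a neighborhood since it is a closed analytic condition cutting out the image locally), so there is an open neighborhood $V_e$ of $e_{Y_e}$ in $Y_e$ on which $\phi_e$ is monomial in the sense of Definition \ref{DefMon}, and we may shrink $V_e$ so that its closure is compact and contained in the monomial locus.

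Next I would use that $\beta_e\in e$ means $e\in\mathcal E_{\beta_e}$, and that by the definition of the topology on $\mathcal E_Y$ the sets $\mathcal E_{\beta_e}$ form a basis; more precisely, the image of $V_e$ under the continuous map $\mathcal E_{\beta_e}\rightarrow Y_e$ (sending an \'etoile to its center on $Y_e$) pulls back to an open subset $W_e$ of $\mathcal E_Y$ containing $e$. As $e$ ranges over $K'$, the sets $W_e$ form an open cover of the compact set $K'$, so finitely many $W_{e_1},\ldots,W_{e_t}$ cover $K'$. Set $\beta_i=\beta_{e_i}$, $\alpha_i=\alpha_{e_i}$, $Y_i=Y_{e_i}$, $X_i=X_{e_i}$, $\phi_i=\phi_{e_i}$, $F_i=F_{e_i}$; all the asserted structural properties ($\beta_i,\alpha_i$ products of local blow ups of nonsingular centers, $Y_i,X_i$ smooth, $\phi_i$ monomial on the relevant open set, $F_i$ nowhere dense with open-embedding complement and nowhere dense preimage) are inherited directly from Theorem \ref{TheoremB*}. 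It remains to manufacture the compact subsets $K_i\subseteq Y_i$ with $\bigcup_i\beta_i(K_i)$ a compact neighborhood of $p$. Here I would take $K_i$ to be a suitable compact subset of $V_i$ (say the closure of a slightly smaller open set) chosen so that the union of the images $\beta_i(K_i)$ covers $K$: this uses that for $e\in W_{e_i}$ the center $e_{Y_i}$ lies in $V_i$ and $P_Y(e)=\beta_i(e_{Y_i})\in\beta_i(V_i)$, so $\bigcup_i\beta_i(\overline{V_i'})\supseteq P_Y(K')=K$ for appropriate relatively compact $V_i'\Subset V_i$; shrinking $K$ at the outset to ensure it is a neighborhood whose preimage is still covered, $\bigcup_i\beta_i(K_i)$ contains $K$ and hence is a neighborhood of $p$, and it is compact as a finite union of continuous images of compacts.

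The main obstacle I anticipate is the bookkeeping needed to pass between the center $e_{Y_i}$ of an \'etoile on the space $Y_i$ and the topology on $\mathcal E_Y$ — that is, checking carefully that ``$\phi_i$ monomial at $e_{Y_i}$'' can be upgraded, via an honest open neighborhood in $Y_i$ whose preimage in $\mathcal E_Y$ is open, to a statement uniform over a neighborhood $W_{e_i}$ of $e_i$ in $\mathcal E_Y$, and that these neighborhoods genuinely cover $K'$. This is exactly the point where the good topological properties of the vo\^ute \'etoil\'ee (continuity and properness of $P_Y$ from \cite{Hcar}, \cite{H3}, together with the basis property of the $\mathcal E_\pi$) are indispensable. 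A secondary technical point is verifying that monomiality is an open condition on $Y_i$ in the precise sense of Definition \ref{DefMon}, i.e. that the local monomial form at $e_{Y_i}$ propagates to a full open neighborhood rather than just infinitesimally; this should follow because the defining data are finitely many analytic equations and a rank condition, all of which are stable under small perturbation of the base point, but it must be stated with Definition \ref{DefMon} in hand. Once these two points are in place, the extraction of a finite subcover and the assembly of the $K_i$ is routine.
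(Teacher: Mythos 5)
Your proposal is correct and follows essentially the same route as the paper's proof of Theorem \ref{TheoremC}: apply Theorem \ref{TheoremB*} to each \'etoile $e$ over $Y$, take a relatively compact neighborhood $V_e$ of $e_{Y_e}$ on which the monomial form persists, use properness of $P_Y$ to make $K'=P_Y^{-1}(K)$ compact, cover it by the basic open sets $\mathcal E_{\overline\pi_e}$ determined by the restrictions $V_e\rightarrow Y$, extract a finite subcover, and take $K_i$ to be the closures of the $V_{e_i}$. The two technical points you flag (openness of monomiality and the passage between centers on $Y_i$ and the topology on $\mathcal E_Y$) are handled in the paper exactly as you anticipate, via the basis property of the $\mathcal E_\pi$ and the surjectivity and continuity of $P_Y$.
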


A stronger version of Theorem \ref{TheoremC*} is proven in Theorem \ref{TheoremC} below.

%%%%%%%%%%%%%%%%%%%%%%%%%%%%%%%%%%%%%%%%%%%%%%%

We obtain corresponding theorems for real analytic morphisms.

\begin{Theorem}\label{TheoremC'*} Suppose that $Y$ is a real analytic manifold, $X$ is a reduced real analytic space and $\phi:Y\rightarrow X$ is a real analytic morphism.
Then there exists a finite number $t$ of commutative diagrams of complex  analytic morphisms
$$
\begin{array}{ccc}
Y_{i}&\stackrel{\phi_{i}}{\rightarrow}&X_i\\
\beta_i\downarrow &&\downarrow \alpha_i\\
Y&\stackrel{\phi}{\rightarrow}& X
\end{array}
$$
for $1\le i \le t$ 
such that each $\beta_i$ and $\alpha_i$ are finite products of local blow ups of nonsingular analytic sub varieties, $Y_i$ and $ X_i$ are smooth analytic  spaces and $\phi_i$ is a  monomial  analytic morphism. Further, there exist compact subsets $K_i$ of $Y_i$ such that $\cup_{i=1}^t\beta_i(K_i)$ is a compact neighborhood of $p$ in $Y$.

There exist nowhere dense closed analytic subspaces $F_i$ of $X_i$ such that $X_i\setminus F_i\rightarrow X$ are open embeddings and $\phi_i^{-1}(F_i)$ is nowhere dense in $Y_i$.

\end{Theorem}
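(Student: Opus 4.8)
The plan is to reduce Theorem~\ref{TheoremC'*} to the already-established complex case, Theorem~\ref{TheoremC*}, by complexification. Let me think through the steps.

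The plan is to derive Theorem~\ref{TheoremC'*} from its complex counterpart, Theorem~\ref{TheoremC*}, by complexification and descent to real loci. First I would complexify the data. Since $Y$ is a real analytic manifold there is a complex analytic manifold $\widetilde Y$ in which $Y$ sits as the fixed point locus of an antiholomorphic involution $\sigma_Y$, and, working in a small enough neighbourhood of $X$, the reduced real analytic space $X$ admits a complexification $\widetilde X$: a reduced complex analytic space carrying an antiholomorphic involution $\sigma_X$ with $\mathrm{Fix}(\sigma_X)=X$. Complexifying the graph of $\phi$ and shrinking $\widetilde Y$ around $Y$, the morphism $\phi$ extends to a complex analytic morphism $\widetilde\phi\colon\widetilde Y\to\widetilde X$ with $\widetilde\phi\circ\sigma_Y=\sigma_X\circ\widetilde\phi$.

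Next I would apply Theorem~\ref{TheoremC*} to $\widetilde\phi$ at the point $p\in Y\subseteq\widetilde Y$, obtaining finitely many commutative diagrams with $\widetilde\beta_i\colon\widetilde Y_i\to\widetilde Y$ and $\widetilde\alpha_i\colon\widetilde X_i\to\widetilde X$ finite products of local blow ups of nonsingular analytic subvarieties, $\widetilde Y_i,\widetilde X_i$ smooth, $\widetilde\phi_i$ monomial, compact $\widetilde K_i\subseteq\widetilde Y_i$ with $\bigcup_i\widetilde\beta_i(\widetilde K_i)$ a compact neighbourhood of $p$, and nowhere dense closed analytic $\widetilde F_i\subseteq\widetilde X_i$ with $\widetilde X_i\setminus\widetilde F_i\to\widetilde X$ an open embedding and $\widetilde\phi_i^{-1}(\widetilde F_i)$ nowhere dense. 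The essential point — and the step I expect to be the main obstacle — is that all of these blow ups may be chosen to be \emph{defined over $\RR$}: every blow up occurring in $\widetilde\beta_i$ and $\widetilde\alpha_i$ has a centre invariant under the relevant conjugation. To justify this one must return to the proofs of Theorems~\ref{TheoremB*} and \ref{TheoremC*} and verify that the sequences of local blow ups produced there, together with the auxiliary embedded resolutions of singularities invoked, are given by constructions functorial for isomorphisms. Since $\sigma_Y$ is a biholomorphism from $\widetilde Y$ onto the conjugate manifold $\overline{\widetilde Y}$, and likewise for $X$, such functoriality identifies the construction over $\widetilde Y$ with the conjugate of the construction over $\overline{\widetilde Y}$, and so equips $\widetilde Y_i$ and $\widetilde X_i$ with antiholomorphic involutions $\sigma_{Y_i},\sigma_{X_i}$ compatible with $\widetilde\beta_i,\widetilde\alpha_i,\widetilde\phi_i$. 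Replacing $\widetilde F_i$ by $\widetilde F_i\cup\sigma_{X_i}(\widetilde F_i)$ and $\widetilde K_i$ by $\widetilde K_i\cup\sigma_{Y_i}(\widetilde K_i)$ (which preserves all their properties, as $\sigma_Y(p)=p$), I may assume $\widetilde F_i$ and $\widetilde K_i$ are invariant as well.

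Finally I would descend to the real loci. Put $Y_i=\mathrm{Fix}(\sigma_{Y_i})$, $X_i=\mathrm{Fix}(\sigma_{X_i})$ and $F_i=\widetilde F_i\cap X_i$; then $Y_i,X_i$ are smooth real analytic spaces, $F_i$ is a closed analytic subspace of $X_i$, and $\widetilde\beta_i,\widetilde\alpha_i,\widetilde\phi_i$ restrict to real analytic morphisms $\beta_i\colon Y_i\to Y$, $\alpha_i\colon X_i\to X$, $\phi_i\colon Y_i\to X_i$ fitting into the required commutative diagrams. Because the centres of the blow ups defining $\widetilde\beta_i,\widetilde\alpha_i$ are conjugation invariant nonsingular analytic subvarieties, their fixed loci are nonsingular real analytic subvarieties and blowing them up commutes with passage to the real locus, so $\beta_i,\alpha_i$ are finite products of local blow ups of nonsingular real analytic subvarieties; symmetrizing the regular parameters in a $\sigma$-equivariant monomial presentation of $\widetilde\phi_i$ (replacing a pair $x,\sigma^*x$ by $x+\sigma^*x$ and $\sqrt{-1}(x-\sigma^*x)$) shows $\phi_i$ is monomial in the sense of Definition~\ref{DefMon}. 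A proper conjugation invariant closed analytic subset of a complexification cannot contain the real form, since a holomorphic function vanishing on the real form vanishes identically, and hence it meets the real form in a proper, thus nowhere dense, closed analytic subset; applied to $\widetilde F_i$ and to $\widetilde\phi_i^{-1}(\widetilde F_i)$ this gives that $F_i$ is nowhere dense in $X_i$ and $\phi_i^{-1}(F_i)$ is nowhere dense in $Y_i$, while $X_i\setminus F_i\to X$ is an open embedding by restricting the equivariant open embedding $\widetilde X_i\setminus\widetilde F_i\to\widetilde X$. For the compactness assertion, put $K_i=\widetilde K_i\cap Y_i$, a compact subset of $Y_i$, and let $N=\bigl(\bigcup_i\widetilde\beta_i(\widetilde K_i)\bigr)\cap Y$, a compact neighbourhood of $p$ in $Y$; over the complement of a nowhere dense subset $B_i\subseteq\widetilde Y$ (the image of the exceptional loci of $\widetilde\beta_i$) the map $\widetilde\beta_i$ is an open immersion, so for $q\in N\setminus\bigcup_iB_i$ the point over $q$ in any $\widetilde K_i$ is unique, hence fixed by $\sigma_{Y_i}$, hence lies in $K_i$, whence $\bigcup_i\beta_i(K_i)\supseteq N\setminus\bigcup_iB_i$. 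Since each $B_i$ meets $Y$ in a nowhere dense set and each $\beta_i(K_i)$ is compact, the closed set $\bigcup_i\beta_i(K_i)$ contains the dense subset $N\setminus\bigcup_iB_i$ of $N$, hence contains $N$, and is therefore a compact neighbourhood of $p$. Granting the equivariance of the complex construction, everything else is routine dimension and identity theorem bookkeeping over the real loci.
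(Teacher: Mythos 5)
Your overall route -- complexify, make the complex monomialization conjugation-equivariant, descend to real loci, and recover the compactness statement by a density argument using uniqueness of preimages away from the exceptional loci -- is exactly the paper's (the compactness argument at the end is essentially identical to the one in the proof of Theorem \ref{TheoremC'}). The gap is in the step you yourself flag as the main obstacle: the claim that equivariance follows because the constructions behind Theorems \ref{TheoremB*} and \ref{TheoremC*} are ``functorial for isomorphisms.'' They are not. The algorithm of Sections 5--7 is built from non-canonical choices (regular parameters, generators of ideals, Tschirnhaus transformations, toric principalizations), and no functoriality statement is available or claimed. Worse, even a functorial construction would not suffice as stated: the tower $\tilde Y_e\rightarrow \tilde Y$ is attached to an \'etoile $e$, and the conjugation $\sigma_Y$ carries $e$ to a different \'etoile $\sigma_Y(e)$; functoriality would identify the tower for $e$ with the conjugate of the tower for $\sigma_Y(e)$, which does not by itself produce an antiholomorphic involution on $\tilde Y_e$.

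The paper resolves this by re-running the entire construction equivariantly (Remark \ref{RemarkR1} and Propositions \ref{TheoremA'}, \ref{Theorem5}, \ref{TheoremB'}) rather than invoking the complex theorem as a black box. Three issues arise there that your proposal does not address. First, as long as the center of $e$ on each stage $W_i$ is a real point, the algorithm can be carried out inside $\RR\{\{x\}\}\subset\CC\{\{x\}\}$, but extracting roots of unit series (e.g.\ in Lemma \ref{Lemma10}) requires the constant term to be positive, which forces $\pm 1$ corrections to the parameters; this is why the real monomial form comes out directly, with no need for the symmetrization $x\mapsto x+\sigma^*x$, $\sqrt{-1}(x-\sigma^*x)$ you propose -- which in any case would not preserve a monomial presentation. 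Second, the center of $e$ may cease to be real at some stage; then no invariant center for the next blow up exists near it, and the paper terminates the algorithm there, replacing $W_{i+1}$ by the disjoint union $U\sqcup\tau_i(U)$ of a neighborhood of $e_{W_i}$ containing no real points with its conjugate, so that $Y_e=\emptyset$; these \'etoiles still contribute to the open cover of $P_{\tilde Y}^{-1}(K)$ but contribute nothing to $\cup_i\beta_i(K_i)$. Third, the auxiliary resolutions and principalizations must themselves be made equivariant, which the paper does by descending to the invariant ring $(\mathcal O^{\rm an}_{X,x})^{\sigma}$ (Remark \ref{RemarkR1}). Without carrying out these modifications, the descent step of your argument does not go through.
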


A stronger version  of Theorem \ref{TheoremC'*} is proven in Theorem \ref{TheoremC'}.

An application of Theorem \ref{TheoremC'*}, showing that Hironaka's rectilinearization theorem can be deduced from local monomialization, is given in \cite{C11}. The rectilinearization theorem was first proven by Hironaka in \cite{H3}. Different proofs have been given by Denef and Van Den Dries \cite{DV} and  Bierstone and Milman \cite{BM3}.

Because of the existence of examples such as the Whitney Umbrella, $x^2-zy^2=0$,  it is not possible for Theorem \ref{TheoremC'*} to hold when $Y$ is only assumed to be a reduced real analytic space. However, using a generalization of the notion of resolution of singularities by Hironaka for real analytic spaces we can generalize Theorem \ref{TheoremC'*} to arbitrary reduced analytic spaces.

We  recall the definition of a smooth real analytic filtration of a real analytic space.

\begin{Definition}(Definition 5.8.2 \cite{H3}) Let $X$ be a real analytic space. A smooth real analytic filtration of $X$ is a sequence of closed real analytic subspaces $\{X^{i}\}_{0\le i<\infty}$ of $X$ such that 
\begin{enumerate}
\item[1)] $X^{(0)}=|X|$ and $X^{(i)}\supset X^{(i+1)}$ for all $i\ge 0$.
\item[2)] $\{X^{(i)}\}$ is locally finite at every point $p\in X$.
\item[3)] $X^{(i)}\setminus X^{(i+1)}$ is smooth.
\end{enumerate}
\end{Definition}

If $X$ is a reduced real analytic space which is countable at infinity, then $X$ has a smooth real analytic filtration (Proposition 5.8 \cite{H3}).

Using resolution of singularities, Hironaka deduces the following result.

\begin{Proposition}\label{TheoremE}(Desingularization I. (5.10) \cite{H3})
Suppose that $X$ is a real analytic space and $p\in X$. Then there exists an open neighborhood $U$ of $p$ in $X$, a finite smooth real analytic filtration $\{U^{(i)}\}$ on $U$ and real analytic morphisms $\pi^{(i)}:\overline U^{(i)}\rightarrow U^{(i)}$ such that 
\begin{enumerate}
\item[1)] Each $\overline U^{(i)}$ is smooth and $\pi^{(i)}$ is a sequence of blowups of smooth sub varieties. 
\item[2)] $(\pi^{(i)})^{-1}(U^{(i+1)})$ is nowhere dense in $\overline U^{(i)}$ and
\item[3)] $\pi^{(i)}$ induces an isomorphism $\overline U^{(i)}\setminus (\pi^{(i)})^{-1}(U^{(i+1)})\rightarrow U^{(i)}\setminus U^{(i+1)}$.
\end{enumerate}
In particular, $U=\cup_{i\ge 0}\pi^{(i)}(\overline U^{(i)})$.
\end{Proposition}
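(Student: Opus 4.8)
The plan is to deduce this from two facts already in hand: the local existence of a finite smooth real analytic filtration and Hironaka's resolution of singularities for real analytic spaces. First I would replace $X$ by its reduction near $p$ (which is reduced and countable at infinity in a neighborhood of $p$), and use Proposition 5.8 of \cite{H3} to choose an open neighborhood $U$ of $p$ carrying a smooth real analytic filtration $\{U^{(i)}\}$. By the local finiteness condition, after shrinking $U$ we may assume the filtration is finite, say $U^{(i)}=\emptyset$ for $i>N$, and --- as in Hironaka's construction of the filtration --- that each $U^{(i+1)}$ is nowhere dense in $U^{(i)}$. The structural point to record is that $U^{(i)}\setminus U^{(i+1)}$ is \emph{open} in $U^{(i)}$, because $U^{(i+1)}$ is closed; hence smoothness of this stratum (condition (3) of the filtration) forces $\operatorname{Sing}(U^{(i)})\subseteq U^{(i+1)}$.

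Next, for each $i$ I would apply resolution of singularities to the reduced real analytic space $U^{(i)}$, obtaining $\pi^{(i)}\colon\overline U^{(i)}\to U^{(i)}$: a locally finite sequence of blow ups of nonsingular centers lying over $\operatorname{Sing}(U^{(i)})$, with $\overline U^{(i)}$ smooth and with $\pi^{(i)}$ restricting to an isomorphism over $U^{(i)}\setminus\operatorname{Sing}(U^{(i)})$. Conclusion (1) is then immediate. Since $\operatorname{Sing}(U^{(i)})\subseteq U^{(i+1)}$, the map $\pi^{(i)}$ is an isomorphism over $U^{(i)}\setminus U^{(i+1)}$, which is exactly conclusion (3): $\overline U^{(i)}\setminus(\pi^{(i)})^{-1}(U^{(i+1)})\to U^{(i)}\setminus U^{(i+1)}$ is an isomorphism. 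For (2), observe that $\pi^{(i)}$ is proper and an isomorphism over the dense open stratum, so it induces a bijection on connected components; since $U^{(i+1)}$ is nowhere dense in $U^{(i)}$, its preimage $(\pi^{(i)})^{-1}(U^{(i+1)})$ contains no connected component of $\overline U^{(i)}$, and being a closed analytic subset of the manifold $\overline U^{(i)}$ it is therefore nowhere dense. Finally, each $\pi^{(i)}$ is surjective onto $U^{(i)}$ (a blow up is surjective onto its base, and local finiteness reduces the composition to a finite one near any point), so $\bigcup_i\pi^{(i)}(\overline U^{(i)})=\bigcup_i U^{(i)}=U^{(0)}=|U|$.

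The only substantive input is Hironaka's resolution of singularities for (noncompact, countable-at-infinity) real analytic spaces, in the strong form used above: a locally finite sequence of blow ups of nonsingular centers over the singular locus, yielding an isomorphism over the regular locus. That is the main point; the rest is bookkeeping --- arranging that the filtration is finite with nowhere-dense successive terms after shrinking $U$, translating ``isomorphism over the regular locus'' into conditions (2) and (3) by way of the inclusion $\operatorname{Sing}(U^{(i)})\subseteq U^{(i+1)}$, and the elementary fact that a proper closed analytic subset of a manifold is nowhere dense. A minor caveat is the role of reducedness: the filtration is taken on $X_{\rm red}$ and each $U^{(i)}$ is given its reduced structure before being resolved, but since all three conclusions concern underlying sets and smoothness this causes no trouble.
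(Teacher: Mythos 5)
The first thing to say is that the paper offers no proof of this Proposition at all: it is quoted verbatim from Hironaka (statement (5.10) of \cite{H3}), so what you have really done is reconstruct Hironaka's deduction from Proposition 5.8 of \cite{H3} and resolution of singularities. Your assembly --- shrink to a finite filtration, observe that $\mathrm{Sing}(U^{(i)})\subseteq U^{(i+1)}$ because the stratum $U^{(i)}\setminus U^{(i+1)}$ is open in $U^{(i)}$ and smooth, resolve each $U^{(i)}$, and read off (1)--(3) --- is the intended one and is essentially correct.

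The one step that fails is your justification of the final claim $U=\cup_i\pi^{(i)}(\overline U^{(i)})$. You assert that each $\pi^{(i)}$ is surjective onto $U^{(i)}$ because ``a blow up is surjective onto its base.'' That is false in the real analytic category, and the failure is precisely the Whitney-umbrella phenomenon the paper invokes two paragraphs before stating this Proposition. For $X=\{x^2=zy^2\}$, the blow up of the $z$-axis has, over a point $(0,0,z_0)$ with $z_0<0$, fiber equal to the real points of $\mathrm{Proj}\,\RR[X,Y]/(X^2+|z_0|Y^2)$, which is empty; the resolution of $X$ is a smooth surface whose image misses the handle $\{x=y=0,\ z<0\}$. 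If $\pi^{(0)}$ were surjective, the entire filtration would be superfluous --- the Proposition is formulated with a filtration exactly because real resolutions are not surjective. Fortunately the conclusion does not need surjectivity: by your own item (3), $U^{(i)}\setminus U^{(i+1)}\subseteq\pi^{(i)}(\overline U^{(i)})$, and since the filtration is finite and decreasing with $U^{(0)}=|U|$, every point of $U$ lies in some stratum $U^{(i)}\setminus U^{(i+1)}$ (take the largest $i$ with $q\in U^{(i)}$); hence $U=\cup_i(U^{(i)}\setminus U^{(i+1)})\subseteq\cup_i\pi^{(i)}(\overline U^{(i)})$. Replace the surjectivity argument by this one line and the proof is sound, modulo the standard fine print that the real resolution with ``isomorphism over the regular locus'' is obtained from an equivariant resolution of the complexification as in Remark \ref{RemarkR1}.
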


We deduce the following theorem from Theorem \ref{TheoremC'*} and Proposition \ref{TheoremE}.

\begin{Theorem}\label{TheoremD*} Suppose that $\phi:Y\rightarrow X$ is a real analytic morphism of reduced real analytic spaces  and $p\in Y$. Then there exists a finite number $t$ of commutative diagrams of real  analytic morphisms
$$
\begin{array}{ccc}
Y_{i}&&\\
\beta_i\downarrow &\phi_i\searrow&\\
Y_i^*&&X_i\\
\gamma_i\downarrow&&\downarrow\alpha_i\\
Y&\stackrel{\phi}{\rightarrow}& X
\end{array}
$$
for $1\le i \le t$ 
such that each $\gamma_i:Y_i^*\rightarrow Y$ is a resolution of singularities of a component of a smooth real analytic filtration of a neighborhood of $p$ in $Y$, $\gamma_i$, $\beta_i$ and $\alpha_i$ are  finite products of local blow ups of nonsingular analytic sub varieties,  $Y_i$ and $ X_i$ are smooth analytic  spaces and $\phi_i$ is a  monomial  analytic morphism. Further, there exist compact subsets $K_i$ of $Y_i$ such that $\cup_{i=1}^t\gamma_i\beta_i(K_i)$ is a compact neighborhood of $p$ in $Y$.

There exist nowhere dense closed analytic subspaces $F_i$ of $X_i$ such that $X_i\setminus F_i\rightarrow X$ are open embeddings and $\phi_i^{-1}(F_i)$ is nowhere dense in $Y_i$.
\end{Theorem}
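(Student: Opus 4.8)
The plan is to reduce Theorem \ref{TheoremD*} to Theorem \ref{TheoremC'*} by composing with the Hironaka desingularization of Proposition \ref{TheoremE}. First I would apply Proposition \ref{TheoremE} to the source $Y$ at the point $p$: this produces an open neighborhood $U$ of $p$, a finite smooth real analytic filtration $\{U^{(i)}\}$ of $U$, and resolutions $\gamma_i := \pi^{(i)}:\overline U^{(i)}\rightarrow U^{(i)}$, each a sequence of blowups of smooth subvarieties, such that $\gamma_i^{-1}(U^{(i+1)})$ is nowhere dense in $\overline U^{(i)}$, $\gamma_i$ restricts to an isomorphism over $U^{(i)}\setminus U^{(i+1)}$, and $U=\bigcup_i \gamma_i(\overline U^{(i)})$. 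The key point is that each $\overline U^{(i)}=:Y_i^*$ is now a \emph{smooth} real analytic manifold, so the composed morphism $\phi\circ\gamma_i: Y_i^* \rightarrow X$ (restricted to the preimage of $U$ inside $X$) is a real analytic morphism whose \emph{source is a manifold}, while $X$ remains merely a reduced real analytic space. This is exactly the hypothesis of Theorem \ref{TheoremC'*}.

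Next, I would apply Theorem \ref{TheoremC'*} to each of the finitely many morphisms $\phi\circ\gamma_i:Y_i^*\rightarrow X$, at each point of a suitable compact set. Concretely, fix a compact neighborhood $L$ of $p$ in $U$; then $\gamma_i^{-1}(L)$ is compact (since $\gamma_i$ is proper, being a finite composition of blowups), and covering it we can, for each $i$ and each of its finitely many output diagrams, obtain commutative squares with $\beta_i:Y_i\rightarrow Y_i^*$ and $\alpha_i:X_i\rightarrow X$ finite products of local blowups of nonsingular subvarieties, $Y_i,X_i$ smooth, $\phi_i:Y_i\rightarrow X_i$ monomial, compact $K_i\subset Y_i$ with $\bigcup_i \beta_i(K_i)$ a compact neighborhood of (the relevant point) in $Y_i^*$, and nowhere dense $F_i\subset X_i$ with $X_i\setminus F_i\rightarrow X$ an open embedding and $\phi_i^{-1}(F_i)$ nowhere dense. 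Reindexing the resulting doubly-indexed family as a single finite list $1\le i\le t$ produces the diagram displayed in the statement, with the vertical composite on the left being $\gamma_i\circ\beta_i$.

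The remaining work is bookkeeping on the covering condition. Since $U=\bigcup_i\gamma_i(\overline U^{(i)})$ and $\{U^{(i)}\}$ is a finite filtration, the compact neighborhood $L$ of $p$ satisfies $L\subseteq\bigcup_i \gamma_i(\gamma_i^{-1}(L))$; choosing the compact neighborhoods produced by Theorem \ref{TheoremC'*} to cover each $\gamma_i^{-1}(L)$ (shrinking $L$ if necessary so everything stays inside $U$, which is harmless since we only need \emph{a} compact neighborhood of $p$), we get $\bigcup_{i=1}^t \gamma_i\beta_i(K_i)\supseteq L$, and since each $\gamma_i\beta_i(K_i)$ is compact and contained in $Y$, the union is a compact neighborhood of $p$. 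The nowhere-density statement for $F_i$ carries over verbatim from Theorem \ref{TheoremC'*} because the condition is a statement about $X_i$, $\phi_i$ and $X$ alone, which are unchanged; the maps $\gamma_i$ and $\beta_i$ do not enter it.

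The main obstacle, and the only genuinely delicate point, is ensuring that after composing with $\gamma_i$ one is really in the situation governed by Theorem \ref{TheoremC'*} \emph{and} that the final covering is by compact sets in the original $Y$ rather than in the disjoint pieces $Y_i^*$. The latter uses crucially that $\gamma_i$ is proper and that the $\gamma_i(\overline U^{(i)})$ cover $U$; the former uses that blowups of a smooth space are smooth, so the $Y_i^*$ are genuine real analytic manifolds. One should also check that the various neighborhoods can be chosen small enough to all lie over a single fixed neighborhood of $\phi(p)$ in $X$ on which everything is defined, which is routine since all constructions are local and there are finitely many of them.
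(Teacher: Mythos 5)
Your proposal is correct and is essentially the paper's own argument: the paper deduces Theorem \ref{TheoremD*} precisely by combining Proposition \ref{TheoremE} (the finite smooth filtration with resolutions $\pi^{(i)}$ of a neighborhood of $p$) with Theorem \ref{TheoremC'*} applied to the composites $\phi\circ\pi^{(i)}$ on the smooth sources $\overline U^{(i)}$, followed by the same properness/compactness bookkeeping you describe.
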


There are a number of local theorems in  analytic geometry, including by Hironaka on the local structure of subanalytic sets
(\cite{Hcar} and \cite{H3}), especially the rectilinearization theorem, by  Hironaka,  the theorem by Lejeune and Teissier \cite{HLT} and by Hironaka \cite{H3} on local flattening, by Cano on local resolution of  3-dimensional vector fields (\cite{Ca}), by Denef and van den Dries \cite{DV} and Bierstone and Milman (\cite{BM3}) 
 on the structure of semianalytic and subanalytic sets, by Lichtin (\cite{Li}, \cite{Li2} )to construct local monomial forms of analytic mappings in low dimensions to prove convergence of series
and  by Belotto on local resolution and monomialization of foliations (\cite{B1}). A global form of the result of \cite{Ca} holds on an algebraic three fold (over an algebraically closed field of characteristic zero) by combining the theorem of \cite{Ca} with the patching theorem of Piltant in \cite{Pi}.

For dominant morphisms of algebraic varieties of characteristic zero, local monomialization along an arbitrary valuation is proven in 
\cite{Ast} and \cite{LMTE}.  It is shown in \cite{C4} that local monomialization (and even ``weak'' local monomialization where the vertical arrows are only required to be birational maps) is not true along an arbitrary valuation in positive characteristic, even for varieties of dimension two.

Global monomialization (toroidalization) has been proven for varieties over algebraically closed fields of characteristic zero for dominant morphisms from 
a projective 3-fold (\cite{C5}, \cite{C6} and \cite{C7}). 
Weak toroidalization (weak global monomialization), where the vertical arrows giving a toroidal map are only required to be birational is proven globally for algebraic varieties of   characteristic zero by  Abramovich and Karu \cite{AK} and Abramovich, Denef and Karu \cite{ADK}. 
Applications of this theorem  to  quantifier elimination  and other important problems in logic are given by Denef in \cite{De} and \cite{De1}.

The proof of local monomialization in characteristic zero function fields given in \cite{Ast} and \cite{LMTE} does not readily extend to the case of analytic morphisms. This is because the methods from valuation theory that are used there do not behave well under the infinite extensions of quotient fields of local rings which take place under local blow ups associated to an \'etoile. The behavior of a valuation associated to an \'etoile  which has rank larger than 1 is particularly wild (examples are given in \cite{WLM}), and the reduction to rank 1 valuations (the value group is an ordered subgroup of $\RR$) 
in the proofs of \cite{Ast} and \cite{LMTE} does not extend to a higher rank valuation which is associated to an \'etoile.  New techniques are developed in this paper which are not sensitive to the rank of a valuation.  The notion of ``independence of variables''
for an \'etoile, Definition  \ref{Def53}, replaces the notion of the rational rank of a (rank 1) valuation which is used in \cite{Ast} and \cite{LMTE}. If $e$ is an \'etoile over an irreducible complex analytic space $X$,  then we have (as in the classical case of function fields) by Lemma 5.3 \cite{WLM}
the inequalities 
$$
{\rm rank }V_e\le {\rm ratrank }V_e\le \dim X
$$
where $V_e$ is the valuation ring associated to $e$.

The proofs of this paper can be adapted to give simpler proofs of the local monomialization theorem for characteristic zero algebraic function fields of \cite{Ast} and \cite{LMTE}. However, two sources of complexity in the proofs of \cite{Ast} and \cite{LMTE} do not exist in the case of complex analytic morphisms, and cannot (readily) be eliminated. They are the problem of residue field extension of local rings, and the problem of approximation of formal (analytic) constructions to become algebraic.

The proofs of this paper, and the difficulties which must be overcome are related to the problems which arise in resolution of vector fields and differential forms (\cite{S}, \cite{Ca} , \cite{P}, \cite{BBGM}) and in resolution of singularities in positive characteristic (some papers illustrating this are \cite{Ab},  \cite{Ab2}, \cite{C8}, \cite{C9}, \cite{Ha1}, \cite{Ha2}, \cite{BV}, \cite{CP1}, \cite{CP2}, \cite{CP3}). A common difficulty to monomialization of morphisms, resolution of singularities in positive characteristic and resolution of vector fields is the possibility of a natural order going up after the blow up of an apparently suitable nonsingular sub variety.

We thank Jan Denef for suggesting the local monomialization problem for analytic morphisms, and for discussion, encouragement and explanation of possible applications. We also thank Bernard Teissier for discussions on this and related problems.
We thank  reviewers for their helpful comments and careful reading.

\section{A brief overview of the proof} 

 In this section we give an outline of the proof of Theorem \ref{TheoremB*} (and a stronger version, Theorem \ref{TheoremB}).  Suppose that $\phi:Y\rightarrow X$ is a complex analytic morphism of complex manifolds and $e$ is an \'etoile over $Y$. 
 The first step is to reduce, using  Proposition \ref{TheoremA} in Section \ref{Pre}, to the assumption that $\phi$ is quasi regular; that is, if we have a commutative diagram
 $$
 \begin{array}{rcl}
 Y_1&\stackrel{\phi_1}{\rightarrow}&X_1\\
 \beta\downarrow&&\downarrow\alpha\\
 Y&\stackrel{\phi}{\rightarrow}&X
 \end{array}
 $$
 with $\beta\in e$, and $\alpha$, $\beta$ products of local blowups of nonsingular analytic sub varieties then
  $\phi_1^*:\mathcal O_{X_1,\phi_1(e_{Y_1})}^{\rm an}\rightarrow \mathcal O_{Y_1,e_{Y_1}}^{\rm an}$ is injective.  This proof only uses the statement of the theorem of resolution of singularities.  
  In fact, it is true that if $\phi$ is quasi regular then $\phi$ is regular (so $\hat{\phi}_1^*:\hat{\mathcal O}_{X_1,\phi_1(e_{Y_1})}^{\rm an}\rightarrow \hat{\mathcal O}_{Y_1,e_{Y_1}}^{\rm an}$ is also injective), as can be deduced from the sophisticated local flattening theorem of Hironaka, Lejeune and Teissier \cite{HLT} and with a different proof by Hironaka in \cite{H3}. This deduction is shown in \cite{WLM}. However, we do not need this for our proof, and in fact deduce it in Corollary \ref{RegQR} from our proof. The fact that we only assume quasi regularity, and not regularity, is addressed in the proof of Theorem \ref{TheoremB*} in Proposition \ref{Theorem502}.
  
  With the assumption that $\phi$ is quasi regular, we have reduced to the proof of Theorem \ref{Theorem3}, and  we have that $e$ induces a restricted \'etoile on $X$ (as explained at the end of Section \ref{Pre}).  We will also need the fact, explained in Section  \ref{Pre}, that there is a valuation $\nu_e$ with valuation ring $V_e$ on the union of quotient fields of local rings at the center of $e$ of sequences of local blowups by nonsingular sub varieties above $Y$ which are in $e$.

    The most important types of transformations (sequences of local blow ups or change of variables) used in the proof are the generalized monoidal transformation, GMT and the simple GMT (SGMT), which are  defined in Section \ref{SecGMT}. The full set of transformations used are defined after the proof of Lemma \ref{Lemma35} in Section \ref{SecGMT}.
    A GMT associates to a given set $x_1,\ldots,x_n$ of variables another set $\overline x_1,\ldots,\overline x_n$ (which are parameters at the point on the corresponding birational extension determined by the \'etoile $e$), defined by 
  $$
  \overline x_i=\prod_{j=1}^n(x_j+\alpha_j)^{a_{ij}}
  $$
  where $A=(a_{ij})$ is a matrix of natural numbers with $\mbox{Det }A=\pm1$ and $\alpha_j\in \CC$. 
  
  A collection of variables $x_1,\ldots,x_n$ is called {\it independent} if every GMT in $x_1,\ldots, x_n$ is monomial (all $\alpha_j=0$). This is a crucial concept in the proof.   A critical fact is that a  GMT  preserves independence of variables.

      In the proof, we inductively construct commutative diagrams
 $$
 \begin{array}{rcl}
 \tilde Y&\stackrel{\tilde\phi}{\rightarrow}&\tilde X\\
 \downarrow&&\downarrow\\
 Y&\stackrel{\phi}{\rightarrow}&X
 \end{array}
 $$  
  where the vertical morphisms are products of local  blow ups of nonsingular analytic sub varieties which are in $e$ such that there 
  exist regular parameters $x_1,\ldots,x_m$ in $\mathcal O_{\tilde X,e_{\tilde X}}^{\rm an}$ and $y_1,\ldots,y_n$ in 
$\mathcal O_{\tilde Y,e_{\tilde Y}}^{\rm an}$ such that $y_1,\ldots,y_s$ are independent but $y_1,\ldots, y_s,y_i$ are dependent for all $i$ with $s+1\le i\le n$, $x_1,\ldots,x_r$ are independent, and identifying $x_i$ with $\tilde\phi^*(x_i)$, there is an expression for some $l$
\begin{equation}\label{Outline1}
\begin{array}{lll}
x_1&=&y_1^{c_{11}}\cdots y_s^{c_{1s}}\\
&&\vdots\\
x_r&=& y_1^{c_{r1}}\cdots y_s^{c_{rs}}\\
x_{r+1}&=& y_{s+1}\\
&&\vdots\\
x_{r+l}&=& y_{s+l}.
\end{array}
\end{equation}

We necessarily have that $C=(c_{ij})$ has rank $r$ (by Lemma \ref{Lemma25}) with our assumptions. 
We will say that the variables $(x,y)=(x_1,\ldots,x_m; y_1,\ldots,y_n)$ are {\it prepared} of type $(s,r,l)$ if all of the above conditions hold.
The above diagram (\ref{Outline1}) is labeled as equation (\ref{eq5})  in Section \ref{SecTrans}, where it is introduced in the proof.
We  say that $(s_1,r_1,l_1)\ge (s,r,l)$ if $s_1\ge s$, $r_1\ge r$ and $r_1+l_1\ge r+l$, and that $(s_1,r_1,l_1)>(s,r,l)$ if $(s_1,r_1,l_1)\ge (s,r,l)$ and $s_1>s$ or $r_1>r$ or $r_1+l_1>r+l$.

Theorem \ref{Theorem3}, and thus 
Theorem \ref{TheoremB*}, is a consequence of induction using Proposition \ref{Theorem2}, which shows that if $\tilde \phi$ is not monomial, and an  expression (\ref{Outline1}) holds, then we can construct some more local blow ups $Y_1\rightarrow \tilde Y$ and $X_1\rightarrow \tilde X$ of nonsingular sub varieties, with $Y_1\rightarrow Y\in e$  such that we have a resulting morphism  $\phi_1:Y_1\rightarrow X_1$ giving equations (\ref{Outline1}) with an increase $(s_1,r_1,l_1)>(s,r,l)$.   

We will now say a little bit about the proof of Proposition \ref{Theorem2}, and the necessary results preceding it.  This is accomplished in Section \ref{Mon}. 
We start with an expression (\ref{Outline1}), and then we perform a sequence of transformations which maintain the form (\ref{Outline1}) to also put $x_{r+l+1}$  into a monomial form consistent with (\ref{Outline1}). We may assume that there is no change in $(r,s,l)$ under these transformations (until the very last step), since otherwise we have already obtained a proof of the induction statement.

 We make use of the following method to reduce the order of a function along a valuation, taking a Tschirnhaus transformation (Lemma \ref{Lemma9}) and then performing sequences of blow ups to make the coefficients monomials (times units), and then performing a transformation of type 4) (defined after the proof of Lemma \ref{Lemma35} in Section \ref{SecGMT}) to get a reduction in multiplicity. This is a variation on the reduction method of Zariski in \cite{LU}, except we consider  valuations of arbitrary rank, and  use the  Tschirnhaus transformation which was introduced by Abhyankar and developed by Hironaka. 
This method is used repeatedly through out the proofs. 

Another important method is developed in Section \ref{DecSeries}. We  define the notion of  a formal series $g$ in $\CC[[y_1,\ldots,y_{s+l}]]$ to be {\it algebraic} over $x_1,\ldots,x_{r+l}$
in Definition \ref{Def500}.  We consider this notion through the decomposition of a series  $g$ expressed in (\ref{eq60}) and (\ref{eq60}). This decomposition was introduced in \cite{LMTE}.

 We perform 10 types of transformations to achieve the proof of Proposition \ref{Theorem2}, which are listed after Lemma \ref{Lemma35}. The basic transformations are 1), 2), 4) and 9) which are generalized monoidal transforms, and 3) and 10), which are generally used to make a Tschirnhaus transformation. 
 
  In Lemma \ref{Lemma2}, it is shown that we can perform transformations which preserve the form (\ref{Outline1}) to transform a given  element $g\in \CC\{\{y_1,\ldots,y_{s+l}\}\}$ into a monomial in $y_1,\ldots,y_s$ times a unit. The decomposition of Section \ref{DecSeries} is essential in the proof of this lemma. From this lemma, we obtain in Lemma \ref{Lemma3} that if $g\in \CC\{\{y_1,\ldots,y_{s+l}\}\}$ is not algebraic over $x_1,\ldots,x_r$,  then we can perform transformations which preserve the form \ref{Outline1} to obtain that
 \begin{equation}\label{Outline2}
 g=P+y_1(1)^{d_1}\cdots y_s(1)^{d_s}
 \end{equation}
 where $P$ is algebraic over $x_1(1),\ldots,x_{r+l}(1)$ and $y_1(1)^{d_1}\cdots y_s(1)^{d_s}$ is not algebraic over $x_1(1),\ldots, x_r(1)$.
 
 In Proposition \ref{Theorem502}, we  show that the natural map of formal power series 
 $$
 \CC[[x_1,\ldots,x_{r+l+1}]]\rightarrow \CC[[y_1,\ldots,y_n]]
 $$
  is an inclusion. (Since $\phi$ is quasi regular, we must have that the map 
  $$
  \CC\{\{x_1,\ldots,x_m\}\}\rightarrow \CC\{\{y_1,\ldots,y_n\}\}
  $$
   is injective.)
 
 Lemmas \ref{Lemma4} and \ref{Lemma5} generalize Lemmas \ref{Lemma2} and \ref{Lemma3} to the case when $g\in \CC\{\{y_1,\ldots,y_n\}\}$.
 
 In Proposition  \ref{Theorem1}, we now deduce that there is a sequence of transforms preserving the form (\ref{Outline1}) such that 
$$
x_{r+l+1}(1)=P+y_1(1)^{d_1}\cdots y_s(1)^{d_s}
$$
with $P\in \CC\{\{y_1(1),\ldots,y_{s+l}(1)\}\}$ algebraic over $x_1(1),\ldots,x_{r+l}(1)$ and 
$y_1(1)^{d_1}\cdots y_s(1)^{d_s}$ not algebraic over $x_1(1),\ldots,x_r(1)$
or we have an expression
$$
x_{r+l+1}(1)=P+y_1(1)^{d_1}\cdots y_s(1)^{d_s}y_{s+l+1}(1)
$$
with $P \in \CC\{\{y_1(1),\ldots,y_{s+l}(1)\}\}$ algebraic over $x_1(1),\ldots,x_{r+l}(1)$. It now remains to perform a sequence of transformations which remove the $P$ term. This is accomplished in Proposition \ref{Theorem2}.

\section{Preliminaries on analytic maps and \'etoiles}\label{Pre}

We require that an analytic space be Hausdorff.

Suppose that $X$ is a complex or real analytic manifold and $p\in X$. Let $K=\CC$ or $\RR$. Suppose that $x_1,\ldots,x_m$ are regular parameters in 
$\mathcal O_{X,p}^{\rm an}$.
Then the completion  $\hat{\mathcal O}_{X,p}^{\rm an}$ of $\mathcal O_{X,p}^{\rm an}$ with respect to its maximal ideal is  the ring of formal power series  $K[[x_1,x_2,\ldots,x_m]]$. The ring $\mathcal O_{X,p}^{\rm an}$ is then identified with the  subring $K\{\{ x_1,\ldots,x_m\}\}$ of convergent power series. By Abel's theorem, the formal series
$$
f=\sum a_{i_1,\ldots,i_m}x_1^{i_1}\cdots x_m^{i_m}\in K[[x_1,\ldots,x_m]]
$$
is a convergent power series if and only if there exist positive real numbers $r_1,\ldots,r_m, M$ such that 
\begin{equation}\label{eq42}
||a_{i_1,\ldots,i_m}||r_1^{i_1}\cdots r_m^{i_m}\le M
\end{equation}
for every $i_1,\ldots,i_m$.

The local ring $\mathcal O_{X,p}^{\rm an}$ of a point $p$ on a complex or real analytic space $X$ is noetherian and henselian by Theorem 45.5 and fact 43.4 \cite{N}. The local ring $\mathcal O_{X,p}^{\rm an}$ is excellent by Section 18 \cite{EGAIV} (or Theorem 102, page 291 \cite{Ma}
and by (ii) of Scholie 7.8.3 \cite{EGAIV}).

A local blow up of an  analytic space $X$ (page 418 \cite{Hcar} or Section 1 \cite{H3}) is a morphism $\pi:X'\rightarrow X$ determined by a triple $(U,E,\pi)$ where $U$ is an open subset of $X$, $E$ is a closed  analytic subspace  of $U$ and $\pi$ is the composition of  the inclusion of $U$ into $X$ with
 the blowup of $E$. If $\pi:X^*\rightarrow X$ is a sequence of local blowups, then taking $F$ to be the union of the preimages on $X^*$ of the closed subspaces that are blown up in constructing $\pi$, we have that $F$ is a closed analytic subspace of $X^*$ such that the induced morphism $X^*\setminus F\rightarrow X$ is an open embedding.

Suppose that $X$ is a real or complex analytic manifold. A divisor $E$ on $X$ is a simple normal crossings (SNC) divisor if the support of $E$ is a union of irreducible smooth codimension 1 sub varieties of $X$ which intersect transversally.

Suppose that $\phi:Y\rightarrow X$ is a morphism of complex or real analytic manifolds. Gabrielov \cite{Gab} (also \cite{BM2} for a survey of this and related topics) has defined three ranks of $\phi$ at a point $q$ of $Y$.
Let $p=\phi(q)$. We have induced local homomorphisms
$$
\phi^*:\mathcal O_{X,p}^{\rm an}\rightarrow \mathcal O_{Y,q}^{\rm an}
$$
and 
$$
\hat \phi^*:\hat{\mathcal O}_{X,p}^{\rm an}\rightarrow \hat{\mathcal O}_{Y,q}^{\rm an}
$$
on the completions. We define
$$
\begin{array}{lll}
r_q(\phi)&=&
\mbox{ generic rank}\\
&=&\mbox{ largest rank of the tangent mapping of $\phi$ in a small open neighborhood of $q$,}
\end{array}
$$
$$
r_q^{\mathcal F}(\phi)=\dim \hat{\mathcal O}_{X,p}^{\rm an}/{\rm Kernel\,}\hat\phi^*
$$
$$
r_q^{\mathcal A}(\phi)=\dim \mathcal O_{X,p}^{\rm an}/{\rm Kernel\,}\phi^*.
$$
We have
\begin{equation}\label{Gineq}
r_q(\phi)\le r_q^{\mathcal F}(q)\le r_q^{\mathcal A}(\phi)\le \dim X.
\end{equation}
We will say that $\phi$ is regular at $q$ if all three of these ranks are equal to the dimension of $X$,
\begin{equation}\label{eqout3}
r_q(\phi)=r_q^{\mathcal F}(\phi)=r_q^{\mathcal A}(\phi)=\dim X.
\end{equation}
If $Y$ is a connected manifold and $\phi$ is regular at a point $q\in Y$ then $\phi$ is regular everywhere on $Y$. In this case we will say that $\phi$ is regular.

The dimension of a subset $E$ of a complex manifold $X$ at a point $p\in X$ is (page 152 \cite{L})
$$
\dim_pE=\sup\{\dim \Gamma\mid \Gamma\mbox{ is a sub manifold of $U$ contained in $E\cap U$ }\}
$$
where $U$ is a small neighborhood of $p$ in $X$.

If $\phi:Y\rightarrow X$ is a complex analytic morphism of complex manifolds, $q\in Y$ and $p=\phi(q)$, then $\dim_p\phi(U)=r_q(\phi)$ if $U$ is a sufficiently small neighborhood of $q$ in $Y$. 

If $E$ is a closed analytic subset of the complex manifold $X$ and $p\in E$, then
 $$
 \dim_pE=\dim\mathcal O^{\rm an}_{E,p}
 $$  
where $\dim\mathcal O^{\rm an}_{E,p}$ is the Krull dimension of the local ring $\mathcal O^{\rm an}_{E,p}$.

For real analytic spaces, we use  the topological dimension ${\rm T}$-$\dim_p$, which is defined analogously (Section 5 of \cite{H3}). Rank and dimension are also discussed in \cite{BM2}, along with some illustrative examples.

An \'etoile is defined in Definition 2.1 \cite{Hcar}. An \'etoile $e$ over a complex analytic space $X$ is defined as a subcategory of the category of sequences of local blow ups over $X$.

A sequence of local blow ups of $X$ is the composite of a finite sequence of local blow ups $(U_i,E_i,\pi_i)$. 

Let $X$ be a complex analytic space. $\mathcal E(X)$ will denote the category of morphisms $\pi:X'\rightarrow X$ which are a sequence of local blow ups.
For $\pi_1:X_1\rightarrow X\in \mathcal E(X)$ and $\pi_2: X_2\rightarrow X\in \mathcal E(X)$, ${\rm Hom}(\pi_1,\pi_2)$ denotes the $X$-morphisms
$X_2\rightarrow X_1$ (morphisms which factor $\pi_1$ and $\pi_2$). The set ${\rm Hom}(\pi_1,\pi_2)$ has at most one element.

\begin{Definition}\label{EtoileDef} (Definition 2.1 \cite{Hcar}) Let $X$ be a complex analytic space. An \'etoile over $X$ is a subcategory $e$ of $\mathcal E(X)$ having the following properties:
\begin{enumerate}
\item[1)] If $\pi:X'\rightarrow X\in e$ then $X'\ne \emptyset$.
\item[2)] If $\pi_i\in e$ for $i=1,2$, then there exists $\pi_3\in e$ which dominates $\pi_1$ and $\pi_2$; that is, ${\rm Hom}(\pi_3,\pi_i)\ne 0$
for $i=1,2$.
\item[3)] For all $\pi_1:X_1\rightarrow X\in e$, there exists $\pi_2:X_2\rightarrow X\in e$ such that there exists $q\in {\rm Hom}(\pi_2,\pi_1)$, and the image $q(X_2)$ is relatively compact in $X_1$.
\item[4)] (maximality) If $e'$ is a subcategory of $\mathcal E(X)$ that contains $e$ and satisfies the above conditions 1) - 3), then $e'=e$.
\end{enumerate}
\end{Definition}

The set of all \'etoiles over $X$ is denoted by $\mathcal E_X$.

Using property 3), Hironaka shows that
for $e\in \mathcal E_X$, and $\pi:X'\rightarrow X\in e$, there exists a uniquely determined point $p_{\pi}(e)\in X'$ (which we will also denote by $e_{X'}$) which has the property that
if $\alpha:Z\rightarrow X\in e$ factors as
$$
Z\stackrel{\beta}{\rightarrow} X' \stackrel{\pi}{\rightarrow} X,
$$
then $\beta(p_{\alpha}(e))=p_{\pi}(e)$.  We will also call $e_{X'}$ the center of $e$ on $X'$.

  The \'etoile associates a point $e_X\in X$ to $X$ and if $\pi_1:X_1\rightarrow U$ is a local blow up of $X$ such that $e_X\in U$ then $\pi_1\in e$ and $e_{X_1}\in X_1$ satisfies $\pi_1(e_{X_1})=e_X$. If $\pi_2:X_2\rightarrow U_1$ is a local blow up of $X_1$ such that $e_{X_1}\in U_1$ then $\pi_1\pi_2\in e$ and $e_{X_2}\in X_2$ satisfies $\pi_2(e_{X_2})=e_{X_1}$. Continuing in this way, we can construct sequences of local blow ups
$$
X_n\stackrel{\pi_n}{\rightarrow} X_{n-1}\rightarrow \cdots\rightarrow X_1\stackrel{\pi_1}{\rightarrow} X
$$
such that $\pi_1\cdots\pi_i\in e$, with associated points $e_{X_i}\in X_i$ such that $\pi_i(e_{X_i})=e_{X_{i-1}}$ for all $i$.

In Section 5 of  \cite{WLM} it is shown that a valuation can be naturally associated to an \'etoile. We will summarize this construction here.

Suppose that $X$ is a reduced complex analytic space and  $e$ is an \'etoile over $X$. We will say that $\pi:X_n\rightarrow X\in e$ is nonsingular if $\pi$ factors as a sequence of local blowups
$$
X_n\rightarrow X_{n-1}\rightarrow \cdots \rightarrow X_1\rightarrow X
$$
 such that $X_i$ is nonsingular for $i\ge 1$. The set of local rings  $A_{\pi}:=\mathcal O_{X_n,e_{X_n}}^{\rm an}$ such that $\pi$ is nonsingular is a directed set, as is the set of quotient fields $K_{\pi}$ of the $A_{\pi}$ (Lemma 4.3 and Definition 3.2 \cite{WLM}). Let 
$$
\Omega_e=\lim_{\rightarrow}K_{\pi}\mbox{ and }V_e=\lim_{\rightarrow}A_{\pi}.
$$

Then $V_e$ is a valuation ring of the field $\Omega_e$ whose residue field is $\CC$ (Lemma 6.1 \cite{WLM}).

We now summarize some further results from \cite{Hcar}. Let $X$ be a complex analytic space. Let $\mathcal E_X$ be the set of all \'etoiles over $X$ and for
$\pi:X_1\rightarrow X$ a product of local blow ups, let 
\begin{equation}\label{eqopen}
\mathcal E_{\pi}=\{e\in \mathcal E_X\mid \pi\in e\}.
\end{equation}
Then the $\mathcal E_{\pi}$ form a basis for a topology on $\mathcal E_X$. The space $\mathcal E_X$ with this topology is called the vo\^ ute \'etoil\'ee over $X$ (Definition 3.1 \cite{Hcar}). The vo\^ ute \'etoil\'ee is a generalization to complex analytic spaces of the Zariski Riemann manifold of a variety $Z$ in algebraic geometry (Section 17, Chapter VI \cite{ZS2}).  

The fields $\Omega_e$ are gigantic, while the points of the Zariski Riemann manifold of a variety $Z$ are just (equivalence classes) of valuations of the function field $k(Z)$ of $Z$, so many of the good properties of valuations of the function field do not hold for the valuation induced by an \'etoile.

We have a canonical map $P_X:\mathcal E_X\rightarrow X$ defined by $P_X(e)=e_X$ which is continuous, surjective and proper (Theorem 3.4 \cite{Hcar}).
It is shown in Section 2 of \cite{Hcar} that given a product of local blow ups $\pi:X_1\rightarrow X$, there is a natural homeomorphism
$j_{\pi}:\mathcal E_{X_1}\rightarrow \mathcal E_{\pi}$ giving a commutative diagram
$$
\begin{array}{rccl}
\mathcal E_{X_1}&\cong \mathcal E_{\pi}&\subset &\mathcal E_X\\
P_{X_1}\downarrow&&&\downarrow P_X\\
X_1&&\stackrel{\pi}{\rightarrow}&X.
\end{array}
$$

\begin{Definition}\label{DefMon} Suppose that $\phi:Y\rightarrow X$ is a  morphism of  complex or real analytic manifolds, and $p\in Y$. We will say that the map $\phi$ is monomial at $p$ if there exist regular parameters $x_1,\ldots,x_m, x_{m+1},\ldots,x_t$ in $\mathcal O_{X,\phi(p)}^{\rm an}$ and $y_1,\ldots,y_n$ in $\mathcal O_{Y,p}^{\rm an}$ and $c_{ij}\in \NN$ such that
$$
\phi^*(x_i)=\prod_{j=1}^ny_j^{c_{ij}}\mbox{ for }1\le i\le m
$$
with $\mathop{rank}(c_{ij})=m$ and $\phi^*(x_i)=0$ for $m< i\le t$. We will say that $y_1y_2\cdots y_n=0$ is a local toroidal structure $O$ at $p$ and that  $\phi$ is a monomial morphism for the toroidal structure $ O$ at $p$.

We will say that $\phi$ is monomial on $Y$ (or simply that $\phi$ is monomial) if there exists an open cover of $Y$ by open sets $U_k$ which are isomorphic to open subsets of $\CC^n$ (or $\RR^n$) and an open cover of $X$ by open sets $V_k$ which are isomorphic to open subsets of $\CC^t$ (or $\RR^t$) such that $\phi(U_k)\subset V_k$ for all $i$ and there exist
$c_{ij}(k)\in\NN$ such that
$$
\phi^*(x_i)=\prod_{j=1}^ny_j^{c_{ij}(k)}\mbox{ for }1\le i\le m
$$
with $\mathop{rank}(c_{ij})=m$ and $\phi^*(x_i)=0$ for $m< i\le t$, and 
where $x_i$ and $y_j$ are the respective coordinates on $\CC^t$ and $\CC^n$ (or $\RR^t$ and $\RR^n$). 

We will say that $y_1y_2\cdots y_n=0$ is a local toroidal structure $O$ on $U_k$ and that  $\phi|U_k$ is a monomial morphism for the toroidal structure $ O$ on $U_k$.

\end{Definition}

\begin{Definition} Suppose that $\phi:Y\rightarrow X$ is an analytic morphism of connected complex analytic manifolds and $e$ is an \'etoile over $Y$. Define
$$
d_e(\phi)=\min\{r_{e_{Y_1}}^{\mathcal A}(\phi_1)\}
$$
where the minimum is over commutative diagrams of analytic morphisms
\begin{equation}\label{eqQR1}
\begin{array}{rcl}
Y_1&\stackrel{\phi_1}{\rightarrow}&X_1\\
\beta\downarrow&&\downarrow \alpha\\
Y&\stackrel{\phi}{\rightarrow}&X
\end{array}
\end{equation}
such that $Y_1$ and $X_1$ are connected complex analytic manifolds, $\beta\in e$, $\alpha$ and $\beta$ are products of local blowups of nonsingular closed analytic sub varieties
and there exists a nowhere dense closed analytic subspace $F_1$ of $X_1$ such that $X_1\setminus F_1\rightarrow X$ is an open embedding and $\phi_1^{-1}(F_1)$ is nowhere dense in $Y_1$.

We will say that $\phi$ is quasi regular with respect to an \'etoile $e$ on $Y$ if
$$
d_e(\phi)=r_{e_Y}^{\mathcal A}(\phi)=\dim X.
$$
\end{Definition}

\begin{Lemma}\label{LemmaQR} Suppose that $\phi:Y\rightarrow X$ is a morphism of connected complex analytic manifolds and $e$ is an \'etoile over $Y$. Suppose that we have a commutative diagram
$$
\begin{array}{rlc}
Y_2&\stackrel{\phi_2}{\rightarrow}&X_2\\
\alpha_2\downarrow&&\downarrow\beta_2\\
Y_1&\stackrel{\phi_1}{\rightarrow}&X_1\\
\alpha\downarrow&&\downarrow\beta\\
Y&\stackrel{\phi}{\rightarrow}&X
\end{array}
$$
such that $Y_2$, $X_2$, $Y_1$ and  $X_1$ are connected complex analytic manifolds, $\alpha\in e$, $\alpha\alpha_2\in e$ and $\alpha,\alpha_2,\beta,\beta_2$ are products of local blow ups of nonsingular closed analytic sub varieties such that there exists a nowhere dense closed analytic subspace $F_2$ of $X_2$ such that $X_2\setminus F_2\rightarrow X$ is an open embedding and $\phi_2^{-1}(F_2)$ is nowhere dense in $Y_2$.
Then
$$
r_{e_{Y_1}}^{\mathcal A}(\phi_1)\ge r_{e_{Y_2}}^{\mathcal A}(\phi_2).
$$
\end{Lemma}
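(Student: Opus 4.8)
The plan is to rephrase the arithmetic ranks geometrically and then to track what the blow up $\beta_2$ of the target does to them. Write $p_1=\phi_1(e_{Y_1})$ and $p_2=\phi_2(e_{Y_2})$. Since $\alpha\in e$ and $\alpha\alpha_2\in e$, the defining property of the center of an \'etoile (Section \ref{Pre}), applied to the factorization $Y_2\xrightarrow{\alpha_2}Y_1\xrightarrow{\alpha}Y$, gives $\alpha_2(e_{Y_2})=e_{Y_1}$; hence by commutativity $\beta_2(p_2)=\phi_1(\alpha_2(e_{Y_2}))=p_1$. As $Y_1,Y_2$ are manifolds, $\mathcal O_{Y_i,e_{Y_i}}^{\rm an}$ is a domain, so $\mathfrak p_i:=\ker\phi_i^*$ is a prime of $\mathcal O_{X_i,p_i}^{\rm an}$; let $V_i$ be the germ at $p_i$ of the reduced (hence irreducible) closed analytic subspace it defines. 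Then $\mathcal O_{V_i,p_i}^{\rm an}=\mathcal O_{X_i,p_i}^{\rm an}/\mathfrak p_i$, so $\dim_{p_i}V_i=r_{e_{Y_i}}^{\mathcal A}(\phi_i)$, and $V_i$ is the smallest germ of closed analytic subspace at $p_i$ containing $\phi_i(U)$ for all small enough neighborhoods $U$ of $e_{Y_i}$ (as $\mathfrak p_i$ is exactly the ideal of germs vanishing on the image).

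First I would show $\beta_2(V_2)\subseteq V_1$: commutativity gives $\phi_2^*\circ\beta_2^*=\alpha_2^*\circ\phi_1^*$, so $\mathfrak p_1=\ker\phi_1^*\subseteq(\beta_2^*)^{-1}(\ker\phi_2^*)=(\beta_2^*)^{-1}(\mathfrak p_2)$, and passing to zero loci $\beta_2(V_2)\subseteq V\big((\beta_2^*)^{-1}(\mathfrak p_2)\big)\subseteq V(\mathfrak p_1)=V_1$. Next I would show that $\beta_2$ does not decrease $\dim_{p_2}V_2$, and this is where the hypothesis on $F_2$ enters. The spaces $X,X_1,X_2$ are connected complex manifolds of the same dimension (each is obtained from $X$ by products of blow ups of nonsingular centers), and $\beta\beta_2$ restricts to an open embedding on $X_2\setminus F_2$; hence the differential of $\beta\beta_2=\beta\circ\beta_2$ is an isomorphism at each point of $X_2\setminus F_2$, and a dimension count forces $d\beta_2$ to be an isomorphism there as well, so $\beta_2|_{X_2\setminus F_2}$ is a local isomorphism (indeed an open embedding, being injective). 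Also $V_2\not\subseteq F_2$, for otherwise $\phi_2(U)\subseteq V_2\subseteq F_2$ for small $U$, forcing $U\subseteq\phi_2^{-1}(F_2)$ and contradicting that $\phi_2^{-1}(F_2)$ is nowhere dense; since $V_2$ is irreducible, $V_2\setminus F_2$ is then dense in $V_2$ and has pure dimension $\dim_{p_2}V_2$.

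Combining these, $\beta_2$ maps $V_2\setminus F_2$ biholomorphically onto a locally closed analytic subset $W$ of $X_1$ of pure dimension $\dim_{p_2}V_2$, and $W\subseteq\beta_2(V_2)\subseteq V_1$; as $p_2\in\overline{V_2\setminus F_2}$ we get $p_1=\beta_2(p_2)\in\overline W$, and since $V_1$ is irreducible (so of pure dimension $\dim_{p_1}V_1$ on a small representative) a point $p'\in W$ close to $p_1$ gives $\dim_{p_2}V_2=\dim_{p'}W\le\dim_{p'}V_1=\dim_{p_1}V_1$, i.e. $r_{e_{Y_1}}^{\mathcal A}(\phi_1)\ge r_{e_{Y_2}}^{\mathcal A}(\phi_2)$. (Alternatively, the condition on $F_2$ forces the field extension ${\rm Frac}(\mathcal O_{X_2,p_2}^{\rm an}/\mathfrak p_2)$ over ${\rm Frac}(\mathcal O_{X_1,p_1}^{\rm an}/\mathfrak p_1)$ to be algebraic because $\beta_2$ is birational, and then the dimension formula for the excellent local rings $\mathcal O_{X_i,p_i}^{\rm an}$ yields even $\dim_{p_1}V_1=\dim_{p_2}V_2$.) The step I expect to take the most care is the second one: deducing that $\beta_2$ is a local isomorphism on $X_2\setminus F_2$ from the fact that $\beta\beta_2$ is an open embedding there (which forces $F_2$ to contain the exceptional locus of $\beta$ as well), together with the bookkeeping of local dimensions of the possibly singular set $W=\beta_2(V_2\setminus F_2)$ sitting inside the irreducible germ $V_1$.
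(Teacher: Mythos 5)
Your proof is correct, but it runs in the opposite direction from the paper's, so it is worth recording the difference. The paper works only with the downstairs kernel: it chooses a representative $Z_1=Z(f_1,\ldots,f_r)$ of the zero locus of $\mathcal K_1=\ker\phi_1^*$ near $\phi_1(e_{Y_1})$, forms its \emph{strict transform} $Z_2$ under $\beta_2$, and uses the hypothesis that $\phi_2^{-1}(F_2)$ is nowhere dense to conclude that the image of $\phi_2$ is not contained in $F_2$ and hence lies in $Z_2$ rather than merely in the total transform $\beta_2^{-1}(Z_1)$; this gives that the ideal of the germ of $Z_2$ is contained in $\mathcal K_2$, whence $r^{\mathcal A}_{e_{Y_2}}(\phi_2)\le\dim_{\phi_2(e_{Y_2})}Z_2=\dim_{\phi_1(e_{Y_1})}Z_1=r^{\mathcal A}_{e_{Y_1}}(\phi_1)$, the middle equality being the standard equidimensionality of a strict transform. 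You instead introduce both image germs $V_1,V_2$ and push $V_2$ \emph{down}: the containment $\beta_2(V_2)\subseteq V_1$ is purely formal from $\phi_2^*\circ\beta_2^*=\alpha_2^*\circ\phi_1^*$ and needs no hypothesis on $F_2$, and the $F_2$ condition enters only to keep the dimension from collapsing under $\beta_2$, via $V_2\not\subseteq F_2$ together with the observation that $\beta_2$ restricts to an open embedding of $X_2\setminus F_2$ (since $\beta\beta_2$ does and $X$, $X_1$, $X_2$ are connected manifolds of the same dimension). Your route avoids strict transforms and the slightly delicate point of why the image of $\phi_2$ misses the exceptional components of the total transform; the paper's route avoids having to check that $\beta_2$ is a local isomorphism off $F_2$ and that $V_2\setminus F_2$ is dense in $V_2$. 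One caution: your parenthetical claim that the two ranks are in fact equal is not established by this argument (since $\alpha_2$ may shrink the domain, $\beta_2(V_2)$ need not be dense in $V_1$), but the inequality, which is all the lemma asserts, is fully proved by your main line of reasoning.
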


\begin{proof} Let $\mathcal K_1$ be the kernel of the homomorphism
$$
\phi_1^*:\mathcal O_{X_1,\phi_1(e_{Y_1})}^{\rm an}\rightarrow \mathcal O_{Y_1,e_{Y_1}}^{\rm an}.
$$
The kernel $\mathcal K_1$ is a prime ideal.  There exists an open neighborhood $V$ of $\phi_1(e_{Y_1})$ in $X_1$ such that $\mathcal K_1$ is generated by analytic functions $f_1,\ldots,f_r$ on $V$ and $Z_1=Z(f_1,\ldots,f_r)\subset V$ is analytically irreducible with $\dim_{\phi_1(e_{Y_1})} Z_1=r_{e_{Y_1}}^{\mathcal A}(\phi_1)$. We have $e_{Y_1}\in\phi_1^{-1}(V)$. Let $Z_2$ be the strict transform of $Z_1$ in $\beta_2^{-1}(V)$. The open set $\phi_2^{-1}(\beta_2^{-1}(V))\not\subset \phi_2^{-1}(F_2)$ since $\phi_2^{-1}(F_2)$ is nowhere dense in $Y_2$ and so $\phi_2(\alpha_2^{-1}(\phi_1^{-1}(V)))\not\subset F_2$. But
$$
\phi_2(\phi_2^{-1}(\beta_2^{-1}(V)))=\phi_2(\alpha_2^{-1}(\phi_1^{-1}(V)))\subset \beta_2^{-1}(Z_1)
$$
and so $\beta_2^{-1}(Z_1)\not\subset F_2$ and thus $Z_2\ne \emptyset$, $\phi_2(\alpha_2^{-1}(\phi_1^{-1}(V)))\subset Z_2$
  and  the ideal of the germ of $Z_2$ at $\phi_2(e_{Y_2})$ is contained in the kernel $\mathcal K_2$ of $\phi_2^*:\mathcal O_{X_2,\phi_2(e_{Y_2})}^{\rm an}\rightarrow \mathcal O_{Y_2,e_{Y_2}}^{\rm an}$. Thus 
$$
r_{e_{Y_2}}^{\mathcal A}(\phi_2)\le \dim_{\phi_2(e_{Y_2} )}Z_2=\dim_{\phi_1(e_{Y_1})}Z_1=r_{e_{Y_1}}^{\mathcal A}(\phi_1).
$$
\end{proof}

\begin{Proposition}\label{TheoremA} Suppose that $\phi:Y\rightarrow X$ is a morphism of reduced  complex analytic spaces and $e\in \mathcal E_Y$
is an \'etoile over $Y$. Then there exists a commutative diagram of morphisms
\begin{equation}\label{eq1}
\begin{array}{ccc}
Y_e&\stackrel{\phi_e}{\rightarrow}& X_e\\
\delta\downarrow &&\downarrow \gamma\\
Y&\stackrel{\phi}{\rightarrow}& X
\end{array}
\end{equation}
such that $\delta\in e$, the morphisms $\gamma$ and $\delta$ are  finite products of local blow ups of nonsingular analytic sub varieties, $Y_e$ and $X_e$ are smooth analytic spaces, there exists a closed analytic sub manifold $Z_e$ of $X_e$ such that $\phi_e(Y_e)\subset Z_e$ and the induced analytic map $\phi_e:Y_e\rightarrow Z_e$ is quasi regular with respect to $e$. 
Further, there exists a nowhere dense closed analytic subspace $F_e$ of $X_e$ such that $X_e\setminus F_e\rightarrow X$ is an open embeddding and $\phi_e^{-1}(F_e)$ is nowhere dense in $Y_e$.
\end{Proposition}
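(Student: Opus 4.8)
The plan is to reduce, using resolution of singularities, to a morphism of connected complex manifolds, to isolate the smallest analytic germ through which $\phi$ factors at the center of $e$, and then to exploit that the integer $r^{\mathcal A}$ attains its minimum over the diagrams \eqref{eqQR1}: by Lemma \ref{LemmaQR} it is non-increasing along such diagrams and it is bounded below by $0$, so a minimizing diagram exists, and at such a diagram the germ can no longer be made smaller. One final embedded resolution of that germ then produces the required $Z_e$, and quasi regularity of $\phi_e\colon Y_e\to Z_e$ falls out of minimality together with Lemma \ref{LemmaQR}.

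First I would work in a neighbourhood of $e_Y$ and apply resolution of singularities to $Y$, obtaining $\delta_0\colon Y_0\to Y$ in $e$ with $Y_0$ smooth; discarding all connected components of $Y_0$ except the one containing $e_{Y_0}$ — this is again a local blow up (the blow up of the empty set on an open subset) and it lies in $e$ because it carries the center of $e$ — I may assume $Y_0$ is a connected manifold. Set $\phi_0=\phi\delta_0$, $p_0=\phi_0(e_{Y_0})$, and let $\mathcal K_0=\operatorname{Kernel}(\phi_0^*\colon\mathcal O^{\rm an}_{X,p_0}\to\mathcal O^{\rm an}_{Y_0,e_{Y_0}})$, a prime ideal; let $Z_0$ be the corresponding irreducible analytic germ at $p_0$, so that $\dim Z_0=r^{\mathcal A}_{e_{Y_0}}(\phi_0)$, $\phi_0$ factors through $Z_0$ near $e_{Y_0}$, and $\phi_0(Y_0)$ is Zariski dense in $Z_0$ (equivalently $\mathcal O^{\rm an}_{Z_0,p_0}\hookrightarrow\mathcal O^{\rm an}_{Y_0,e_{Y_0}}$). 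Next I would apply embedded resolution to the pair $Z_0\subset X$: by a finite sequence $\gamma\colon X_1\to X$ of local blow ups of nonsingular subvarieties, taken compatibly with $Z_0$ so that at each stage the center is properly contained in the strict transform of $Z_0$, one arranges that $X_1$ is smooth and the strict transform $Z_1$ of $Z_0$ is a closed submanifold of $X_1$. Since $\phi_0(Y_0)$ is dense in the irreducible $Z_0$, the ideal of each such center pulls back to a nonzero ideal on the $Y$-side; principalizing these ideals by a further finite sequence of local blow ups of nonsingular subvarieties $Y_1\to Y_0$ (all in $e$) allows $\phi_0$ to be lifted, by the universal property of blow ups, to $\phi_1\colon Y_1\to X_1$ with $\phi_1(Y_1)\subset Z_1$; replace $Y_1$ again by the connected component of a neighbourhood of $e_{Y_1}$, so it is an irreducible manifold. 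Taking $F_1\subset X_1$ to be the union of the total transforms of the centers of $\gamma$, the complement $X_1\setminus F_1$ maps to $X$ by an open embedding; since $Z_1$ is a strict transform it is not contained in $F_1$, so $F_1\cap Z_1$ is nowhere dense in the irreducible $Z_1$, and since $Y_1$ is an irreducible manifold with $\phi_1(Y_1)$ dense in $Z_1$ we get $\phi_1^{-1}(F_1)=\phi_1^{-1}(F_1\cap Z_1)$ nowhere dense in $Y_1$. Thus $\phi_1\colon Y_1\to X_1$ is a morphism of connected complex manifolds over $\phi$ with all the side conditions, so in particular the set of diagrams \eqref{eqQR1} for $\phi_1$ is nonempty and $d_e(\phi_1)$ is defined.

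Since $r^{\mathcal A}$ takes only the values $0,1,\dots,\dim X$, the minimum $d:=d_e(\phi_1)$ is attained; fix a diagram \eqref{eqQR1} for $\phi_1$, say $\phi_2\colon Y_2\to X_2$, with $r^{\mathcal A}_{e_{Y_2}}(\phi_2)=d$, and repeat the construction of the previous paragraph with $\phi_2$ in place of $\phi_0$ — $X_2$ being already smooth. This yields the minimal germ $Z_2=V(\operatorname{Kernel}\phi_2^*)$ of dimension $d$, an embedded resolution $X_e\to X_2$ of $Z_2$ with $X_e$ smooth and $Z_e$, the strict transform of $Z_2$, a closed submanifold, and a lift $\phi_e\colon Y_e\to X_e$ with $Y_e\to Y_2$ (hence $Y_e\to Y$) a sequence of local blow ups of nonsingular subvarieties in $e$ and $\phi_e(Y_e)\subset Z_e$; let $\gamma,\delta$ be the composed morphisms and $F_e$ the union of the total transforms of all centers of $\gamma$. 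It remains to verify that $\phi_e\colon Y_e\to Z_e$ is quasi regular with respect to $e$, i.e.\ $d_e(\phi_e\colon Y_e\to Z_e)=r^{\mathcal A}_{e_{Y_e}}(\phi_e\colon Y_e\to Z_e)=\dim Z_e$. Since $Z_2$ is exactly cut out by $\operatorname{Kernel}\phi_2^*$ and passing to strict transforms along with the lifted map preserves the inclusion of coordinate rings, the minimal germ of $\phi_e$ inside $Z_e$ is all of $Z_e$, so $r^{\mathcal A}_{e_{Y_e}}(\phi_e\colon Y_e\to Z_e)=\dim Z_e=d$ and a fortiori $d_e(\phi_e\colon Y_e\to Z_e)\le d$. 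For the reverse inequality, let $\chi\colon Y'\to Z'$ be any diagram as in the definition of $d_e$ dominating $\phi_e\colon Y_e\to Z_e$; each center of $Z'\to Z_e$, being a nonsingular subvariety of the corresponding strict transform of $Z_e$ inside $X_e$, is a nonsingular subvariety of the ambient manifold, so blowing up the ambient manifolds along the same centers produces $X'\to X_e$ with $Z'$ the strict transform of $Z_e$ and $\chi\colon Y'\to Z'\subset X'$ a diagram \eqref{eqQR1} for $\phi_2\colon Y_2\to X_2$ (the side morphisms compose, and the $F$-conditions persist since they reduce to nowhere-dense conditions on the submanifolds $Z'$ and on $Y'$). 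As $\chi$ factors through the submanifold $Z'$ we have $r^{\mathcal A}_{e_{Y'}}(\chi\colon Y'\to X')=r^{\mathcal A}_{e_{Y'}}(\chi\colon Y'\to Z')\ge d_e(\phi_2)$, and $d_e(\phi_2)=d$ (it is $\le r^{\mathcal A}_{e_{Y_2}}(\phi_2)=d$ by the trivial diagram and $\ge d_e(\phi_1)=d$ because every diagram over $\phi_2$ is one over $\phi_1$). Hence $d_e(\phi_e\colon Y_e\to Z_e)\ge d$, and $\phi_e\colon Y_e\to Z_e$ is quasi regular with respect to $e$.

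The point requiring the most care is the construction of the lifts $\phi_1$ and $\phi_e$ over the resolution blow ups of the target: the universal property of a blow up does not apply directly, since $\phi$ need not be dominant and its image may lie inside the subvariety being blown up, so the pullback of the center's ideal is zero. The device that makes the lift exist, after blowing up the source within $e$, is to pass first to the minimal germ, in which the image is Zariski dense, and then to carry out the resolution of the target compatibly with an embedded resolution of that germ, so that no center ever contains a strict transform of the germ; then the ideals of the centers pull back to nonzero ideals on the source, which can be made invertible by further local blow ups of nonsingular subvarieties there, all of which belong to $e$. Checking that the resolution of $X$ can indeed be arranged compatibly with $Z_0$ in this way — so that this argument applies verbatim at every stage — is the technical heart of the proof, and it is the only place where anything beyond the bare statement of resolution of singularities is invoked.
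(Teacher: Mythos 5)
Your proposal is correct and follows essentially the same route as the paper: pick a diagram realizing the minimum $d_e$, pass to the irreducible germ cut out by $\operatorname{Kernel}\phi^*$, perform an embedded resolution of that germ together with a resolution of indeterminacy of the lifted rational map, and conclude quasi regularity from minimality via Lemma \ref{LemmaQR}. Your additional preliminary round (resolving $Y$ and producing a first diagram of connected manifolds so that $d_e$ is defined) and your explicit check that diagrams over $Y_e\to Z_e$ extend to diagrams over the ambient $X_e$ are details the paper compresses into its one-line appeal to Lemma \ref{LemmaQR}, but they do not change the argument.
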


\begin{proof} Let 
\begin{equation}\label{eqQR2}
\begin{array}{rcl}
Y_1&\stackrel{\phi_1}{\rightarrow}&X_1\\
\alpha\downarrow&&\downarrow\beta\\
Y&\stackrel{\phi}{\rightarrow}&X
\end{array}
\end{equation}
be a diagram as in (\ref{eqQR1}) such that 
$$
d_e(\phi)=r_{e_{Y_1}}^{\mathcal A}(\phi_1).
$$
Let $\mathcal K$ be the prime ideal which is the kernel of 
$$
\phi_1^*:\mathcal O_{X_1,\phi_1(e_{Y_1})}^{\rm an}\rightarrow \mathcal O_{Y_1,e_{Y_1}}^{\rm an}.
$$
We can replace $X_1$ with an open neighborhood $V$ of $\phi_1(e_{Y_1})$ on which a set of generators of $\mathcal K$ are analytic and determine a locally irreducible closed analytic subset $Z$ of $V$ and replace $Y_1$ with $\phi_1^{-1}(V)$.  After performing an embedded resolution of singularities $X_2\rightarrow X_1$ of $Z$ and a resolution of indeterminacy of the rational map $Y_1\dashrightarrow X_2$, we may assume that $Z$ is nonsingular. Then we have achieved the conclusions of Proposition \ref{TheoremA} by Lemma \ref{LemmaQR}.
\end{proof}

Suppose that $\phi:Y\rightarrow X$ is a regular morphism of nonsingular complex analytic spaces and that $e$ is an \'etoile over $Y$. Then
$e$ naturally induces  an \'etoile $f$ over $X$; we have that $\Omega_f\subset \Omega_e$ and $V_f=V_e\cap \Omega_f$ by Proposition 6.2 \cite{WLM}.

If we do not assume that $\phi:Y\rightarrow X$ is regular, but only that $\phi$ is quasi regular with respect to $e$, then the same construction of an induced \'etoile on $X$ is valid (by Lemma \ref{LemmaQR} and Proposition \ref{TheoremA}).

We in fact have that a quasi regular morphism is regular, as we deduce in Corollary \ref{RegQR}.  This fact can also be deduced from the local flattening theorem of Hironaka, Lejeune and Teissier \cite{HLT} and Hironaka \cite{H3}, as is shown in \cite{WLM}.

\section{Valuations on algebraic function fields}

We begin this section by reviewing some material from Sections 8,9,10 of \cite{RTM} and Chapter VI, Section 10 \cite{ZS2}.

Let $K$ be an algebraic function field over a field $\overline k$, and let $\nu$ be a valuation of $K$ which is trivial on $\overline k$. Let $V_{\nu}$ be the valuation ring of $\nu$ and $\Gamma_{\nu}$ be the value group of $\nu$. Let 
$$
0=\mathfrak p_0\subset \cdots \subset \mathfrak p_d\subset V_{\nu}
$$
be the chain of prime ideals in $V_{\nu}$.  Let $U_i=\{\nu(a)\mid a\in \mathfrak p_i\setminus \{0\}\}$. Let $\Gamma_i$ be the complement of $U_i$ and $-U_i$ in $\Gamma_{\nu}$. The chain of isolated subgroups in $\Gamma_{\nu}$ is
$$
0=\Gamma_d\subset \cdots\subset \Gamma_0=\Gamma_{\nu}.
$$
The valuations composite with $\nu$ have the valuation rings $V_{\mathfrak p_i}$ with value groups $\Gamma_{\nu}/\Gamma_i$. Let $\nu_i$ be the induced valuation ($\nu_i(f)$ is the class of $\nu(f)$ in $\Gamma_{\nu}/\Gamma_i$ for $f\in K\setminus \{0\}$). 
The valuation $\nu$ is called zero dimensional if the residue field $V_{\nu}/\mathfrak p_d$ is an algebraic extension of $\overline k$. In this section we prove the following lemma. In the case when $\nu$ has rank 1 (so there is an order preserving embedding of $\Gamma_{\nu}$ in $\RR$),  Lemma \ref{Lemma25} is proven in Section 9 of \cite{LU}. We extend this proof to the case when $\nu$ has  arbitrary rank $d$.
Related constructions of Perron transforms along a valuation of rank greater than $1$ are given by ElHitti in \cite{ElH}.

\begin{Lemma}\label{Lemma25} Suppose that $\overline k$ is a field and $\nu$ is a valuation of the quotient field of the polynomial ring $\overline k[x_1,\ldots,x_{s+1}]$ such that $\nu(x_i)>0$ for  $1\le i\le s$, $\nu(x_{s+1})\ge 0$, $\nu(x_1),\ldots,\nu(x_s)$ are rationally independent and $\nu(x_{s+1})$  is rationally dependent on $\nu(x_1),\ldots,\nu(x_s)$. Then there exists a composition of monoidal transforms (a sequence of blow ups of nonsingular subvarieties) of the form
$$
x_i=\left(\prod_{j=1}^s\overline x_j^{a_{ij}}\right)\overline x_{s+1}^{a_{i,s+1}}\mbox{ for }1\le i\le s\mbox{ and}
$$
$$
x_{s+1}= \left(\prod_{j=1}^s\overline x_j^{a_{s+1,j}}\right)\overline x_{s+1}^{a_{s+1,s+1}}
$$
such that $\nu(\overline x_i)>0$ for $1\le i\le s$ and $\nu(\overline x_{s+1})=0$. 

If $\nu$ is zero dimensional and $\overline k$ is algebraically closed, then there exists $0\ne \alpha\in \overline k$ such that $\nu(\overline x_{s+1}-\alpha)>0$.
\end{Lemma}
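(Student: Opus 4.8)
\emph{Plan of proof.} The first assertion is a Perron transformation along $\nu$; the plan is to run the reduction of Zariski \cite{LU} (Section 9), but to work with the totally ordered value group $\Gamma_\nu$ directly, so that the argument never uses an order embedding of $\Gamma_\nu$ into $\RR$. Write $\gamma_i=\nu(x_i)$; by hypothesis there are integers $b>0$ and $a_1,\dots,a_s$ with $b\gamma_{s+1}=\sum_{i=1}^sa_i\gamma_i$, and we may assume $\gcd(b,a_1,\dots,a_s)=1$. The only moves used are monoidal transforms in the variables $x_1,\dots,x_{s+1}$: for $i\ne j$ one blows up the nonsingular subvariety $x_i=x_j=0$ and passes to the chart containing the center of $\nu$; since $\Gamma_\nu$ is totally ordered the values $\gamma_i,\gamma_j$ are comparable, and this chart is the one in which the variable of larger value (say $x_j$) is divided by the other, so that $x_j$ is replaced by a new variable of value $\gamma_j-\gamma_i\ge 0$ while every other variable is unchanged. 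A composite of such moves has exactly the asserted shape $x_i=\big(\prod_{j=1}^s\overline x_j^{a_{ij}}\big)\overline x_{s+1}^{a_{i,s+1}}$ with an integer matrix $A=(a_{ij})$ of determinant $\pm 1$; in particular $A$ induces an isomorphism between the group generated by $\gamma_1,\dots,\gamma_{s+1}$ and the group generated by $\overline\gamma_1,\dots,\overline\gamma_{s+1}$ (where $\overline\gamma_j=\nu(\overline x_j)$), so once we have arranged $\overline\gamma_{s+1}=0$ and $\overline\gamma_j>0$ for $1\le j\le s$, rational independence of $\overline\gamma_1,\dots,\overline\gamma_s$ is automatic.

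To arrange this, keep track of the relation in the current variables, written $\sum_{i=1}^{s+1}n_i\overline\gamma_i=0$ with $n_{s+1}=-b$ and $n_i=a_i$ for $i\le s$. A move that replaces $\overline x_j$ by $\overline x_j'$ of value $\overline\gamma_j-\overline\gamma_i$ rewrites the relation by adding $n_j$ to $n_i$ and leaving the other coefficients fixed, and iterating such a move $q$ times adds $q\,n_j$ to $n_i$; thus, exactly as in the rank-one case, one may use these moves to drive down a suitable measure of the relation, for instance $|n_{s+1}|$ with $\sum_i|n_i|$ as a tie-breaker, until the relation becomes $\overline\gamma_{s+1}=\sum_{i=1}^sm_i\overline\gamma_i$ with all $m_i\ge 0$. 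For any such $i$ with $m_i>0$ one then has $\overline\gamma_{s+1}\ge\overline\gamma_i$, so blowing up $x_i=x_{s+1}=0$ and dividing $x_{s+1}$ by $x_i$ replaces $\overline\gamma_{s+1}$ by $\overline\gamma_{s+1}-\overline\gamma_i$ and decreases $\sum_i m_i$, without touching $\overline\gamma_1,\dots,\overline\gamma_s$; after finitely many such moves $\overline\gamma_{s+1}=0$, as desired. The point I expect to require the most care is the termination of this algorithm in the non-archimedean setting, i.e.\ checking that the chosen measure genuinely decreases and that one is never blocked because the two values to be compared lie in different archimedean classes of $\Gamma_\nu$; here it is precisely the total ordering of $\Gamma_\nu$ that substitutes for the comparisons Zariski makes in $\RR$.

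Granting the first assertion, the final statement is immediate. Suppose $\nu$ is zero dimensional and $\overline k$ is algebraically closed. Then $\nu$ is trivial on $\overline k$, so $\overline k\subset V_\nu$ and the composite $\overline k\to V_\nu/\mathfrak p_d$ is injective; since $V_\nu/\mathfrak p_d$ is algebraic over $\overline k$ and $\overline k$ is algebraically closed, this composite is an isomorphism, so the residue field of $V_\nu$ equals $\overline k$. After the transformation constructed above $\nu(\overline x_{s+1})=0$, so $\overline x_{s+1}$ is a unit of $V_\nu$ (in particular $\overline x_{s+1}\ne 0$), and its residue is an element $\alpha\in\overline k$ with $\alpha\ne 0$. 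Then $\overline x_{s+1}-\alpha$ lies in the maximal ideal $\mathfrak p_d$ of $V_\nu$, that is, $\nu(\overline x_{s+1}-\alpha)>0$.
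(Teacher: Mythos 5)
Your handling of the last assertion (the zero-dimensional case) is correct and matches the paper's one-line argument. The problem is with the main assertion, and it is precisely the point you defer at the end as ``requiring the most care'': termination of your reduction algorithm \emph{is} the mathematical content of the lemma when $\nu$ has rank greater than one, and you have asserted it rather than proved it. The claim that ``exactly as in the rank-one case'' one can drive down the measure $\bigl(|n_{s+1}|,\sum_i|n_i|\bigr)$ does not hold up. The moves available to you are constrained by the order on $\Gamma_{\nu}$: you may only add $n_j$ to $n_i$ when $\nu(\overline x_j)\ge\nu(\overline x_i)$, and a legal move pairing a large positive coefficient with a small negative one (say $n_j=100$, $n_i=-1$) strictly increases $\sum_i|n_i|$; nothing in your argument guarantees that a measure-decreasing move is ever available. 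More fundamentally, Zariski's convergence proof in \cite{LU} is genuinely archimedean: for real $\gamma_j>\gamma_i>0$ there is an integer $q$ with $\gamma_j-q\gamma_i<\gamma_i$, so repeated subtraction eventually swaps the roles of the two values, and this division-with-remainder is the engine of the rank-one argument. In a higher-rank value group $\gamma_j$ may exceed every integer multiple of $\gamma_i$, so the total order on $\Gamma_{\nu}$ only tells you which chart contains the center of $\nu$; it does not substitute for the comparisons Zariski makes in $\RR$, and it gives you no termination.

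The paper's proof is organized exactly to avoid this: it runs a decreasing induction on the chain of isolated subgroups $0=\Gamma_d\subset\cdots\subset\Gamma_0=\Gamma_{\nu}$, working at each stage in the rank-one quotient $\Gamma_k/\Gamma_{k+1}$, where Zariski's archimedean algorithm does apply. It first clears the images of the dependent values in that quotient, uses the full order on $\Gamma_{\nu}$ only to break ties between elements with equal image in the quotient (this is the step producing $\nu(x_1(2)/x_{r+1}(2))>0$ or its reciprocal), and then descends into $\Gamma_{k+1}$. To complete your proof you would need either to supply this two-level induction, or to give a genuinely order-theoretic termination proof for your coefficient algorithm; as written, the measure you propose is not shown to decrease, and I do not believe it does in general.
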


\begin{proof} The proof is by decreasing   induction on the largest $k\le d$ such that there exist $x_{i_1},\ldots,x_{i_a}$ (with $1\le i_1\le \cdots\le i_a\le s$) such that $\nu(x_{s+1})$ is rationally dependent on $\nu(x_{i_1}),\ldots,\nu(x_{i_a})$ and $\nu(x_{i_1}),\ldots,\nu(x_{i_a})\in \Gamma_k$. If $k=d$ then $\nu(x_{s+1})=0$, and the lemma is trivially satisfied, with $(a_{ij})$ being the identity matrix. 

Suppose that this condition is satisfied for $k$, and the lemma is true for $k+1$. Without loss of generality, since with this condition we can ignore the variables such that $\nu(x_i)\not\in \Gamma_k$, we may assume that $\nu(x_1),\ldots,\nu(x_s)\in \Gamma_k$. After reindexing the $x_i$, there exists $r$ such that $1\le r\le s$ and $\nu_{k+1}(x_1),\ldots,\nu_{k+1}(x_r)$ is a basis of the span as a rational vector space of $\nu_{k+1}(x_1),\ldots,\nu_{k+1}(x_s)$ in $(\Gamma_k/\Gamma_{k+1})\otimes \QQ$. 

Suppose that there exists $t$ with $r<t\le s$ and $\nu_{k+1}(x_t)\ne 0$. After possibly reindexing $x_{r+1},\ldots,x_s$ we may assume that
$\nu_{k+1}(x_{r+1})\ne 0$. We necessarily have that $\nu_{k+1}(x_{r+1})>0$ since $\nu(x_{r+1})>0$. Since $\Gamma_k/\Gamma_{k+1}$ is a rank 1 ordered group, we can apply the algorithm of Section 2 on pages 861 - 863 of \cite{LU} and Section 9 on page 871 of \cite{LU} to construct a sequence of monoidal transforms along $\nu$,
$$
x_i=\left(\prod_{j=1}^rx_j(1)^{a_{ij}(1)}\right)x_{r+1}(1)^{a_{i,r+1}(1)}\mbox{ for }1\le i\le r\mbox{ and}
$$
$$
x_{r+1}=\left(\prod_{j=1}^rx_j(1)^{a_{r+1,j}(1)}\right)x_{r+1}(1)^{a_{r+1,r+1}(1)}
$$
and $x_i=x_i(1)$ for $r+1\le i\le s$ such that 
$\nu_{k+1}(x_i(1))>0$ for $1\le i\le r+1$ and 
$$
\nu_{k+1}(x_{r+1}(1))=\lambda_1\nu_{k+1}(x_1(1))+\cdots+\lambda_r\nu_{k+1}(x_r(1))
$$
for some $\lambda_1,\ldots,\lambda_r\in \NN$ (by equation (11') on page 863 \cite{LU}). We necessarily have that some $\lambda_i>0$, so we may assume that $\lambda_1>0$. Then perform the sequence of monoidal transforms along $\nu$
$$
x_{r+1}(1)=x_1(2)^{\lambda_1-1}x_2(2)^{\lambda_2}\cdots x_r(2)^{\lambda_r}x_{r+1}(2)
$$
and $x_i(1)=x_i(2)$ for $i\ne r+1$. Then $\nu_{k+1}(x_i(2))>0$ for all $i$ with $1\le i\le r+1$ and $\nu_{k+1}(x_{r+1}(2))=\nu_{k+1}(x_1(2))$.
We necessarily have that 
$$
\nu\left(\frac{x_1(2)}{x_{r+1}(2)}\right)>0\mbox{ or }\nu\left(\frac{x_{r+1}(2)}{x_1(2)}\right)>0
$$
as $\nu(x_1(2)),\ldots,\nu(x_s(2))$ are rationally independent. In the first case, perform the monoidal transform along $\nu$
$$
x_1(2)=x_1(3)x_{r+1}(3),\,\, x_{r+1}(2)=x_1(3)\mbox{ and }x_i(2)=x_i(3)\mbox{ for }i\ne 1\mbox{ or }r+1.
$$
Otherwise, perform the monoidal transform along $\nu$
$$
x_1(2)=x_1(3),\,\, x_{r+1}(2)=x_1(3)x_{r+1}(3)\mbox{ and }x_i(2)=x_i(3)\mbox{ for }i\ne 1\mbox{ or }r+1.
$$
We then have that $\nu(x_i(3))>0$ for $1\le i\le s+1$, $\nu_{k+1}(x_1(3)),\ldots,\nu_{k+1}(x_r(3))$ is a rational basis of the span of 
$\nu_{k+1}(x_1(3)),\ldots,\nu_{k+1}(x_s(3))$ as a rational vector space in $(\Gamma_k/\Gamma_{k+1})\otimes\QQ$,
$\nu(x_1(3)),\ldots,\nu(x_s(3))$ are rationally independent, and $\nu(x_{s+1}(3))$ is rationally dependent on $\nu(x_1(3)),\ldots,\nu(x_s(3))$. We further have that $\nu_{k+1}(x_{r+1}(3))=0$. We repeat this algorithm, reducing to the case that $\nu_{k+1}(x_i)=0$ if $r+1\le i\le s$. 

Suppose that $\nu_{k+1}(x_{s+1})>0$ (and $\nu_{k+1}(x_i)=0$ for $r+1\le i\le s$). Then we apply the algorithm that we used above to construct a monoidal transform along $\nu$
\begin{equation}\label{eq101}
\begin{array}{lll}
x_i&=&\left(\prod_{j=1}^rx_j(1)^{a_{ij}(1)}\right)x_{s+1}(1)^{a_{i,r+1}(1)}\mbox{ for }1\le i\le r\mbox{ and }\\
x_{s+1}&=&\left(\prod_{j=1}^rx_j(1)^{a_{r+1,j}(1)}\right)x_{s+1}(1)^{a_{r+1,r+1}(1)}
\end{array}
\end{equation}
to achieve $\nu_{k+1}(x_i(1))>0$ for $1\le i\le r$, $\nu_{k+1}(x_{s+1}(1))=0$ and $\nu(x_{s+1}(1))\ge 0$. Since $\nu_{k+1}(x_1),\ldots,\nu_{k+1}(x_r)$ are rationally independent, (\ref{eq101}) implies that $\nu_{k+1}(x_1(1)),\ldots,\nu_{k+1}(x_r(1))$ are rationally independent. Since $\nu_{k+1}(x_i)=0$ for $r<i\le s$ and $\nu(x_{r+1}),\ldots,\nu(x_s)\in \Gamma_{k+1}$ are rationally independent we have that
 $$
\nu(x_1(1)),\ldots,\nu(x_r(1)),\nu(x_{r+1}),\ldots,\nu(x_s)
$$
 are rationally independent. Since
$$
\nu(x_1(1)),\ldots,\nu(x_r(1)),\nu(x_{r+1}),\ldots,\nu(x_s),\nu(x_{s+1}(1))
$$
and $\nu(x_1),\ldots,\nu(x_s)$ span the same rational subspace $V$ of $\Gamma_{\nu}\otimes \QQ$, which has dimension $s$, we have that 
$$
\nu(x_1(1)),\ldots,\nu(x_r(1)),\nu(x_{r+1}),\ldots,\nu(x_s)
$$
is a rational basis of $V$, so $\nu(x_{s+1}(1))$ is a rational linear combination of
$$
\nu(x_1(1)),\ldots,\nu(x_r(1)),\nu(x_{r+1}),\ldots,\nu(x_s).
$$
Since $\nu_{k+1}(x_{s+1}(1))=0$ and $\nu(x_{k+1}(1)),\ldots,\nu_{k+1}(x_r(1))$ are rationally independent, we have that 
$\nu(x_{s+1}(1))$ is a rational linear combination of $\nu(x_{r+1}),\ldots,\nu(x_s)\in \Gamma_{k+1}$. We thus attain the conclusions of the lemma by decreasing induction on $k$.

Finally, if $\nu$ is zero dimensional and $\overline k$ is algebraically closed, then the class $\alpha$ of $\overline x_{s+1}$ in the residue field $\overline k$ of $V_{\nu}$ is nonzero. Then necessarily $\nu(\overline x_{s+1}-\alpha)>0$.

\end{proof}

\section{Generalized Monoidal Transforms}\label{SecGMT}
Suppose that $X$ is a nonsingular complex analytic space and $e$ is an \'etoile over $X$. Let $\nu_e$ be a valuation of $\Omega_e$ whose valuation ring is $V_e$ (Section \ref{Pre}).
Suppose that $\tilde X\rightarrow X\in e$ and
$x_1,\ldots,x_n$ is a regular system of parameters in $\mathcal O_{\tilde X,e_{\tilde X}}^{\rm an}$. 
Suppose that $\overline X\rightarrow \tilde X$ is such that $\overline X\rightarrow \tilde X\rightarrow X\in e$. The germ of the local homomorphism $\mathcal O_{\tilde X,e_{\tilde X}}^{\rm an}\rightarrow \mathcal O_{\overline X,e_{\overline X}}^{\rm an}$ is a 
Generalized Monoidal Transform (GMT) along the \'etoile $e$ if $\mathcal O_{\overline X,e_{\overline X}}^{\rm an}$ has regular parameters
 $\overline x_1,\ldots,\overline x_n$ such that there exists an $n\times n$ matrix $A=(a_{ij})$ with $a_{ij}\in \NN$ and $\mbox{Det}(A)=\pm 1$ such that
\begin{equation}\label{eq2}
x_i=\prod_{j=1}^n(\overline x_j+\alpha_j)^{a_{ij}}
\end{equation}
for $1\le j\le n$ and  $\alpha_j\in \CC$ (at least one of which must be zero since $\mathcal O_{\tilde X,e_{\tilde X}}^{\rm an}\rightarrow \mathcal O_{\overline X,e_{\overline X}}^{\rm an}$ is a local homomorphism). We will say that the GMT is in the variables $x_{i_1},\ldots,x_{i_m}$ if the GMT has the special form
$$
x_i=\prod_{j\in S}(\overline x_j+\alpha_j)^{a_{ij}}
$$
for $i\in S$ and 
$$
x_i=\overline x_i
$$
for $i\not\in S$
where $S=\{i_1,\ldots,i_m\}$. We will say that the GMT is monomial if all $\alpha_j$ are zero.
We observe that a GMT is a regular morphism.

It will be assumed through out this paper that all GMT are along a fixed \'etoile $e$.

\begin{Definition}\label{Def53}
The variables $x_1,\ldots,x_s$ are said to be dependent if there exists a GMT (\ref{eq2}) in $x_1,\ldots,x_s$
which is not  monomial.
\end{Definition}

The variables $x_1,\ldots,x_s$ are said to be independent if they are not dependent.

\begin{Lemma}\label{Lemma1} Suppose that $x_1,\ldots,x_s$ are independent and (\ref{eq2}) is a GMT in $x_1,\ldots,x_s$. Then $\overline x_1,\ldots, \overline x_s$ are independent.
\end{Lemma}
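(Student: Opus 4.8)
The plan is to show the contrapositive in a convenient form: if $\overline x_1,\ldots,\overline x_s$ are dependent, then so are $x_1,\ldots,x_s$. The key observation is that GMTs in a fixed set of variables compose, and that composition respects the condition of being monomial. More precisely, suppose we are given the GMT (\ref{eq2}) in $x_1,\ldots,x_s$, say $x_i=\prod_{j=1}^s \overline x_j^{a_{ij}}$ for $1\le i\le s$ (it is monomial, since $x_1,\ldots,x_s$ are assumed independent) with $A=(a_{ij})\in \mathrm{GL}_s(\ZZ)\cap M_s(\NN)$, together with a second GMT in $\overline x_1,\ldots,\overline x_s$, say $\overline x_j=\prod_{k=1}^s(\overline{\overline x}_k+\beta_k)^{b_{jk}}$ with $B=(b_{jk})\in M_s(\NN)$, $\mathrm{Det}(B)=\pm1$. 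First I would substitute the second into the first to obtain a relation
$$
x_i=\prod_{k=1}^s(\overline{\overline x}_k+\beta_k)^{\sum_j a_{ij}b_{jk}},
$$
with exponent matrix $AB$, which again lies in $M_s(\NN)$ (product of nonnegative-integer matrices) and has determinant $\pm1$. The point is that $\overline x_1,\ldots,\overline x_s$ are themselves regular parameters at the center of $e$ on the intermediate space, so the composite $\mathcal O_{\tilde X,e_{\tilde X}}^{\rm an}\to\mathcal O_{\overline X,e_{\overline X}}^{\rm an}\to\mathcal O_{\overline{\overline X},e_{\overline{\overline X}}}^{\rm an}$ is literally a GMT in $x_1,\ldots,x_s$ in the sense of the definition preceding Definition \ref{Def53}, with exponent matrix $AB$ and translation vector $(\beta_1,\ldots,\beta_s)$.

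Next I would observe that if the second GMT is not monomial, i.e. some $\beta_k\ne 0$, then the composite GMT in $x_1,\ldots,x_s$ also has a nonzero translation term $\beta_k$, hence is not monomial. This contradicts the independence of $x_1,\ldots,x_s$ via Definition \ref{Def53}. The only subtlety is to make sure the invertibility of $A$ over $\ZZ$ is used correctly: I need to know that $\beta_k\ne 0$ for some $k$ actually forces the composite to be non-monomial, and not merely that the formula has a $\beta_k$ written in it. Since $A$ is invertible, the map $(\overline x_1,\ldots,\overline x_s)\mapsto(x_1,\ldots,x_s)$ is itself a GMT in the reverse direction with a monomial formula; so a genuinely non-monomial change of the $\overline x$'s cannot become monomial after an invertible monomial substitution. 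Concretely, if $x_i=\prod_k(\overline{\overline x}_k+\beta_k)^{c_{ik}}$ with $C=AB$ and also $x_i=\prod_k \overline{\overline x}_k^{\,c'_{ik}}$ for some monomial GMT $C'$, then comparing these two factorizations of the same elements (using that $\CC\{\{\overline{\overline x}_1,\ldots,\overline{\overline x}_s\}\}$ is a UFD and the $\overline{\overline x}_k+\beta_k$, $\overline{\overline x}_k$ are non-associate primes when $\beta_k\ne 0$) yields a contradiction; equivalently, apply $\nu_e$ and the residue map. I expect this bookkeeping with the factorizations to be the only mildly delicate point, and it is routine.

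Therefore the plan is: (i) take an arbitrary GMT in $\overline x_1,\ldots,\overline x_s$ and compose it with the given monomial GMT (\ref{eq2}) to get a GMT in $x_1,\ldots,x_s$; (ii) note the composite's translation vector equals that of the GMT in the $\overline x$'s; (iii) invoke independence of $x_1,\ldots,x_s$ to conclude the composite, hence the original GMT in the $\overline x$'s, is monomial; (iv) conclude $\overline x_1,\ldots,\overline x_s$ are independent. The main obstacle, if any, is purely formal: verifying that composing two GMTs in a fixed variable set again yields a GMT in that variable set (closure of $M_s(\NN)\cap\mathrm{SL}_s^{\pm}(\ZZ)$ under multiplication, and correct tracking of the translation terms through the substitution), which I would state as a short sublemma and dispatch directly.
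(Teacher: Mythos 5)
Your proposal is correct and is essentially the paper's argument: the paper's entire proof is the observation that a composition of a GMT in $x_1,\ldots,x_s$ with a GMT in $\overline x_1,\ldots,\overline x_s$ is again a GMT in $x_1,\ldots,x_s$, and you have simply written out that composition (exponent matrix $AB$, same translation vector) and applied Definition \ref{Def53}. The extra care about non-uniqueness of factorizations is harmless but unnecessary, since independence already says \emph{every} GMT representation in $x_1,\ldots,x_s$ must have all translation constants zero.
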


\begin{proof} This follows since a composition of a GMT in $x_1,\ldots,x_s$ and in $\overline x_1,\ldots,\overline x_s$ is a GMT in $x_1,\ldots,x_s$.
\end{proof}

\begin{Definition} A GMT is a simple GMT (SGMT) if it can be factored by a sequence of blow ups of nonsingular subvarieties.
\end{Definition}

\begin{Lemma}\label{Lemma50}  The variables $x_1,\ldots,x_s$ are independent if and only if every SGMT in $x_1,\ldots,x_s$ is monomial.
\end{Lemma}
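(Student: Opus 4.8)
The plan is to treat the two directions separately; all the content is in the converse. The forward implication is immediate from the definitions: an SGMT in $x_1,\ldots,x_s$ is in particular a GMT in $x_1,\ldots,x_s$, and by Definition \ref{Def53} the independence of $x_1,\ldots,x_s$ means precisely that every GMT in $x_1,\ldots,x_s$ is monomial, so a fortiori every SGMT in $x_1,\ldots,x_s$ is monomial.

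For the converse I would prove the contrapositive: assuming $x_1,\ldots,x_s$ are dependent, I will exhibit a non-monomial SGMT in $x_1,\ldots,x_s$. Fix a non-monomial GMT in $x_1,\ldots,x_s$, say $x_i=\prod_{j=1}^s(\overline x_j+\alpha_j)^{a_{ij}}$ for $1\le i\le s$, with $\mbox{Det}(a_{ij})=\pm1$ and, after reindexing, $\alpha_1\neq 0$. The key preliminary observation is that the existence of such a GMT forces $\nu_e(x_1),\ldots,\nu_e(x_s)$ to be rationally dependent: each $\overline x_j$ with $\alpha_j=0$ lies in the maximal ideal of $\mathcal O_{\overline X,e_{\overline X}}^{\rm an}$ and so has $\nu_e(\overline x_j)>0$, whereas each $\overline x_j+\alpha_j$ with $\alpha_j\neq 0$ is a unit and so has $\nu_e$-value $0$; hence $\nu_e(x_i)=\sum_{j:\alpha_j=0}a_{ij}\nu_e(\overline x_j)$ for every $i$, and the $s$ values $\nu_e(x_1),\ldots,\nu_e(x_s)$ all lie in the $\QQ$-span of the at most $s-1$ values $\{\nu_e(\overline x_j):\alpha_j=0\}$.

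Next, let $r$, with $1\le r\le s-1$, be the $\QQ$-dimension of the span of $\nu_e(x_1),\ldots,\nu_e(x_s)$, and reorder the $x_i$ so that $\nu_e(x_1),\ldots,\nu_e(x_r)$ is a $\QQ$-basis of this span; then $\nu_e(x_s)$ is rationally dependent on $\nu_e(x_1),\ldots,\nu_e(x_r)$. I would now apply the Perron transform construction of Lemma \ref{Lemma25} --- which, crucially, is valid for the valuation $\nu_e$ of \emph{arbitrary} rank --- to the variables $x_1,\ldots,x_r,x_s$ (the relevant polynomial subring being $\CC[x_1,\ldots,x_r,x_s]\subset\Omega_e$, over the algebraically closed field $\CC$, with $\nu_e$ zero dimensional since $V_e$ has residue field $\CC$). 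This produces a sequence of blow ups of nonsingular subvarieties along $e$, call it $\overline X'\to\tilde X$, and a natural-number matrix $B$ with $\mbox{Det}(B)=\pm1$, such that on $\overline X'$ the variables $x_1,\ldots,x_r,x_s$ are replaced by coordinates $w_1,\ldots,w_r,w$ (with $x_{r+1},\ldots,x_{s-1},x_{s+1},\ldots,x_n$ untouched), where $\nu_e(w_i)>0$, $\nu_e(w)=0$, and each of $x_1,\ldots,x_r,x_s$ is the corresponding $B$-monomial in $w_1,\ldots,w_r,w$. Since $w$ is a unit and $V_e$ has residue field $\CC$, the center $e_{\overline X'}$ sits at the point where $w$ has a nonzero residue $\beta\in\CC$ while $w_1,\ldots,w_r$ and $x_{r+1},\ldots,x_{s-1},x_{s+1},\ldots,x_n$ all vanish; hence $w_1,\ldots,w_r,\,w-\beta,\,x_{r+1},\ldots,x_{s-1},x_{s+1},\ldots,x_n$ is a regular system of parameters of $\mathcal O_{\overline X',e_{\overline X'}}^{\rm an}$. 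Substituting $w=(w-\beta)+\beta$ into the monomial expressions for $x_1,\ldots,x_r,x_s$ then exhibits the germ $\mathcal O_{\tilde X,e_{\tilde X}}^{\rm an}\to\mathcal O_{\overline X',e_{\overline X'}}^{\rm an}$ in the shape (\ref{eq2}), with matrix $B$ (padded by an identity block for $x_{r+1},\ldots,x_{s-1}$) and with a single nonzero translation, namely $\beta$ at the $w$-place. Because $\mbox{Det}(B)=\pm1$ the column of $B$ indexed by $w$ is nonzero, so this GMT is not monomial; and since it is factored by the sequence of blow ups $\overline X'\to\tilde X$, it is a non-monomial SGMT in $x_1,\ldots,x_s$, completing the contrapositive.

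The step I expect to be the main obstacle is exactly this passage from an abstract non-monomial GMT to an honest sequence of blow ups: it is carried by Lemma \ref{Lemma25} (Perron transforms along a valuation of possibly high rank) together with the elementary but essential observation that after such a monomial transformation the center of $e$ may land at a non-origin point of an exceptional divisor, and that this non-origin residue $\beta$ is precisely what produces the nonzero translation in (\ref{eq2}). The incidental verifications along the way --- that the variables left fixed by the Perron transform retain positive $\nu_e$-value, hence residue $0$; that the displayed list is genuinely a regular system of parameters at $e_{\overline X'}$; and that $B$ has determinant $\pm1$ --- are routine and are already built into the statement of Lemma \ref{Lemma25}.
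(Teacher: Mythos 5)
Your proof is correct, but it takes a genuinely different route from the paper's. The paper attacks the nontrivial direction head-on rather than by contraposition: given an arbitrary GMT (\ref{eq2}) in $x_1,\ldots,x_s$, it realizes $\overline x_1,\ldots,\overline x_s$ as coordinates at the center of $\nu_e$ on a toric model $Z\rightarrow\AA^s$, uses principalization of monomial ideals to dominate $Z$ by a toric model $Z_1\rightarrow \AA^s$ that is a product of blow ups of nonsingular subvarieties, lifts $Z_1$ to a neighborhood of $e_{\tilde X}$ to get an SGMT (monomial by hypothesis), and then uses the factorization $\mathcal O_{\tilde X,e_{\tilde X}}^{\rm an}\rightarrow\mathcal O_{\overline X,e_{\overline X}}^{\rm an}\rightarrow\mathcal O_{X_1,e_{X_1}}^{\rm an}$ to force the original GMT to be monomial. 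You instead argue contrapositively through the valuation: a non-monomial GMT forces $\nu_e(x_1),\ldots,\nu_e(x_s)$ to be rationally dependent, and the Perron algorithm of Lemma \ref{Lemma25} (including its zero-dimensionality clause, which supplies the nonzero residue $\beta$) then manufactures an explicit non-monomial SGMT. Both arguments are sound. The paper's needs only principalization of monomial ideals and no valuation theory beyond the existence of the center; yours leans on the full strength of Lemma \ref{Lemma25}, but yields as a byproduct the equivalence of independence of $x_1,\ldots,x_s$ with rational independence of $\nu_e(x_1),\ldots,\nu_e(x_s)$, which the paper invokes separately (again via Lemma \ref{Lemma25}) in Lemmas \ref{Lemma21}, \ref{Lemma22} and elsewhere. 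The one step you compress --- transferring the algebraic monoidal transforms of Lemma \ref{Lemma25} to local blow ups of $\tilde X$ by blowing up the corresponding monomial ideal in a neighborhood of $e_{\tilde X}$ --- is exactly the routine move made in the proofs of Lemmas \ref{Lemma21} and \ref{Lemma22}, so I do not count it as a gap.
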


\begin{proof} Suppose that every SGMT in $x_1,\ldots,x_s$ is monomial and (\ref{eq2}) is a GMT in $x_1,\ldots,x_s$. We must show that all $\alpha_i=0$. Let $\nu$ be the valuation of the quotient field $K$ of $\CC[x_1,\ldots,x_s]$ which gives the restriction of $\nu_e$ to $K$. Let $\pi:Z\rightarrow \AA^s$ be a projective morphism of nonsingular toric varieties such that $\overline x_1,\ldots,\overline x_s$ are regular parameters in $\mathcal O_{Z,p}$, where $p$ is the center of $\nu$ on $Z$. Let $J$ be a (monomial) ideal in $\CC[x_1,\ldots,x_s]$ whose blow up in $\AA^s$ is $Z$.  By principalization of ideals (a particularly simple algorithm which is adequate for our purposes is given in \cite{Go}), there exists a projective morphism of nonsingular toric varieties $\Lambda:Z_1\rightarrow \AA^s$ which is a product of blow ups of nonsingular varieties such that $J\mathcal O_{Z_1}$ is locally principal, and so $\Lambda$ factors through $\pi$. Let $I$ be a monomial ideal such that $Z_1$ is the blow up of $I$.

Let $X_1$ be obtained by blowing up $I$ in a neighborhood of $e_{\tilde X}$ in $\tilde X$.  Then $\mathcal O_{\tilde X,e_{\tilde X}}^{\rm an}\rightarrow \mathcal O_{X_1,e_{X_1}}^{\rm an}$ is a SGMT (since $Z_1\rightarrow \AA^s$ is a morphism of toric varieties which is a product of blow ups of nonsingular varieties). Thus  $\mathcal O_{Z_1,p_1}$ has regular parameters $\tilde x_1,\ldots,\tilde x_s$ (where $p_1$ is the center of $\nu$ on $Z_1$) and  $\tilde x_1,\ldots,\tilde x_s,x_{s+1},\ldots,x_n$ are regular parameters in $\mathcal O_{X_1,e_{X_1}}^{\rm an}$
such that $x_i=\prod_{j=1}^s\tilde x_j^{b_{ij}}$ are monomials for $1\le i\le s$. Since $\Lambda$ factors through $\pi$, and so there is a factorization
$$
\mathcal O_{\tilde X,e_{\tilde X}}\rightarrow \mathcal O_{\overline X,\mathcal O_{\overline X}}\rightarrow \mathcal O_{X_1,e_{X_1}}
$$
we must also have that the given GMT (\ref{eq2}) is monomial.
\end{proof}

\begin{Lemma}\label{Lemma21}
Suppose that $x_1,\ldots,x_s$ are independent and 
$$
M_1=x_1^{d_1(1)}\cdots x_s^{d_s(1)},\,\, M_2=x_1^{d_1(2)}\cdots x_s^{d_s(2)}
$$
are monomials with $d_i(j)\in \NN$. Then there exists a (monomial) SGMT in $x_1,\ldots,x_s$ such that the ideal generated by $M_1$ and $M_2$ is principal in $\mathcal O_{X_1,e_{X_1}}^{\rm an}$.
\end{Lemma}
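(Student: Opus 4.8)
The plan is to reduce the statement to a purely combinatorial principalization of the monomial ideal $J=(M_1,M_2)\subset\CC[x_1,\ldots,x_s]$ by a product of blow ups of nonsingular coordinate subvarieties, and then to transport that sequence to $\tilde X$ along $e$, proceeding exactly as in the proof of Lemma \ref{Lemma50}. Let $\nu$ be the restriction of $\nu_e$ to the quotient field of $\CC[x_1,\ldots,x_s]$; since $x_1,\ldots,x_s$ are regular parameters of $\mathcal O_{\tilde X,e_{\tilde X}}^{\rm an}$ and $V_e$ dominates this local ring, we have $\nu(x_i)>0$ for $1\le i\le s$, so the center of $\nu$ on $\AA^s$ is the origin.

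First I would principalize $J$ on $\AA^s$: there is a projective morphism of nonsingular toric varieties $\Lambda\colon Z_1\rightarrow\AA^s$ which is a composite of blow ups of nonsingular torus invariant subvarieties and for which $J\mathcal O_{Z_1}$ is an invertible (hence everywhere principal) ideal sheaf. Combinatorially this amounts to subdividing the normal fan of the Newton polyhedron of $J$ into a smooth fan on each cone of which the support function of $J$ is linear; this is the content of \cite{Go}, and it is the same type of input already used in the proofs of Lemmas \ref{Lemma25} and \ref{Lemma50}. Let $I$ be a monomial ideal with $Z_1=\mathrm{Bl}_I\AA^s$.

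Second, since $x_1,\ldots,x_s$ are part of a regular system of parameters $x_1,\ldots,x_n$ of $\mathcal O_{\tilde X,e_{\tilde X}}^{\rm an}$, each center blown up in forming $\Lambda$ is, in suitable local analytic coordinates, a coordinate subspace, hence a nonsingular subvariety of a neighborhood of the current center of $e$. Performing the corresponding sequence of local blow ups in a neighborhood of $e_{\tilde X}$—and simply omitting the blow ups whose center fails to contain the current center of $e$, since these change nothing near $e_{\tilde X}$—produces a morphism $X_1\rightarrow\tilde X$ which is a sequence of blow ups of nonsingular subvarieties with $X_1\rightarrow X\in e$ (by the basic property of \'etoiles recalled in Section \ref{Pre}); that is, an SGMT in $x_1,\ldots,x_s$ along $e$. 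Equivalently, $X_1$ is obtained by blowing up $I$ in a neighborhood of $e_{\tilde X}$, so there is a local homomorphism $\mathcal O_{Z_1,p_1}\rightarrow\mathcal O_{X_1,e_{X_1}}^{\rm an}$ with $p_1$ the center of $\nu$ on $Z_1$, and the ideal generated by $M_1$ and $M_2$ in $\mathcal O_{X_1,e_{X_1}}^{\rm an}$ is the extension of the principal ideal $J\mathcal O_{Z_1,p_1}$, hence principal. Finally, since $x_1,\ldots,x_s$ are independent, Lemma \ref{Lemma50} guarantees that the SGMT just constructed is monomial, which yields the parenthetical assertion (equivalently, it makes $M_1$ and $M_2$ monomials in new regular parameters $\overline x_1,\ldots,\overline x_s$ with one dividing the other).

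The only step that is not formal is the monomial principalization of the first paragraph above, and that is where I expect whatever difficulty there is to reside; it is, however, entirely combinatorial. If one prefers to avoid the citation one can instead run the Perron transform algorithm of \cite{LU}, Section 2, directly on the exponent vectors of $M_1$ and $M_2$ (this is essentially what the proof of Lemma \ref{Lemma25} does in a more general setting); since there are only two monomials, each step clears the common monomial factor of the current pair and blows up a nonsingular codimension two coordinate center, and the algorithm is arranged so that a suitable numerical invariant of the pair strictly decreases until one of the two monomials divides the other. Either way the bookkeeping—that the centers are nonsingular, that the resulting morphism lies in $e$ because its centers contain $e_{\tilde X}$, and that $(M_1,M_2)$ becomes principal—is routine.
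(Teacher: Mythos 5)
Your proposal is correct and follows essentially the same route as the paper: restrict $\nu_e$ to the quotient field of $\CC[x_1,\ldots,x_s]$, principalize the monomial ideal $(M_1,M_2)$ by a toric morphism that factors into blow ups of nonsingular subvarieties, realize that morphism as the blow up of a monomial ideal in a neighborhood of $e_{\tilde X}$, and read off the conclusion at the center of $e$. The only cosmetic difference is that you invoke Lemma \ref{Lemma50} (really just the definition of independence) to get monomiality of the resulting SGMT, whereas the paper deduces it directly from the rational independence of $\nu(x_1),\ldots,\nu(x_s)$ and the toric structure of the morphism; these are interchangeable.
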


\begin{proof} Let $\nu$ be the valuation of the quotient field $K$ of $\CC[x_1,\ldots,x_s]$ which gives the restriction of $\nu_e$ to $K$.
Since $x_1,\ldots,x_s$ are independent, $\nu(x_1),\ldots,\nu(x_s)$ are rationally independent by Lemma \ref{Lemma25}. Let $I$ be the ideal generated by $M_1$ and $M_2$ in $\CC[x_1,\ldots,x_s]$ There exists a birational morphism of nonsingular toric varieties which is a product of blow ups of nonsingular subvarieties $\pi:Z\rightarrow \AA^s$ such that $I\mathcal O_Z$ is an invertible ideal sheaf. Let $p_1$ be the center of $\nu$ on $Z$. Since $\pi$ is toric and $\nu(x_1),\ldots,\nu(x_s)$ are rationally independent, there exist regular parameters $\overline x_1,\ldots,\overline x_s$ in $\mathcal O_{Z,p_1}$ such that 
\begin{equation}\label{eq50}
x_i=\prod_{j=1}^s\overline x_j^{a_{ij}}
\end{equation}
for $1\le i\le s$ are monomials in $\overline x_1,\ldots,\overline x_s$. Let $J$ be the monomial ideal in $\CC[x_1,\ldots,x_s]$ whose blow up is $Z$.  Let $X_1$ be the blow up of $J$ in a neighborhood of $e_{\tilde X}$ in $\tilde X$. Then $\overline x_1,\ldots,\overline x_s,x_{s+1},\ldots,x_m$ are regular parameters in $\mathcal O_{X_1,e_{X_1}}^{\rm an}$ and $I\mathcal O_{X_1,e_{X_1}}^{\rm an}$ is a principal ideal.
\end{proof}

\begin{Lemma}\label{Lemma10} Suppose that $x_1,\ldots,x_s\in \mathcal O_{\tilde X,e_{\tilde X}}^{\rm an}$ are independent, $\gamma\in \mathcal O_{\tilde X,e_{\tilde X}}^{\rm an}$ is a unit and $d_1,\ldots,d_s\in \QQ$. Then $\tilde x_1=\gamma^{d_1}x_1,\ldots,\tilde x_s=\gamma^{d_s}x_s$ are independent.
\end{Lemma}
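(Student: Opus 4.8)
The plan is to reduce, using Lemma \ref{Lemma50}, to checking that every SGMT in $\tilde x_1,\ldots,\tilde x_s$ is monomial, and then to read this off from the values of the valuation $\nu_e$. The point is that replacing $x_i$ by a unit times $x_i$ does not change $\nu_e(x_i)$, while independence is detected by rational independence of these values.

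First I would record the elementary setup. For $d_i=p_i/q_i\in\QQ$ with $q_i>0$ we have $(\gamma^{d_i})^{q_i}=\gamma^{p_i}$, and $\gamma$ is a unit, so $\gamma^{d_i}$ is a unit in $\mathcal O_{\tilde X,e_{\tilde X}}^{\rm an}$. Hence $\tilde x_1,\ldots,\tilde x_s,x_{s+1},\ldots,x_n$ is again a regular system of parameters in $\mathcal O_{\tilde X,e_{\tilde X}}^{\rm an}$, so the statement is meaningful. Since $\mathcal O_{\tilde X,e_{\tilde X}}^{\rm an}\subseteq V_e$ we get $\nu_e(\gamma)=0$, hence $\nu_e(\gamma^{d_i})=0$, and therefore $\nu_e(\tilde x_i)=\nu_e(x_i)$ for $1\le i\le s$. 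Because $x_1,\ldots,x_s$ are independent, $\nu_e(x_1),\ldots,\nu_e(x_s)$ are rationally independent (by Lemma \ref{Lemma25}, exactly as in the proof of Lemma \ref{Lemma21}); consequently $\nu_e(\tilde x_1),\ldots,\nu_e(\tilde x_s)$ are rationally independent as well.

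Now for the main step, let $\mathcal O_{\tilde X,e_{\tilde X}}^{\rm an}\to\mathcal O_{\overline X,e_{\overline X}}^{\rm an}$ be an SGMT in $\tilde x_1,\ldots,\tilde x_s$, with regular parameters $\overline x_1,\ldots,\overline x_n$ of $\mathcal O_{\overline X,e_{\overline X}}^{\rm an}$ satisfying $\tilde x_i=\prod_{j=1}^s(\overline x_j+\alpha_j)^{a_{ij}}$ for $1\le i\le s$ and $x_i=\overline x_i$ for $s<i\le n$, where $(a_{ij})_{1\le i,j\le s}$ has determinant $\pm1$ and $\alpha_j\in\CC$. As the SGMT is a sequence of local blow ups of nonsingular subvarieties, $\overline X\to X$ is a nonsingular sequence of local blow ups in $e$, so $\mathcal O_{\overline X,e_{\overline X}}^{\rm an}\subseteq V_e$; thus for each $j$ with $\alpha_j\ne 0$ the element $\overline x_j+\alpha_j$ is a unit and $\nu_e(\overline x_j+\alpha_j)=0$. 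Setting $S_0=\{\,j:\alpha_j=0\,\}$, this yields $\nu_e(\tilde x_i)=\sum_{j\in S_0}a_{ij}\,\nu_e(\overline x_j)$ for every $i$, so $\nu_e(\tilde x_1),\ldots,\nu_e(\tilde x_s)$ all lie in the $\QQ$-subspace spanned by the $\#S_0\le s$ elements $\nu_e(\overline x_j)$, $j\in S_0$. Since that subspace contains $s$ rationally independent elements, $\#S_0=s$, i.e. every $\alpha_j=0$ and the SGMT is monomial. By Lemma \ref{Lemma50}, $\tilde x_1,\ldots,\tilde x_s$ are independent.

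The only points needing care are (i) that $\nu_e$ is legitimately defined on $\mathcal O_{\overline X,e_{\overline X}}^{\rm an}$, which is precisely the reason for passing through SGMTs (and Lemma \ref{Lemma50}) rather than arbitrary GMTs, and (ii) the input fact that independence of $x_1,\ldots,x_s$ already forces rational independence of $\nu_e(x_1),\ldots,\nu_e(x_s)$; this is where Lemma \ref{Lemma25} does the real work, but it is available to us. Everything else is the linear-algebra dimension count above, and I do not expect any genuine obstacle.
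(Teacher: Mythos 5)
Your proof is correct, but it follows a genuinely different route from the paper's. The paper argues directly with an arbitrary GMT: assuming a non-monomial GMT $\tilde x_i=\prod_{j}(\hat x_j+\hat\alpha_j)^{a_{ij}}$ exists, it sets $c=A^{-1}d$, observes $\prod_j(\gamma^{c_j})^{a_{ij}}=\gamma^{d_i}$, and rescales each $\hat x_j$ by the unit $\gamma^{c_j}$ (shifting the constants to $\gamma(0)^{c_j}\hat\alpha_j$) to manufacture a non-monomial GMT in $x_1,\ldots,x_s$ itself, contradicting independence. That argument is a purely formal change of variables: it never mentions $\nu_e$, needs no reduction to SGMTs, and so applies verbatim to arbitrary GMTs. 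Your argument instead routes through Lemma \ref{Lemma50} and the valuation: it uses that independence of $x_1,\ldots,x_s$ forces rational independence of $\nu_e(x_1),\ldots,\nu_e(x_s)$ (the nontrivial direction of Lemma \ref{Lemma25}, which the paper itself invokes in exactly this way in Lemmas \ref{Lemma21} and \ref{Lemma6}, so you are entitled to it), that unit rescaling leaves the values unchanged, and then a dimension count over $\QQ$ to kill the nonzero $\alpha_j$ in any SGMT. Your two points of care are well placed: restricting to SGMTs is indeed what guarantees $\mathcal O_{\overline X,e_{\overline X}}^{\rm an}$ sits in the directed system defining $V_e$, so $\nu_e(\overline x_j+\alpha_j)=0$ when $\alpha_j\ne 0$ is legitimate. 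What each approach buys: yours makes transparent the conceptual reason the lemma is true (independence is detected by values, and values are blind to units), essentially establishing the equivalence of independence with rational independence of values; the paper's is more self-contained, avoids the valuation machinery entirely, and disposes of arbitrary GMTs without passing through Lemma \ref{Lemma50}.
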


\begin{proof} Suppose that $\tilde x_1,\ldots,\tilde x_s$ are not independent. Then there exists $\overline X\rightarrow \tilde X$ giving a GMT 
$\tilde x_i=\prod_{j=1}^s(\hat x_j+\hat\alpha_j)^{a_{ij}}$ for $1\le j\le s$ with some $\hat\alpha_j\ne 0$. After reindexing the $\tilde x_i$, we may assume that $\hat\alpha_j=0$ for $1\le j\le a<s$ and $\hat\alpha_j\ne 0$ for $a< j\le s$.
Define $c_1,\ldots,c_s\in\QQ$ by
$$
\left(\begin{array}{c}
c_1\\ \vdots\\c_s\end{array}\right)=A^{-1}\left(\begin{array}{c}
d_1\\ \vdots\\ d_s\end{array}\right)
$$
where $A=(a_{ij})$. Then $\prod_{j=1}^s(\gamma^{c_{ij}})^{a_{ij}}=\gamma^{d_i}$ for $1\le i\le s$. We have
\begin{equation}\label{eq100}
\gamma^{c_j}\equiv \gamma(0)^{c_j}\mbox{ mod }(\hat x_1,\ldots,\hat x_a)\mathcal O_{\overline x,e_{\overline x}}^{\rm an}\mbox{ for all }j.
\end{equation}
Set $\overline x_j=\gamma^{c_j}\hat x_j$ for $1\le j\le a$, and define $\alpha_j=\gamma(0)^{c_j}\hat\alpha_j$,
$\overline x_j=\gamma^{c_j}(\hat x_j+\hat\alpha_j)-\alpha_j$ for $a\le j\le s$. Then $\overline x_1,\ldots,\overline x_s$ are regular parameters in $\mathcal O_{\overline X,e_{\overline X}}^{\rm an}$ by (\ref{eq100}). Thus we have a GMT
$$
x_i=\prod_{j=1}^s(\overline x_j+\alpha_j)^{a_{ij}}\mbox{ for }1\le j\le s
$$
 in $x_1,\ldots,x_s$, contradicting the independence of $x_1,\ldots,x_s$ since some $\alpha_j\ne 0$.
\end{proof}

\begin{Lemma}\label{Lemma6} Suppose that $x_1,\ldots,x_s$ are independent and $x_1,\ldots,x_s,x_{s+1}$ are dependent. Suppose that
(\ref{eq2}) is A GMT in $x_1,\ldots,x_{s+1}$ such that some $\alpha_j\ne 0$. Then there are  $x_1(1),\ldots,x_{s+1}(1)$ in $\mathcal O_{\overline X,e_{\overline X}}^{\rm an}$ such that $x_1(1),\ldots,x_{s+1}(1),x_{s+2},\ldots,x_n$ are a regular system of parameters in $\mathcal O_{\overline X,e_{\overline X}}^{\rm an}$ and there is an expression
$$
x_i=\prod_{j=1}^sx_j(1)^{b_{ij}}\mbox{ for }1\le i\le s
$$
and 
$$
x_{s+1}=\prod_{j=1}^sx_j(1)^{b_j}(x_{s+1}(1)+\alpha)
$$
where $0\ne\alpha \in \CC$, $b_{ij},b_j\in \NN$ and the $s\times s$ matrix $(b_{ij})$ has nonzero determinant. 
Further, the variables $x_1(1),\ldots, x_s(1)$ are independent.
\end{Lemma}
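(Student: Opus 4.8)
The plan is to start from the given GMT (\ref{eq2}) in $x_1,\dots,x_{s+1}$, and exploit the fact that $x_1,\dots,x_s$ are independent to force a rigid structure on the matrix $A=(a_{ij})$ and the shift vector $(\alpha_1,\dots,\alpha_{s+1})$. Write $A$ in block form according to the partition $\{1,\dots,s\}$ versus $\{s+1\}$. First I would observe that the variables $\overline x_1,\dots,\overline x_s$ obtained by the GMT ``restricted'' to $x_1,\dots,x_s$ must be such that the sub-GMT is monomial: more precisely, I would argue that after reindexing we may assume $\alpha_j=0$ for $j$ in some set and $\alpha_j\neq 0$ for the rest, and then consider the valuation $\nu$ on the quotient field of $\CC[x_1,\dots,x_s]$ given by restricting $\nu_e$. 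Since $x_1,\dots,x_s$ are independent, by Lemma \ref{Lemma25} the values $\nu(x_1),\dots,\nu(x_s)$ are rationally independent. The equations (\ref{eq2}) for $1\le i\le s$ express each $x_i$ as a product of factors $(\overline x_j+\alpha_j)^{a_{ij}}$; taking $\nu$-values, the factors with $\alpha_j\neq 0$ are units and contribute $0$, so $\nu(x_i)$ is an $\NN$-combination of the values $\nu(\overline x_j)$ over the indices $j$ with $\alpha_j=0$. Rational independence of the $\nu(x_i)$ then forces the number of such indices $j$ to be at least $s$.

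Next I would use the determinant condition $\mathrm{Det}(A)=\pm1$ together with this count to pin down the shape of $A$. Since there must be at least $s$ indices $j\le s+1$ with $\alpha_j=0$, and at least one $\alpha_j\neq 0$ by hypothesis, exactly one index has $\alpha_j\neq 0$; after reindexing I may assume it is $j=s+1$, so $\alpha_1=\dots=\alpha_s=0$ and $\alpha:=\alpha_{s+1}\neq 0$. Now for $1\le i\le s$ we have $x_i=\prod_{j=1}^s\overline x_j^{a_{ij}}(\overline x_{s+1}+\alpha)^{a_{i,s+1}}$, and I would set $x_j(1):=\overline x_j$ for $1\le j\le s$ and $x_{s+1}(1):=\overline x_{s+1}$; rationality considerations on $\nu$-values show the $s\times s$ block $(a_{ij})_{1\le i,j\le s}$, call it $(b_{ij})$, must have nonzero determinant (otherwise the $\nu(x_i)$ would satisfy a rational relation). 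Setting $b_j:=a_{s+1,j}$ then gives exactly the claimed expressions $x_i=\prod_{j=1}^s x_j(1)^{b_{ij}}$ for $1\le i\le s$ and $x_{s+1}=\prod_{j=1}^s x_j(1)^{b_j}(x_{s+1}(1)+\alpha)$. That $x_1(1),\dots,x_{s+1}(1),x_{s+2},\dots,x_n$ form a regular system of parameters is immediate since they are just $\overline x_1,\dots,\overline x_{s+1},x_{s+2},\dots,x_n$.

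Finally, the independence of $x_1(1),\dots,x_s(1)$: since $(b_{ij})$ has nonzero determinant, the $\nu$-values $\nu(x_1(1)),\dots,\nu(x_s(1))$ are obtained from the rationally independent $\nu(x_1),\dots,\nu(x_s)$ by an invertible rational matrix, hence are themselves rationally independent; but I should argue independence directly rather than just via values. I would instead note that the passage $x_i\mapsto x_i(1)$ (for $1\le i\le s$) is, up to the invertible monomial change encoded by $(b_{ij})$ together with multiplication by the unit $(x_{s+1}(1)+\alpha)^{a_{i,s+1}}$, a composition of a monomial GMT in $x_1,\dots,x_s$ with multiplication by units; then apply Lemma \ref{Lemma1} (monomial GMTs preserve independence) and Lemma \ref{Lemma10} (multiplying independent variables by rational powers of a unit preserves independence) to conclude. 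The main obstacle I anticipate is the bookkeeping in the second step: carefully deducing from $\mathrm{Det}(A)=\pm1$, the local-homomorphism constraint (at least one $\alpha_j=0$), and the rational-independence count that exactly one $\alpha_j$ is nonzero and that the surviving $s\times s$ block is nonsingular — this is where one must be careful that no degenerate configuration of the matrix slips through.
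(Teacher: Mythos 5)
Your first step (counting indices with $\alpha_j=0$ via the rational independence of $\nu(x_1),\dots,\nu(x_s)$ to conclude that exactly one $\alpha_j$ is nonzero, which may be taken to be $\alpha_{s+1}$, and that the upper-left $s\times s$ block of $A$ is nonsingular) is correct and is essentially what the paper does. The gap is in your definition of the new variables. After reindexing you have, for $1\le i\le s$,
$$
x_i=\prod_{j=1}^s\overline x_j^{\,a_{ij}}(\overline x_{s+1}+\alpha_{s+1})^{a_{i,s+1}},
$$
and the exponents $a_{i,s+1}$ need not vanish (e.g.\ $s=1$, $A=\left(\begin{smallmatrix}1&1\\1&2\end{smallmatrix}\right)$). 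Setting $x_j(1)=\overline x_j$ therefore does \emph{not} give the asserted pure monomial form $x_i=\prod_j x_j(1)^{b_{ij}}$: you have silently dropped the unit factors $(\overline x_{s+1}+\alpha_{s+1})^{a_{i,s+1}}$, and likewise the exponent $a_{s+1,s+1}$ on the unit in the expression for $x_{s+1}$, which need not equal $1$. This is where the actual content of the lemma sits. The paper absorbs these units by rescaling: it sets $x_i(1)=(\overline x_{s+1}+\alpha_{s+1})^{\lambda_i}\overline x_i$ for suitable $\lambda_i\in\QQ$ (chosen to solve the linear system that cancels the unit factors, which is possible because the $s\times s$ block is invertible over $\QQ$), and then $x_{s+1}=\prod_j x_j(1)^{a_{s+1,j}}(\overline x_{s+1}+\alpha_{s+1})^{\lambda}$ with $\lambda\in\QQ$ nonzero, so that $x_{s+1}(1):=(\overline x_{s+1}+\alpha_{s+1})^{\lambda}-\alpha_{s+1}^{\lambda}$ and $\alpha:=\alpha_{s+1}^{\lambda}$ give the stated form. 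In particular the final $\alpha$ is $\alpha_{s+1}^{\lambda}$, not $\alpha_{s+1}$, and $x_{s+1}(1)\ne\overline x_{s+1}$; one must also check (as the paper does implicitly, via the fact that $(\overline x_{s+1}+\alpha_{s+1})^{\lambda_i}$ and $(\overline x_{s+1}+\alpha_{s+1})^{\lambda}-\alpha_{s+1}^{\lambda}$ are well-defined analytic functions with the right value at the point) that the rescaled elements are still a regular system of parameters.

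For the last claim, your first instinct was the right one: since $\nu_e(\overline x_{s+1}+\alpha_{s+1})=0$, the values $\nu_e(x_1(1)),\dots,\nu_e(x_s(1))$ coincide with $\nu_e(\overline x_1),\dots,\nu_e(\overline x_s)$ and are rationally independent, which directly forces every GMT in $x_1(1),\dots,x_s(1)$ to be monomial; this is the paper's argument. Your alternative route through Lemma \ref{Lemma1} is shakier, because the passage from $(x_1,\dots,x_s)$ to $(\overline x_1,\dots,\overline x_s)$ is not literally a GMT in those $s$ variables (the $s\times s$ block has nonzero determinant but not necessarily $\pm1$), so Lemma \ref{Lemma1} does not apply as stated; stick with the valuation-theoretic argument.
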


\begin{proof} 
Let $R= \CC[x_1,\ldots,x_{s+1}]_{(x_1,\ldots,x_{s+1})}$ and  $K$ be the quotient field of $R$.  Let   (\ref{eq2}) be a  GMT in  $x_1,\ldots,x_s,x_{s+1}$ which is not monomial and $R_1=\CC[\overline x_1,\ldots, \overline x_{s+1}]_{(\overline x_1,\ldots, \overline x_{s+1})}$. We have a commutative diagram of injective local homomorphisms
$$
\begin{array}{ccc}
R&\rightarrow & \mathcal O_{\tilde X,e_{\tilde X}}^{\rm an}\\
\downarrow&&\downarrow\\
R_1&\rightarrow &\mathcal O_{\overline X,e_{\overline X}}^{\rm an}.
\end{array}
$$
The field  $K$ is also the quotient field of $R_1$ and $R\rightarrow R_1$ is birational. Let $\nu$ be  the restriction of $\nu_e$ to $K$. We have that $\nu$ dominates $R$ and $\nu$ dominates $R_1$. Since all GMT in $x_1,\ldots,x_s$ are monomial, we must have that   $\nu(x_1),\ldots,\nu(x_s)$ are  rationally independent by Lemma \ref{Lemma25}.
We have that 
$$
\nu(x_i)=\sum_{j=1}^{s+1} a_{ij}\nu(\overline x_j+\alpha_j)\mbox{ for }1\le i\le s+1.
$$
Thus after possibly interchanging the  variables $\overline x_1,\ldots, \overline x_{s+1}$, we have that $\alpha_1=\ldots=\alpha_s=0$.
Further, since our GMT (\ref{eq2}) is not monomial, we must have that $\alpha_{s+1}\ne 0$. Thus the $s\times s$ matrix consisting of the first $s$ rows and columns of $A=(a_{ij})$ has rank $s$ and $\nu(\overline x_1),\ldots,\nu(\overline x_s)$ are rationally independent. There exists $\lambda_i\in \QQ$ such that after replacing $\overline x_i$ with
$x_i(1):= (\overline x_{s+1}+\alpha_{s+1})^{\lambda_i}\overline x_i$ for $1\le i\le s$, we have that 
$x_i=\prod_{j=1}^sx_j(1)^{a_{ij}}$ for $1\le i\le s$ and $x_{s+1}=\prod_{j=1}^sx_j(1)^{a_{s+1,j}}(\overline x_{s+1}+\alpha_{s+1})^{\lambda}$
where $\lambda\in\QQ$ is non zero since $\mbox{Det}(A)\ne 0$. Setting $x_{s+1}(1):= (\overline x_{s+1}+\alpha_{s+1})^{\lambda}-\alpha_{s+1}^{\lambda}$ and $\alpha = \alpha_{s+1}^{\lambda}$, we obtain the expression of the GMT asserted in the lemma. 

The values $\nu_e(\overline x_1),\ldots,\nu_e(\overline x_s)$ are rationally independent, and $\nu_e(\overline x_{s+1}+\alpha_{s+1})=0$, so 
$\nu_e(x_1(1)),\ldots,\nu_e(x_s(1))$ are rationally independent. Thus $x_1(1),\ldots,x_s(1)$ are independent.

\end{proof}

The following lemma giving a Tschirnhaus transformation will be useful.

\begin{Lemma}\label{Lemma9} Suppose that $F\in \CC\{\{ x_1,\ldots,x_n\}\}$ and $\mbox{ord }F(0,\ldots,0,x_n)=t\ge 1$. Then there exists $\Phi\in \CC\{\{ x_1,\ldots,x_{n-1}\}\}$ such that setting $\overline x_n=x_n-\Phi$, we have that
$$
F=\tau_0\overline x_n^t+\tau_2\overline x_n^{t-2}+\cdots+\tau_t
$$
where $\tau_0\in\CC\{\{ x_1,\ldots,\overline x_n\}\}$ is a unit and $\tau_i\in \CC\{\{ x_1,\ldots,x_{n-1}\}\}$ for $2\le i\le t$.
\end{Lemma}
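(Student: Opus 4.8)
The plan is to use the Weierstrass preparation and division theorems to reduce $F$ to a Weierstrass polynomial in $x_n$ of degree $t$, and then to apply a translation in $x_n$ that kills the degree-$(t-1)$ coefficient, exactly as in the classical Tschirnhaus trick for depressing a polynomial. First I would note that since $\mathrm{ord}\,F(0,\dots,0,x_n)=t$, the series $F$ is $x_n$-regular of order $t$, so by the Weierstrass preparation theorem we may write $F = \tau\cdot P$ where $\tau\in\CC\{\{x_1,\dots,x_n\}\}$ is a unit and $P = x_n^t + a_{t-1}x_n^{t-1}+\cdots+a_0$ is a Weierstrass polynomial with $a_i\in\CC\{\{x_1,\dots,x_{n-1}\}\}$ and $a_i(0)=0$. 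It suffices to perform the Tschirnhaus change of variable on $P$, since multiplying by a unit does not affect the shape of the conclusion; the unit $\tau$ will be absorbed into $\tau_0$ after the substitution (one checks the leading coefficient in $\overline x_n$ remains a unit because the substitution is a local analytic automorphism fixing the maximal ideal generated by $x_1,\dots,x_{n-1},\overline x_n$).

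Next I would set $\Phi = \tfrac{1}{t}\,a_{t-1}\in\CC\{\{x_1,\dots,x_{n-1}\}\}$ (here we use $\mathrm{char}\,\CC=0$, so $t$ is invertible), and put $\overline x_n = x_n - \Phi$, equivalently $x_n = \overline x_n + \Phi$. Substituting into $P$ and expanding by the binomial theorem, the coefficient of $\overline x_n^{t-1}$ is $t\Phi + a_{t-1} = 0$ by the choice of $\Phi$, so
$$
P = \overline x_n^t + \tau_2'\,\overline x_n^{t-2} + \cdots + \tau_t'
$$
where each $\tau_i'\in\CC\{\{x_1,\dots,x_{n-1}\}\}$ is an explicit polynomial in $a_0,\dots,a_{t-1}$ (note $x_1,\dots,x_{n-1},\overline x_n$ is again a regular system of parameters, and $\CC\{\{x_1,\dots,\overline x_n\}\}=\CC\{\{x_1,\dots,x_n\}\}$ as subrings of $\CC[[x_1,\dots,x_{n-1},\overline x_n]]$). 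Multiplying back by $\tau$ and writing $\tau_0 = \tau$, $\tau_i = \tau\cdot\tau_i'$ for $2\le i\le t$ — wait, this last step is slightly too crude, since then $\tau_i$ would not lie in $\CC\{\{x_1,\dots,x_{n-1}\}\}$; instead one applies Weierstrass preparation \emph{after} the translation, or observes that $\overline x_n$-regularity of order $t$ is preserved by the translation and reruns preparation to pull a fresh unit out front. I would state it as: $F$ is $\overline x_n$-regular of order $t$ (the translation by $\Phi\in\CC\{\{x_1,\dots,x_{n-1}\}\}$ does not change $F(0,\dots,0,x_n)$ up to the substitution $x_n\mapsto x_n$, hence does not change the order in the last variable), so a second application of Weierstrass preparation gives $F = \tau_0 Q$ with $\tau_0$ a unit and $Q$ a Weierstrass polynomial in $\overline x_n$ of degree $t$; and the degree-$(t-1)$ coefficient of $Q$ vanishes because, comparing with the expansion above, preparation does not alter which monic polynomial in $\overline x_n$ one obtains modulo units — more carefully, one first translates, then prepares.

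The cleanest order, which I would actually write, is therefore: (1) Weierstrass-prepare $F = \tau\cdot P$ with $P$ monic of degree $t$ in $x_n$; (2) Tschirnhaus-translate $x_n = \overline x_n + \tfrac1t a_{t-1}$ to get $P = \overline x_n^t + (\text{no }\overline x_n^{t-1}\text{ term}) + \cdots$, with coefficients in $\CC\{\{x_1,\dots,x_{n-1}\}\}$; (3) multiply by $\tau$, which is now a unit in $\CC\{\{x_1,\dots,\overline x_n\}\}$, and set $\tau_0 = \tau$ times the coefficient of $\overline x_n^t$ (which is $1$, so $\tau_0=\tau$) and $\tau_i$ equal to $\tau$ times the coefficient of $\overline x_n^{t-i}$ — but to land these $\tau_i$ in $\CC\{\{x_1,\dots,x_{n-1}\}\}$ one instead absorbs the unit by a final preparation, or, more simply, one notes that the statement as written only requires $\tau_i\in\CC\{\{x_1,\dots,x_{n-1}\}\}$, so the correct move is to leave the unit $\tau$ untouched on $P$ (do not multiply it in), i.e. do the whole argument on $P$ and then set $\tau_0 := \tau$, $\tau_i := $ coefficient of $\overline x_n^{t-i}$ in $P$ for $2\le i\le t$, giving $F = \tau\cdot P = \tau_0\overline x_n^t + \tau_0\tau_2\overline x_n^{t-2}+\cdots$; rescaling, absorb $\tau_0$ only into the leading term by replacing $\tau_i$ with $\tau_0\tau_i$, which again leaves the base ring — hence the genuinely correct formulation is that $\tau_0$ multiplies \emph{all} terms, and one simply rewrites $F=\tau_0(\overline x_n^t + \tau_2\overline x_n^{t-2}+\cdots+\tau_t)$ with $\tau_i\in\CC\{\{x_1,\dots,x_{n-1}\}\}$, which is exactly the claim once $\tau_0$ is factored out. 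The main obstacle is this bookkeeping of \emph{which} unit goes where; the mathematical content is entirely standard (Weierstrass preparation plus the characteristic-zero depression of a degree-$t$ polynomial), so I expect the proof to be short, and the only genuine input beyond Weierstrass preparation is the invertibility of $t$, which is where characteristic zero is used.
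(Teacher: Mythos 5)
There is a genuine gap here, and you in fact notice it yourself but then close it incorrectly. The lemma asserts the normal form $F=\tau_0\overline x_n^t+\tau_2\overline x_n^{t-2}+\cdots+\tau_t$ in which the unit $\tau_0$ multiplies \emph{only} the leading term while $\tau_2,\ldots,\tau_t$ lie in $\CC\{\{x_1,\ldots,x_{n-1}\}\}$; this is the Taylor expansion of $F$ in $\overline x_n$ with vanishing $(t-1)$-st coefficient and with all terms of degree $\ge t$ collected into $\tau_0\overline x_n^t$. It is not equivalent to the form $F=\tau\bigl(\overline x_n^t+\tau_2'\overline x_n^{t-2}+\cdots+\tau_t'\bigr)$ that your argument actually produces: writing the unit as $\tau=\sum_j\tau^{(j)}\overline x_n^j$ with $\tau^{(j)}\in\CC\{\{x_1,\ldots,x_{n-1}\}\}$ and regrouping, the $\overline x_n^{t-1}$ Taylor coefficient of $\tau P$ is $\sum_{i\ge 2}\tau^{(i-1)}\tau_i'$, which is generically nonzero. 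Equivalently, your $\Phi=\tfrac1t a_{t-1}$ depresses the Weierstrass polynomial $P$ but not $F=\tau P$ itself, and the two required translations differ in general. So the concluding assertion of your proposal (that the claim follows ``once $\tau_0$ is factored out'') is false, and none of the variants you list (prepare then translate, translate then prepare, absorb the unit into the $\tau_i$) yields the stated conclusion. The exact form matters: in the applications (e.g.\ equations (\ref{eq40}), (\ref{eq8}), (\ref{eq12}), (\ref{eq28})) the paper goes on to transform the $\tau_i$ individually, which requires them to be honest elements of the smaller ring.

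The paper's proof avoids Weierstrass preparation entirely. Since $\mbox{ord }F(0,\ldots,0,x_n)=t$, the series $\partial^{t-1}F/\partial x_n^{t-1}$ has order $1$ in $x_n$ at the origin, so the implicit function theorem gives $\partial^{t-1}F/\partial x_n^{t-1}=u\,(x_n-\Phi)$ with $u$ a unit and $\Phi\in\CC\{\{x_1,\ldots,x_{n-1}\}\}$. With \emph{this} $\Phi$, setting $G(x_1,\ldots,x_{n-1},\overline x_n)=F$ and Taylor-expanding in $\overline x_n$, the $(t-1)$-st coefficient is $\tfrac{1}{(t-1)!}\partial^{t-1}F/\partial x_n^{t-1}(x_1,\ldots,x_{n-1},\Phi)=0$ on the nose, the $t$-th coefficient is a unit, the coefficients of $\overline x_n^{t-i}$ for $2\le i\le t$ are literal Taylor coefficients and hence lie in $\CC\{\{x_1,\ldots,x_{n-1}\}\}$, and the terms of degree $\ge t$ assemble into $\tau_0\overline x_n^t$ with $\tau_0$ a unit. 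Your argument can be repaired by replacing your $\Phi$ with the solution of $\partial^{t-1}F/\partial x_n^{t-1}(x_1,\ldots,x_{n-1},\Phi)=0$; characteristic zero then enters through the invertibility of $t!$ rather than of $t$ alone.
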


\begin{proof} By the implicit function theorem (cf. Section C.2.4 \cite{L}),
$$
\frac{\partial^{t-1}F}{\partial x_n^{t-1}}=u(x_n-\Phi)
$$
where $u\in \CC\{\{ x_1,\ldots,x_n\}\}$ is a unit series and $\Phi\in \CC\{\{ x_1,\ldots,x_{n-1}\}\}$. Let $\overline x_n=x_n-\Phi$. Let
$G(x_1,\ldots,x_{n-1},\overline x_n)=F(x_1,\ldots,x_n)$. We expand
$$
\begin{array}{lll}
G&=&G(x_1,\ldots,x_{n-1},0)+\frac{\partial G}{\partial \overline x_n}(x_1,\ldots,x_{n-1},0)\overline x_n+\cdots +\frac{1}{(t-1)!}\frac{\partial^{t-1}G}{\partial \overline x_n^{t-1}}(x_1,\ldots,x_{n-1},0)\overline x_n^{t-1}\\
&&+\frac{1}{t!}\frac{\partial^tG}{\partial \overline x_n^t}(x_1,\ldots,x_{n-1},0)\overline x_n^t+\cdots
\end{array}
$$
We have
$$
\frac{\partial^{t-1}G}{\partial \overline x_n^{t-1}}(x_1,\ldots,x_{n-1},0)=\frac{\partial^{t-1}F}{\partial x_n^{t-1}}(x_1,\ldots,x_{n-1},\Phi)=0
$$
and
$$
\frac{\partial^tG}{\partial\overline x_n^t}(x_1,\ldots,x_{n-1},0)=\frac{\partial^{t}F}{\partial x_n^{t}}(x_1,\ldots,x_{n-1},\Phi)
$$
is a unit in $\CC\{\{ x_1,\ldots,x_n\}\}$, giving (by (\ref{eq42})) the conclusions of the lemma.
\end{proof}

\section{Transformations}\label{SecTrans}
Suppose that $\phi:Y\rightarrow X$ is an analytic morphism of complex analytic manifolds and $e$ is an \'etoile over $Y$ such that $\phi$ is quasi regular with respect to $e$ (Section \ref{Pre}).  We will also denote the induced \'etoile on $X$ (Section \ref{Pre}) by $e$.  

Suppose that $\tilde Y\rightarrow Y\in e$ and $\tilde X\rightarrow X\in e$ give a morphism $\tilde\phi:\tilde Y\rightarrow \tilde X$. Then
$$
\tilde\phi^*:\mathcal O_{\tilde X,e_{\tilde X}}^{\rm an}\rightarrow \mathcal O_{\tilde Y,e_{\tilde Y}}^{\rm an}
$$
 is injective, so we may regard $\mathcal O_{\tilde X,e_{\tilde X}}^{\rm an}$ as a subring of $\mathcal O_{\tilde Y,e_{\tilde Y}}^{\rm an}$.
Assume that there exist regular parameters $x_1,\ldots,x_m$ in $\mathcal O_{\tilde X,e_{\tilde X}}^{\rm an}$ and $y_1,\ldots,y_n$ in 
$\mathcal O_{\tilde Y,e_{\tilde Y}}^{\rm an}$ such that $y_1,\ldots,y_s$ are independent but $y_1,\ldots, y_s,y_i$ are dependent for all $i$ with $s+1\le i\le n$, $x_1,\ldots,x_r$ are independent, and identifying $x_i$ with $\tilde\phi^*(x_i)$, there is an expression for some $l$
\begin{equation}\label{eq5}
\begin{array}{lll}
x_1&=&y_1^{c_{11}}\cdots y_s^{c_{1s}}\\
&&\vdots\\
x_r&=& y_1^{c_{r1}}\cdots y_s^{c_{rs}}\\
x_{r+1}&=& y_{s+1}\\
&&\vdots\\
x_{r+l}&=& y_{s+l}.
\end{array}
\end{equation}

We necessarily have that $C=(c_{ij})$ has rank $r$ (by Lemma \ref{Lemma25}) with our assumptions, and so by the rank theorem (page 134 \cite{L}) and the inequality (\ref{Gineq}) there is an induced inclusion 
$$
\CC[[x_1,\ldots,x_{r+l}]]\rightarrow \CC[[y_1,\ldots,y_n]].
$$

Assume that $E_Y$ is a SNC divisor on $Y$ supported on $Z(y_1y_2\cdots y_s)$ (in a neighborhood of $e_Y$) in $Y$.

\begin{Definition}
We will say that the variables $(x,y)=(x_1,\ldots,x_m; y_1,\ldots,y_n)$ are prepared of type $(s,r,l)$ if all of the above conditions hold.
\end{Definition}

We will say that $(s_1,r_1,l_1)\ge (s,r,l)$ if $s_1\ge s$, $r_1\ge r$ and $r_1+l_1\ge r+l$, and that $(s_1,r_1,l_1)>(s,r,l)$ if $(s_1,r_1,l_1)\ge (s,r,l)$ and $s_1>s$ or $r_1>r$ or $r_1+l_1>r+l$.

We will perform  transformations of the types 1) - 10) below, which preserve the form (\ref{eq5}) (and the quasi regularity of the morphism of germs), giving an expression
\begin{equation}\label{eq7}
\begin{array}{lll}
x_1(1)&=& y_1(1)^{c_{11}(1)}\cdots y_s(1)^{c_{1s}(1)}\\
&&\vdots\\
x_r(1)&=& y_1(1)^{c_{r1}(1)}\cdots y_s(1)^{c_{rs}(1)}\\
x_{r+1}(1)&=& y_{s+1}(1)\\
&&\vdots\\
x_{r+l}(1)&=& y_{s+l}(1)
\end{array}
\end{equation}
where $x_1(1),\ldots,x_m(1)$ and $y_1(1),\ldots,x_n(1)$ are respective regular parameters in $\mathcal O_{\overline X,e_{\overline X}}^{\rm an}$ and
$\mathcal O_{\overline Y,e_{\overline Y}}^{\rm an}$ in the induced commutative diagram  of quasi regular analytic morphisms
$$
\begin{array}{ccc}
\overline Y&\stackrel{\overline \phi}{\rightarrow}&\overline X\\
\downarrow&&\downarrow\\
\tilde Y&\stackrel{\tilde \phi}{\rightarrow}&\tilde X.
\end{array}
$$
where $\overline Y\rightarrow \tilde Y\rightarrow Y\in e$ and $\overline X\rightarrow \tilde X\rightarrow X\in e$.

 Further, we will have that $x_1(1),\ldots,x_r(1)$ are independent and $y_1(1),\ldots,y_s(1)$ are independent. So we either continue to have that $y_1(1),\ldots,y_s(1), y_{t}(1)$ are dependent for all $s+1\le t\le n$ or 
after rewriting (\ref{eq5}), we have an increase in $s$, without decreasing $r$ or $r+l$. 
In summary, we will have that the variables $(x(1),y(1))$ are prepared of type $(s_1,r_1,l_1)$ with $(s_1,r_1,l_1)\ge (s,r,l)$.

Let $E_{\overline Y}$ be the pullback of $E_Y$ on $\overline Y$. Then 
\begin{equation}\label{eq*}
\begin{array}{l}
\mbox{$E_{\overline Y}$ is supported on $Z(y_1(1)y_2(1)\cdots y_s(1))\subset \overline Y$}\\
\mbox{and $\overline Y\setminus Z(y_1(1)y_2(1)\cdots y_s(1))\rightarrow Y$ is an open embedding.}
\end{array}
\end{equation}

\begin{Lemma}\label{Lemma22} Suppose that $(x,y)$ are prepared of type $(s,r,l)$ and
$$
x_i=\prod_{j=1}^rx_j(1)^{a_{ij}}\mbox{ for }1\le i\le r
$$
is a GMT in $x_1,\ldots,x_r$. Then there exists a SGMT
$$
y_i=\prod_{j=1}^s y_j(1)^{b_{ij}}\mbox{ for }1\le i\le s
$$
such that the variables $(x(1),y(1))$ are prepared of type $(s_1,r_1,l_1)$ with $(s_1,r_1,l_1)\ge (s,r,l)$.
\end{Lemma}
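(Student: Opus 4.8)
The plan is to use the hypothesis on the $X$-side to write the new parameters $x_i(1)$ as Laurent monomials in $y_1,\dots,y_s$, and then to produce, by a toric argument in the spirit of Lemma \ref{Lemma21}, a monomial SGMT in $y_1,\dots,y_s$ which clears the denominators of these Laurent monomials. First note that, since $x_1,\dots,x_r$ are independent, the given GMT in $x_1,\dots,x_r$ is automatically monomial, so $x_i=\prod_{j=1}^r x_j(1)^{a_{ij}}$ with $A=(a_{ij})$ an $r\times r$ matrix over $\NN$ with $\det A=\pm1$, and $x_1(1),\dots,x_r(1),x_{r+1},\dots,x_m$ (setting $x_i(1):=x_i$ for $i>r$) a regular system of parameters of $\mathcal O_{\overline X,e_{\overline X}}^{\rm an}$. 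Inverting $A$ over $\ZZ$ and substituting the relations (\ref{eq5}) gives $x_i(1)=\prod_{j=1}^s y_j^{d_{ij}}$ for $1\le i\le r$, where $(d_{ij})=A^{-1}C$ is an integer $r\times s$ matrix of rank $r$ with no zero row (since $A^{-1}$ is invertible and $C$ has rank $r$). The goal is a monomial SGMT $y_i=\prod_{j=1}^s y_j(1)^{b_{ij}}$, with $B=(b_{ij})$ and $\det B=\pm1$, after which every $x_i(1)$ becomes a genuine monomial in $y_1(1),\dots,y_s(1)$.

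To build it, let $\nu$ be the restriction of $\nu_e$ to the quotient field of $\CC[y_1,\dots,y_s]$. Since $y_1,\dots,y_s$ are independent, $\nu(y_1),\dots,\nu(y_s)$ are rationally independent by Lemma \ref{Lemma25}, and $\nu(x_i(1))=\sum_j d_{ij}\nu(y_j)\ge 0$ because $x_i(1)$ lies in the valuation ring $V_e$. Write $x_i(1)=P_i/Q_i$ with $P_i=\prod_j y_j^{\max(d_{ij},0)}$ and $Q_i=\prod_j y_j^{\max(-d_{ij},0)}$ coprime monomials of $\CC[y_1,\dots,y_s]$. Iterating Lemma \ref{Lemma21}, there is a monomial SGMT $y_i=\prod_{j=1}^s y_j(1)^{b_{ij}}$ over $\tilde Y$ (so $\overline Y\to\tilde Y\to Y\in e$) after which each ideal $(P_i,Q_i)$ is principal in $\mathcal O_{\overline Y,e_{\overline Y}}^{\rm an}$. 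Because $(P_i,Q_i)$ is then principal and $\nu(x_i(1))\ge 0$ while $x_i(1)$ is a nonunit (being a regular parameter of $\mathcal O_{\overline X,e_{\overline X}}^{\rm an}$), the quotient $x_i(1)=P_i/Q_i$ is forced to be a genuine, nonconstant monomial $\prod_{j=1}^s y_j(1)^{c_{ij}(1)}$ with $c_{ij}(1)\in\NN$; moreover $x_{r+h}(1)=x_{r+h}=y_{s+h}=y_{s+h}(1)$ for $1\le h\le l$ since neither transform touched these variables. Thus (\ref{eq7}) holds with $C(1)=(c_{ij}(1))=A^{-1}CB$ of rank $r$. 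Finally, since each $x_i(1)$ now lies in the maximal ideal of $\mathcal O_{\overline Y,e_{\overline Y}}^{\rm an}$, the assignment $x_i(1)\mapsto\prod_j y_j(1)^{c_{ij}(1)}$ ($i\le r$) and $x_i(1)\mapsto x_i$ ($i>r$) defines a local homomorphism $\mathcal O_{\overline X,e_{\overline X}}^{\rm an}\to\mathcal O_{\overline Y,e_{\overline Y}}^{\rm an}$ agreeing with $\tilde\phi^*$ (because $A\,C(1)=CB$), hence a morphism $\overline\phi$ completing the commutative square.

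It then remains to check that $(x(1),y(1))$ is prepared, necessarily of type $(s,r,l)\ge(s,r,l)$. That $x_1(1),\dots,x_r(1)$ are independent is Lemma \ref{Lemma1} applied to the GMT in the independent variables $x_1,\dots,x_r$, and likewise $y_1(1),\dots,y_s(1)$ are independent. For the remaining variables, $y_t(1)=y_t$ for $s<t\le n$, and the monomial change $y_i=\prod y_j(1)^{b_{ij}}$ does not alter the $\QQ$-span of $\{\nu_e(y_j)\}_{j\le s}$. Since $y_1,\dots,y_s,y_t$ is dependent but $y_1,\dots,y_s$ is independent, $\nu_e(y_t)$ is rationally dependent on $\nu_e(y_1),\dots,\nu_e(y_s)$ (otherwise Lemma \ref{Lemma25} would force every GMT in $y_1,\dots,y_s,y_t$ to be monomial), hence $\nu_e(y_t(1))$ is rationally dependent on $\nu_e(y_1(1)),\dots,\nu_e(y_s(1))$; as the residue field of $V_e$ is $\CC$, the induced valuation is zero dimensional, and the last assertion of Lemma \ref{Lemma25} then produces a non-monomial GMT in $y_1(1),\dots,y_s(1),y_t(1)$. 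So these variables are dependent, completing the verification.

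The step I expect to be the crux is the passage from the Laurent monomials $\prod_j y_j^{d_{ij}}$ to honest monomials on $\overline Y$ — equivalently, arranging that the morphism $\overline\phi:\overline Y\to\overline X$ exists at all. This is precisely what forces the principalization of the ideals $(P_i,Q_i)$ and makes essential use of the inequality $\nu_e(x_i(1))\ge 0$ together with the fact that $x_i(1)$ is a genuine parameter; the bookkeeping that the ``tail'' variables $y_t$ remain dependent is a secondary point, handled through the valuation-theoretic reformulation of (in)dependence afforded by Lemma \ref{Lemma25}.
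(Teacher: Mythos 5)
Your proof is correct and follows essentially the same toric strategy as the paper's: both reduce to the monomial map $\AA^s\rightarrow\AA^r$, use the rational independence of $\nu_e(y_1),\ldots,\nu_e(y_s)$ (via Lemma \ref{Lemma25}) together with principalization of monomial ideals (Lemma \ref{Lemma21}) to turn the $x_i(1)$ into honest monomials in the new $y$-variables. The only difference is organizational: the paper principalizes the pullback of the monomial ideal $J$ whose blow-up is the toric model $Z$ of the $x(1)$-chart, obtaining a factorization $W\rightarrow Z$, whereas you principalize the numerator--denominator ideals $(P_i,Q_i)$ of the Laurent monomials $x_i(1)=\prod_j y_j^{d_{ij}}$ directly and use $\nu_e(x_i(1))>0$ to force $Q_i\mid P_i$ --- an equivalent and slightly more explicit route.
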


\begin{proof} Let $\nu$ be the restriction of  $\nu_e$ to the quotient field $K$ of $\CC[y_1,\ldots,y_s]$, which contains $\CC[x_1,\ldots,x_r]$. The values $\nu(y_1),\ldots,\nu(y_s)$ are rationally independent and $\nu(x_1),\ldots,\nu(x_r)$ are rationally independent by Lemma \ref{Lemma25}. The inclusion $\CC[x_1,\ldots,x_r]\rightarrow \CC[y_1,\ldots,y_s]$ induces a dominant morphism $\AA^s\rightarrow \AA^r$ of nonsingular toric varieties. Let $\pi:Z\rightarrow \AA^r$ be a projective morphism of nonsingular toric varieties such that $x_1(1),\ldots,x_r(1)$ are regular parameters in $\mathcal O_{Z,p}$ where $p$ is the center of $\nu$ on $Z$. Let $J$ be a monomial ideal in $\CC[x_1,\ldots,x_r]$ whose blow up is $Z$. By principalization of ideals, there exists a projective morphism of toric varieties $\Lambda:W\rightarrow \AA^s$ which is a product of blow ups of nonsingular subvarieties, such that $J\mathcal O_W$ is locally principal, so that the rational map $W\dashrightarrow Z$ is a morphism. Let $q_1$ be the center of $\nu$ on $W$. Since $\nu(y_1),\ldots,\nu(y_s)$ are rationally independent and $\Lambda$ is toric, there exist 
regular parameters $\overline y_1,\ldots,\overline y_s$ in $\mathcal O_{W,q_1}$ and $b_{ij}\in\NN$ with ${\rm det}(b_{ij})=\pm 1$ such that 
$$
y_i=\prod_{j=1}^s\overline y_i^{b_{ij}}\mbox{ for }1\le i\le s.
$$
 $W$ is the blow up of a (monomial) ideal $H$ in $\CC[y_1,\ldots,y_s]$. Let $Y_1\rightarrow \tilde Y$ be the blow up of $H$ in a neighborhood of $e_{\tilde Y}$. Let $e_{Y_1}$ be the center of $e$ on $Y_1$. Then $\overline y_1,\ldots,\overline y_s,y_{s+1},\ldots,y_n$ are regular parameters in $\mathcal O_{Y_1,e_{Y_1}}^{\rm an}$, giving the conclusions of the lemma.
 \end{proof}

\begin{Lemma}\label{Lemma23} Suppose that $(x,y)$ are prepared of type $(s,r,l)$, $1\le \overline m\le l$ and
$$
x_i=\prod_{j=1}^rx_j(1)^{a_{ij}}\mbox{ for }1\le i\le r
$$
and
$$
x_{r+\overline m}=\prod_{j=1}^rx_j(1)^{a_{r+\overline m,j}}(x_{r+\overline m}(1)+\alpha)
$$
with $0\ne \alpha\in \CC$ is a GMT. Then there exists a SGMT
$$
y_i=\prod_{j=1}^sy_j(1)^{b_{ij}}\mbox{ for }1\le i\le s
$$
and
$$
y_{s+\overline m}=\prod_{j=1}^sy_j(1)^{b_{s+\overline m,j}}(y_{s+\overline m}(1)+\alpha)
$$
such that the variables $(x(1),y(1))$ are prepared of type $(s_1,r_1,l_1)$ with $(s_1,r_1,l_1)\ge (s,r,l)$.
\end{Lemma}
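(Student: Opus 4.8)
The plan is to reduce to Lemma \ref{Lemma22} for the monomial part of the given GMT and then deal with the single translated variable $x_{r+\overline m}=y_{s+\overline m}$ by an extra sequence of blow ups of smooth codimension--two centres on $Y$, using the valuation $\nu_e$ to pin down the constant that appears. First I would apply Lemma \ref{Lemma22} to the monomial GMT $x_i=\prod_{j=1}^r x_j(1)^{a_{ij}}$, $1\le i\le r$ (the restriction of the given GMT to the independent variables). This yields a SGMT $y_i=\prod_{j=1}^s y_j(1)^{b_{ij}}$, $1\le i\le s$, realised by blowing up a monomial ideal of $\CC[y_1,\dots,y_s]$ near $e_{\tilde Y}$, giving $Y_1\to\tilde Y$ together with a morphism $Y_1\to X'$, where $X'\to\tilde X$ is the monomial GMT with regular parameters $x_1(1),\dots,x_r(1)$; here $x_j(1)=\prod_{k=1}^s y_k(1)^{c_{jk}(1)}$ with $c_{jk}(1)\in\NN$, the systems $y_1(1),\dots,y_s(1)$ and $x_1(1),\dots,x_r(1)$ are independent, and $(x(1),y(1))$ is already prepared of some type $\ge(s,r,l)$. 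Since neither $X'\to\tilde X$ nor $Y_1\to\tilde Y$ touches $x_{r+\overline m}$ or $y_{s+\overline m}$, we still have $x_{r+\overline m}=y_{s+\overline m}$ on $Y_1$.

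Next I would set $b_{s+\overline m,k}:=\sum_{j=1}^r a_{r+\overline m,j}c_{jk}(1)\in\NN$, so that $\prod_{j=1}^r x_j(1)^{a_{r+\overline m,j}}=\prod_{k=1}^s y_k(1)^{b_{s+\overline m,k}}$ inside $V_e$ (compare in a common member of $e$ dominating $Y_1$ and $\overline X$). On $\overline X$ we have $x_{r+\overline m}/\prod_j x_j(1)^{a_{r+\overline m,j}}=x_{r+\overline m}(1)+\alpha$, a unit with $\nu_e(x_{r+\overline m}(1))>0$; since $x_{r+\overline m}=y_{s+\overline m}$ this gives
$$
\nu_e\!\left(\,y_{s+\overline m}\Big/\prod_{k=1}^s y_k(1)^{b_{s+\overline m,k}}-\alpha\,\right)=\nu_e(x_{r+\overline m}(1))>0,
$$
so the residue of $y_{s+\overline m}/\prod_{k=1}^s y_k(1)^{b_{s+\overline m,k}}$ in the residue field $\CC$ of $V_e$ is $\alpha\neq0$. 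In particular this residue being nonzero forces not all $b_{s+\overline m,k}$ to vanish, so $\nu_e(y_{s+\overline m})=\sum_k b_{s+\overline m,k}\nu_e(y_k(1))>0$.

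Now I would build $\overline Y\to Y_1$ by iterated blow ups of smooth codimension--two centres that make $\prod_k y_k(1)^{b_{s+\overline m,k}}$ divide $y_{s+\overline m}$. Put $z_0:=y_{s+\overline m}$ and $M_0:=\prod_k y_k(1)^{b_{s+\overline m,k}}$; given $z_{i-1}$ and $M_{i-1}=\prod_k y_k(1)^{b^{(i-1)}_k}$ with $\nu_e(z_{i-1})=\nu_e(M_{i-1})$ and $b^{(i-1)}_k$ not all zero, pick $j$ with $b^{(i-1)}_j\ge1$ and blow up the smooth codimension--two subvariety $Z(y_j(1),z_{i-1})$; in the chart containing the centre of $e$ (which is this chart since $\nu_e(z_{i-1}/y_j(1))=\nu_e(M_{i-1}/y_j(1))\ge0$) set $z_i:=z_{i-1}/y_j(1)$ and $M_i:=M_{i-1}/y_j(1)$. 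After $\sum_k b_{s+\overline m,k}$ such steps the monomial becomes trivial, so $z:=y_{s+\overline m}/\prod_k y_k(1)^{b_{s+\overline m,k}}$ is a regular function on $\overline Y$ with $\nu_e$--residue $\alpha\neq0$, hence a unit; then $y_{s+\overline m}(1):=z-\alpha$ is a regular parameter, $y_{s+\overline m}=\prod_{k=1}^s y_k(1)^{b_{s+\overline m,k}}(y_{s+\overline m}(1)+\alpha)$ is the asserted SGMT with the \emph{same} constant $\alpha$, and $y_{s+\overline m}(1)=x_{r+\overline m}(1)$, so the row $x_{r+\overline m}(1)=y_{s+\overline m}(1)$ of (\ref{eq7}) holds. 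These blow ups leave $y_1(1),\dots,y_s(1)$ unchanged, so all relations $x_i=\prod_k y_k(1)^{c_{ik}(1)}$ and $y_i=\prod_k y_k(1)^{b_{ik}}$ persist, and $Z(y_1(1)\cdots y_s(1))\cup Z(y_{s+\overline m})$ stays a simple normal crossings set, so (\ref{eq*}) is preserved.

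Finally I would check the remaining assertions. The morphism $\overline Y\to\overline X$ exists because the ideals blown up to pass from $X'$ to $\overline X$ pull back, under $x_j(1)\mapsto\prod_k y_k(1)^{c_{jk}(1)}$ and $x_{r+\overline m}\mapsto y_{s+\overline m}$, to ideals of the shape $(\prod_k y_k(1)^{(\cdot)},\,y_{s+\overline m})$, which the above blow ups have made invertible on $\overline Y$ (adding a few more such blow ups if necessary); $\overline Y\to Y\in e$ since it is a composite of local blow ups performed at the centres of $e$; and $(x(1),y(1))$ is prepared of type $(s_1,r_1,l_1)\ge(s,r,l)$ because $y_1(1),\dots,y_s(1)$ and $x_1(1),\dots,x_r(1)$ are independent by Lemmas \ref{Lemma1} and \ref{Lemma50}, the matrix $C(1)=(c_{ij}(1))$ has rank $r$ by Lemma \ref{Lemma25}, and if $y_{s+\overline m}(1)$ (equivalently $x_{r+\overline m}(1)$) has become independent of $y_1(1),\dots,y_s(1)$ one enlarges $s$ and $r$ accordingly, which only increases the type. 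The main obstacle is the third step: carrying out the normalisation that makes $\prod_k y_k(1)^{b_{s+\overline m,k}}$ divide $y_{s+\overline m}$ through honest blow ups of \emph{nonsingular} centres (the naive blow up of $(\text{monomial},\,y_{s+\overline m})$ is not of this form) while certifying that the resulting constant is precisely the $\alpha$ of the given GMT --- both handled by peeling the monomial off one variable at a time in the $\nu_e$--chart and by comparing residues inside $V_e$.
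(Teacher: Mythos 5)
The proposal breaks down at its first step. You treat the first $r$ rows of the given GMT as a self-contained monomial GMT in $x_1,\ldots,x_r$ and invoke Lemma \ref{Lemma22}. But a GMT in $x_1,\ldots,x_r$ requires a unimodular exponent matrix, whereas for GMTs of the form in the hypothesis (produced by Lemma \ref{Lemma6}, and as recorded in the definition of transformations of type 4)) the $r\times r$ block $(a_{ij})_{1\le i,j\le r}$ is only guaranteed to have \emph{nonzero} determinant. Concretely, in Lemma \ref{Lemma6} the parameters $x_i(1)$ for $1\le i\le r$ are obtained from the honest toric coordinates $\overline x_i$ by multiplying by \emph{rational} powers of the unit $\overline x_{r+\overline m}+\overline\alpha$; they are not regular parameters on any intermediate modification of $\tilde X$ built from $x_1,\ldots,x_r$ alone, and they are not natural-number (or even integer-exponent) monomials in $x_1,\ldots,x_r$. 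Consequently the space $X'$ of your first paragraph does not exist, Lemma \ref{Lemma22} does not apply, the exponents $c_{jk}(1)$ and hence $b_{s+\overline m,k}=\sum_j a_{r+\overline m,j}c_{jk}(1)$ are not available as natural numbers, and the residue computation and peeling argument that follow have nothing to stand on.

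This is precisely the difficulty the paper's proof is organized around: it performs the toric principalization in \emph{all} of $y_1,\ldots,y_s,y_{s+\overline m}$ at once (pulling back the ideal defining the toric model $Z\rightarrow \AA^{r+1}$ of the $\overline x$-coordinates), obtains coordinates $\overline y_1,\ldots,\overline y_s,\overline y_{s+\overline m}$ with their own translation constant $\overline\beta$ on the $Y$ side, and then checks by the explicit linear algebra of (\ref{eq53})--(\ref{eq55}) (the identity expressing $(e_{ij})$ applied to the vector $(\tau_1,\ldots,\tau_s,1)$ as $\frac{\overline c}{\overline d}$ times $(\gamma_1,\ldots,\gamma_r,1)$) that the rational unit-power corrections converting $\overline x\mapsto x(1)$ and $\overline y\mapsto y(1)$ match on the two sides, so that $x_i(1)=\prod_j y_j(1)^{e_{ij}}$ and $x_{r+\overline m}(1)=y_{s+\overline m}(1)$ after a root-of-unity adjustment. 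Your later ideas --- comparing residues in $V_e$ to pin down $\alpha$, and realizing the required divisibility by codimension-two blowups --- are sound in themselves, but they cannot be set in motion until this matching problem for the non-unimodular block and the fractional unit powers is solved.
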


\begin{proof} Let 
$\overline x_1,\ldots,\overline x_n$ be the variables defined by (\ref{eq2}) which lead to the variables $x_1(1),\ldots,x_n(1)$ of the statement of Lemma \ref{Lemma23} by the analytic change of variables defined in Lemma \ref{Lemma6}.

Let $\nu$ be the restriction of $\nu_e$ to the quotient field $K$ of $\CC[y_1,\ldots,y_s,y_{s+\overline m}]$, which contains $\CC[x_1,\ldots,x_r,x_{r+\overline m}]$. Then $\nu(y_1),\ldots,\nu(y_s)$ are rationally independent by Lemma \ref{Lemma25} and
$\nu(y_{s+\overline m})=\nu(x_{r+\overline m})$ is rationally dependent on $\nu(x_1),\ldots,\nu(x_r)$, hence  $\nu(y_{s+\overline m})$ is rationally dependent on $\nu(y_1),\ldots,\nu(y_s)$. Let $\pi:Z\rightarrow \AA^{r+1}$ be a projective morphism of nonsingular toric varieties such that $\overline x_1,\ldots,\overline x_r,\overline x_{r+\overline m}$ are regular parameters in $\mathcal O_{Z,p}$ where $p$ is the center of $\nu$ on $Z$. We have that
\begin{equation}\label{eq51}
\begin{array}{lll}
x_i&=&\prod_{j=1}^r\overline x_j^{a_{ij}}(\overline x_{r+\overline m}+\overline \alpha)^{a_{i,r+1}}\mbox{ for }1\le i\le r\mbox{ and}\\
x_{r+\overline m}&=& \prod_{j=1}^r\overline x_j^{a_{r+1,j}}(\overline x_{r+\overline m}+\overline \alpha)^{a_{r+1,r+1}}
\end{array}
\end{equation}
where $0\ne\overline\alpha\in \CC$.

Let $J$ be a (monomial) ideal in $\CC[x_1,\ldots,x_r,x_{r+\overline m}]$ whose blow up is $Z$. By principalization of ideals, there exists a toric projective morphism $\Lambda:W\rightarrow \AA^{s+1}$  which is a product of blow ups of non singular varieties such that 
 $J\mathcal O_W$ is locally principal. Let $q_1$ be the center of $\nu$ on $W$. Since $\nu(y_1),\ldots,\nu(y_s)$ are rationally independent, and $\Lambda$ factors through $Z$, we have that $\mathcal O_{W,q_1}$ dominates $\mathcal O_{Z,p}$ and $\mathcal O_{W,q_1}$ has regular parameters $\overline y_1,\ldots,\overline y_s,\overline y_{s+\overline m}$ such that 
\begin{equation}\label{eq52}
\begin{array}{lll}
y_i&=&\prod_{j=1}^s\overline y_j^{b_{ij}}(\overline y_{s+\overline m}+\overline \beta)^{b_{i,s+1}}\mbox{ for }1\le i\le s\mbox{ and}\\
y_{s+\overline m}&=& \prod_{j=1}^s\overline y_i^{b_{s+1,j}}(\overline y_{s+\overline m}+\overline \beta)^{b_{s+1,s+1}}
\end{array}
\end{equation}
where $0\ne\overline\beta\in \CC$, $b_{ij}\in \NN$ and ${\rm Det}(b_{ij})=\pm 1$.

The variety $W$ is the blow up of a monomial ideal $H$ in $\CC[y_1,\ldots,y_s,y_{s+\overline m}]$. Let $Y_1\rightarrow \tilde Y$ be the blow up of $H$ in a neighborhood of $e_{\tilde Y}$. Let $e_{Y_1}$ be the center of $e$ on $Y_1$. Then 
$$
\overline y_1,\ldots,\overline y_s,y_{s+1},\ldots,y_{s+\overline m-1},\overline y_{s+\overline m},y_{s+\overline m+1},\ldots,y_n
$$
are regular parameters in $\mathcal O_{Y_1,e_{Y_1}}^{\rm an}$.

In $\mathcal O_{X_1,e_{X_1}}^{\rm an}$, we have the following relations between the variables $\overline x$ and $x(1)$. 
\begin{equation}\label{eq53}
\begin{array}{lll}
\overline x_i&=&(x_{r+\overline m}(1)+\alpha)^{\overline c\gamma_i}x_i(1)\mbox{ for }1\le i\le r\mbox{ and}\\
\overline x_{r+\overline m}&=&(x_{r+\overline m}(1)+\alpha)^{\overline c}-\overline \alpha
\end{array}
\end{equation}
with $\alpha^{\overline c}=\overline \alpha$ and
$$
(a_{ij})\left(\begin{array}{c} \gamma_1\\ \vdots\\ \gamma_r\\1\end{array}\right) =
\left(\begin{array}{c}0\\ \vdots\\ 0\\ \frac{1}{\overline c}\end{array}\right)
$$
with
$$
\overline c={\rm det}\left(\begin{array}{ccc}
a_{11}&\cdots& a_{1r}\\
&\vdots&\\
a_{r1}&\cdots&a_{rr}
\end{array}\right)
{\rm det}\left(\begin{array}{ccc}
a_{11}&\cdots&a_{1,r+1}\\
&\vdots&\\
a_{r+1,1}&\cdots& a_{r+1,r+1}
\end{array}\right).
$$

In $\mathcal O_{Y_1,e_{Y_1}}^{\rm an}$, we have the following relations between the variables $\overline y$ and $y(1)$ of the proof of Lemma \ref{Lemma6}.
\begin{equation}\label{eq54}
\begin{array}{lll}
\overline y_i&=&(y_{s+\overline m}(1)+\beta)^{\overline d\tau_i}y_i(1)\mbox{ for }1\le i\le s\mbox{ and}\\
\overline y_{s+\overline m}&=&(y_{s+\overline m}(1)+\beta)^{\overline d}-\overline \beta
\end{array}
\end{equation}
with $\beta^{\overline d}=\overline \beta$ and
$$
(b_{ij})\left(\begin{array}{c} \tau_1\\ \vdots\\ \tau_s\\1\end{array}\right) =
\left(\begin{array}{c}0\\ \vdots\\ 0\\ \frac{1}{\overline d}\end{array}\right)
$$
with
$$
\overline d={\rm det}\left(\begin{array}{ccc}
b_{11}&\cdots& b_{1s}\\
&\vdots&\\
b_{s1}&\cdots&b_{ss}
\end{array}\right)
{\rm det}\left(\begin{array}{ccc}
b_{11}&\cdots&b_{1,s+1}\\
&\vdots&\\
b_{s+1,1}&\cdots& b_{s+1,s+1}
\end{array}\right).
$$

We have expressions
$$
\overline x_i= x_1^{g_{i1}}\cdots x_r^{g_{ir}} x_{r+\overline m}^{g_{i,r+1}}\mbox{ for }1\le i\le r
$$
and
$$
\overline x_{r+\overline m}+\overline \alpha= x_1^{g_{r+1,1}}\cdots x_r^{g_{r+1,r}}x_{r+\overline m}^{g_{r+1,r+1}}
$$
where $(g_{ij})=(a_{ij})^{-1}$ and
$$
\overline y_i= y_1^{h_{i1}}\cdots y_s^{h_{is}} y_{s+\overline m}^{h_{i,s+1}}\mbox{ for }1\le i\le s
$$
and
$$
\overline y_{s+\overline m}+\overline \beta= y_1^{h_{s+1,1}}\cdots y_s^{h_{s+1,s}}y_{s+\overline m}^{h_{s+1,s+1}}
$$
where $(h_{ij})=(b_{ij})^{-1}$.

Substituting (\ref{eq5}), we have
$$
\overline x_i=y_1^{d_{i1}}\cdots y_s^{d_{is}}y_{s+\overline m}^{d_{i,s+1}}\mbox{ for }1\le i\le r
$$
and
$$
\overline x_{r+\overline m}+\overline\alpha=y_1^{d_{r+1,1}}\cdots y_s^{d_{r+1,s}}y_{s+\overline m}^{d_{r+1,s+1}}
$$
where 
$$
(d_{ik})=(a_{ij})^{-1}\left(\begin{array}{cc} (c_{jk})&0\\ 0& 1\end{array}\right).
$$
We have
\begin{equation}\label{eq55}
\begin{array}{lll}
\overline x_i&=& \overline y_1^{e_{i1}}\cdots \overline y_s^{e_{is}}(\overline y_{s+\overline m}+\overline \beta)^{e_{i,s+1}}\mbox{ for $1\le i\le r$ and}\\
\overline x_{r+\overline m}+\overline \alpha&=& \overline y_1^{e_{r+1,1}}\cdots \overline y_s^{e_{r+1,s}}(\overline y_{s+\overline m}+\overline\beta)^{e_{r+1,s+1}}
\end{array}
\end{equation}
where $(e_{ij})=(d_{ij})(h_{ij})^{-1}$. Since $\nu(\overline x_{r+\overline m}+\overline\alpha)=\nu(\overline y_{s+\overline m}+\overline\beta)=0$ and $\nu(\overline y_1),\ldots,\nu(\overline y_s)$ are rationally independent we have that
$$
0=e_{r+1,1}=\cdots = e_{r+1,s}.
$$
We then have that $e_{s+1,s+1}\ne 0$ since $\mbox{rank}(e_{ij})=r+1$. We have that $e_{ij}\ge 0$ for $1\le i\le r+1$ and $1\le  j\le s+1$ since $\Lambda$ factors through $Z$. We compute

$$
\begin{array}{lll}
(e_{ij})\left(\begin{array}{c} \tau_1\\ \vdots\\ \tau_s\\ 1\end{array}\right)
&=&(a_{ij})^{-1}\left(\begin{array}{cc} (c_{ij})&0\\ 0 & 1\end{array}\right)
\left(\begin{array}{c} 0\\ \vdots\\ 0\\ \frac{1}{\overline d}\end{array}\right)\\
&=& (a_{ij})^{-1}\left(\begin{array}{c} 0\\ \vdots\\ 0 \\ \frac{1}{\overline d}\end{array}\right)
=\frac{\overline c}{\overline d}\left(\begin{array}{c} \gamma_1\\ \vdots\\ \gamma_r\\ 1\end{array}\right).
\end{array}
$$
Substituting  (\ref{eq53}) and  (\ref{eq54}) into (\ref{eq55}), we obtain
$$
x_i(1)(x_{r+\overline m}(1)+\alpha)^{\overline c\gamma_i}
=y_1(1)^{e_{i1}}\cdots y_s(1)^{e_{s1}}(y_{s+\overline m}(1)+\beta)^{\overline c\gamma_i}\mbox{ for }1\le i\le r
$$
and
$$
(\overline x_{r+\overline m}(1)+\alpha)^{\overline c}=(\overline y_{s+\overline m}(1)+ \beta)^{\overline c}.
$$
We thus have an expression (after possibly replacing $\overline y_{s+\overline m}$ with its product times a root of unity) 
$$
x_i(1)=\prod_{j=1}^sy_j(1)^{e_{ij}}\mbox{ for }1\le i\le r
$$
and
$$
x_{r+\overline m}(1)=y_{s+\overline m}(1)
$$
giving the conclusions of the lemma.
\end{proof}

%\begin{Lemma}\label{Lemma24} Suppose that $\overline m>l$ and we have an expression
%$$
%x_{r+\overline m}=y_1^{b_1}\cdots y_s^{b_s}u
%$$
%with $u\in \CC\{\{y_1,\ldots,y_n\}\}$
%a unit and such that $y_1^{b_1}\cdots y_s^{b_s}$ is algebraic over $x_1,\ldots,x_r$. Then there exist $\gamma_i\un \QQ$ and an analytic %change of variables
%$$
%y_i(1)=y_iu^{\gamma_i}\mbox{ for }1\le i\le s
%$$
%such that we have relations (\ref{eq5}) for the variables $(x,y(1))$ with 
%$$
%x_{r+\overline m}=y_1(1)^{b_1}\cdots y_s(1)^{b_s}.
%$$
%\end{Lemma}

\begin{Lemma}\label{Lemma35} Suppose that $(x,y)$ are prepared of type $(s,r,l)$, $\overline m>l$ and we have an expression
$$
x_{r+\overline m}=y_1^{c_{r+1,1}}\cdots y_s^{c_{r+1,s}}u
$$
where $u\in \CC\{\{y_1,\ldots,y_n\}\}$ is a unit and
$$
x_i=\prod_{j=1}^rx_j(1)^{a_{ij}}\mbox{ for }1\le i\le r\mbox{ and}
$$
$$
x_{r+\overline m}=\prod_{j=1}^rx_j(1)^{a_j}(x_{r+\overline m}(1)+\alpha)\mbox{ with }0\ne\alpha\in\CC
$$
is a GMT in $x_1,\ldots,x_r,x_{r+\overline m}$. Then there exists a SGMT
$$
y_i=\prod_{j=1}^sy_j(1)^{b_{ij}}\mbox{ for }1\le i\le s
$$
in $y_1,\ldots,y_s$ such that the variables $(x(1),y(1))$ are prepared of type $(s_1,r_1,l_1)$ with $(s_1,r_1,l_1)\ge (s,r,l)$.
\end{Lemma}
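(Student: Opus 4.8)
The plan is to imitate the proofs of Lemma \ref{Lemma22} and Lemma \ref{Lemma23}: produce the required SGMT on the $Y$-side by toric principalization of the monomial ideal which realizes the given GMT on the $X$-side. Write $M=y_1^{c_{r+1,1}}\cdots y_s^{c_{r+1,s}}$, so $x_{r+\overline m}=Mu$ with $u$ a unit. Let $\nu$ be the restriction of $\nu_e$ to the quotient field $K$ of the subring $\CC[y_1,\ldots,y_s][x_{r+\overline m}]$ of $\mathcal O_{\tilde Y,e_{\tilde Y}}^{\rm an}$; this contains $\CC[x_1,\ldots,x_r,x_{r+\overline m}]$, and $\nu(x_{r+\overline m})=\nu(M)$ because $u$ is a unit. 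By independence of $y_1,\ldots,y_s$ and Lemma \ref{Lemma25}, $\nu(y_1),\ldots,\nu(y_s)$ are rationally independent, hence so are $\nu(x_1),\ldots,\nu(x_r)$ (the matrix $(c_{ij})$ has rank $r$), while $\nu(x_{r+\overline m})=\nu(M)$ is a rational linear combination of them --- as it must be, since the given GMT exists. The essential new point, which distinguishes this from Lemma \ref{Lemma23}, is that $x_{r+\overline m}$ carries no valuation-theoretic information outside $y_1,\ldots,y_s$, so the blow ups on $Y$ can be taken inside $y_1,\ldots,y_s$.

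I would then realize the $X$-side GMT, as in the proof of Lemma \ref{Lemma23}, via a projective nonsingular toric variety $\pi:Z\to\AA^{r+1}$ (the blow up of a monomial ideal $J$ in $\CC[x_1,\ldots,x_r,x_{r+\overline m}]$); the extra coordinate $x_{r+\overline m}$ is needed because the $r\times r$ block of the GMT matrix need not be unimodular. On it there are regular parameters $\overline x_1,\ldots,\overline x_r,\overline x_{r+\overline m}$ at the center $p$ of $\nu$ with $\nu(\overline x_1),\ldots,\nu(\overline x_r)$ rationally independent and $\overline x_{r+\overline m}+\overline\alpha$ a unit, and the $\lambda_j$-twist of Lemma \ref{Lemma6} makes each $x_j(1)$ a Laurent monomial (with rational exponents) in $x_1,\ldots,x_r$ alone. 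I would next form the monomial map $\AA^s\to\AA^{r+1}$ sending $x_i\mapsto\prod_j y_j^{c_{ij}}$ $(i\le r)$ and $x_{r+\overline m}\mapsto M$; since $u$ is a unit, the pullback of $J$ under this map equals the pullback of $J$ under $\tilde\phi$. Principalizing $J\mathcal O_{\AA^s}$ by a product of blow ups of nonsingular subvarieties $\Lambda:W\to\AA^s$, the rational map $\AA^s\dashrightarrow Z$ becomes a morphism near the centers of $\nu$; since $\Lambda$ is toric and the $\nu(y_j)$ are rationally independent, the center $q_1$ of $\nu$ on $W$ is torus-invariant, with regular parameters $y_1(1),\ldots,y_s(1)$ satisfying $y_i=\prod_j y_j(1)^{b_{ij}}$, $b_{ij}\in\NN$, $\det(b_{ij})=\pm1$. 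Taking $Y_1\to\tilde Y$ to be the blow up, in a neighbourhood of $e_{\tilde Y}$, of the monomial ideal $H$ in $\CC[y_1,\ldots,y_s]$ with $W$ its blow up, we obtain an SGMT in $y_1,\ldots,y_s$ with $Y_1\to Y\in e$, with $y_1(1),\ldots,y_s(1),y_{s+1},\ldots,y_n$ a regular system of parameters, and with a morphism $\phi_1:Y_1\to X_1$.

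Finally I would check the conclusion. Substituting (\ref{eq5}) and $y_i=\prod_j y_j(1)^{b_{ij}}$ into the expressions for $x_1(1),\ldots,x_r(1)$ displays them as Laurent monomials in $y_1(1),\ldots,y_s(1)$; since they lie in $\mathcal O_{Y_1,e_{Y_1}}^{\rm an}$ these exponents are non-negative integers, so $x_i(1)=\prod_j y_j(1)^{c_{ij}(1)}$ for $1\le i\le r$, and $x_{r+\overline m}(1)+\alpha$ becomes a unit, contributing no new coordinate --- consistent with $\overline m>l$. The SGMT leaves $y_{s+1},\ldots,y_n$ and $x_{r+1},\ldots,x_{r+l}$ fixed, so $x_{r+1}(1)=y_{s+1}(1),\ldots,x_{r+l}(1)=y_{s+l}(1)$, and (\ref{eq7}) holds. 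By Lemma \ref{Lemma6}, $x_1(1),\ldots,x_r(1)$ are independent; by Lemma \ref{Lemma1}, $y_1(1),\ldots,y_s(1)$ are independent; either $y_1(1),\ldots,y_s(1),y_t(1)$ remains dependent for all $t>s$ and we keep type $(s,r,l)$, or we absorb some $y_t(1)$ into the independent block, increasing $s$ without decreasing $r$ or $r+l$. Hence $(x(1),y(1))$ are prepared of type $(s_1,r_1,l_1)\ge(s,r,l)$. The step needing the most care is the compatibility of the two sides: that the toric blow up of $\AA^s$ chosen to monomialize $x_1,\ldots,x_r$ automatically produces a morphism to $Z$ respecting the $x_{r+\overline m}$-row of the GMT. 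This is exactly where one uses that $u$ is a unit, that $M$ lies in $\CC[y_1,\ldots,y_s]$, and that the $\lambda_j$-twist of Lemma \ref{Lemma6} renders $x_1(1),\ldots,x_r(1)$ independent of $x_{r+\overline m}$.
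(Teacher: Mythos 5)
Your proposal is correct and takes essentially the same approach as the paper: the heart of the matter in both is that, since $u$ is a unit with $\nu_e(u)=0$ and $\nu_e(y_1),\ldots,\nu_e(y_s)$ are rationally independent, the $x_{r+\overline m}$-row of the GMT contributes only a unit on the $Y$-side, so one only needs to principalize a monomial ideal in $y_1,\ldots,y_s$ (forcing the SGMT to be monomial) and then check that the resulting Laurent exponents of $x_1(1),\ldots,x_r(1)$ are nonnegative integers. The paper carries this out by direct matrix manipulation of the normal form from Lemma \ref{Lemma6} together with Lemma \ref{Lemma21}, whereas you re-run the toric construction of Lemma \ref{Lemma23}; the two are the same argument in different packaging.
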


\begin{proof} By Lemma \ref{Lemma6}, the GMT $(x)\rightarrow (x(1))$ is determined by a monoidal transform
$$
x_i=\left(\prod_{j=1}^r\overline x_i^{g_{ij}}\right)(\overline x_{r+\overline m}+\overline \alpha)^{g_{i,r+1}}\mbox{ for }1\le i\le r\mbox{ and}
$$
$$
x_{r+\overline m}=\left(\prod_{j=1}^r\overline x_i^{g_{r+1,j}}\right)(\overline x_{r+\overline m}+\overline\alpha)^{g_{r+1,r+1}}
$$
where ${\rm det}(g_{ij})=\pm 1$ and 
\begin{equation}\label{eq36}
\begin{array}{lll}
x_i(1)&=&(\overline x_{r+\overline m}+\overline \alpha)^{\lambda_i}\overline x_i\mbox{ for }1\le i\le r\mbox{ and}\\
x_{r+\overline m}(1)&=& (\overline x_{r+\overline m}+\overline \alpha)^{\lambda}-\overline\alpha^{\lambda},\,\,\alpha=\overline\alpha^{\lambda}
\end{array}
\end{equation}
for suitable $\lambda_i,\lambda\in \QQ$ (with $\lambda\ne 0$). Letting $(e_{ij})=(g_{ij})^{-1}$ and $(d_{ij})=(g_{ik})^{-1}(c_{kj})$, we have
$$
\overline x_i=\left(\prod_{j=1}^s y_j^{d_{ij}}\right) u^{e_{i,r+1}}\mbox{ for }1\le i\le r\mbox{ and}
$$
$$
\overline x_{r+\overline m}+\overline \alpha=\left(\prod_{j=1}^s y_j^{d_{r+1,j}}\right) u^{e_{r+1,r+1}}.
$$
The values $\nu_e(y_1),\ldots,\nu_e(y_s)$ are rationally independent by Lemma \ref{Lemma25}. Since 
$$
\nu_e(x_{r+\overline m}+\overline\alpha)=\nu_e(u)=0,
$$
 we have that $d_{r+1,j}=0$ for $1\le j\le s$. Thus by (\ref{eq36}),
 \begin{equation}\label{eq37}
 x_{r+\overline m}(1)=u^{\lambda e_{r+1,r+1}}-\alpha\in \CC\{\{y_1,\ldots,y_n\}\}.
 \end{equation}
Write
$$
\prod_{j=1}^sy_j^{d_{ij}}=\frac{M_i}{N_i}
$$
where $M_i,N_i$ are monomials in $y_1,\ldots,y_s$ for $1\le i\le r$. Let $K$ be the ideal $K=\prod_{i=1}^s(M_i,N_i)$ in $\CC\{\{y_1,\ldots,y_s\}\}$. By Lemma \ref{Lemma21}, there exists a (monomial) SGMT in $y_1,\ldots,y_s$
$$
y_i=\prod_{j=1}^sy_j(1)^{b_{ij}}\mbox{ for }1\le i\le s
$$
such that $K\mathcal O_{Y(1),e_{Y(1)}}^{\rm an}$ is a principal ideal. $y_1(1),\ldots,y_s(1)$ are independent by Lemma \ref{Lemma1}.
Since $\nu_e(M_i/N_i)=\nu_e(x_i)>0$ we have that $N_i$ divides $M_i$ in $\mathcal O_{Y(1),e_{Y(1)}}^{\rm an}$ for $1\le i\le s$ and so we have an expression
$$
\overline x_i=\left(\prod_{j=1}^sy_j(1)^{c_{ij}(1)}\right)u^{e_{i,r+1}}\mbox{ for }1\le i\le r
$$
with $c_{ij}(1)\in \NN$. Since $x_i(1)$ is necessarily a Laurent monomial in $y_1(1),\ldots,y_s(1)$ for $1\le i\le s$, comparing with (\ref{eq36}), we see that 
$$
x_i(1)=\prod_{j=1}^sy_j(1)^{c_{ij}(1)}\mbox{ for }1\le i\le r.
$$
Since $x_{r+\overline m}(1)\in \CC\{\{y_1(1),\ldots,y_n(1)\}\}$ by (\ref{eq37}), we have attained the conclusions of the lemma.

\end{proof}

Suppose that $(x,y)$ are prepared of type $(s,r,l)$.
We will perform sequences of transformations of the following 10 types for $1\le i\le 10$ each of which will be called a transformation of type i) from the variables $(x,y)$ to $(x(1),y(1))$. The variables $x(1)$ and $y(1)$ are respective regular parameters in $\mathcal O_{X(1),e_{X(1)}}^{\rm an}$ and $\mathcal O_{Y(1),e_{Y(1)}}^{\rm an}$ from the corresponding diagram of  quasi regular analytic maps
$$
\begin{array}{ccc}
Y(1)&\stackrel{\phi(1)}{\rightarrow}&X(1)\\
\downarrow&&\downarrow\\
\tilde Y&\stackrel{\tilde \phi}{\rightarrow}&\tilde X
\end{array}
$$
where $Y(1)\rightarrow \tilde Y\rightarrow Y\in e$ and $X(1)\rightarrow \tilde X\rightarrow X\in e$.
We have that $(x(1),y(1))$ is prepared of type $(s_1,r_1,l_1)$ with $(s_1,r_1,l_1)\ge (s,r,l)$ for all 10 types of transformations.
The fact that none of $s,r$ or $r+l$ can go down after a transformation follows from Lemmas \ref{Lemma1}, \ref{Lemma6} and \ref{Lemma10}. Existence of transformations of types 2) and 4) follow from Lemmas \ref{Lemma22} and  \ref{Lemma23}. A transformation of type 9) will be constructed in the proof of Proposition \ref{Theorem2} (using Lemma \ref{Lemma35}).

Transformations of types 1) to 4) are the most basic and are used most of the time. Transformations of types 1) - 6) and 1) - 8) are used in blocks, depending on the lemma or proposition. Transformations of type 3), 5) or 10) are often used to make  a Tschirnhaus transformation (Lemma \ref{Lemma9}). A transformation of type 8) is often used to make a change of variables, giving an increase in $r$. A transformation of type 9) is used at the end of the proof of Proposition \ref{Theorem2}.

\begin{enumerate}
\item[1)] A (necessarily monomial) SGMT in $y_1,\ldots,y_s$,
$$
y_i=\prod_{j=1}^sy_j(1)^{b_{ij}}\mbox{ for }1\le i\le s,
$$
with $\mbox{Det}(b_{ij})=\pm 1$.
\item[2)] A (necessarily monomial) SGMT in $x_1,\ldots,x_r$ followed by a (necessarily monomial) SGMT in $y_1,\ldots,y_s$,
$$
x_i=\prod_{j=1}^rx_j(1)^{a_{ij}}\mbox{ for }1\le i\le r
$$
and
$$
y_i=\prod_{j=1}^sy_j(1)^{b_{ij}}\mbox{ for }1\le i\le s
$$
with $\mbox{Det}(a_{ij})=\pm 1$ and $\mbox{Det}(b_{ij})=\pm 1$.
\item[3)] A change of variables $x_{r+\overline m}(1)=x_{r+\overline m}-\Phi$ for some $\overline m$ with $1\le \overline m\le l$ and
$\Phi\in \CC\{\{x_1,\ldots,x_{r+\overline m-1}\}\}$, followed by a change of variables
$y_{s+\overline m}(1)=y_{s+\overline m}-\Phi$.
\item[4)] A SGMT in $x_1,\ldots,x_r,x_{r+\overline m}$ followed by a SGMT in $y_1,\ldots,y_s, y_{s+\overline m}$ for some $\overline m$ with $1\le \overline m\le l$,
$$
x_i=\prod_{j=1}^rx_j(1)^{a_{ij}}\mbox{ for }1\le i\le r\mbox{ and }x_{r+\overline m}=\prod_{j=1}^rx_j(1)^{a_j}(x_{r+\overline m}(1)+\alpha)
$$
for some $0\ne \alpha\in \CC$, and
$$
y_i=\prod_{j=1}^sy_j(1)^{b_{ij}}\mbox{ for }1\le i\le s\mbox{ and }y_{s+\overline m}=\prod_{j=1}^sy_j(1)^{b_j}(y_{s+\overline m}(1)+\alpha)
$$
with $\mbox{Det}(a_{ij})\ne 0$ and $\mbox{Det}(b_{ij})\ne 0$ and $\prod_{j=1}^sy_j(1)^{b_j}=\prod_{j=1}^rx_j(1)^{a_j}$.
\item[5)] A change of variables
$y_{s+\overline m}(1)=F$ with $F\in \CC\{\{y_1,\ldots,y_{s+\overline m}\}\}$ and 
$$
\mbox{ord }F(0,\ldots,0,y_{s+\overline m})=1
$$
 for some $\overline m$ with $\overline m>l$.
\item[6)] A SGMT in $y_1,\ldots,y_s, y_{s+\overline m}$, for some $\overline m$ with $l+1\le \overline m\le n-s$.
\item[7)] An interchange of variables $y_{s+i}$ and $y_{s+\overline m}$ with $s+l<s+i<s+\overline m\le n$.
\item[8)] A change of variables, replacing $y_i$ with $y_i\gamma^{c_i}$ for $1\le i\le s$ for some unit $\gamma\in\CC\{\{ y_1,\ldots,y_n\}\}$ and $c_i\in\QQ$ such that the form (\ref{eq5}) is preserved.
\item[9)] 
 A SGMT in $x_1,\ldots,x_r,x_{r+\overline m}$ followed by a SGMT in $y_1,\ldots,y_s$  (supposing  that $\overline m>l$ and
$$
x_{r+\overline m}=y_1^{b_1}\cdots y_s^{b_s}u
$$
where $u\in \CC\{\{ y_1,\ldots,y_n\}\}$ is a unit),
$$
x_i=\prod_{j=1}^rx_j(1)^{a_{ij}}\mbox{ for }1\le i\le r\mbox{ and }x_{r+\overline m}=\prod_{j=1}^rx_j(1)^{a_{j}}(x_{r+\overline m}(1)+\alpha)
$$
for some $0\ne \alpha\in \CC$, and
$$
y_i=\prod_{j=1}^sy_j(1)^{b_{ij}}\mbox{ for }1\le i\le s
$$
with $\mbox{Det}(b_{ij})=\pm 1$ and $\mbox{Det}(a_{ij})\ne 0$ and $\prod_{j=1}^sy_j^{b_j}=\prod_{j=1}^rx_j(1)^{a_{j}}$.
\item[10)] A change of variables, replacing $x_{r+\overline m}$ with $x_{r+\overline m}-\Phi$ for some $l<\overline m\le m-r$
and $\Phi\in \CC\{\{x_1,\ldots,x_{r+\overline m-1}\}\}$.
\end{enumerate}

In the following, we will assume that $(s,r,l)$ is preserved by these transformations. If this does not hold, then we just start over again with the assumption of the higher $(s,r,l)$. As these numbers cannot increase indefinitely, we will eventually reach a situation where they remain stable under the above transformations.

A sequence of transformations 
$$
(x,y)\rightarrow (x(1),y(1)) \rightarrow \cdots \rightarrow (x(t-1),y(t-1))\rightarrow (x(t),y(t))
$$
will be called a sequence of transformations from $(x,y)$ to $(x(t),y(t))$.

Observe that a sequence of transformations (which are of types 1) - 10)) satisfy the condition (\ref{eq*}).

\section{A decomposition  of series}\label{DecSeries}

In this section, suppose that $(x,y)$ are prepared of type $(s,r,l)$. As commented after  (\ref{eq5}), we have a natural inclusion of formal power series rings
$$
\CC[[x_1,\ldots,x_{r+l}]]\subset \CC[[y_1,\ldots,y_{s+l}]].
$$

\begin{Definition}\label{Def500}
Suppose that $g\in k[[y_1,\ldots,y_n]]$. We will say that $g$ is algebraic over $x_1,\ldots,x_{r+l}$ if $g\in \CC[[y_1,\ldots,y_{s+l}]]$ and $g$ has an  expansion 
\begin{equation}\label{eq4}
g=\sum a_{i_1,\ldots,i_{s+l}}y_1^{i_1}\cdots y_s^{i_s}y_{s+1}^{i_{s+1}}\cdots y_{s+l}^{i_{s+l}}
\end{equation}
where $a_{i_1,\ldots,i_{s+l}}\in \CC$ is nonzero only if 
$$
\mbox{rank}\left(\begin{array}{ccc} 
c_{11}&\cdots& c_{1s}\\
&\vdots&\\
c_{r1}&\cdots&c_{rs}\\
i_1&\cdots&i_s
\end{array}\right)=r.
$$
\end{Definition}

Observe that the property that $g$ is algebraic over $x_1,\ldots,x_{r+l}$ is preserved by a transformation of type 8).

\begin{Lemma}\label{Lemma26} Suppose that $x_1^{b_1}\cdots x_r^{b_r}$ with $b_1,\ldots,b_r\in \ZZ$ is such that 
$\prod_{i=1}^r(y_1^{c_{i1}}\cdots y_s^{c_{is}})^{b_i}\in \CC[y_1,\ldots,y_s]$ is algebraic over $x_1,\ldots,x_r$. Then there exists a SGMT
$$
x_i=\prod_{j=1}^rx_j(1)^{a_{ij}}\mbox{ for }1\le i\le r
$$
such that 
$$
x_1^{b_1}\cdots x_r^{b_r}=x_1(1)^{b_1(1)}\cdots x_r(1)^{b_r(1)}
$$
with $b_i(1)\in \NN$ for all $i$.
\end{Lemma}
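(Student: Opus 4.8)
The plan is to reduce the statement to the toric principalization of Lemma~\ref{Lemma21} applied to the independent variables $x_1,\ldots,x_r$, once the hypothesis has been translated into an inequality on exponents. Write $g=x_1^{b_1}\cdots x_r^{b_r}$. Using $x_i=y_1^{c_{i1}}\cdots y_s^{c_{is}}$ for $1\le i\le r$ we get $g=y_1^{e_1}\cdots y_s^{e_s}$ with $(e_1,\ldots,e_s)=(b_1,\ldots,b_r)(c_{ij})$. The assumption $\prod_{i=1}^r(y_1^{c_{i1}}\cdots y_s^{c_{is}})^{b_i}\in\CC[y_1,\ldots,y_s]$ says exactly that $e_j\ge 0$ for all $j$; the requested algebraicity over $x_1,\ldots,x_r$ is then automatic, since $(e_1,\ldots,e_s)$ is an integer combination of the rows of $(c_{ij})$, which has rank $r$. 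Since $\nu_e(y_j)>0$ for all $j$, it follows that $\nu_e(g)=\sum_j e_j\nu_e(y_j)\ge 0$, and $\nu_e(g)=0$ forces $e=0$ and hence $b=0$; in that trivial case the identity transformation works, so I may assume $\nu_e(g)>0$.

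Next I would write $g=M/N$ with $M=\prod_{b_i\ge 0}x_i^{b_i}$ and $N=\prod_{b_i<0}x_i^{-b_i}$, two monomials in $\CC[x_1,\ldots,x_r]$, and apply Lemma~\ref{Lemma21} (with $x_1,\ldots,x_r$ in the role of $x_1,\ldots,x_s$ there, and $M_1=M$, $M_2=N$) to obtain a monomial SGMT $x_i=\prod_{j=1}^r x_j(1)^{a_{ij}}$ with $\det(a_{ij})=\pm1$, after which the ideal $(M,N)$ is principal in $\mathcal O_{X(1),e_{X(1)}}^{\rm an}$ (and $x_1(1),\ldots,x_r(1)$ stay independent by Lemma~\ref{Lemma1}). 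Because that regular local ring is dominated by $\nu_e$ and $\nu_e(M)-\nu_e(N)=\nu_e(g)>0$, the generator of $(M,N)$ cannot be a unit times $M$, so $(M,N)=(N)$; hence $N\mid M$ in $\mathcal O_{X(1),e_{X(1)}}^{\rm an}$ and $g=M/N\in\mathcal O_{X(1),e_{X(1)}}^{\rm an}$.

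Finally, substituting the SGMT into $g=M/N$ gives $g=\prod_{j=1}^r x_j(1)^{b_j(1)}$ as an element of the fraction field, where $b_j(1)=\sum_{i=1}^r b_i a_{ij}\in\ZZ$; I would then invoke the elementary fact that a Laurent monomial in part of a regular system of parameters which lies in the regular local ring has all exponents nonnegative. Concretely, if $b_{j_0}(1)<0$, clear denominators to get $g\cdot\prod_{b_j(1)<0}x_j(1)^{-b_j(1)}=\prod_{b_j(1)\ge 0}x_j(1)^{b_j(1)}$, and reduce modulo the prime $(x_{j_0}(1))$: the left side vanishes while the right side is a nonzero monomial in the domain $\mathcal O_{X(1),e_{X(1)}}^{\rm an}/(x_{j_0}(1))$, a contradiction. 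Thus every $b_j(1)\in\NN$ and $x_1^{b_1}\cdots x_r^{b_r}=x_1(1)^{b_1(1)}\cdots x_r(1)^{b_r(1)}$, as required. The only delicate point is identifying which of $M,N$ generates the principalized ideal, and this is settled purely by the valuation $\nu_e$ together with $\nu_e(g)>0$; I do not expect any further obstacle.
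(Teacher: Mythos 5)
Your proof is correct and follows essentially the same route as the paper: write the Laurent monomial as a quotient $M/N$ of monomials in $x_1,\ldots,x_r$, note that the hypothesis forces $\nu_e(M)\ge\nu_e(N)$, principalize $(M,N)$ by the toric SGMT of Lemma \ref{Lemma21}, and conclude $N\mid M$ so that all exponents become nonnegative. The extra details you supply (the trivial case $\nu_e(g)=0$, and the explicit argument that a Laurent monomial lying in the regular local ring has nonnegative exponents) are correct elaborations of steps the paper leaves implicit.
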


\begin{proof} Let $\nu$ be the restriction of $\nu_e$ to the quotient field of $\CC[y_1,\ldots,y_s]$. We have $\nu(x_1^{b_1}\cdots x_r^{b_r})\ge 0$. Write $x_1^{b_1}\cdots x_r^{b_r}=\frac{M_1}{M_2}$ where $M_1$ and $M_2$ are monomials in $x_1,\ldots,x_r$. We have that
$\nu(M_1)\ge \nu(M_2)$. By Lemma \ref{Lemma21}, there exists a monomial SGMT in $x_1,\ldots,x_r$ such that the ideal generated by $M_1$ and $M_2$ in $\mathcal O_{X(1),e_{X(1)}}^{\rm an}$ is principal. Since $\nu(M_1)\ge \nu(M_2)$, we have that $M_2$ divides $M_1$ in $\mathcal O_{X(1),e_{X(1)}}^{\rm an}$, giving the conclusions of the lemma.
\end{proof}

Suppose that $g\in \CC[[y_1,\ldots,y_{s+l}]]$. As on page 1540 of \cite{LMTE}, we have an expression
\begin{equation}\label{eq6}
g=\sum_{[\Lambda]\in \left(\ZZ^s/(\QQ^rC)\cap \ZZ^s\right)} h_{[\Lambda]}
\end{equation}
where
\begin{equation}\label{eq60}
h_{[\Lambda]}=\sum_{\alpha\in \NN^s\mid [\alpha]=[\Lambda]}g_{\alpha}y_1^{\alpha_1}\cdots y_s^{\alpha_s}
\end{equation}
with $g_{\alpha}\in \CC[[y_{s+1},\ldots,y_{s+l}]]$. 

If $g\in \CC\{\{y_1,\ldots,y_{s+l}\}\}$ then each $h_{[\Lambda]}\in \CC\{\{y_1,\ldots,y_{s+l}\}\}$ by the criterion of (\ref{eq42}).

\begin{Proposition}\label{Prop2}
Suppose that $\Lambda=(\lambda_1,\ldots,\lambda_s)\in \NN^s$ is fixed.
Then there exists a SGMT of type 2), $(x,y)\mapsto (x(1),y(1))$,  $w_1,\ldots,w_r\in \NN$ and $d\in\ZZ_{>0}$ such that 
\begin{equation}\label{eq64}
\delta_{[\Lambda]}:= \frac{h_{[\Lambda]}}{y_1^{\lambda_1}\cdots y_s^{\lambda_s}}x_1^{w_1}\cdots x_r^{w_r}\in \CC[[x_1(1)^{\frac{1}{d}},\ldots,x_r(1)^{\frac{1}{d}},x_{r+1}(1),\ldots,x_{r+l}(1)]].
\end{equation}
If $[\Lambda]=0$, we further have
$$
h_{[\Lambda]}\in \CC[[x_1(1)^{\frac{1}{d}},\ldots,x_r(1)^{\frac{1}{d}},x_{r+1}(1),\ldots,x_{r+l}(1)]].
$$
 If $g\in \CC\{\{y_1,\ldots,y_{s+l}\}\}$, then $\delta_{[\Lambda]}\in \CC\{\{x_1(1)^{\frac{1}{d}},\ldots,x_r(1)^{\frac{1}{d}},x_{r+1}(1),\ldots,x_{r+l}(1)\}\}$ by the criterion (\ref{eq42}).
\end{Proposition}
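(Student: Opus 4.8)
The plan is to read $h_{[\Lambda]}$ off the decomposition (\ref{eq6})--(\ref{eq60}), recognize $h_{[\Lambda]}/(y_1^{\lambda_1}\cdots y_s^{\lambda_s})$ as an infinite $\CC[[y_{s+1},\ldots,y_{s+l}]]$-linear combination of Laurent monomials in $x_1,\ldots,x_r$ whose exponent vectors all lie in one fixed rational polyhedron, and then produce a single monomial SGMT in $x_1,\ldots,x_r$ adapted to $\nu_e$ that simultaneously turns all of these Laurent monomials into honest power-series monomials in the new coordinates. First I would set $L=(\QQ^rC)\cap\ZZ^s$, a rank $r$ sublattice of $\ZZ^s$; there is a positive integer $d$ such that every $\beta\in L$ can be written $\beta=\mu_\beta C$ with $\mu_\beta\in\tfrac1d\ZZ^r$, and $\beta\mapsto\mu_\beta$ is injective. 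Unwinding (\ref{eq60}),
$$
\frac{h_{[\Lambda]}}{y_1^{\lambda_1}\cdots y_s^{\lambda_s}}=\sum_{\beta\in L,\ \Lambda+\beta\in\NN^s}g_{\Lambda+\beta}\,y^{\beta},\qquad g_{\Lambda+\beta}\in\CC[[y_{s+1},\ldots,y_{s+l}]]=\CC[[x_{r+1},\ldots,x_{r+l}]],
$$
and, since $x_i=y_1^{c_{i1}}\cdots y_s^{c_{is}}$, we have $y^{\beta}=\prod_{i=1}^r x_i^{(\mu_\beta)_i}$, a Laurent monomial in $x_1,\ldots,x_r$ with exponents in $\tfrac1d\ZZ$. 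The constraint $\Lambda+\beta\in\NN^s$ says exactly that $\mu_\beta\in P:=\{v\in\RR^r: vC\ge -\Lambda\}$, a rational polyhedron with recession cone $\tau:=\{v\in\RR^r: vC\ge 0\}$. Since the columns $c^{(1)},\ldots,c^{(s)}$ of $C$ lie in $\RR^r_{\ge 0}$ and span $\RR^r$, the dual cone $\tau^{\vee}=\operatorname{cone}(c^{(1)},\ldots,c^{(s)})\subseteq\RR^r_{\ge 0}$ is full dimensional and strongly convex; hence $\tau\supseteq\RR^r_{\ge 0}$ is strongly convex and full dimensional, and $P=K+\tau$ for a compact polytope $K$, the convex hull of the finitely many vertices of $P$.

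Next I would bring in the valuation $\nu_e$. Since $x_1,\ldots,x_r$ and $y_1,\ldots,y_s$ are independent, $\nu_e(x_1),\ldots,\nu_e(x_r)$ are rationally independent and $\nu_e(y_1),\ldots,\nu_e(y_s)>0$ by Lemma \ref{Lemma25}; hence $(\nu_e(x_1),\ldots,\nu_e(x_r))=\sum_{j=1}^{s}\nu_e(y_j)\,c^{(j)}$ lies in the interior of $\tau^{\vee}$ and on no rational hyperplane of $\RR^r$. Exactly as in the proofs of Lemmas \ref{Lemma50} and \ref{Lemma21}, I would choose a toric projective morphism of nonsingular toric varieties $Z\to\AA^r$, a product of blow ups of nonsingular toric subvarieties, whose fan is a smooth refinement of the fan of $\AA^r$ in which $\tau^{\vee}$ is a union of cones; $Z$ is the blow up of a monomial ideal $I\subseteq\CC[x_1,\ldots,x_r]$. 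Letting $\sigma_1$ be the maximal cone of this fan that contains $(\nu_e(x_i))_i$ in its interior, we get $\sigma_1\subseteq\tau^{\vee}$, hence $\sigma_1^{\vee}\supseteq\tau$; and at the center $p_1$ of $\nu_e$ on $Z$ there are regular parameters $x_1(1),\ldots,x_r(1)$ with $x_i=\prod_{j=1}^r x_j(1)^{a_{ij}}$, $a_{ij}\in\NN$, $\det(a_{ij})=\pm 1$, such that every Laurent monomial $x^{\mu}$ with $\mu\in\sigma_1^{\vee}$ becomes a monomial in $x_1(1)^{1/d},\ldots,x_r(1)^{1/d}$ with exponents in $\tfrac1d\ZZ_{\ge 0}$. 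Blowing up $I$ in a neighborhood of $e_{\tilde X}$ gives a SGMT $X(1)\to\tilde X$ in $e$, and Lemma \ref{Lemma22} supplies the matching monomial SGMT in $y_1,\ldots,y_s$, so $(x,y)\mapsto(x(1),y(1))$ is a transformation of type 2) and $(x(1),y(1))$ is prepared of type $(s_1,r_1,l_1)\ge(s,r,l)$.

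To conclude, choose $w=(w_1,\ldots,w_r)\in\NN^r$ large enough that $w+v\in\RR^r_{\ge 0}$ for each of the finitely many vertices $v$ of $P$; then $w+P=(w+K)+\tau\subseteq\RR^r_{\ge 0}+\tau=\tau\subseteq\sigma_1^{\vee}$. Thus for every $\beta$ occurring in the sum above, $x_1^{w_1}\cdots x_r^{w_r}\,y^{\beta}=x^{w+\mu_\beta}$ is a monomial in $x_1(1)^{1/d},\ldots,x_r(1)^{1/d}$ with exponents in $\tfrac1d\ZZ_{\ge 0}$, distinct $\beta$ giving distinct such monomials; together with $g_{\Lambda+\beta}\in\CC[[x_{r+1}(1),\ldots,x_{r+l}(1)]]$ (these variables are untouched) this gives $\delta_{[\Lambda]}\in\CC[[x_1(1)^{1/d},\ldots,x_r(1)^{1/d},x_{r+1}(1),\ldots,x_{r+l}(1)]]$. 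When $[\Lambda]=0$ one may take the representative $\Lambda=0$, so $y^{\Lambda}=1$ and the relevant $\beta\in L\cap\NN^s$ already satisfy $\mu_\beta\in\tau\subseteq\sigma_1^{\vee}$; hence $h_{[\Lambda]}=\sum_{\beta}g_{\beta}y^{\beta}$ itself lies in that ring, and so does $\delta_{[\Lambda]}=h_{[\Lambda]}x^{w}$. Finally, if $g\in\CC\{\{y_1,\ldots,y_{s+l}\}\}$ then each $h_{[\Lambda]}\in\CC\{\{y_1,\ldots,y_{s+l}\}\}$ by (\ref{eq42}), and the passage to $\delta_{[\Lambda]}$ is an invertible monomial substitution on the exponents (with denominators dividing $d$) followed by multiplication by a fixed monomial, which preserves Abel's estimate (\ref{eq42}); so $\delta_{[\Lambda]}$ is convergent.

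The step I expect to be the main obstacle is passing from finitely many monomials, as in Lemmas \ref{Lemma21} and \ref{Lemma35}, to the infinite family $\{y^{\beta}\}$ that must all be monomialized by one single SGMT. The decisive point making this work is that, after the harmless translation by $x^{w}$, all the relevant exponent vectors lie in the one fixed cone $\tau$, whose dual $\tau^{\vee}=\operatorname{cone}(c^{(1)},\ldots,c^{(s)})$ contains the valuation vector $(\nu_e(x_i))_i$ in its interior; consequently the chart of any toric resolution refining $\tau^{\vee}$, taken at the center of $\nu_e$, automatically has dual cone containing $\tau$. Everything else --- extracting the denominator $d$, checking that the transformation is of type 2) and that preparedness is preserved via Lemma \ref{Lemma22}, and the convergence bookkeeping via (\ref{eq42}) --- is routine.
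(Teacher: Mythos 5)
Your proof is correct, and it reaches the paper's conclusion by a genuinely different implementation of the key step. Both arguments share the same polyhedral skeleton: the exponents $\mu_\beta$ occurring in $h_{[\Lambda]}/y^{\Lambda}$ fill a polyhedron $P_\Lambda$ with strongly convex recession cone $\tau=\{v\mid vC\ge 0\}$; one monomial SGMT in $x_1,\ldots,x_r$ absorbs the cone, and the shift $x^w$ together with the denominator $d$ absorbs the bounded part. The paper handles the cone by semigroup algebra: $M_\Lambda=P_\Lambda\cap G$ is a finitely generated module over $H=\tau\cap\ZZ^r$ (Gordan's lemma plus Theorem 7.1 of \cite{CHR}), the finitely many semigroup generators of $H$ are monomialized by iterating Lemma \ref{Lemma26}, $\NN$-combinations of monomialized generators remain monomial, and the finitely many module generators $\overline u_i$ then determine $w$ and $d$. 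You handle the cone by one toric construction: refine the fan of $\AA^r$ so that $\tau^{\vee}={\rm cone}(c^{(1)},\ldots,c^{(s)})$ is a union of cones, and use that $(\nu_e(x_1),\ldots,\nu_e(x_r))=\sum_j\nu_e(y_j)\,c^{(j)}$ is interior to $\tau^{\vee}$ and lies on no rational hyperplane to conclude that the chart at the center of $\nu_e$ has dual cone containing $\tau$; the vertices of $P_\Lambda$ (Minkowski--Weyl) determine $w$. Your route has the advantage of making it transparent that the SGMT is independent of $\Lambda$ (which is what Lemma \ref{Lemma2} actually uses, treating finitely many classes at once), while the paper's stays entirely inside the already-proved Lemmas \ref{Lemma21}, \ref{Lemma22} and \ref{Lemma26}. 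The only point you pass over quickly is that the smooth refinement containing $\tau^{\vee}$ as a union of cones must be realized by a product of blow ups of nonsingular subvarieties: as in Lemma \ref{Lemma50} one passes to a further principalizing refinement, and one should note that the new maximal cone containing the valuation vector sits inside the old one, so the inclusion $\tau\subseteq\sigma_1^{\vee}$ survives this extra refinement; since you cite exactly that mechanism, this is a presentational point rather than a gap.
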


\begin{proof} Write $C=(C_1,\ldots,C_s)$ and let $\Phi:\QQ^r\rightarrow \QQ^s$ be defined by $\Phi(v)=vC$ for $v\in \QQ^r$. $\Phi$ is injective since $C$ has rank $r$. Let $G=\Phi^{-1}(\ZZ^s)$. For $\Lambda=(\lambda_1,\ldots,\lambda_s)\in \NN^s$, define
$$
P_{\Lambda}=\{v\in \QQ^r\mid vC_i+\lambda_i\ge 0\mbox{ for }1\le i\le s\}.
$$
For $\Lambda\in \NN^s$, we have
$$
h_{[\Lambda]}=y_1^{\lambda_1}\cdots y_s^{\lambda_s}\left(\sum_{v=(v_1,\ldots,v_r)\in G\cap P_{\Lambda}}x_1^{v_1}\cdots x_r^{v_r}g_v\right)
$$
where $g_v\in \CC[[x_{r+1},\ldots,x_{r+l}]]$ and we have reindexed the $g_{\alpha}=g_{vC+\Lambda}$ in (\ref{eq60}) as $g_v$. Let
$$
H=\{v\in \ZZ^r\mid vC_i\ge 0\mbox{ for }1\le i\le s\},
$$
$$
I=\{v\in G\mid vC_i\ge 0\mbox{ for }1\le i\le s\}
$$
and for $\Lambda=(\lambda_1,\ldots,\lambda_s)\in \NN^s$,
$$
M_{\Lambda}=\{v\in G\mid vC_i+\lambda_i\ge 0\mbox{ for }1\le i\le s\}.
$$
We have that $P_{\Lambda}$ is a rational polyhedral set in $\QQ^r$ whose associated cone is
$$
\sigma=\{v\in \QQ^r\mid vC_i=0\mbox{ for }1\le i\le s\}=\{0\}.
$$
Let $W=\QQ^r$. We have that $G$ is a lattice in $W$ and $P_{\Lambda}$ is strongly convex. Thus $M_{\Lambda}=P_{\Lambda}\cap G$ is a finitely generated module over the semigroup $I$ (cf. Theorem 7.1 \cite{CHR}). Let $\overline n=[G:\ZZ^r]$. We have that $\overline n x\in H$ for all $x\in I$. Gordan's Lemma (cf. Proposition 1, page 12 \cite{Fu}) implies that $H$ and $I$ are finitely generated semigroups. There exist
$w_1,\ldots,w_{\overline l}\in I$ which generate $I$ as a semigroup and there exist $\overline v_1,\ldots,\overline v_{\overline a}\in H$ which generated $H$ as a semigroup. Then the finite set 
$$
\{a_1w_1+\cdots a_{\overline l}w_{\overline l}\mid a_i\in \NN\mbox{ and }0\le a_i\le \overline n\mbox{ for }1\le i\le \overline l\}
$$
generates $I$  as an $H$-module. We thus have that $M_{\Lambda}$ is a finitely generated module over the semigroup $H$. Thus there exist
$\overline u_1,\ldots,\overline u_{\overline b}\in M_{\Lambda}$ such that if $v=(v_1,\ldots,v_r)\in M_{\Lambda}$, then
$$
v=\overline u_i+\sum_{j=1}^{\overline a}n_j\overline v_j
$$
for some $1\le i\le b$ and $n_1,\ldots, n_{\overline a}\in \NN$. Thus
$$
x_1^{v_1}\cdots x_r^{v_r}=x_1^{\overline u_{i,1}}\cdots  x_r^{\overline u_{i,r}}\prod_{j=1}^{\overline a}(x_1^{\overline v_{j,1}}\cdots x_r^{\overline v_{j,r}})^{n_j}
$$
where $\overline u_i=(\overline u_{i,1},\ldots,\overline u_{i,r})$ for $1\le i\le \overline b$ and $\overline v_j=(\overline v_{j,1},\ldots,\overline v_{j,r})$ 
for $1\le j\le \overline a$.
By Lemma \ref{Lemma26} and Lemma \ref{Lemma22}, there exists a transformation of type 2) such that for $1\le j\le \overline a$,
$$
x_1^{\overline v_{j,1}}\cdots \overline x_r^{\overline v_{j,r}}=x_1(1)^{\overline v(1)_{j,1}}\cdots x_r(1)^{\overline v(1)_{j,r}}
$$
with $(\overline v(1)_{j,1},\ldots,\overline v(1)_{j,r})\in \NN^r$ for $1\le j\le \overline a$. We then have expressions of all
$\Lambda=(\lambda_1,\ldots,\lambda_s)\in\NN^s$,  where $\overline u_1,\ldots,\overline u_{\overline b}\in \QQ^r$ depend only on $\Lambda$,
$$
h_{[\Lambda]}=y_1(1)^{\lambda_1(1)}\cdots y_s(1)^{\lambda_s(1)}\left[
\sum_{i=1}^{\overline b}x_1(1)^{\overline u_{i,1}(1)}\cdots x_r(1)^{\overline u_{i,r}(1)}g_i\right]
$$
where $g_i\in \CC[[x_{r+1}(1),\ldots,x_{r+l}(1)]]$, 
$$
\Lambda(1):= (\lambda_1(1),\ldots,\lambda_s(1))=\Lambda(b_{ij})
$$
and 
$$
\overline u(1)_i=(\overline u_{i,1}(1),\ldots,\overline u_{i,r}(1))=\overline u_i(a_{ij}).
$$
If $\Lambda=0$, we have  $M_{\Lambda}=I$ so that $x_1^{\overline u_{i,1}}\cdots \overline x_r^{\overline u_{i,r}}$ is a monomial in $y_1,\ldots,y_s$ for $1\le i\le \overline b$, so we can construct a transformation of type 2), $(x,y)\mapsto (x(1),y(1))$ so that we also have that the $\overline u_i(1)$ satisfy $\overline u_i(1)\in \QQ_{\ge 0}^r$ for $1\le i\le \overline b$. 

Now let $d$ be a common denominator of the coefficients of the $\overline u_i(1)$ for $1\le i\le \overline b$. If $[\Lambda]=0$, we have that
$$
h_{[\Lambda]}\in \CC[[x_1(1)^{\frac{1}{d}},\ldots,x_r(1)^{\frac{1}{d}}, x_{r+1}(1),\ldots,x_{r+l}(1)]].
$$
 If $[\Lambda]\ne 0$, we choose $w=(w_1,\ldots,w_r)\in \NN^r$ such that $w+\overline u_i\in \QQ_{\ge 0}^r$ for $1\le i\le \overline b$.  Then
 $$
 \frac{h_{[\Lambda]}}{y_1^{\lambda_1}\cdots y_s^{\lambda_s}}x_1^{w_1}\cdots x_r^{w_r}\in
 \CC[[x_1(1)^{\frac{1}{d}},\ldots,x_r(1)^{\frac{1}{d}}, x_{r+1}(1),\ldots,x_{r+l}(1)]].
$$

\end{proof}

\begin{Lemma}\label{Lemma11}
Suppose that $f\in \CC\{\{ x_1,\ldots,x_m\}\}\subset \CC[[y_1,\ldots,y_n]]$ is algebraic over $x_1,\ldots,x_{r+l}$. Then $f\in \CC\{\{ x_1,\ldots,x_{r+l}\}\}$.
%If $f\in \CC[[ x_1,\ldots,x_m]]$ is algebraic over $x_1,\ldots,x_{r+l}$. Then $f\in \CC[[ x_1,\ldots,x_{r+l}]]$.
\end{Lemma}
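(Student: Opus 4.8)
The plan is to combine the monomial decomposition of Section~\ref{DecSeries} with Proposition~\ref{Prop2} and then compare the support of $f$ in two coordinate systems.

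First I would note that, by Definition~\ref{Def500}, $f$ lies in $\CC[[y_1,\ldots,y_{s+l}]]$ and that, since $C$ has rank $r$, the rank condition in Definition~\ref{Def500} says exactly that every monomial $y_1^{i_1}\cdots y_{s+l}^{i_{s+l}}$ occurring in $f$ has $(i_1,\ldots,i_s)$ in the row span $\QQ^rC$. In particular $(i_1,\ldots,i_s)\in\QQ^rC\cap\ZZ^s$, so in the decomposition (\ref{eq6}) of $g=f$ every term lies in the class $[\Lambda]=0$; that is, $f=h_{[0]}$.

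Next I would invoke Proposition~\ref{Prop2} with $\Lambda=0$ (in its purely formal form — it needs no analyticity hypothesis on $g$). It produces a transformation of type 2), $(x,y)\mapsto(x(1),y(1))$, so that $x_i=\prod_{j=1}^rx_j(1)^{a_{ij}}$ with $a_{ij}\in\NN$ and $\det(a_{ij})=\pm1$ for $1\le i\le r$, while $x_{r+1}(1)=x_{r+1},\ldots,x_m(1)=x_m$ remain unchanged, together with an integer $d>0$ such that
$$
f=h_{[0]}\in\CC[[x_1(1)^{1/d},\ldots,x_r(1)^{1/d},x_{r+1}(1),\ldots,x_{r+l}(1)]].
$$
On the other hand $f\in\CC\{\{x_1,\ldots,x_m\}\}\subseteq\CC[[x_1(1),\ldots,x_m(1)]]$, because each $x_i$ with $1\le i\le r$ is a genuine monomial in $x_1(1),\ldots,x_r(1)$. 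Both of these formal power series rings embed into $\mathcal B:=\CC[[z_1,\ldots,z_m]]$, where $z_i$ stands for $x_i(1)^{1/d}$ for $i\le r$ and for $x_i(1)$ for $i>r$: the monomial $x^b$ ($b\in\NN^m$) maps to $\prod_{j\le r}z_j^{\,d(bA)_j}\prod_{i>r}z_i^{b_i}$, and since $A=(a_{ij})$ is invertible and $d\ne0$ this is injective on exponent vectors, so no monomials collapse. Consequently $f$ has a single expansion in $\mathcal B$: the first description shows it involves none of $z_{r+l+1},\ldots,z_m$, while in the second the exponent of $z_k$ for $k>r$ in the image of $x^b$ is exactly $b_k$. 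Hence the coefficient of $x^b$ in $f$ is zero whenever $b_{r+l+1}+\cdots+b_m>0$, i.e. $f\in\CC[[x_1,\ldots,x_{r+l}]]$. Since $f$ is at the same time a convergent power series in $x_1,\ldots,x_m$, Abel's criterion (\ref{eq42}) finally gives $f\in\CC\{\{x_1,\ldots,x_{r+l}\}\}$.

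The one delicate point is the bookkeeping around Proposition~\ref{Prop2}: one must remember that the variables $x_1(1),\ldots,x_r(1)$ produced by a type 2) transformation are $\NN$-monomials in $x_1,\ldots,x_r$ with inverse again monomial (determinant $\pm1$), which is what makes the inclusions into the overring $\mathcal B$ compatible and cancellation-free; the rest is routine manipulation of exponents together with Abel's convergence criterion.
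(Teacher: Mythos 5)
The reduction $f=h_{[0]}$ and the appeal to Proposition \ref{Prop2} are fine, but the step where you compare the two expansions of $f$ inside $\mathcal B=\CC[[z_1,\ldots,z_m]]$ has a genuine gap. The series $F(z_1,\ldots,z_{r+l})$ coming from Proposition \ref{Prop2} and the series $G'(z_1,\ldots,z_m)$ obtained by substituting the monomial expressions into the $x$-expansion of $f$ are two elements of $\mathcal B$ that are only known to have the same image under the evaluation map $\mathcal B\rightarrow \CC[[y_1(1)^{1/d},\ldots,y_s(1)^{1/d},y_{s+1}(1),\ldots,y_n(1)]]$, which sends $z_i$ to the monomial $x_i(1)^{1/d}$ for $i\le r$, to $y_{s+i-r}(1)$ for $r<i\le r+l$, and to the \emph{formal image} $\hat\phi(1)^*(x_i(1))$ for $i>r+l$. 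To conclude that $F=G'$ in $\mathcal B$ — which is what your support comparison needs — you must know this evaluation map is injective, i.e.\ you need injectivity of the completed homomorphism $\CC[[x_1(1),\ldots,x_m(1)]]\rightarrow \CC[[y_1(1),\ldots,y_n(1)]]$. That is not available at this stage: quasi regularity only gives $r^{\mathcal A}=\dim X$, hence injectivity of the convergent map $\mathcal O_{X(1)}^{\rm an}\rightarrow \mathcal O_{Y(1)}^{\rm an}$, and by (\ref{Gineq}) this does not force $r^{\mathcal F}=\dim X$ (Gabrielov's examples show the two ranks can differ). In the paper, formal injectivity is precisely the content of Proposition \ref{Theorem502}, which comes later and whose proof invokes Lemma \ref{Lemma11}; so as written your argument is circular.

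The paper's proof avoids this by an integrality argument that stays in the convergent category on the $X$-side: from $f\in\CC\{\{x_1(1)^{1/d},\ldots,x_r(1)^{1/d},x_{r+1},\ldots,x_{r+l}\}\}$ (the convergent form of Proposition \ref{Prop2}) one forms the norm $g(z)=\prod(z-f(\omega^{i_1}x_1(1)^{1/d},\ldots))$, whose coefficients are convergent series in $x_1(1),\ldots,x_r(1),x_{r+1},\ldots,x_{r+l}$; then $f$, regarded inside $\CC\{\{x_1(1),\ldots,x_r(1),x_{r+1},\ldots,x_m\}\}$ via the analytically injective quasi regular map, is integral over $\CC\{\{x_1(1),\ldots,x_r(1),x_{r+1},\ldots,x_{r+l}\}\}$, and the latter ring is integrally closed in the former, so $f$ lies in it. Only then is the support comparison made, and it takes place inside the single honest power series ring $\CC\{\{x_1(1),\ldots,x_r(1),x_{r+1},\ldots,x_m\}\}$, where uniqueness of expansions is automatic. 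To repair your route you would either have to confine the comparison to a subring on which the evaluation map is known to be injective — which essentially forces the norm trick — or first establish formal injectivity, which cannot be done here without the lemma itself.
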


\begin{proof} By Proposition \ref{Prop2} and by the criterion of (\ref{eq42}), there exists a monomial GMT

\begin{equation}\label{eq41}
\begin{array}{lll}
x_1&=& x_1(1)^{a_{11}(1)}\cdots x_r(1)^{a_{1r}(1)}\\
&&\vdots\\
x_r&=& x_1(1)^{a_{r1}(1)}\cdots x_r(1)^{a_{rr}(1)}
\end{array}
\end{equation}
with $\mbox{Det}(a_{ij}(1))=\pm 1$ and $d\in \ZZ_+$ such that
$$
f\in \CC\{\{ x_1(1)^{\frac{1}{d}},\ldots,x_r(1)^{\frac{1}{d}},x_{r+1},\ldots,x_{r+l}\}\}.
$$
Let
$$
\begin{array}{lll}
g(z)&=&\prod_{i_1,\ldots,i_r=1}^d(z-f(\omega^{i_1}x_1(1)^{\frac{1}{d}},\ldots,\omega^{i_r}x_r(1)^{\frac{1}{d}},x_{r+1},\ldots,x_{r+l}))\\
&&\in \CC\{\{ x_1(1),\ldots,x_r(1),x_{r+1},\ldots,x_{r+l}\}\}[z]
\end{array}
$$
where $\omega$ is a primitive complex $d$-th root of unity. We have that $f$ is integral over $\CC\{\{ x_1(1),\ldots,x_r(1),x_{r+1},\ldots,x_{r+l}\}\}$ since $f$ is a root of $g(z)=0$. But 
$$
f\in \CC\{\{ x_1(1),\ldots,x_r(1),x_{r+1},\ldots,x_{m}\}\}
$$
 and $\CC\{\{ x_1(1),\ldots,x_r(1),x_{r+1},\ldots,x_{r+l}\}\}$ is integrally closed in 
 $$
 \CC\{\{ x_1(1),\ldots,x_r(1),x_{r+1},\ldots,x_{m}\}\}
 $$
  so $f\in \CC\{\{ x_1(1),\ldots,x_r(1),x_{r+1},\ldots,x_{r+l}\}\}$. Substituting (\ref{eq41}) into the series expansion of $f$ in terms of $x_1,\ldots,x_m$ we obtain that $f\in \CC\{\{ x_1,\ldots,x_{r+l}\}\}$.
 
% The proof for the case $f\in\CC[[x_1,\ldots,x_m]]$ is the same.
\end{proof}

\begin{Lemma}\label{Lemma7} Suppose that $g\in \CC[[y_1,\ldots,y_{s+l}]]$ has an expression $g=\sum h_{[\Lambda]}$ and one of the transformations 1) - 4) are performed.
Then $g\in \CC[[y_1(1),\ldots,y_{s+l}(1)]]$ and if $g=\sum h'_{[\Lambda']}$ is the decomposition in terms of the variables $y_1(1),\ldots,y_{s+l}(1)$
and $x_1(1),\ldots,x_{r+l}(1)$, then 
\begin{equation}\label{eq70}
h_{[\Lambda]}=h'_{[\Lambda \overline B]}
\end{equation}
where 
$$
\overline B=\left(\begin{array}{ccc} b_{11}&\cdots&b_{1s}\\
&\vdots&\\
b_{s1}&\cdots&b_{ss}\end{array}\right)
$$
with $b_{ij}$ defined as in the definitions of types 1), 2) and  4)  (and with $\overline B$ being the identity matrix for a transformation of type   3).

In particular, if a transformation of type 1) -  10) is performed, then $f\in \CC[[y_1,\ldots,y_n]]$ is algebraic over $x_1,\ldots,x_{r+l}$ if and only if $f$ is algebraic over $x_1(1),\ldots,x_{r+l}(1)$.
\end{Lemma}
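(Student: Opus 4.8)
The plan is to track how the integer matrix $C$ and the lattice $\QQ^rC\cap\ZZ^s$ underlying the decomposition (\ref{eq6})--(\ref{eq60}) change under a transformation of type 1)--4), and then to read off both (\ref{eq70}) and the algebraicity statement from this. First I would record the effect of such a transformation on $C$. Let $A=(a_{ij})$ be the matrix of the SGMT performed on $x_1,\ldots,x_r$ (the identity for types 1) and 3)) and let $\overline B=(b_{ij})$ be the matrix of the SGMT performed on $y_1,\ldots,y_s$ (the identity for type 3)). Substituting the $y$- and $x$-relations into (\ref{eq5}) gives, in all four cases, $C(1)=A^{-1}C\overline B$; since $A$ is invertible over $\QQ$ this yields $\QQ^rC(1)=\QQ^rC\overline B$, whence $\QQ^rC(1)\cap\ZZ^s=(\QQ^rC\cap\ZZ^s)\overline B$ because $\overline B\in\mathrm{GL}_s(\ZZ)$. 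Consequently $\alpha\mapsto\alpha\overline B$ is a group automorphism of $\ZZ^s$ carrying $\QQ^rC\cap\ZZ^s$ onto $\QQ^rC(1)\cap\ZZ^s$, so it descends to a bijection $[\Lambda]\mapsto[\Lambda\overline B]$ between the index sets of (\ref{eq6}) in the old and new variables, fixing the class of $0$.

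Next I would check that, rewritten in the new variables, each summand $h_{[\Lambda]}$ is a power series in $y_1(1),\ldots,y_{s+l}(1)$ all of whose $y_1(1),\ldots,y_s(1)$-exponents lie in the single class $[\Lambda\overline B]$. For types 1) and 2) this is immediate, since $y_1^{\alpha_1}\cdots y_s^{\alpha_s}=\prod_jy_j(1)^{(\alpha\overline B)_j}$ exactly (and $\alpha\overline B\in\NN^s$ as $\overline B\ge 0$) while $y_{s+1},\ldots,y_{s+l}$ are unchanged. For type 3) the only new feature is the substitution $y_{s+\overline m}=y_{s+\overline m}(1)+\Phi$ with $\Phi\in\CC\{\{x_1,\ldots,x_{r+\overline m-1}\}\}$: every monomial of $\Phi$ has $y_1,\ldots,y_s$-exponent of the shape $dC$ with $d\in\NN^r$, hence in $\QQ^rC\cap\ZZ^s$, so expanding $g_\alpha$ after the substitution only shifts $y_1,\ldots,y_s$-exponents by elements of $\QQ^rC\cap\ZZ^s$ and preserves the class. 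For type 4) there is additionally the substitution $y_{s+\overline m}=\bigl(\prod_jy_j(1)^{b_j}\bigr)(y_{s+\overline m}(1)+\alpha)$, where $\prod_jy_j(1)^{b_j}=\prod_jx_j(1)^{a_j}$ has $y_1(1),\ldots,y_s(1)$-exponent $(a_1,\ldots,a_r)A^{-1}C\overline B\in\QQ^rC(1)\cap\ZZ^s$, so again this substitution shifts exponents only within the new lattice. Since distinct $h_{[\Lambda]}$ then land in distinct classes, $g=\sum_{[\Lambda]}h_{[\Lambda]}$ shows $g\in\CC[[y_1(1),\ldots,y_{s+l}(1)]]$ (in the convergent case $g$ stays convergent by the criterion (\ref{eq42})), and comparing with $g=\sum h'_{[\Lambda']}$ and using uniqueness of the decomposition gives $h_{[\Lambda]}=h'_{[\Lambda\overline B]}$, i.e.\ (\ref{eq70}).

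For the final assertion, note that $f\in\CC[[y_1,\ldots,y_n]]$ is algebraic over $x_1,\ldots,x_{r+l}$ precisely when $f\in\CC[[y_1,\ldots,y_{s+l}]]$ and $f=h_{[0]}$, i.e.\ $f$ equals the $0$-component of its own decomposition. For types 1)--4) the equivalence now follows from (\ref{eq70}) and the fact that $\overline B$ fixes the class of $0$; for type 8) it is the remark just after Definition \ref{Def500}; and transformations of types 5), 7), 10) touch only variables of index exceeding $s+l$ among the $y$'s (types 5), 7)) or exceeding $r+l$ among the $x$'s (type 10)), so they leave $y_1,\ldots,y_{s+l}$, $x_1,\ldots,x_{r+l}$ and $C$ unchanged and there is nothing to prove. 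For types 6) and 9) the transformation restricts to a monomial change $y_i=\prod_{j\le s}y_j(1)^{b_{ij}}$ on $y_1,\ldots,y_s$ with $(b_{ij})$ of nonzero determinant and nonnegative integer entries, and to an invertible-over-$\QQ$ monomial change on $x_1,\ldots,x_r$, so $\QQ^rC(1)=\QQ^rC\overline B$ and the same transport-of-lattice argument proves both implications, the direction ``algebraic over the $x(1)$'s $\Rightarrow$ algebraic over the $x$'s'' using that $f$ is a priori a power series in $y_1,\ldots,y_n$, which forbids negative $y$-exponents when the monomial substitution is reverted. The main obstacle I anticipate is the bookkeeping of the second paragraph: one must verify carefully that the auxiliary terms---$\Phi$ in type 3) and the monomial $\prod_jy_j(1)^{b_j}$ in type 4)---contribute exponents only inside the relevant lattice, so that the coset decomposition is genuinely permuted rather than smeared over several classes; the remainder is linear algebra over $\ZZ$ together with the uniqueness of (\ref{eq6}).
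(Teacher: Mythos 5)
Your argument is correct and follows essentially the same route as the paper's proof: you derive the relation $C(1)=\overline A^{-1}C\overline B$ (the paper's equation (\ref{eq63})), observe that the auxiliary exponent vector $(b_1,\ldots,b_s)=(a_1,\ldots,a_r)C(1)$ lies in $\QQ^rC(1)\cap\ZZ^s$, and conclude that the cosets are transported by $[\Lambda]\mapsto [\Lambda\overline B]$, with the other types handled by the same bookkeeping. The only caveat is that for a transformation of type 4) one is only given $\det(\overline B)\ne 0$ rather than $\overline B\in \mathrm{GL}_s(\ZZ)$, so the claimed equality $\QQ^rC(1)\cap\ZZ^s=(\QQ^rC\cap\ZZ^s)\overline B$ is not automatic; but your proof only needs that $\alpha\mapsto\alpha\overline B$ is well defined and injective on cosets, which follows from $\overline B$ being integral and invertible over $\QQ$ together with (\ref{eq63}).
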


\begin{proof} We will prove (\ref{eq70}) in the case of a transformation of type 4). The other cases are simpler. With the notation of (\ref{eq60}), we have expansions
$$
g_{\alpha}=\sum_i(y_1(1)^{b_1}\cdots y_s(1)^{b_s})^ig_{\alpha,i}
$$
with $g_{\alpha,i}\in \CC[[y_{s+1}(1),\ldots,y_{s+l}(1)]]$ so
$$
\begin{array}{lll}
h_{[\Lambda]}&=&\sum_{[\alpha]=[\Lambda]}\prod_{j=1}^s(y_1(1)^{b_{j1}}\cdots y_s(1)^{b_{js}})^{\alpha_j}(\sum_i(y_1(1)^{b_1}\cdots y_s(1)^{b_s})^ig_{\alpha,i})\\
&=& \sum_{\overline\alpha}y_1(1)^{\overline\alpha_1}\cdots y_s^{\overline \alpha_s}\left(\sum_i(y_1(1)^{b_1}\cdots y_s(1)^{b_s})^ig_{\alpha,i}\right)
\end{array}
$$
where 
\begin{equation}\label{eq61}
\alpha \overline B=\overline\alpha
\end{equation}
with $\overline \alpha=(\overline\alpha_1,\ldots,\overline \alpha_s)$. Write
$$
\overline A=\left(\begin{array}{ccc}
a_{11}&\cdots&a_{1r}\\
&\vdots&\\
a_{r1}&\cdots& a_{rr}
\end{array}\right)
$$
and 
$$
C(1)=\left(\begin{array}{ccc}
c_{11}(1)&\cdots& c_{1s}(1)\\
&\vdots&\\
c_{r1}(1)&\cdots&c_{rs}(1)
\end{array}\right).
$$
We showed in the  proof of Lemma \ref{Lemma23} (where $(e_{ij})$ is defined) that
$$
A(e_{ij})=\left(\begin{array}{cc} C&0\\0&1\end{array}\right)B.
$$
We have that 
$$
A=\left(\begin{array}{cc}
\overline A&*\\
*&*\end{array}\right),\,\,
B=\left(\begin{array}{cc}
\overline B&*\\
*&*\end{array}\right),\,\,
(e_{ij})=\left(\begin{array}{cc} C(1)&*\\
0&*\end{array}\right).
$$
We obtain that
\begin{equation}\label{eq63}
\overline A C(1)=C\overline B.
\end{equation}
From $y_1(1)^{b_1}\cdots y_s(1)^{b_s}=x_1(1)^{a_1}\cdots x_r(1)^{a_r}$ we obtain
$$
(a_1,\ldots,a_r)C(1)=(b_1,\ldots,b_s)
$$
and so 
$$
(b_1,\ldots,b_s)\in \QQ^rC(1)\cap \ZZ^s.
$$
Since $\overline A$ and $\overline B$ are invertible with integral coefficients, we have from (\ref{eq63}) that for $\alpha,\beta\in \ZZ^s$, $\alpha-\beta\in \QQ^rC\cap \ZZ^s$ if and only if $\alpha\overline B-\beta \overline B\in \QQ^rC(1)\cap \ZZ^s$, from which we obtain (\ref{eq70}).

\end{proof}

\section{Monomialization}\label{Mon}

\begin{Lemma}\label{Lemma51} Suppose that the variables $(x,y)$ are prepared of type $(s,r,l)$ and there exists $t$ with $r<t\le r+ l$ such that
$x_1,\ldots,x_r,x_t$ are independent. Then there exists a transformation of type 6) with $\overline m=t-r$, possibly followed by a tranformation of type 8) $(x,y)\rightarrow (x(1),y(1))$ such that $(x(1),y(1))$ are prepared of type $(s_1,r_1,l_1)$ with $(s_1,r_1,l_2)>(s,r,l)$.
\end{Lemma}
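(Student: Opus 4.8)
The plan is to run the Perron algorithm of Lemma \ref{Lemma25} on the $y$--variables to produce the transformation of type 6), and then repair the result with a transformation of type 8). Set $\overline m=t-r$, so that $1\le\overline m\le l$ (in particular $l\ge1$) and $x_{r+\overline m}=y_{s+\overline m}$. Since $(x,y)$ are prepared of type $(s,r,l)$, the variables $y_1,\dots,y_s$ are independent and $y_1,\dots,y_s,y_{s+\overline m}$ are dependent, so by Lemma \ref{Lemma25} the values $\nu_e(y_1),\dots,\nu_e(y_s)$ are rationally independent while $\nu_e(y_{s+\overline m})$ is rationally dependent on them; by hypothesis $x_1,\dots,x_r,x_{r+\overline m}$ are independent, so again by Lemma \ref{Lemma25} the values $\nu_e(x_1),\dots,\nu_e(x_r),\nu_e(y_{s+\overline m})$ are rationally independent. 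Applying Lemma \ref{Lemma25} to the restriction of $\nu_e$ to the quotient field of $\CC[y_1,\dots,y_s,y_{s+\overline m}]$ (a zero dimensional valuation with residue field $\CC$), we obtain a composition of monoidal transforms
$$
y_i=\Bigl(\prod_{k=1}^s\overline y_k^{\,b_{ik}}\Bigr)\overline y_{s+\overline m}^{\,b_{i,s+\overline m}}\mbox{ for }1\le i\le s,\qquad y_{s+\overline m}=\Bigl(\prod_{k=1}^s\overline y_k^{\,b_{s+\overline m,k}}\Bigr)\overline y_{s+\overline m}^{\,b_{s+\overline m,s+\overline m}},
$$
with $\nu_e(\overline y_k)>0$ for $k\le s$, $\nu_e(\overline y_{s+\overline m})=0$, and $\nu_e(\overline y_{s+\overline m}-\alpha)>0$ for some $0\ne\alpha\in\CC$. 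As in the proofs of Lemmas \ref{Lemma22}, \ref{Lemma23} and \ref{Lemma35}, this composition is realized in the vo\^ute by blowing up the corresponding monomial ideal of $\CC[y_1,\dots,y_s,y_{s+\overline m}]$ in a neighborhood of $e_{\tilde Y}$, giving a transformation of type 6) with $\overline m=t-r$; the new regular parameters at the center are $\overline y_1,\dots,\overline y_s$, $y_{s+\overline m}(1):=\overline y_{s+\overline m}-\alpha$, together with the unchanged $y$--variables, and I set $y_k(1)=\overline y_k$ for $k\le s$.

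Write $\gamma=\overline y_{s+\overline m}=y_{s+\overline m}(1)+\alpha$, a unit, $B=(b_{ik})_{1\le i,k\le s}$ (so $\mbox{Det}(B)=\pm1$), $u=(b_{i,s+\overline m})_{1\le i\le s}$, $v=(b_{s+\overline m,k})_{1\le k\le s}$ and $w=b_{s+\overline m,s+\overline m}$. Substituting into (\ref{eq5}) gives, for $1\le i\le r$,
$$
x_i=\Bigl(\prod_{k=1}^sy_k(1)^{(CB)_{ik}}\Bigr)\gamma^{(Cu)_i},\qquad x_{r+\overline m}=\Bigl(\prod_{k=1}^sy_k(1)^{v_k}\Bigr)\gamma^{\,w},
$$
while $x_{r+j}=y_{s+j}(1)$ stays unchanged for $1\le j\le l$, $j\ne\overline m$. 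Because $\nu_e(\overline y_{s+\overline m})=0$, the displayed relations give $\nu_e(\overline y)=B^{-1}\nu_e(y)$ and $\nu_e(y_{s+\overline m})=v^{T}B^{-1}\nu_e(y)$, so in the basis $\nu_e(y_1),\dots,\nu_e(y_s)$ the coordinate vectors of $\nu_e(x_1),\dots,\nu_e(x_r),\nu_e(x_{r+\overline m})$ are the rows of $C$ together with $v^{T}B^{-1}$. These are rationally independent, whence the $(r+1)\times s$ matrix with rows $(CB)_1,\dots,(CB)_r,v^{T}$ --- obtained from the former by right multiplication by the invertible $B$ --- has rank $r+1$, and the system $(CB)c=Cu$, $v^{T}c=w$ has a solution $c=(c_1,\dots,c_s)\in\QQ^s$. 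Carrying out the transformation of type 8) that replaces $y_k(1)$ by $\gamma^{c_k}y_k(1)$ for $1\le k\le s$ then makes $x_1,\dots,x_r$ and $x_{r+\overline m}$ monomials in the new variables, which I continue to call $y_1(1),\dots,y_s(1)$; by Lemma \ref{Lemma10} these remain independent, and (\ref{eq*}) holds since $\prod_i y_i$ is, up to a unit, a monomial in $\overline y_1,\dots,\overline y_s$ involving every $\overline y_k$.

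Finally, reindex so that $x_{r+\overline m}$ becomes $x_{r+1}$, the remaining $x_{r+j}$ ($j\ne\overline m$) become $x_{r+2},\dots,x_{r+l}$ matched with the corresponding unchanged $y$--variables relabelled $y_{s+1}(1),\dots,y_{s+l-1}(1)$, and $y_{s+\overline m}(1)$ placed among the later $y$--variables; then $x_1,\dots,x_{r+1}$ are monomials in $y_1(1),\dots,y_s(1)$ with exponent matrix of rank $r+1$, and $x_{r+1+j}=y_{s+j}(1)$ for $1\le j\le l-1$. Since the SGMT and the unit substitution preserve the rational span of $\nu_e(y_1),\dots,\nu_e(y_s)$, each $\nu_e(y_{s+j}(1))$ with $j\ne\overline m$ stays rationally dependent on $\nu_e(y_1(1)),\dots,\nu_e(y_s(1))$; so, after possibly rewriting (\ref{eq5}) to record an increase in $s$, the variables $(x(1),y(1))$ are prepared of type $(s_1,r_1,l_1)$ with $s_1\ge s$, $r_1=r+1$, $r_1+l_1=r+l$, hence $(s_1,r_1,l_1)>(s,r,l)$. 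The main obstacle is exactly this passage from Lemma \ref{Lemma25}: its Perron matrix need not be block compatible with $C$, so the type 6) step in general destroys the monomial form of $x_1,\dots,x_r$, and one must use the rational independence of $\nu_e(x_1),\dots,\nu_e(x_r),\nu_e(x_{r+\overline m})$ to ensure that the corrective type 8) substitution both restores it and monomializes $x_{r+\overline m}$, so that the form (\ref{eq5}) holds with $r$ raised by one.
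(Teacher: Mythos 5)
Your proof is correct and follows essentially the same route as the paper: a type 6) SGMT extracted from the dependence of $y_1,\ldots,y_s,y_{s+\overline m}$, followed by the observation that independence of $x_1,\ldots,x_r,x_t$ forces the new exponent matrix to have rank $r+1$, and a type 8) unit substitution that raises $r$. The only difference is bookkeeping: the paper first normalizes the SGMT as in Lemma \ref{Lemma6} so that $x_1,\ldots,x_r$ stay monomial and only one unit factor must be removed, whereas you keep the raw Perron form from Lemma \ref{Lemma25} and solve a slightly larger inhomogeneous system in the type 8) step.
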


\begin{proof} Without loss of generality, we may assume that $t=r+1$. Since $y_1,\ldots,y_s$ are independent and $y_1,\ldots,y_s,y_{s+1}$ are dependent, there exists by Lemmas \ref{Lemma6} and \ref{Lemma50} a SGMT $(y)\rightarrow (y(1))$ (a transformation of type 6) with $\overline m=t-r=1$) defined by
$$
y_i=\prod_{j=1}^sy_j(1)^{b_{ij}}\mbox{ for $1\le j\le s$ and}
$$
$$
y_{s+1}=\prod_{j=1}^sy_j(1)^{b_j}(y_{s+1}(1)+\alpha)\mbox{ with }\alpha\ne 0.
$$
This gives us an expression
$$
x_i=\prod_{j=1}^s y_j(1)^{c_{ij}(1)}\mbox{ for $1\le i\le r$ and}
$$
$$
x_{r+1}=\prod_{j=1}^s y_j(1)^{b_j}(y_{s+1}(1)+\alpha).
$$
If $s_1>s$ we are done. Otherwise, we must have that
$$
\mbox{rank}\left(\begin{array}{ccc}
c_{11}(1)&\cdots&c_{1s}(1)\\
&\vdots&\\
c_{r1}(1)&\cdots&c_{rs}(1)\\
b_1&\cdots&b_s
\end{array}\right)=r+1
$$
since $x_1,\ldots,x_{r+1}$ are independent. Thus after making a change of variables in $y_1,\ldots,y_s$ (a transformation of type 8)) with 
$\gamma=(y_{s+1}(1)+\alpha)$) we obtain an increase  $r_1>r$ (and $(s_1,r_1,l_1)>(s,r,l)$).
\end{proof}

\begin{Lemma}\label{Prop1} Suppose that $(x,y)$ are prepared of type $(s,r,l)$ and $g\in \CC\{\{x_1,\ldots,x_{r+l}\}\}$. Then either there exists a sequence of transformations $(x,y)\rightarrow (x(1),y(1))$ such that $(x(1),y(1))$ are prepared of type $(s_1,r_1,l_1)$ with $(s_1,r_1,l_1)>(s,r,l)$ or there exists a sequence of transformations of the types 2) - 4) $(x,y)\rightarrow (x(1),y(1))$ such that 
$(x(1),y(1))$ are prepared of type $(s_1,r_1,l_1)$ with $(s_1,r_1,l_1)=(s,r,l)$
and we have an expression
$$
g=x_1(1)^{d_1}\cdots x_r(1)^{d_r}u
$$
with $u\in \CC\{\{x_1(1),\ldots, x_{r+l}(1)\}\}$ a unit.
\end{Lemma}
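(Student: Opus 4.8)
The plan is to prove Lemma \ref{Prop1} by induction on $l$, the base case $l=0$ being the monomialization of a convergent series in the independent variables $x_1,\dots,x_r$ alone and the inductive step being Zariski's reduction-of-order procedure (Tschirnhaus transformation, monomialization of coefficients, monoidal transform) carried out in the variable $x_{r+l}$, as sketched in Section 2. For $l=0$, since $x_1,\dots,x_r$ are independent, $\nu_e(x_1),\dots,\nu_e(x_r)$ are rationally independent by Lemma \ref{Lemma25}, so the linear form $w\mapsto\sum_i w_i\nu_e(x_i)$ attains its minimum over the Newton polyhedron of $g$ (the convex hull of $\bigcup_{v\in\mathrm{supp}(g)}(v+\RR^r_{\ge0})$) at a single vertex $v_0$, which lies in $\mathrm{supp}(g)$ and is the unique $\nu_e$-minimal exponent occurring in $g$. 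First I would apply a transformation of type 2) that principalizes the monomial ideal generated by the finitely many vertices of that polyhedron (an iteration of the construction of Lemma \ref{Lemma21}, together with Lemma \ref{Lemma22}); then $x(1)^{v_0}$ divides $g$ in $\mathcal O_{X(1),e_{X(1)}}^{\rm an}$, and the quotient has $\nu_e$-value $0$, hence is a unit there (the ring being dominated by $\nu_e$). This gives the conclusion, with $(s,r,l)$ possibly increased.

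For $l\ge1$, if $g$ does not involve $x_{r+l}$ I would apply the induction hypothesis to the variables of type $(s,r,l-1)$ obtained by deleting $x_{r+l}$ and $y_{s+l}$; every transformation of type 2), 3) or 4) for $(s,r,l-1)$ is one of the same type for $(s,r,l)$, and an increase over $(s,r,l-1)$ forces one over $(s,r,l)$, so we are done. Otherwise, after a preliminary reduction to the case $t:=\mathrm{ord}\,g(0,\dots,0,x_{r+l})<\infty$, I would induct on $t$: by Lemma \ref{Lemma9} with last variable $x_{r+l}$ (a transformation of type 3) with $\overline m=l$) we may take $g=\tau_0 x_{r+l}^{\,t}+\tau_2 x_{r+l}^{\,t-2}+\cdots+\tau_t$ with $\tau_0$ a unit and $\tau_2,\dots,\tau_t\in\CC\{\{x_1,\dots,x_{r+l-1}\}\}$. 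Applying the induction hypothesis on $l$ in turn to the finitely many coefficients $\tau_2,\dots,\tau_t$ (each time either producing an increase over $(s,r,l)$, in which case we stop, or fixing $(s,r,l)$ and writing $\tau_i$ as a monomial in $x_1(1),\dots,x_r(1)$ times a unit), together with transformations of type 2) that principalize the monomial ideals needed to compare the $\nu_e$-values of the terms $\tau_i x_{r+l}^{\,t-i}$, I reach a normal form in which the residual polynomial of $g$ in $x_{r+l}$ relative to $\nu_e$ is defined.

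Then, as in Section 9 of \cite{LU}: if the leading term $\tau_0 x_{r+l}^{\,t}$ is the unique $\nu_e$-minimal term of $g$, a transformation of type 4) in $x_1,\dots,x_r,x_{r+l}$ turns $x_{r+l}$ into a monomial in $x_1(1),\dots,x_r(1)$ times a unit, so that $g$ becomes that monomial to the power $t$ times a unit; if the leading term is not $\nu_e$-minimal, the residual polynomial is monic of degree $t$ with vanishing degree-$(t-1)$ coefficient (by the Tschirnhaus normalization), hence is not of the form $(z-\alpha)^t$ with $\alpha\ne0$, so it has a nonzero root $\alpha$ of multiplicity $<t$, and a transformation of type 4) in $x_1,\dots,x_r,x_{r+l}$ with this $\alpha$ strictly decreases $t$, completing the induction on $t$. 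Throughout, $(s,r,l)$ can only be preserved or increased, and any increase is acceptable, since transformations of types 2), 3) and 4) never decrease $s$, $r$ or $r+l$.

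I expect the main obstacle to be the normal-form step that precedes this dichotomy: to make the residual polynomial available one must arrange that the monomial parts of the $\tau_i$ divide the appropriate powers of the monomial extracted from $x_{r+l}$ by the type 4) transformation, which forces one to interleave the coefficient monomialization with transformations of type 2) (principalization of monomial ideals, Lemma \ref{Lemma21}) and, when the $\nu_e$-minimal term fails to be unique, with further type 2) transformations separating the competing values; and one must verify that $t$ really drops rather than rises after the type 4) transformation, which is exactly where Lemma \ref{Lemma9} is essential, since a monic polynomial of degree $t$ with no degree-$(t-1)$ term equals $(z-\alpha)^t$ only when $\alpha=0$, and that is the dominant case already handled. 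The degenerate configurations where $g(0,\dots,0,x_{r+l})=0$ but $g$ still involves $x_{r+l}$ need the usual extra care, extracting a power of $x_{r+l}$ by a type 4) transformation or reducing in an earlier variable.
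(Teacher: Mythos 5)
Your proposal follows essentially the same route as the paper's proof of Lemma \ref{Prop1}: a nested induction (the paper inducts on $t$ with $g\in\CC\{\{x_1,\ldots,x_t\}\}$ for $r\le t\le r+l$, and within that on the order $h$ of $g$ in the last variable), with the base case handled by principalizing the monomial ideal of the support via a transformation of type 2) and Lemma \ref{Lemma21}, and the inductive step by the Tschirnhaus transformation of Lemma \ref{Lemma9}, monomialization of the coefficients $\tau_i$ by the induction hypothesis, principalization, and a transformation of type 4). Two points in your write-up need repair. First, in a transformation of type 4) the constant $\alpha$ is not at your disposal: it is forced by the \'etoile $e$ (it is the residue at $e_{\overline X}$ of the coordinate produced by Lemma \ref{Lemma6}), so you cannot ``perform a type 4) transformation with this $\alpha$'' for a root $\alpha$ that you have selected in the residual polynomial. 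The argument must run the other way: perform the type 4) transformation that $e$ dictates, divide by the principal generator of the ideal of the monomial coefficients, and observe that the new order in $x_{r+l}$ equals the multiplicity of that particular $\alpha\ne 0$ as a root of $P(z)=c_0z^{t}+c_2z^{t-2}+\cdots$ (only the terms whose monomial equals the generator survive after setting $x_1=\cdots=x_r=0$); since the $z^{t-1}$ coefficient vanishes, this multiplicity cannot be $t$, so the order strictly drops, possibly to $0$. This is the same Tschirnhaus observation you make, with the quantifier on $\alpha$ reversed. Second, a transformation of type 4) in $x_1,\ldots,x_r,x_{r+l}$ exists only when these variables are dependent; you must dispose of the independent case separately, which is exactly what Lemma \ref{Lemma51} does by producing an increase in $(s,r,l)$, landing you in the first alternative of the lemma. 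With these two repairs your argument coincides with the paper's.
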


\begin{proof} In the course of the proof, we may assume that all transformations do not lead to an increase in $(s,r,l)$.
We will establish the lemma by induction on $t$ with $g\in \CC\{\{ x_1,\ldots,x_t\}\}$ for $r\le t\le r+l$. 
We will establish the lemma then with the further  restriction that all transformations of types 3) and 4) have $\overline m\le t-r$
and we will obtain $u\in \CC\{\{ x_1(1),\ldots,x_t(1)\}\}$.

We first prove the lemma for $t=r$, so suppose $g\in \CC\{\{ x_1,\ldots,x_r\}\}$. Expand
$$
g=\sum a_{i_1,\ldots,i_r}x_1^{i_1}\cdots x_r^{i_r}\mbox{ with }a_{i_1,\ldots,i_r}\in \CC.
$$
Let $I$ be the ideal 
$$
I=(x_1^{i_1}\cdots x_r^{i_r}\mid a_{i_1,\ldots,i_r}\ne 0).
$$
The ideal $I$ is generated by $x_1^{i_1(1)}\cdots x_r^{i_r(1)},\ldots,x_1^{i_1(k)}\cdots x_r^{i_r(k)}$
for some $i_1(1),\ldots,i_r(k)$ with $k\in \ZZ_{>0}$. By performing a transformation of type 2) $(x,y)\rightarrow (x(1),y(1))$ we  may principalize the ideal $I$ (by Lemma \ref{Lemma21}). Suppose that 
$x_1(1)^{a_1}\cdots x_r(1)^{a_r}$ is a generator of $I\mathcal O_{X(1),e_{X(1)}}^{\rm an}$. Then since $x_1,\ldots,x_r$ are independent, we have that $g=x_1(1)^{a_1}\cdots x_r(1)^{a_r}u$ where $u\in\CC\{\{ x_1(1),\ldots,x_r(1)\}\}$ is a unit, obtaining the conclusions of the lemma when $t=r$.

Now suppose that $l+r\ge t>r$, $g\in \CC\{\{ x_1,\ldots,x_t\}\}$ and the lemma is true in $\CC\{\{ x_1,\ldots,x_{t-1}\}\}$. We may then assume that
$g\in \CC\{\{ x_1,\ldots,x_t\}\}\setminus \CC\{\{ x_1,\ldots,x_{t-1}\}\}$.
Expand
$$
g=\sum_{i=0}^{\infty}\sigma_i x_t^i\mbox{ with }\sigma_i\in \CC\{\{ x_1,\ldots,x_{t-1}\}\}.
$$
Suppose that $\sigma_0,\ldots,\sigma_k$ generate the ideal $I=(\sigma_i\mid i\in \NN)$. By induction on $t$, there exists a sequence of transformations of types 2) - 4) $(x,y)\rightarrow (x(1),y(1))$ (with $\overline m\le t-r-1$ in transformations of types 3) and 4)) such that for $0\le i\le k$, either $\sigma_i=0$ or 
$$
\sigma_i=x_1(1)^{a_1^i}\cdots x_r(1)^{a_r^i}\overline u_i
$$
for some $a_j^i\in \NN$ and  unit $\overline u_i\in \CC\{\{x_1(1),\ldots,x_{t-1}(1)\}\}$. Then after a transformation of type 2) (which we incorporate into $(x,y)\rightarrow (x(1),y(1))$), we obtain (by Lemma \ref{Lemma21}) that $I\mathcal O_{X(1),e_{X(1)}}^{\rm an}$ is principal and generated by $x_1(1)^{a_1^i}\cdots x_r(1)^{a_r^i}$ for some $i$. Then we have an expression 
$$
g=x_1(1)^{a_1}\cdots x_r(1)^{a_r}F
$$
where $F\in \CC\{\{x_1(1),\ldots,x_t(1)\}\}$ and $h:=\mbox{ord }F(0,\ldots,0,x_t(1))<\infty$. If $h=0$ we have the conclusions of the lemma, so suppose that $h>0$. By Lemma \ref{Lemma9}, there exists a change of variables in $x_t(1)$ (inducing a transformation of type 3) with $\overline m=t-r$) such that $F$ has an expression
\begin{equation}\label{eq40}
F=\tau_0x_t(1)^h+\tau_2x_t(1)^{h-2}+\cdots+\tau_h
\end{equation}
with $\tau_0\in\CC\{\{ x_1(1),\ldots,x_t(1)\}\}$ a unit and $\tau_i\in \CC\{\{ x_1(1),\ldots,x_{t-1}(1)\}\}$ for $2\le i\le h$. By induction on $t$, we can perform a sequence of transformations of types 2) - 4) $(x(1),y(1))\rightarrow (x(2),y(2))$ (with $\overline m\le t-r-1$ in transformations of types 3) and 4)) such that for
$2\le i\le h$, 
$$
\tau_i=x_1(2)^{a_1^i}\cdots x_r(2)^{a_r^i}\overline\tau_i
$$
where $\overline \tau_i\in \CC\{\{ x_1(2),\ldots,x_{t-1}(2)\}\}$ is either zero or a unit series. We can assume by Lemma \ref{Lemma51} that $x_1(2),\ldots,x_r(2),x_t(1)$ are dependent. Now perform by Lemma \ref{Lemma23} a transformation of type 4)
$(x(2),y(2))\rightarrow (x(3),y(3))$ with $\overline m=t-r$ and substitute into (\ref{eq40}) to get an expression
$$
F=\tau_0x_1(3)^{b_1^0}\cdots x_r(3)^{b_r^0}(x_t(3)+\alpha)^h+\overline \tau_2x_1(3)^{b_1^2}\cdots x_r(3)^{b_r^2}(x_t(3)+\alpha)^{h-2}
+\cdots +\overline \tau_hx_1(3)^{b_1^h}\cdots x_r(3)^{b_r^h}
$$
with $0\ne \alpha\in \CC$. Now perform a transformation of type 2) (which we incorporate into $(x(2),y(2))\rightarrow (x(3),y(3)))$ to principalize the ideal 
$$
I=(x_1(3)^{b_1^i}\cdots x_r(3)^{b_r^i}\mid i=0\mbox{ or }\overline \tau_i\ne 0).
$$
We then have an expression
$$
g=x_1(3)^{\overline a_1}\cdots\overline x_r(3)^{\overline a_r}\overline F
$$
where $\mbox{ord }\overline F(0,\ldots,0,x_t(3))<h$. By induction on $h$, we  eventually reach the conclusions of the lemma for $g\in \CC\{\{ x_1,\ldots,x_t\}\}$. The lemma now follows from induction on $t$.
\end{proof}

\begin{Lemma}\label{Lemma2} Suppose that $(x,y)$ are prepared of type $(s,r,l)$ and $g\in \CC\{\{y_1,\ldots,y_{s+l}\}\}$. Then either there exists a sequence of transformations $(x,y)\rightarrow (x(1),y(1))$ such that $(x(1),y(1))$ are prepared of type $(s_1,r_1,l_1)$ with $(s_1,r_1,l_1)>(s,r,l)$ or there exists a sequence of transformations of the types 1) - 4) $(x,y)\rightarrow (x(1),y(1))$ such that 
$(x(1),y(1))$ are prepared of type $(s_1,r_1,l_1)$ with  $(s_1,r_1,l_1)=(s,r,l)$ and we have an expression
$$
g=y_1(1)^{d_1}\cdots y_s(1)^{d_s}u
$$
with $u\in \CC\{\{y_1(1),\ldots, y_{s+l}(1)\}\}$ a unit.
\end{Lemma}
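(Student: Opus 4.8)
The plan is to run the same induction that proves Lemma \ref{Prop1}, but over the larger rings $\CC\{\{y_1,\ldots,y_{s+\overline m}\}\}$ in place of $\CC\{\{x_1,\ldots,x_{r+\overline m}\}\}$, with the gap between the two controlled by the decomposition of Section \ref{DecSeries}. Thus I would induct on $t$ with $r\le t\le r+l$ and $g\in\CC\{\{y_1,\ldots,y_{s+(t-r)}\}\}$, so that Lemma \ref{Prop1} is precisely the sub-case $g\in\CC\{\{x_1,\ldots,x_t\}\}$. Throughout one may assume that $s$, $r$ and $r+l$ never increase, since otherwise the first alternative of the lemma already holds. For the base case $t=r$ one has $g\in\CC\{\{y_1,\ldots,y_s\}\}$; since $y_1,\ldots,y_s$ are independent, a transformation of type 1) principalizes the monomial ideal generated by the monomials appearing in $g$ (Lemma \ref{Lemma21}), and because the values $\nu_e(y_1(1)),\ldots,\nu_e(y_s(1))$ are rationally independent the generator divides $g$ with unit quotient.

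For the inductive step, with $\overline m=t-r\ge 1$, I would expand $g=\sum_i\sigma_iy_{s+\overline m}^i$ with $\sigma_i\in\CC\{\{y_1,\ldots,y_{s+\overline m-1}\}\}$, monomialize the finitely many $\sigma_i$ that generate the ideal $(\sigma_i\mid i\in\NN)$ by the induction hypothesis, principalize by a transformation of type 1), and so reduce to $g=y_1(1)^{d_1}\cdots y_s(1)^{d_s}F$ with $h:=\mbox{ord }F(0,\ldots,0,y_{s+\overline m}(1))<\infty$; if $h=0$ we are done. If $h>0$, I would follow the order-reduction scheme of Lemma \ref{Prop1}: perform a Tschirnhaus transformation (Lemma \ref{Lemma9}) in $y_{s+\overline m}(1)$ to kill the degree $h-1$ term, obtaining $F=\tau_0y_{s+\overline m}(1)^h+\tau_2y_{s+\overline m}(1)^{h-2}+\cdots+\tau_h$ with $\tau_0$ a unit; monomialize $\tau_2,\ldots,\tau_h$ by the induction hypothesis; perform a transformation of type 4) introducing a translate $y_{s+\overline m}(3)+\alpha$ with $0\ne\alpha\in\CC$; substitute and principalize the resulting monomial ideal. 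As in Lemma \ref{Prop1}, the absence of the degree $h-1$ term forces the order of the transform in $y_{s+\overline m}(3)$ to drop below $h$, and one finishes by induction on $h$ inside the induction on $t$. Lemma \ref{Lemma7} guarantees that at every stage the decomposition, and the notion of being algebraic over $x_1,\ldots,x_{r+l}$, behave predictably under transformations of types 1)--4).

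The main obstacle is the Tschirnhaus step, and this is exactly where the decomposition of Section \ref{DecSeries} is indispensable: a transformation of type 3) only allows one to subtract a $\Phi\in\CC\{\{x_1,\ldots,x_{r+\overline m-1}\}\}$, whereas Lemma \ref{Lemma9} produces a center $\Phi$ in the much larger ring $\CC\{\{y_1,\ldots,y_{s+\overline m-1}\}\}$. To get around this I would decompose the relevant coefficient as $\sum_{[\Lambda]}h_{[\Lambda]}$; by Proposition \ref{Prop2} one transformation of type 2) makes $h_{[0]}$, the piece algebraic over $x_1,\ldots,x_{r+l}$, a power series (possibly in fractional powers of $x_1(1),\ldots,x_r(1)$) over the $x(1)$'s --- which can be Tschirnhaus-normalized by arguments parallel to those in Lemma \ref{Prop1} together with transformations of type 3), invoking Lemma \ref{Lemma11} to return to the genuine subring $\CC\{\{x_1,\ldots,x_{r+\overline m-1}\}\}$ --- while by the same proposition each $h_{[\Lambda]}$ with $[\Lambda]\ne0$ is a monomial in $y_1(1),\ldots,y_s(1)$ times a factor controlled over the $x$'s. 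After principalizing by a transformation of type 1) the monomial ideal spanned by those prefactors, the pieces with $[\Lambda]\ne0$ become divisible by the leading monomial of $F$ and therefore contribute nothing to the degree $h-1$ coefficient; hence the order reduction survives, $h$ strictly decreases after finitely many steps, and the induction closes. The delicate bookkeeping throughout --- which monomial divides which after each SGMT --- is precisely what the independence of $y_1,\ldots,y_s$ (Lemmas \ref{Lemma1} and \ref{Lemma21}) and the compatibility of the decomposition with transformations (Lemma \ref{Lemma7}) are there to supply.
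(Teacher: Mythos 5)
Your plan --- rerun the induction of Lemma \ref{Prop1} with the $y$-variables in place of the $x$-variables --- is not the route the paper takes, and it has a genuine gap exactly at the point you flag as ``the main obstacle.'' The paper's proof never performs a Tschirnhaus transformation in the $y$-variables at all. Instead it decomposes $g=\sum h_{[\Lambda]}$, picks finitely many generators $h_{[\Lambda_1]},\ldots,h_{[\Lambda_t]}$ of the ideal they generate, uses Proposition \ref{Prop2} to write each $\delta_{[\Lambda_i]}=\frac{h_{[\Lambda_i]}}{y^{\lambda^i}}x^{w^i}$ as a convergent series in $x_1(1)^{1/d},\ldots,x_r(1)^{1/d},x_{r+1}(1),\ldots,x_{r+l}(1)$, forms the product $\epsilon_{[\Lambda_i]}$ over the conjugates $x_j(1)^{1/d}\mapsto\omega^{i_j}x_j(1)^{1/d}$ to land in the honest ring $\CC\{\{x_1(1),\ldots,x_{r+l}(1)\}\}$, applies Lemma \ref{Prop1} to $f=\prod_i\epsilon_{[\Lambda_i]}$ (so all the Tschirnhaus work happens in the $x$-variables, where type 3) is available), and then transfers the monomial form back to each $h_{[\Lambda_i]}$ by observing that $\chi_{[\Lambda_i]}=\epsilon_{[\Lambda_i]}/\delta_{[\Lambda_i]}$ is integral over the normal ring $\CC\{\{y_1(2),\ldots,y_{s+l}(2)\}\}$, hence lies in it; a final transformation of type 1) principalizes the ideal of the $h_{[\Lambda_i]}$.

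The gap in your version: for $1\le\overline m\le l$ the only admissible change of the variable $y_{s+\overline m}$ is a transformation of type 3), whose center must lie in $\CC\{\{x_1,\ldots,x_{r+\overline m-1}\}\}$ (and which must be applied simultaneously to $x_{r+\overline m}$ to preserve $x_{r+\overline m}=y_{s+\overline m}$); type 5) is only available for $\overline m>l$. Your workaround --- subtract only the ``algebraic part'' $h_{[0]}$ of the Tschirnhaus center and argue that the remaining pieces ``contribute nothing to the degree $h-1$ coefficient'' after principalization --- does not hold up. First, the center $\Phi$ of Lemma \ref{Lemma9} is produced implicitly from $\partial^{h-1}F/\partial y_{s+\overline m}^{h-1}=u(y_{s+\overline m}-\Phi)$, and it is not shown that its algebraic part is realizable by a type 3) move while the remainder behaves as claimed. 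Second, principalization gives you no control over \emph{which} monomial becomes the generator of the coefficient ideal, so the assertion that the $[\Lambda]\ne 0$ pieces ``become divisible by the leading monomial of $F$'' is unsupported; and even when such divisibility holds, a term divisible by the generator still contributes a nonzero constant to the relevant coefficient of the strict transform unless the divisibility is strict. The order-reduction argument genuinely needs the $(z+\alpha)^{h-1}$ coefficient of the strict transform to vanish at the origin: otherwise $P(z)=c_0(z+\alpha)^h+c_1(z+\alpha)^{h-1}+\cdots$ can equal $c_0z^h$ and the order does not drop. There is a conceivable repair (a case analysis on whether the generator of the principalized ideal is or is not algebraic over $x_1,\ldots,x_r$, using that the residual part of the $(h-1)$-coefficient is a sum of $h_{[\Lambda]}$ with $[\Lambda]\ne 0$), but it is not in your write-up, and the paper's norm-and-integrality argument avoids the entire difficulty.
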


\begin{proof} We will perform a sequence of transformations  which we may assume  do not lead to an increase in $(s,r,l)$.

Let $g$ have the expression (\ref{eq6}). Let $J$ be the ideal in $\mathcal O_{\tilde Y,e_{\tilde Y}}^{\rm an}$ defined by
$$
J=(h_{[\Lambda]}\mid [\Lambda] \in \ZZ^s/(\QQ^rC)\cap \ZZ^s).
$$
$J$ is generated by $h_{[\Lambda_1]},\ldots,h_{[\Lambda_t]}$ for some $[\Lambda_1],\ldots,[\Lambda_t]$.
After performing a transformation of type 2) $(x,y)\rightarrow (x(1),y(1))$ we obtain  expressions 
$$
\delta_{[\Lambda_i]}:= \frac{h_{[\Lambda_i]}}{y_1^{\lambda_1^i}\cdots y_s^{\lambda_s^i}}x_1^{w_1^i}\cdots x_r^{w_r^i}\in \CC\{\{x_1(1)^{\frac{1}{d}},\ldots,x_r(1)^{\frac{1}{d}},x_{r+1}(1),\ldots,x_{r+l}(1)\}\}.
$$
for $1\le i\le t$ of the form of  (\ref{eq64}) by Proposition \ref{Prop2}. We may choose the $w_1^{i},\ldots, w_r^i\in \NN$ so that 
$$
\frac{x_1^{w_1^i}\cdots x_r^{w_r^i}}{y_1^{\lambda_1^i}\cdots y_s^{\lambda_s^i}}\in \CC\{\{y_1,\ldots,y_s\}\}.
$$
 Let $\omega$ be a complex primitive $d$-th root of unity, and for $1\le j\le t$, let
$$
\epsilon_{[\Lambda_j]}=\prod_{i_1,\ldots,i_r=1}^d\delta_{[\Lambda_j]}(\omega^{i_1}x_1(1)^{\frac{1}{d}},\ldots,\omega^{i_r}x_r(1)^{\frac{1}{d}},x_{r+1}(1),\ldots,x_{r+l}(1))
\in \CC\{\{x_1(1),\ldots,x_{r+l}(1)\}\}.
$$
Let 
$$
f=\prod_{i=1}^t \epsilon_{[\Lambda_i]}.
$$
By Lemma \ref{Prop1}, there exists a sequence of transformations of types 2) - 4) $(x(1),y(1))\rightarrow (x(2),y(2))$ such that $f=x_1(2)^{m_1}\cdots x_r(2)^{m_r}u$
where $u\in\CC\{\{ x_1(2),\ldots,x_{r+l}(2)\}\}$ is a unit series. Thus each $\epsilon_{[\Lambda_i]}$ has such a form,  so
$$
\epsilon_{[\Lambda_i]}=x_1(2)^{m_1^i}\cdots x_r(2)^{m_r^i}u_i
$$
for $1\le i\le t$ where $u_i\in \CC\{\{x_1(2),\ldots,x_{r+l}(2)\}\}$ is a unit. 

Let $K$ be the quotient field of $R=\CC\{\{ y_1(2),\ldots,y_{s+l}(2)\}\}$. We have
$$
\chi_{[\Lambda_i]}:=\frac{\epsilon_{[\Lambda_i]}}{\delta_{[\Lambda_i]}}\in K
$$
 for $1\le i\le t$. We also have 
 $$
 \chi_{[\Lambda_i]}\in \CC\{\{x_1(2)^{\frac{1}{d}},\ldots,x_r(2)^{\frac{1}{d}},x_{r+1}(2),\ldots,x_{r+l}(2)\}\},
 $$
as we have only performed transformations of types 2) - 4). So $\chi_{[\Lambda_i]}$ is integral over $\CC\{\{x_1(2),\ldots,x_{r+l}(2)\}\}$ and thus $\chi_{[\Lambda_i]}$ is integral over $R$. Since $R$ is a regular local ring it is normal so $\chi_{[\Lambda_i]}\in R$. Thus $\delta_{[\Lambda_i]}$ divides $\epsilon_{[\Lambda_i]}$ in $R$ and so there are expressions
$$
\delta_{[\Lambda_i]}=y_1(2)^{e_1^i}\cdots y_s(2)^{e_s^i}v_i
$$
for $1\le i\le t$ where $v_i\in\CC\{\{y_1(2),\ldots,y_{s+l}(2)\}\}$ are unit series and thus 
$$
h_{[\Lambda_i]}=y_1(2)^{m_1^i}\cdots y_s(2)^{m_s^i}\overline u_i
$$
for $1\le i\le t$ where $\overline u_i\in \CC\{\{y_1(2),\ldots,y_{s+l}(2)\}\}$ are unit series. 
Now perform a transformation of type 1) to principalize the ideal
$J\mathcal O_{Y(2),e_{Y(2)}}^{\rm an}=(y_1(2)^{m_1^i}\cdots y_s(2)^{m_s^i}\mid 1\le i\le t)$. Then we have the desired conclusion for $g$ by (\ref{eq70}) in Lemma \ref{Lemma7}.
\end{proof}

\begin{Lemma}\label{Lemma3} Suppose that $(x,y)$ are prepared of type $(s,r,l)$ and $g\in \CC\{\{y_1,\ldots,y_{s+l}\}\}$. Then either there exists a sequence of transformations $(x,y)\rightarrow (x(1),y(1))$ such that $(x(1),y(1))$ are prepared of type $(s_1,r_1,l_1)$ with $(s_1,r_1,l_1)>(s,r,l)$ or there exists a sequence of transformations of the types 1) - 4) and 8) $(x,y)\rightarrow (x(1),y(1))$ such that 
$(x(1),y(1))$  are prepared of type $(s_1,r_1,l_1)$ with $(s_1,r_1,l_1)=(s,r,l)$ 
 and
either
$g$ is algebraic over $x_1(1),\ldots,x_{r+l}(1)$ or 
$$
g=P+y_1(1)^{d_1}\cdots y_s(1)^{d_s}
$$
with $P\in \CC\{\{y_1(1),\ldots,y_{s+l}(1)\}\}$ algebraic over $x_1(1),\ldots,x_{r+l}(1)$ and $y_1(1)^{d_1}\cdots y_s(1)^{d_s}$  not algebraic over $x_1(1),\ldots,x_r(1)$.
\end{Lemma}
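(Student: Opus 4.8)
The plan is to obtain Lemma \ref{Lemma3} as a refinement of Lemma \ref{Lemma2} by peeling off the ``algebraic part'' of $g$ before monomializing the rest. As usual we may assume that no transformation performed below produces a strict increase of $(s,r,l)$, since then we are in the first alternative and are done. Write the decomposition $g=\sum_{[\Lambda]}h_{[\Lambda]}$ of (\ref{eq6})--(\ref{eq60}). The component $h_{[0]}$ of the trivial class is algebraic over $x_1,\ldots,x_{r+l}$: every $(y_1,\ldots,y_s)$-exponent $\alpha$ occurring in $h_{[0]}$ lies in $\QQ^rC\cap\ZZ^s$, which is precisely the rank condition of Definition \ref{Def500}. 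Set $g'=g-h_{[0]}=\sum_{[\Lambda]\ne 0}h_{[\Lambda]}\in\CC\{\{y_1,\ldots,y_{s+l}\}\}$. If $g'=0$ then $g=h_{[0]}$ is already algebraic over $x_1,\ldots,x_{r+l}$ and we are in the first alternative (with the empty sequence of transformations); this happens in particular when $s=r$, since then $\QQ^rC\cap\ZZ^s=\ZZ^s$. So from now on assume $g'\ne 0$, hence $s>r$.

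Next I would apply Lemma \ref{Lemma2} to the convergent series $g'$, obtaining a sequence of transformations of types 1)--4), $(x,y)\to(x(1),y(1))$, preserving the type $(s,r,l)$, together with an expression
$$
g'=y_1(1)^{d_1}\cdots y_s(1)^{d_s}\,u
$$
with $u\in\CC\{\{y_1(1),\ldots,y_{s+l}(1)\}\}$ a unit. By Lemma \ref{Lemma7}, algebraicity over $x_1,\ldots,x_{r+l}$ is preserved by these transformations, so $h_{[0]}$ is carried to a series $P:=h_{[0]}(1)\in\CC\{\{y_1(1),\ldots,y_{s+l}(1)\}\}$ which is still algebraic over $x_1(1),\ldots,x_{r+l}(1)$; moreover $P$ is the trivial-class component of the decomposition of $g$ in the new variables, so $g'=g-P$ equals the sum of the components of $g$ of nonzero class, i.e.\ the trivial-class component of $g'$ is $0$.

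The key point is then that $M:=y_1(1)^{d_1}\cdots y_s(1)^{d_s}$ is not algebraic over $x_1(1),\ldots,x_r(1)$, i.e.\ $d=(d_1,\ldots,d_s)\notin\QQ^rC(1)$. Indeed, expanding $g'=Mu$ shows that the monomial $y_1(1)^{d_1}\cdots y_s(1)^{d_s}$ occurs in $g'$ with coefficient the series $u(0,\ldots,0,y_{s+1}(1),\ldots,y_{s+l}(1))\in\CC[[y_{s+1}(1),\ldots,y_{s+l}(1)]]$, which is nonzero since its constant term is $u(0)\ne 0$. Hence $d$ lies in the $(y_1(1),\ldots,y_s(1))$-support of $g'$; since the trivial-class component of $g'$ is $0$, this forces $[d]\ne 0$, so $d\notin\QQ^rC(1)\cap\ZZ^s$, and as $d\in\ZZ^s$ we conclude $d\notin\QQ^rC(1)$.

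Finally I would remove the unit $u$ by a transformation of type 8). Since $\ker C(1)$ has dimension $s-r\ge 1$ and $d$ does not lie in $\QQ^rC(1)=(\ker C(1))^{\perp}$, there is $(c_1,\ldots,c_s)\in\ker C(1)$ with $\sum_jc_jd_j\ne 0$. Choosing a branch of the relevant root, put $\gamma=u^{1/\sum_jc_jd_j}$, a unit of $\CC\{\{y_1(1),\ldots,y_{s+l}(1)\}\}$, and replace $y_j(1)$ by $y_j(2)=y_j(1)\gamma^{c_j}$ for $1\le j\le s$. Because $(c_j)\in\ker C(1)$ this preserves (\ref{eq5}), so it is a legitimate transformation of type 8), and $(s,r,l)$ is unchanged by our running assumption; a direct substitution turns $g'=Mu$ into the pure monomial $y_1(2)^{d_1}\cdots y_s(2)^{d_s}$. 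By the remark following Definition \ref{Def500}, $P$ is carried to a series $P'$ still algebraic over $x_1(2),\ldots,x_{r+l}(2)$, while $y_1(2)^{d_1}\cdots y_s(2)^{d_s}$ is not algebraic over $x_1(2),\ldots,x_r(2)$ since $C$ (hence $\QQ^rC$) is unchanged by a transformation of type 8). Then
$$
g=P'+y_1(2)^{d_1}\cdots y_s(2)^{d_s}
$$
is the required expression. I expect the main obstacle to be the third step, namely verifying that the monomialized remainder $M$ genuinely stays in a nontrivial class — which is exactly what makes the final type 8) transformation possible — rather than the by-now routine bookkeeping of the decomposition through Lemmas \ref{Lemma2} and \ref{Lemma7}.
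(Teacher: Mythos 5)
Your proof is correct and follows essentially the same route as the paper: split off $g'=g-h_{[0]}$, monomialize $g'$ via Lemma \ref{Lemma2}, use Lemma \ref{Lemma7} to see that $h_{[0]}$ stays algebraic and that the exponent vector $(d_1,\ldots,d_s)$ lies outside $\QQ^rC(1)$, and then absorb the unit by a type 8) transformation. Your minor variations (justifying algebraicity of $h_{[0]}$ directly from Definition \ref{Def500} rather than via Proposition \ref{Prop2}, handling the degenerate case $g'=0$, and normalizing $\gamma$ as a root of $u$ instead of taking $\sum e_jd_j=-1$) are all sound and do not change the argument.
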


\begin{proof}  We will perform a sequence of transformations which we may assume do not lead to an increase in $(s,r,l)$. Let $g$ have the expression (\ref{eq6}) and let $g'=g-h_{[0]}\in \CC\{\{y_1,\ldots,y_{s+l}\}\}$. By Lemma \ref{Lemma2}, there exists a sequence of transformations of type 1) - 4) $(x,y) \rightarrow (x(1),y(1))$ so that $g'=y_1(1)^{d_1}\cdots y_s(1)^{d_s}u$ with $u\in \CC\{\{y_1(1),\ldots,y_{s+l}(1)\}\}$ a unit. 
By Proposition \ref{Prop2}, after possibly performing another transformation of type 2), we also obtain that $h_{[0]}\in \CC\{\{x_1(1)^{\frac{1}{d}},\ldots,x_r(1)^{\frac{1}{d}},x_{r+1}(1),\ldots,x_{r+l}(1)\}\}$. 
Since $h_{[0]}\in \CC\{\{y_1(1),\ldots,y_{s+l}(1)\}\}$, by Lemma \ref{Lemma7}, we have that $h_{[0]}$ is algebraic over $x_1(1),\ldots,x_{r+l}(1)$. 
We also have by Lemma \ref{Lemma7} that 
$$
{\rm rank}\left(\begin{array}{ccc}
c_{11}(1)&\cdots&c_{1s}(1)\\
\vdots&&\vdots\\
c_{r1}(1)&\cdots&c_{rs}(1)\\
d_1&\cdots&d_s
\end{array}\right)=r+1.
$$
Thus there exist $e_1,\ldots,e_s\in \QQ$ such that 
$$
e_1c_{i1}(1)+\cdots+e_sc_{is}(1)=0\mbox{ for }1\le i\le r
$$
and
$$
e_1d_1+\cdots+e_sd_s=-1,
$$
and so, making a change of variables, replacing $y_i(1)$ with $y_i(1)u^{e_i}$ for $1\le i\le s$, we have a transformation of type 8) which gives the conclusions of the lemma.

\end{proof}

\begin{Proposition}\label{Theorem502} Suppose that $(x,y)$ are prepared of type $(s,r,l)$. Then either there exists a sequence of transformations $(x,y)\rightarrow (x(1),y(1))$ such that $(x(1),y(1))$ are prepared of type $(s_1,r_1,l_1)$ with $(s_1,r_1,l_1)>(s,r,l)$ or the induced homomorphism
$$
\alpha:\CC[[x_1,\ldots,x_{r+l},x_{r+l+1}]]\rightarrow \CC[[y_1,\ldots,y_n]]
$$
is an injection.
\end{Proposition}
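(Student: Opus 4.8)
The plan is to argue by contraposition: assuming that no sequence of transformations raises $(s,r,l)$, we prove that $\alpha$ is injective. So suppose $\mathfrak{p}:=\ker\alpha\ne 0$. Since $\CC[[x_1,\ldots,x_{r+l}]]$ injects into $\CC[[y_1,\ldots,y_n]]$ (the remark following (\ref{eq5})), we have $\mathfrak{p}\cap\CC[[x_1,\ldots,x_{r+l}]]=0$; hence $\mathfrak{p}$ is a height one prime of $\CC[[x_1,\ldots,x_{r+l+1}]]$ and the image of $x_{r+l+1}$ is algebraic over $F:=\operatorname{Frac}\CC[[x_1,\ldots,x_{r+l}]]$. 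Clearing denominators in its minimal polynomial yields a relation
\[
\sum_{j=0}^{t}q_j\,x_{r+l+1}^{\,j}=0\quad\text{in }\CC[[y_1,\ldots,y_n]],\qquad q_j\in\CC[[x_1,\ldots,x_{r+l}]],
\]
with $t\ge 1$ chosen minimal; then $q_t\ne 0$, and $q_0\ne 0$ because $x_{r+l+1}\notin\mathfrak{p}$.

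First I would eliminate the variables $y_{s+l+1},\ldots,y_n$. The ring $\CC[[y_1,\ldots,y_{s+l}]]$ is algebraically closed in $\CC[[y_1,\ldots,y_n]]$: by induction on the number of extra variables it suffices to adjoin one variable $z$, and if $g\in R[[z]]$, with $R=\CC[[y_1,\ldots,y_{n-1}]]$, is algebraic over $\operatorname{Frac}R$ with minimal polynomial $Q$, then applying $z\mapsto 0$ shows $g_0:=g|_{z=0}\in R$ is a root of $Q$ in $\operatorname{Frac}R$, so $Q$ is linear and $g\in\operatorname{Frac}R\cap R[[z]]=R$. Applying this to $x_{r+l+1}$, which is algebraic over $F\subseteq\operatorname{Frac}\CC[[y_1,\ldots,y_{s+l}]]$, and inheriting convergence from Abel's criterion (\ref{eq42}), we may assume $x_{r+l+1}\in\CC\{\{y_1,\ldots,y_{s+l}\}\}$.

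Now apply Lemma \ref{Lemma3} to $g=x_{r+l+1}$. If it produces $(s_1,r_1,l_1)>(s,r,l)$ we are done; otherwise it gives a sequence of transformations of types 1)--4) and 8) (none of which alters $x_{r+l+1}$) after which either $x_{r+l+1}$ is algebraic over $x_1(1),\ldots,x_{r+l}(1)$, or $x_{r+l+1}=P+M$ with $M=y_1(1)^{d_1}\cdots y_s(1)^{d_s}$, where $P$ is algebraic over $x_1(1),\ldots,x_{r+l}(1)$ and $M$ is not algebraic over $x_1(1),\ldots,x_r(1)$. Under these transformations each $q_j$ remains a formal power series in $y_1(1),\ldots,y_{s+l}(1)$ that is algebraic over $x_1(1),\ldots,x_{r+l}(1)$ (Lemma \ref{Lemma7}), and $q_t$ has nonzero image $q_t(1)$ since the transformations induce injections of the relevant power series rings; thus $\sum_j q_j(1)\,x_{r+l+1}^{\,j}=0$. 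In the first case, Lemma \ref{Lemma11} gives $x_{r+l+1}\in\CC\{\{x_1(1),\ldots,x_{r+l}(1)\}\}$, so $x_{r+l+1}-h\in\ker\phi(1)^{*}$ for a convergent $h$ in those variables; this is a nonzero kernel element, contradicting the injectivity of $\phi(1)^{*}$ (quasi regularity is preserved by the transformations). In the second case, substitute $x_{r+l+1}=P+M$ and use the $\ZZ^s/\bigl((\QQ^rC(1))\cap\ZZ^s\bigr)$-grading of $\CC[[y_1(1),\ldots,y_{s+l}(1)]]$ underlying the decomposition (\ref{eq6}):
\[
0=\sum_{j=0}^{t}q_j(1)(P+M)^j=\sum_{k=0}^{t}M^{k}\Bigl(\sum_{j=k}^{t}\tbinom{j}{k}q_j(1)P^{\,j-k}\Bigr).
\]
Each inner sum has degree $[0]$ (a product of elements algebraic over $x_1(1),\ldots,x_{r+l}(1)$), while $M^{k}$ has degree $k\cdot[(d_1,\ldots,d_s)]$; since that quotient group is torsion free and $[(d_1,\ldots,d_s)]\ne 0$, the $t+1$ summands are homogeneous of pairwise distinct degrees, so each vanishes. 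The $k=t$ summand gives $q_t(1)=0$, a contradiction. Hence $\mathfrak{p}=0$.

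The step I expect to be the main obstacle is the reduction to the case $x_{r+l+1}\in\CC\{\{y_1,\ldots,y_{s+l}\}\}$, since Lemma \ref{Lemma3} is unavailable otherwise, together with the bookkeeping that carries the algebraic relation through the transformations so that the top coefficient $q_t(1)$ survives nonzero and the grading argument applies.
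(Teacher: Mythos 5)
Your argument breaks down at its very first step, and everything after it is built on that step. From a nonzero $\mathfrak{p}=\ker\alpha$ with $\mathfrak{p}\cap\CC[[x_1,\ldots,x_{r+l}]]=0$ you conclude that the image $z$ of $x_{r+l+1}$ is algebraic over $F=\operatorname{Frac}\CC[[x_1,\ldots,x_{r+l}]]$ and hence satisfies a \emph{polynomial} relation $\sum_{j=0}^{t}q_jz^j=0$ of finite minimal degree $t$ with $q_t\ne 0$. That implication is valid for the polynomial ring $A'[T]$, $A'=\CC[[x_1,\ldots,x_{r+l}]]$ (localize at $A'\setminus\{0\}$ and use that $F[T]$ is a PID), but it fails for the power series ring $A'[[T]]$: a nonzero kernel element is a power series $G=\sum_i a_ix_{r+l+1}^i$, i.e.\ an \emph{infinite} relation $\sum_i\alpha(a_i)z^i=0$, and in general it cannot be truncated. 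Concretely, with $A'=\CC[[x_1,x_2]]$ the principal prime $\mathfrak{p}=(x_1-x_2e^{T})\subset A'[[T]]$ satisfies $\mathfrak{p}\cap A'=0$ \emph{and} $\mathfrak{p}\cap A'[T]=0$ (expand $q_j(x_2e^{T},x_2)$ and use the $\CC$-linear independence of the series $e^{aT}T^j$), so the image of $T$ in $A'[[T]]/\mathfrak{p}\cong\CC[[x_2,T]]$ is transcendental over $F$ even though the kernel is nonzero. One cannot exclude this behaviour for $\alpha$ at the outset: that $\alpha$ has no kernel at all (absent an increase in $(s,r,l)$) is precisely what the proposition asserts. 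Everything downstream — the elimination of $y_{s+l+1},\ldots,y_n$ via ``algebraic elements of $R[[z]]$ lie in $R$'', the survival of the top coefficient $q_t(1)$, and the extraction of the $k=t$ graded piece — uses the finite degree $t$ and therefore inherits the gap.

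The paper's proof is organized exactly to avoid this reduction: it keeps the full power series $f(t)=\sum_i\alpha(a_i)t^i\in A[[t]]$ with $A=\CC[[y_1,\ldots,y_{s+l}]]$ and derives a contradiction from $f(z)=0$, $f\ne 0$, in two steps that work for infinitely many terms: (i) after translating $t$ by the constant term of $z$ in $y_{s+l+1},\ldots,y_n$, a lowest-lex-exponent argument shows $z\in\CC[[y_1,\ldots,y_{s+l}]]$; (ii) after translating by $h_{[0]}$ and using Lemma \ref{Lemma2} together with a transformation of type 8) to arrange $\tilde z=y_1(1)^{d_1}\cdots y_s(1)^{d_s}$ exactly, the identity $0=\sum_ie_i\tilde z^{\,i}$ \emph{is} the decomposition (\ref{eq6}) of $0$, forcing every $e_i=0$. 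Your closing binomial/grading computation is the same idea as (ii), and your lemma on algebraic closedness of $\CC[[y_1,\ldots,y_{s+l}]]$ plays the role of (i); to repair the proof you would have to run both for the infinite relation — for instance deducing $q_k\in(P)^N$ for all $N$ from the vanishing of the graded pieces instead of appealing to a nonzero top coefficient — rather than assuming a minimal polynomial exists.
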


\begin{proof} Set $z=\alpha(x_{r+l+1})$ and suppose that there exists a nonzero series $G\in \CC[[x_1,\ldots,x_{r+l+1}]]$ such that $\alpha(G)=0$. Expand $G$ as
$$
G=\sum_{i=0}^{\infty}a_i(x_1,\ldots,x_{r+l})x_{r+l+1}^i
$$
with $a_i(x_1,\ldots,x_{r+l})\in \CC[[x_1,\ldots,x_{r+l}]]$ for all $i$. We have
$\alpha(a_i)\in \CC[[y_1,\ldots,y_{s+l}]]$ for all $i$ and 
\begin{equation}\label{eq501}
0=\alpha(G)=\sum_i\alpha(a_i)z^i=0
\end{equation}
in $\CC[[y_1,\ldots,y_n]]$. 

 Let $A=\CC[[y_1,\ldots,y_{s+l}]]$ and $A[[t]]$ be a power series ring in one variable. Let
 $$
 f(t)=\sum\alpha(a_i)t^i\in A[[t]].
 $$
 $f(t)$ is nonzero since $\alpha(a_i)$ is nonzero whenever $a_i$ is nonzero.

Suppose that $z\not\in \CC[[y_1,\ldots,y_{s+l}]]$. We will derive a contradiction. Expand 
$$
z=\sum b_{i_{s+l+1},\ldots,i_n}y_{s+l+1}^{i_{s+l+1}}\cdots y_n^{i_n}
$$
 in $\CC[[y_1,\ldots,y_n]]$ with $b_{i_{s+l+1},\ldots,i_n}\in \CC[[y_1,\ldots,y_{s+l}]]$. Since $z$ is in the maximal ideal of $\CC[[y_1,\ldots,y_n]]$, we have that $b_{0,\ldots,0}$ is in the maximal ideal of $\CC[[y_1,\ldots,y_{s+l}]]$. Thus the map $g(t)\mapsto g(t+b_{0,\ldots,0})$ is an isomorphism of $A[[t]]$. 
 Let $\overline f(t)=f(t+b_{0,\ldots,0})$. We have that $\overline f(t)\ne 0$. Let $\overline z=z-b_{0,\ldots,0}$. We have that $\overline f(\overline z)=0$. Let $(j_{s+l+1},\ldots,j_n)$ be the minimum in the lex order of
 $$
 \{(i_{s+l+1},\ldots,i_n)\mid b_{i_{s+l+1},\ldots,i_n}\ne 0\mbox{ and }(i_{s+l+1},\ldots,i_n)\ne (0,\ldots,0)\}.
 $$
 Then $\overline f(\overline z)$ has a nonzero $\lambda(j_{s+l+1},\ldots,j_n)$ term, where $\lambda$ is the smallest positive exponent of $t$ such that $\overline f(t)$ has a nonzero $t^{\lambda}$ term. This contradiction shows that $z\in \CC[[y_1,\ldots,y_{s+l}]]$.
 Thus 
$$
z\in \CC[[y_1,\ldots,y_{s+l}]]\cap\CC\{\{y_1,\ldots,y_n\}\}=\CC\{\{y_1,\ldots,y_{s+l}\}\}.
$$
Suppose that $z$ is not algebraic over $x_1,\ldots,x_{r+l}$ (Definition \ref{Def500}). 
Let
$$
z=\sum_{[\Lambda]\in (\ZZ^s/(\QQ^rC)\cap \ZZ^s)}h_{[\Lambda]}
$$
be the decomposition of (\ref{eq6}). Then $z\ne h_{[0]}$ since we are assuming that $z$ is not algebraic over $x_1,\ldots,x_{r+l}$. Since $z$ is in the maximal ideal of $\CC[[y_1,\ldots,y_n]]$ we have that $h_{[0]}$ is in the maximal ideal of $\CC[[y_1,\ldots,y_{s+l}]]$. Thus the map $g(t)\mapsto g(t+h_{[0]})$ is an isomorphism of $A[[t]]$. Let $\tilde f(t)=f(t+h_{[0]})$. We have that $\tilde f(t)\ne 0$. Further, all coefficients of $\tilde f(t)$ are algebraic over $x_1,\ldots,x_{r+l}$.

Let $\tilde z=z-h_{[0]}\in\CC\{\{y_1,\ldots,y_{s+l}\}\}$. We have that $\tilde f(\tilde z)=0$. By Lemma \ref{Lemma2}, there either exists a sequence of transformations $(x,y)\rightarrow (x(1),y(1))$ such that  $(x(1),y(1))$ are prepared of type $(s_1,r_1,l_1)$ with $(s_1,r_1,l_1)>(s,r,l)$ or there exists a sequence of transformations $(x,y)\rightarrow (x(1),y(1))$ of types 1) - 4) such that we have an expression 
$$
\tilde z=y_1(1)^{d_1}\cdots y_s(1)^{d_s}u
$$
where $u\in \CC\{\{y_1(1),\ldots,y_{s+l}(1)\}\}$ is a unit. We have that
$$
\mbox{rank}\left(\begin{array}{ccc}
c_{11}(1)&\cdots&c_{1s}(1)\\
\vdots&&\vdots\\
c_{r1}(1)&\cdots&c_{rs}(1)\\
d_1&\cdots&d_s
\end{array}\right)
=r+1
$$
by Lemma \ref{Lemma7}. We now perform a transformation of type 8), replacing $y_i(1)$ with $y_i(1)u^{\lambda_i}$ for some $\lambda_i\in \QQ$ for $1\le i\le s$ to obtain that
$\tilde z=y_1(1)^{d_1}\cdots y_s(1)^{d_s}$. We have that $\tilde f(t)\in A_1[[t]]$ where $A_1=\CC\{\{y_1(1),\ldots,y_{s+l}(1)\}\}$ and all coefficients $e_i$  of 
$$
\tilde f(t)=\sum e_it^i
$$
 are algebraic over $x_1(1),\ldots,x_{r+l}(1)$ by Lemma \ref{Lemma1}. From the expansion
$$
0=\tilde f(\tilde z)=\sum_{i=0}^{\infty}e_i(y_1(1)^{d_1}\cdots y_s(1)^{d_s})^i
$$
we see that this is the expansion of type (\ref{eq6}) of $\tilde f(\tilde z)=0$, so that $e_i(y_1(1)^{d_1}\cdots y_s(1)^{d_s})^i=0$ for all $i$, which implies that $e_i=0$ for all $i$ so that $\tilde f(t)=0$, giving a contradiction,
 so $z$ is algebraic over $x_1,\ldots,x_{r+l}$. By Lemma \ref{Lemma11}, identifying $z$ with $x_{r+l+1}$ by the inclusion $\CC\{\{x_1,\ldots,x_n\}\}\subset \CC\{\{y_1,\ldots,y_n\}\}$, we have that
$x_{r+l+1}\in \CC\{\{x_1,\ldots,x_{r+l}\}\}$, a contradiction. Thus $\alpha$ is injective.
\end{proof}

\begin{Lemma}\label{Lemma4} Suppose that $(x,y)$ are prepared of type $(s,r,l)$ and $g\in \CC\{\{y_1,\ldots,y_{t}\}\}$ with $s+l\le t\le n$. Then either there exists a sequence of transformations $(x,y)\rightarrow (x(1),y(1))$ such that $(x(1),y(1))$ are prepared of type $(s_1,r_1,l_1)$ with $(s_1,r_1,l_1)>(s,r,l)$ or there exists a sequence of transformations of the types 1) - 6)  $(x,y)\rightarrow (x(1),y(1))$ (with $l<\overline m\le t-s$ in transformations of type 5) - 6))
such that $(x(1),y(1))$  are prepared of type $(s_1,r_1,l_1)$ with  $(s_1,r_1,l_1)=(s,r,l)$  and 
$$
g=y_1(1)^{d_1}\cdots y_s(1)^{d_s}u
$$
with $u\in \CC\{\{y_1(1),\ldots, y_{t}(1)\}\}$ a unit.
\end{Lemma}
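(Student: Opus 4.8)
The plan is to follow the structure of the proof of Lemma \ref{Prop1}, but in the $y$-variables and with the variables $y_{s+l+1},\ldots,y_t$ playing the role that $x_{r+1},\ldots,x_{r+l}$ play there. I would argue by induction on $t$, $s+l\le t\le n$. The base case $t=s+l$ is exactly Lemma \ref{Lemma2}: its transformations of types 1)--4) are among the allowed types 1)--6), and the restriction $l<\overline m\le t-s$ on transformations of types 5)--6) is vacuous since $t-s=l$. As in Lemmas \ref{Lemma2}, \ref{Lemma3} and \ref{Prop1} I would adopt the convention that any transformation producing an increase in $(s,r,l)$ terminates the argument in the first alternative of the statement, so we may assume $(s,r,l)$ is preserved at each step.

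For the inductive step, assume the lemma for $t-1$ and that $g\in\CC\{\{y_1,\ldots,y_t\}\}\setminus\CC\{\{y_1,\ldots,y_{t-1}\}\}$. Expand $g=\sum_{i\ge0}\sigma_iy_t^i$ with $\sigma_i\in\CC\{\{y_1,\ldots,y_{t-1}\}\}$, choose $\sigma_0,\ldots,\sigma_k$ generating the ideal $(\sigma_i)$, and apply the case $t-1$ successively to $\sigma_0,\ldots,\sigma_k$ (composing the transformations, all of types 1)--6) with $\overline m\le(t-1)-s$), so that each $\sigma_i$ becomes $0$ or a monomial in $y_1(1),\ldots,y_s(1)$ times a unit; since none of these transformations touches $y_t$, we still have $g\in\CC\{\{y_1(1),\ldots,y_t(1)\}\}$. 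A transformation of type 1) principalizing $(\sigma_0,\ldots,\sigma_k)$ (Lemma \ref{Lemma21}) then gives $g=y_1(1)^{a_1}\cdots y_s(1)^{a_s}F$ with $F\in\CC\{\{y_1(1),\ldots,y_t(1)\}\}$ and $h:=\mbox{ord }F(0,\ldots,0,y_t(1))<\infty$. If $h=0$ we are done, so we proceed by a second induction, on $h$.

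When $h\ge1$, Lemma \ref{Lemma9} gives a Tschirnhaus change of variable $y_t(1)\mapsto y_t(1)-\Phi$ with $\Phi\in\CC\{\{y_1(1),\ldots,y_{t-1}(1)\}\}$ --- a transformation of type 5) with $\overline m=t-s$, which is $>l$ since $t>s+l$ --- after which $F=\tau_0y_t(1)^h+\tau_2y_t(1)^{h-2}+\cdots+\tau_h$ with $\tau_0$ a unit and $\tau_2,\ldots,\tau_h\in\CC\{\{y_1(1),\ldots,y_{t-1}(1)\}\}$. Applying the case $t-1$ to each $\tau_i$ puts $\tau_i=(\mbox{monomial in }y_1(1),\ldots,y_s(1))\cdot\overline\tau_i$ with $\overline\tau_i$ zero or a unit. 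Here is the key point: since $(x,y)$ is prepared, $y_1,\ldots,y_s,y_t$ are dependent, and this is preserved; so --- unlike in the proof of Lemma \ref{Prop1}, where an appeal to Lemma \ref{Lemma51} is needed --- the dependence is automatic, and by Lemma \ref{Lemma6} (realized as an SGMT by the toric principalization argument in the proof of Lemma \ref{Lemma23}) there is a transformation of type 6) with $\overline m=t-s$ of the form
$$
y_j=\prod_{k=1}^sy_k(1)^{b_{jk}}\ (1\le j\le s),\qquad y_t=\Big(\prod_{k=1}^sy_k(1)^{b_k}\Big)(y_t(1)+\alpha),\qquad 0\ne\alpha\in\CC.
$$
Substituting into the Tschirnhaus expansion and then principalizing (a transformation of type 1)) the ideal generated by the resulting monomial coefficients, I obtain $g=y_1(1)^{e_1}\cdots y_s(1)^{e_s}\,\overline F$ with $\overline F=\sum_i\overline{\overline\tau}_i(y_t(1)+\alpha)^{h-i}$, where $\overline{\overline\tau}_1=0$, at least one $\overline{\overline\tau}_i$ is a unit, and $y_t(1)+\alpha$ is a unit. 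Since the coefficient of $y_t(1)^{h-1}$ in $\overline F(0,\ldots,0,y_t(1))$ would be a nonzero multiple of $\alpha$ were the order equal to $h$ (this uses characteristic zero together with $\overline{\overline\tau}_1=0$), we get $\mbox{ord }\overline F(0,\ldots,0,y_t(1))<h$, closing the induction on $h$ and hence the induction on $t$.

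The main obstacle is bookkeeping: carrying the form (\ref{eq5}) and the preparedness type through the two nested inductions and the repeated relabellings of variables, and checking that only transformations of types 1)--6) with $l<\overline m\le t-s$ in types 5)--6) are used. The genuinely geometric content --- that the dependence of $y_1,\ldots,y_s,y_t$ produces a type 6) transformation with a nonzero constant $\alpha$, which combined with the vanishing of the degree $h-1$ term after a Tschirnhaus transformation forces the multiplicity to drop --- is already contained in the proofs of Lemmas \ref{Lemma6}, \ref{Lemma23} and \ref{Prop1}; here it is if anything simpler, since preparedness supplies the required dependence directly.
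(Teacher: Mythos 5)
Your proof is correct and follows essentially the same route as the paper's: induction on $t$ with base case Lemma \ref{Lemma2}, principalization of the coefficient ideal, induction on $h$ via the Tschirnhaus transformation of Lemma \ref{Lemma9}, a type 6) transformation supplied by the automatic dependence of $y_1,\ldots,y_s,y_t$ (which, as you note, replaces the appeal to Lemma \ref{Lemma51} needed in Lemma \ref{Prop1}), and a final principalization forcing the order to drop. Your explicit verification that the degree $h-1$ coefficient equals $h\alpha\tau_0(0)\ne 0$ makes precise a step the paper leaves implicit, but the argument is the same.
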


\begin{proof} We will perform a sequence of transformations  which we may assume  do not lead to an increase in $(s,r,l)$.
The proof is by induction on $t$ with $s+l\le t\le n$, with $g\in \CC\{\{x_1,\ldots,x_t\}\}$. The case $t=s+l$ is proven in Lemma \ref{Lemma2}.
Thus we may assume that $t> s+l$. Write
$$
g=\sum \sigma_iy_t^i
$$
where $\sigma_i\in \CC\{\{ y_1,\ldots,y_{t-1}\}\}$.
Let $I=(\sigma_i\mid i\ge 0)$. There exist  $\sigma_{0},\ldots,\sigma_{k}$ which generate $I$. by induction, there exist a sequence of transformations of the types 1) - 6) $(x,y)\rightarrow (x(1),y(1))$ (with $l<\overline m\le t-1-s$ whenever a transformation of type 5) or 6) is performed) such that 
$$
\sigma_{j}=y_1(1)^{i_1(j)}\cdots y_s(1)^{i_s(j)}\overline u_{j}
$$
for $0\le j\le k$ where $\overline u_{j}\in\CC\{\{ y_1(1),\ldots,y_{t-1}(1)\}\}$ is a unit or zero. Now perform a transformation of type 1) (which we incorporate into $(x,y)\rightarrow (x(1),y(1))$) to make $I$ principal. Then we have an expression
$g=y_1(1)^{m_1}\cdots y_s(1)^{m_s}\overline g$ where $h=\mbox{ord}(\overline g(0,\ldots,0,y_t(1))<\infty$. If $h=0$ we are done. We will now proceed by induction on $h$.
 By Lemma \ref{Lemma9}, we can perform a transformation of type 5), replacing $y_t(1)$ with $y_t(1)-\Phi$ for an appropriate $\Phi\in \CC\{\{y_1(1),\ldots,y_{t-1}(1)\}\}$, to obtain  an expression 
\begin{equation}\label{eq8}
\overline g=\tau_0y_t(1)^h+\tau_1y_t(1)^{h-2}+\cdots+\tau_h
\end{equation}
with $\tau_0\in \CC\{\{y_1(1),\ldots,y_t(1)\}\}$ a unit series and $\tau_i\in \CC\{\{y_1(1),\ldots,y_{t-1}(1)\}\}$ for $1\le i\le h$.
By induction on $t$, we may construct a sequence of transformations of type 1) - 6) $(x(1),y(1))\rightarrow (x(2),y(2))$ (with $\overline m\le t-1-s$ whenever a transformation of type 5) or 6) is performed) such that for $2\le i\le h$, whenever $\tau_i$ is nonzero, it has an expression
$$
\tau_i=y_1(2)^{j_1^i}\cdots y_s(2)^{j_s^i}\overline u_i
$$ 
where $\overline u_i\in\CC\{\{ y_1(2),\ldots,y_{t-1}(2)\}\}$  is a unit series. Since $y_t(2)$ is dependent on $y_1(2),\ldots,y_s(2)$, there exists a transformation of  type 6) $(x(2),y(2))\rightarrow (x(3),y(3))$ with $\overline m=t$, which we perform. Substituting into (\ref{eq8}), we obtain
$$
\overline g=\tau_0 y_1(3)^{b_1^0}\cdots y_s(3)^{b_s^0}(y_t(3)+\alpha)^h+
y_1(3)^{b_1^2}\cdots y_s(3)^{b_s^2}\overline u_2(y_t(3)+\alpha)^{h-2}+\cdots
+y_1(3)^{b_1^h}\cdots y_s(3)^{b_s^h}\overline u_h
$$
(with $0\ne \alpha\in \CC$).
Now perform a transformation of type 1) (which we incorporate into $(x(2),y(2))\rightarrow (x(3),y(3)))$ to principalize the ideal
$$
J=(y_1(3)^{b_1^0}\cdots y_s(3)^{b_s^0},y_1(3)^{b_1^2}\cdots y_s(3)^{b_s^2}\overline u_2,\ldots,y_1(3)^{b_1^h}\cdots y_s(3)^{b_s^h}\overline u_h),
$$
giving us that $\overline g=y_1(3)^{d_1}\cdots y_s(3)^{d_s}\tilde g$ with $\mbox{ord}(\tilde g(0,\ldots,y_t(3)))<h$. By induction on $h$, we obtain the conclusions of the lemma.

\end{proof}

\begin{Lemma}\label{Lemma5} Suppose that $(x,y)$ are prepared of type $(s,r,l)$ and 
$$
g\in \CC\{\{y_1,\ldots,y_{t}\}\}\setminus \CC\{\{y_1,\ldots,y_{s+l}\}\}
$$
 with $s+l< t\le n$. Then either there exists a sequence of transformations $(x,y)\rightarrow (x(1),y(1))$ such that $(x(1),y(1))$ are prepared of type $(s_1,r_1,l_1)$ with $(s_1,r_1,l_1)>(s,r,l)$ or there exists a sequence of transformations of the types 1) - 7) $(x,y)\rightarrow (x(1),y(1))$ (with $\overline m\le t-s$ in transformations of types 5) - 7)) such that $(x(1),y(1))$  are prepared of type $(s_1,r_1,l_1)$ with  $(s_1,r_1,l_1)=(s,r,l)$  and 
$$
g=P+y_1(1)^{d_1}\cdots y_s(1)^{d_s}y_t
$$
with $P\in\CC\{\{y_1(1),\ldots,y_{s+l}(1)\}\}$. 
\end{Lemma}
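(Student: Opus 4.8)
Here is the approach I would take.

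\medskip
\noindent\emph{Proof proposal.}\quad The plan is to induct on $t$ with $s+l<t\le n$; as usual I may assume that no transformation performed increases $(s,r,l)$, since otherwise we are done. First I would dispose of the case $g\in \CC\{\{y_1,\ldots,y_{t-1}\}\}$. Since $g\notin \CC\{\{y_1,\ldots,y_{s+l}\}\}$, this forces $t-1>s+l$, so the induction hypothesis applied to $g$ (with $t$ replaced by $t-1$) yields either an increase of $(s,r,l)$ or an expression $g=P+y_1(1)^{d_1}\cdots y_s(1)^{d_s}y_{t-1}$ with $P\in \CC\{\{y_1(1),\ldots,y_{s+l}(1)\}\}$; a transformation of type 7) interchanging $y_{t-1}$ and $y_t$ (legitimate since $s+l<t-1<t\le n$) then gives the desired form. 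So from now on I assume $g$ genuinely involves $y_t$, and I write $g=\hat P+y_t\psi$ where $\hat P:=g|_{y_t=0}\in \CC\{\{y_1,\ldots,y_{t-1}\}\}$ and $\psi\in \CC\{\{y_1,\ldots,y_t\}\}$, $\psi\ne 0$.

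The heart of the argument is to reach, by transformations of types 1)--7) with $\overline m\le t-s$, an expression
\[
g=\hat P+y_1(1)^{d_1}\cdots y_s(1)^{d_s}\,y_t ,\qquad \hat P\in \CC\{\{y_1(1),\ldots,y_{t-1}(1)\}\}.
\]
For this I would run the monomialization machinery of Lemma \ref{Lemma4} on $g-\hat P=y_t\psi$: a Tschirnhaus transformation of type 5) in the variable $y_t$ (Lemma \ref{Lemma9}), monomialization of the coefficients of $\psi$ expanded as a series in $y_t$ (an application of Lemma \ref{Lemma4} with $t$ replaced by $t-1$, hence using $\overline m\le t-1-s$), a generalized monoidal transform of type 6) in $y_1,\ldots,y_s,y_t$ (Lemma \ref{Lemma6}), and transformations of type 1) to principalize the resulting monomial ideals (Lemma \ref{Lemma1}) --- exactly as in the multiplicity‑lowering loop of Lemma \ref{Lemma4}, except that after each step I would record the part of $g$ independent of the current $y_t$ as the updated $\hat P$. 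Since $\hat P$ is always just $g$ evaluated at (current $y_t)=0$, it remains in $\CC\{\{y_1,\ldots,y_{t-1}\}\}$ throughout. The adapted loop drives the order in $y_t$ of $(g-\hat P)/(\text{accumulated monomial in }y_1,\ldots,y_s)$ down to $1$; at that point $(g-\hat P)/(\text{monomial})$ equals $y_t$ times a unit, and a final transformation of type 5) replacing $y_t$ by that unit times $y_t$ produces the displayed expression.

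If $t=s+l+1$ then $\CC\{\{y_1,\ldots,y_{t-1}\}\}=\CC\{\{y_1,\ldots,y_{s+l}\}\}$ and we are done. If $t>s+l+1$, I would apply the induction hypothesis to $\hat P\in \CC\{\{y_1,\ldots,y_{t-1}\}\}$: either $(s,r,l)$ increases (done), or $\hat P$ already lies in $\CC\{\{y_1,\ldots,y_{s+l}\}\}$ (done), or $\hat P=P+M'y_{t-1}$ with $P\in \CC\{\{y_1,\ldots,y_{s+l}\}\}$ and $M'$ a monomial in $y_1,\ldots,y_s$. The transformations used here (types 1)--7) with $\overline m\le t-1-s$) leave $y_t$ untouched, so we arrive at $g=P+M'y_{t-1}+My_t$ with $M$ a monomial in $y_1,\ldots,y_s$. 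Finally I would merge the two monomial terms: after a transformation of type 7) interchanging $y_{t-1}$ and $y_t$ if necessary, assume $\nu_e(M)\le \nu_e(M')$; a transformation of type 1) principalizing the ideal $(M,M')$ then makes $M$ divide $M'$ --- here one uses that a nontrivial monomial in the independent variables $y_1,\ldots,y_s$ has positive value, which follows from Lemma \ref{Lemma25} --- say $M'=MN$; and a transformation of type 5) replacing $y_t$ by $y_t+Ny_{t-1}$ yields $g=P+My_t$, which is the assertion.

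The step I expect to be the main obstacle is the middle paragraph, specifically controlling the interaction of the generalized monoidal transforms of type 6) --- which replace $y_t$ by a monomial in $y_1,\ldots,y_s$ times $y_t(1)+\alpha$ with $\alpha\ne 0$ --- with the decomposition $g=\hat P+y_t\psi$. One must check, refining the multiplicity argument in the proof of Lemma \ref{Lemma4}, that the order in $y_t$ of the ``nonconstant part'' genuinely drops after each round (so that with the appropriate choice of centre it does not collapse below $1$), and that the $y_t$‑free remainder continually absorbed into $\hat P$ --- which stays in $\CC\{\{y_1,\ldots,y_{t-1}\}\}$ because $\hat P$ is defined as $g$ restricted to (current $y_t)=0$ --- does not reintroduce higher multiplicity, so that the procedure terminates.
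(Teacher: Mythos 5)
Your strategy is essentially the paper's: expand $g$ in powers of $y_t$, monomialize the coefficient ideal with Lemma \ref{Lemma4}, normalize with a Tschirnhaus transformation (Lemma \ref{Lemma9}), blow up with a transformation of type 6) in $y_1,\ldots,y_s,y_t$, principalize, and induct on the order $h$ in $y_t$. The one real difference is where the induction on $t$ enters. The paper applies Lemma \ref{Lemma5} inductively to the $y_t$-constant term at every stage --- first to $\sigma_0$, and then, inside each round of the multiplicity-lowering loop, to the Tschirnhaus constant term $\tau_h$, putting it into the form $P_0+(\mbox{monomial})\,y_{t-1}\overline u$ \emph{before} the blow up. This maintains the stronger invariant $P\in\CC\{\{y_1(i),\ldots,y_{s+l}(i)\}\}$ and lets the monomial of the $\tau_h$-term be included in the ideal $K$ that is principalized, which is exactly how the paper obtains the dichotomy ``either $\mbox{ord }\overline F(0,\ldots,0,y_{t-1},0)=1$ (exit by a type 7) swap and a type 5) change of variables) or the order in $y_t$ drops.'' You keep only the weaker invariant $\hat P=g|_{y_t=0}\in\CC\{\{y_1,\ldots,y_{t-1}\}\}$, absorb $\tau_h$ into $\hat P$ at each round, and pay for it with a single application of the induction hypothesis to $\hat P$ at the end plus the merging step $M'y_{t-1}+My_t\rightsquigarrow My_t(1)$; that merging is legitimate (the type 7) swap needs $t-1>s+l$, which holds precisely when there is a $y_{t-1}$-term to merge, and the type 5) substitution $y_t\mapsto y_t+Ny_{t-1}$ has order $1$ in $y_t$).

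The obstacle you flag is resolved by the Tschirnhaus normalization, and your reorganization does not make it harder --- if anything it simplifies the bookkeeping at that spot. After the type 6) transformation and the principalization, the terms of the strict transform surviving the substitution $y_1=\cdots=y_{t-1}=0$ give
$$
\sum_{i\in S}c_i\bigl((y_t+\alpha)^{h-i}-\alpha^{h-i}\bigr),\qquad c_i\ne 0,\ \alpha\ne 0,
$$
with $S$ a nonempty subset of $\{0,2,\ldots,h-1\}$: the index $1$ is absent by Lemma \ref{Lemma9}, and $h$ is absent because $\tau_h$ went into $\hat P$. Writing this as $p(y_t+\alpha)-p(\alpha)$ with $p(z)=\sum_{i\in S}c_iz^{h-i}$: if $0\notin S$ then $\deg p\le h-2$, while if $0\in S$ then $p$ has degree $h$ with vanishing $z^{h-1}$-coefficient, so $p(z)-p(\alpha)$ cannot equal $c_0(z-\alpha)^h$; either way the order at $y_t=0$ is strictly less than $h$, and it is at least $1$ because $g-\hat P$ is divisible by the current $y_t$, which is coprime to the accumulated monomial. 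What you should still write down explicitly is (i) that the $y_t$-free remainders absorbed into $\hat P$ after a type 6) transformation stay in $\CC\{\{y_1,\ldots,y_{t-1}\}\}$ --- they do, since $g|_{y_t(1)=0}=g|_{y_t=M_b\alpha}$ with $M_b$ a monomial in $y_1(1),\ldots,y_s(1)$ --- and (ii) that after principalization the gcd monomial is the maximal monomial dividing $g-\hat P$ (the term realizing the gcd is a unit times $(y_t+\alpha)^{h-i_0}-\alpha^{h-i_0}$, which no $y_j$ with $j\le s$ divides), so the invariant $h$ you induct on is well defined at each round.
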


\begin{proof} We will perform a sequence of transformations  which we may assume  do not lead to an increase in $(s,r,l)$.
Write $g=\sum_{i\ge 0} \sigma_iy_t^i$ with $\sigma_i\in \CC\{\{y_1,\ldots,y_t\}\}$. Let $I$ be the ideal $I=(\sigma_i\mid i>0)$. 
Suppose that $I$ is generated by $\sigma_1,\ldots,\sigma_k$. 
By Lemma \ref{Lemma4}, there exist a sequence of transformations of types 1) - 6) $(x,y)\rightarrow (x(1),y(1))$ (with $\overline m\le t-s-1$ if a tranformation of type 5) or 6) is performed) such that for $1\le j\le k$,
$$
\sigma_j=y_1(1)^{i_1^j}\cdots y_s(1)^{i_s^j}u_j
$$
with $u_j\in\CC\{\{ y_1(1),\ldots,y_{t-1}(1)\}\}$ a unit (or zero). By induction on $t$ in Lemma \ref{Lemma5}, there exists a sequence of transformations of types 1) - 7) $(x(1),y(1))\rightarrow (x(2),y(2))$ (with $\overline m\le t-s-1$ if a transformation of type 5), 6)  or 7) is performed) such that 
\begin{equation}\label{eq10}
\sigma_0=P_0+y_1(2)^{a_1}\cdots y_s(2)^{a_s}y_{t-1}(2)
\end{equation}
or 
\begin{equation}\label{eq11}
\sigma_0=P_0
\end{equation}
with $P_0\in \CC\{\{y_1(2),\ldots,y_{s+l}(2)\}\}$. Case (\ref{eq10}) can only occur if $t>s+l+1$.

Let $J$ be the ideal $I\mathcal O_{Y(2),e_{Y(2)}}^{\rm an}+(y_1(2)^{a_1}\cdots y_s(2)^{a_s})$ if (\ref{eq10}) holds and $J=I\mathcal O_{Y(2),e_{Y(2)}}^{\rm an}$ if (\ref{eq11}) holds. $J$ is generated by monomials in $y_1(2),\ldots,y_s(2)$. There exists a transformation of type 1) $(x(2),y(2))\rightarrow (x(3),y(3))$ such that $J\mathcal O_{X(3),e_{X(3)}}^{\rm an}$ is principal by Lemma \ref{Lemma21}, so
$$
g=P_0+\sum_{i>0}\sigma_iy_t(2)^i+y_1(2)^{a_1}\cdots y_s(2)^{a_s}y_{t-1}(2)\overline u
=P_0+y_1(3)^{d_1}\cdots y_s(3)^{d_s}\overline g
$$
where $\overline u$ is zero or 1, 
 $P_0\in \CC\{\{y_1(3),\ldots,y_{s+l}(3)\}\}$ and $\overline g\in \CC\{\{y_1(3),\ldots,y_t(3)\}\}$ is not divisible by $y_1(3),\ldots,y_s(3)$. If $\mbox{ord }\overline g(0,\ldots,0,y_{t-1}(3),0)=1$ we set $y_t(3)=\overline g$ and $y_{t-1}(3)=y_t(3)$ (a composition of transformations of type 7) and 5)) to get the conclusions of Lemma \ref{Lemma5}. Otherwise, we have  
$$
0<\mbox{ord }\overline g(0,\ldots,0,y_t(3))<\infty.
$$

Now suppose that 
\begin{equation}\label{eq18}
g=P+y_1(3)^{d_1}\cdots y_s(3)^{d_s}F
\end{equation}
 where $P\in \CC\{\{y_1(3),\ldots,y_{s+l}(3)\}\}$, $F\in \CC\{\{y_1(3),\ldots,y_t(3)\}\}$ is such that the power series expansion of $y_1(3)^{d_1}\cdots y_s(3)^{d_s}F$ has no monomials in $y_1(3),\cdots,y_{s+l}(3)$; that is, 
 $$
 F(y_1(3),\ldots,y_{s+l}(3),0,\ldots,0)=0,
 $$
 $y_i(3)\not\,\mid F$ for $1\le i\le s$ and 
 $$
 0<h:=\mbox{ord }F(0,\ldots,0,y_t(3))<\infty.
 $$
  If $h=1$, we can set $y_t(3)=F$ (a transformation of type 5)) to get the conclusions of Lemma \ref{Lemma5} for $g$.

Suppose that $h>1$. 
 By Lemma \ref{Lemma9}, we can make a change of variables, replacing $y_t(3)$ with  $y_t(3)-\Phi$ for an appropriate
 $\Phi\in \CC\{\{y_1(3),\ldots,y_{t-1}(3)\}\}$ (a transformation of type 5)) to get  an expression 
\begin{equation}\label{eq12}
F=\tau_0y_t(3)^h+\tau_2y_t(3)^{h-2}+\cdots+\tau_h
\end{equation}
where $\tau_0\in \CC\{\{y_1(3),\ldots,y_t(3)\}\}$ is a unit and $\tau_i\in \CC\{\{y_1(3),\ldots,y_{t-1}(3)\}\}$ for $2\le i\le h$.
By  Lemma \ref{Lemma4}, there exists a sequence of transformations of types 1) - 6) $(x(3),y(3))\rightarrow (x(4),y(4))$ (with $\overline m<t-s$ for transformations of types 5) - 6)) such that for $2\le i\le h-1$,
$$
\tau_i=y_1(4)^{j_1^i}\cdots y_s(4)^{j_s^i}\overline u_i
$$
with  $\overline u_i\in\CC\{\{ y_1(4),\ldots,y_{t-1}(4)\}\}$ either   a unit or zero.
By induction on $t$ in Lemma \ref{Lemma5}, there exists a sequence of transformations of types 1) - 7) $(x(4),y(4))\rightarrow (x(5),y(5))$ (with $\overline m<t-s$ for transformations of types 5) - 7)) such that we further have that 
\begin{equation}\label{eq13}
\tau_h=P_0+y_1(5)^{c_1}\cdots y_s(5)^{c_s}y_{t-1}\overline u
\end{equation}
where $\overline u$ is zero or 1 and $P_0\in \CC\{\{y_1(5),\ldots,y_{s+l}(5)\}\}$. Since $y_t(5)$ is dependent on $y_1(5),\ldots,y_s(5)$, there exists a transformation of  type 6) $(x(5),y(5))\rightarrow (x(6),y(6))$ with $\overline m=t-s$.
Perform it and substitute into (\ref{eq12}) to get 
$$
\begin{array}{lll}
F&=&\tau_0y_1(6)^{b_1^0}\cdots y_s(6)^{b_s^0}(y_t(6)+\alpha)^h+y_1(6)^{b_1^2}\cdots y_s(6)^{b_s^2}\overline u_2(y_t(6)+\alpha)^{h-2}
+\cdots\\
&&+y_1(6)^{d_1}\cdots y_s(6)^{d_s}y_{t-1}(6)\overline u + P_0
\end{array}
$$
Now perform a transformation of type 1) $(x(6),y(6))\rightarrow (x(7),y(7))$ to principalize the ideal
$$
K=(y_1(6)^{b_1^0}\cdots y_s(6)^{b_s^0})+(y_1(6)^{b_1^i}\cdots y_s(6)^{b_s^i}\mid \overline u_i\ne 0)+(\overline uy_1(1)^{d_1}\cdots y_s(1)^{d_s}).
$$
We obtain an expression
$$
g=P_1+y_1(7)^{e_1}\cdots y_s(7)^{e_s}\overline F
$$
where 
$$
P_1=P+y_1(3)^{d_1}\cdots y_s(3)^{d_s}F(y_1(7),\ldots,y_{s+l}(7),0,\ldots,0)\in\CC\{\{y_1(7),\ldots,y_{s+l}(7)\}\}
$$
and
$$
y_1(7)^{e_1}\cdots y_s(7)^{e_s}\overline F=y_1(3)^{d_1}\cdots y_s(3)^{d_s}(F-F(y_1(7),\ldots,y_s(7),0,\ldots,0))
$$
is such that $y_i(7)\not\,\mid \overline F$ for $1\le i\le s$.
We either have $\mbox{ord } \overline F(0,\ldots,0,y_{t-1}(7),0)=1$ or $1\le \mbox{ord } \overline F(0,\ldots,0,y_t(7))<h$. In the first case, set $y_t(7)=F$ and $y_{t-1}(7)=y_t(7)$ (a composition of transformations of type 7) and  5)) to get the conclusions of Lemma \ref{Lemma5}. Otherwise we have a reduction in $h$ in (\ref{eq18}). By induction in $h$ we will eventually get the conclusions of Lemma \ref{Lemma5}.
\end{proof}

\begin{Proposition}\label{Theorem1}
Suppose that $(x,y)$ are prepared of type $(s,r,l)$  with $r+l<m$. Then either there exists a sequence of transformations $(x,y)\rightarrow (x(1),y(1))$ such that $(x(1),y(1))$ are prepared of type $(s_1,r_1,l_1)$ with $(s_1,r_1,l_1)>(s,r,l)$ or there exists a sequence of transformations of  types  1) - 8) $(x,y)\rightarrow (x(1),y(1))$ such that $(x(1),y(1))$  are prepared of type $(s_1,r_1,l_1)$ with  $(s_1,r_1,l_1)=(s,r,l)$  and we have an expression
\begin{equation}\label{eq15}
x_{r+l+1}(1)=P+y_1(1)^{d_1}\cdots y_s(1)^{d_s}
\end{equation}
with $P \in \CC\{\{y_1(1),\ldots,y_{s+l}(1)\}\}$ algebraic over $x_1(1),\ldots,x_{r+l}(1)$ and 
$y_1(1)^{d_1}\cdots y_s(1)^{d_s}$ not algebraic over $x_1(1),\ldots,x_r(1)$
or we have an expression
\begin{equation}\label{eq16}
x_{r+l+1}(1)=P+y_1(1)^{d_1}\cdots y_s(1)^{d_s}y_{s+l+1}(1)
\end{equation}
with $P\in \CC\{\{y_1(1),\ldots,y_{s+l}(1)\}\}$ algebraic over $x_1(1),\ldots,x_{r+l}(1)$.
\end{Proposition}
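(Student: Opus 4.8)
The plan is to assemble the preparation results of this section. Throughout we may assume that none of the transformations performed increases $(s,r,l)$, since otherwise the first alternative of the proposition already holds. Because $\phi$ is quasi regular, $\tilde\phi^*$ is injective, so we regard $z:=x_{r+l+1}$ as an element of $\CC\{\{y_1,\ldots,y_n\}\}$ via the inclusion $\CC\{\{x_1,\ldots,x_n\}\}\subset\CC\{\{y_1,\ldots,y_n\}\}$; note that transformations of types 1) - 8) do not alter the variable $x_{r+l+1}$, so it stays part of a regular system of parameters. We split according to whether $z\in\CC\{\{y_1,\ldots,y_{s+l}\}\}$ or not.

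If $z\in\CC\{\{y_1,\ldots,y_{s+l}\}\}$, we apply Lemma \ref{Lemma3} to $g=z$: after transformations of types 1) - 4) and 8), either $z$ is algebraic over $x_1(1),\ldots,x_{r+l}(1)$, or $z=P+y_1(1)^{d_1}\cdots y_s(1)^{d_s}$ with $P$ algebraic over $x_1(1),\ldots,x_{r+l}(1)$ and $y_1(1)^{d_1}\cdots y_s(1)^{d_s}$ not algebraic over $x_1(1),\ldots,x_r(1)$. The first alternative is impossible: by Lemma \ref{Lemma11} it would force $x_{r+l+1}(1)=z\in\CC\{\{x_1(1),\ldots,x_{r+l}(1)\}\}$, contradicting that $x_1(1),\ldots,x_{r+l+1}(1)$ have linearly independent images in $\mathfrak m/\mathfrak m^2$ of the regular local ring $\mathcal O_{X(1),e_{X(1)}}^{\rm an}$. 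So the second alternative holds, which is (\ref{eq15}).

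If $z\notin\CC\{\{y_1,\ldots,y_{s+l}\}\}$, let $t$ with $s+l<t\le n$ be minimal with $z\in\CC\{\{y_1,\ldots,y_t\}\}$. Applying Lemma \ref{Lemma5} to $g=z$, and a transformation of type 7) interchanging $y_t$ with $y_{s+l+1}$ if $t>s+l+1$ (which leaves $P$ alone), we obtain $z=P+My_{s+l+1}(1)$ with $P\in\CC\{\{y_1(1),\ldots,y_{s+l}(1)\}\}$ and $M$ a monomial in $y_1(1),\ldots,y_s(1)$. It then remains to make $P$ algebraic over $x_1(1),\ldots,x_{r+l}(1)$, for which we apply Lemma \ref{Lemma3} to $P$. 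The transformations of types 1) - 4) and 8) it produces fix $y_{s+l+1}(1)$ and carry $M$ to a monomial times a unit, which we absorb into $y_{s+l+1}(1)$ by a transformation of type 5); we thereby reach either $z=P+M'y_{s+l+1}(1)$ with $P$ algebraic over $x_1(1),\ldots,x_{r+l}(1)$ --- which is (\ref{eq16}) --- or $z=P'+N+M'y_{s+l+1}(1)$ with $P'$ algebraic over $x_1(1),\ldots,x_{r+l}(1)$, $M'$ a monomial, and $N$ a monomial in $y_1(1),\ldots,y_s(1)$ not algebraic over $x_1(1),\ldots,x_r(1)$. In the last situation we perform a transformation of type 1) principalizing the ideal $(M',N)$ (Lemma \ref{Lemma21}); by Lemma \ref{Lemma7} this preserves the algebraicity of $P'$ and the non-algebraicity of $N$, and leaves $y_{s+l+1}(1)$ fixed, so one of $M',N$ divides the other. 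If $M'\mid N$, write $N=M'Q$ with $Q$ a monomial in $y_1(1),\ldots,y_s(1)$; then $z=P'+M'\bigl(Q+y_{s+l+1}(1)\bigr)$, and the type 5) substitution $y_{s+l+1}(1)\mapsto Q+y_{s+l+1}(1)$ gives (\ref{eq16}). If $N\mid M'$, write $M'=NQ$; then $z=P'+N\bigl(1+Qy_{s+l+1}(1)\bigr)$, i.e.\ $P'$ plus $N$ times a unit, and since the exponent vector of $N$ is not in the row space of $C(1)$ there is a transformation of type 8) absorbing this unit into $N$; by the remark after Definition \ref{Def500} and Lemma \ref{Lemma7} the result keeps $P'$ algebraic and the monomial non-algebraic over $x_1(1),\ldots,x_r(1)$, giving (\ref{eq15}).

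The step I expect to be the main obstacle is this last one. After peeling off from $z$ the summand $P$ lying in $\CC\{\{y_1,\ldots,y_{s+l}\}\}$ (Lemma \ref{Lemma5}) and then algebraizing $P$ (Lemma \ref{Lemma3}), one is confronted with \emph{two} separate non-monomial contributions --- the new non-algebraic monomial $N$ and the term $My_{s+l+1}$ --- which must be combined into the single term appearing in (\ref{eq15}) or (\ref{eq16}). This is handled by a monomial blow up (type 1)) comparing $M$ and $N$, followed either by a Tschirnhaus-type absorption into $y_{s+l+1}$ (type 5)) or by absorbing a unit into a non-algebraic monomial (type 8)); throughout, the bookkeeping one must maintain is that ``algebraic over $x_1,\ldots,x_{r+l}$'' and ``not algebraic over $x_1,\ldots,x_r$'' are preserved by transformations of types 1) - 4) (Lemma \ref{Lemma7}) and 8), and that $x_{r+l+1}$ remains a regular parameter --- precisely what excludes the degenerate alternative in the first case, via Lemma \ref{Lemma11}.
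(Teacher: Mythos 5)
Your proposal is correct and follows essentially the same route as the paper: the same case split on whether $x_{r+l+1}\in \CC\{\{y_1,\ldots,y_{s+l}\}\}$, Lemma \ref{Lemma3} (with Lemma \ref{Lemma11} excluding the algebraic alternative) in the first case, and Lemma \ref{Lemma5} followed by Lemma \ref{Lemma3}, principalization of the two monomials, and the same two divisibility subcases (leading to (\ref{eq15}) via a type 8) absorption or to (\ref{eq16}) via a type 5) substitution) in the second. The only differences are cosmetic bookkeeping.
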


\begin{proof} 
We will construct a sequence of transformations  such that either we obtain an increase in $(s,r,l)$, or we obtain the conclusions of Proposition \ref{Theorem1}. We may thus assume that all transformations  in the course of our proof do not give an increase in $(s,r,l)$.

We have that $x_{r+l+1}$ is not algebraic over $x_1,\ldots,x_{r+l}$ by Lemma \ref{Lemma11}.

 First suppose that $x_{r+l+1}\in \CC\{\{y_1,\ldots,y_{s+l}\}\}$. Then there exists a sequence of transformations of types 1) - 4) and 8) such that the conclusions of Lemma \ref{Lemma3} hold, giving an expression (\ref{eq15}) of the conclusions of Proposition \ref{Theorem1}, since $x_{r+l+1}$ is not algebraic over $x_1(1),\ldots,x_{r+l}(1)$ by Lemma \ref{Lemma7}.
 
 Now suppose that $x_{r+l+1}\not\in \CC\{\{y_1,\ldots,y_{s+l}\}\}$. Then by Lemma \ref{Lemma5}, there exists a sequence of transformations of types 1) - 7) $(x,y)\rightarrow (x(1),y(1))$  such that we have an expression
 \begin{equation}\label{eq17}
 x_{r+l+1}(1)=\tilde P+y_1(1)^{a_1}\cdots y_s(1)^{a_s}y_{s+l+1}(1)
 \end{equation}
 with $\tilde P\in \CC\{\{y_1(1),\ldots,y_{s+l}(1)\}\}$.
 Then by Lemma \ref{Lemma3}, there exists a sequence of transformations 1) - 4) and 8) $(x(1),y(1))\rightarrow (x(2),y(2))$  such that we have an expression (\ref{eq17}) with
 $$
 \tilde P=P'+y_1(2)^{b_1}\cdots y_s(2)^{b_s}\overline u
 $$
 where $P'$ is algebraic over $x_1(2),\ldots,x_{r+l}(2)$ and $y_1(2)^{b_1}\cdots y_s(2)^{b_s}$ is not algebraic over $x_1(2),\ldots,x_r(2)$ and $\overline u$ is 0 or 1. If $\overline u=0$ we have achieved the conclusions of (\ref{eq16}) of Proposition \ref{Theorem1}, so assume that $\overline u=1$. Now (by Lemma \ref{Lemma21}) perform a transformation of type 1) $(x(2),y(2))\rightarrow (x(3),y(3))$ to principalize the ideal
 $$
 L=(y_1(2)^{b_1}\cdots y_s(2)^{b_s}, y_1(1)^{a_1}\cdots y_s(1)^{a_s}).
 $$
 If $y_1(2)^{b_1}\cdots y_s(2)^{b_s}$ divides $y_1(1)^{a_1}\cdots y_s(1)^{a_s}$ (in $\mathcal O_{X(3),e_{X(3)}}^{\rm an}$), since we have the condition that $y_1(2)^{b_1}\cdots y_s(2)^{b_s}$ is not algebraic over $x_1(3),\ldots,x_s(3)$ from Lemma \ref{Lemma7}, we can change variables, multiplying the $y_i$ by units for $1\le i\le s$ to get an expression (\ref{eq15}) of the conclusions of Proposition \ref{Theorem1} (a transformation of type 8)). If $y_1(2)^{b_1}\cdots y_s(2)^{b_s}$ does not divide $y_1(1)^{a_1}\cdots y_s(1)^{a_s}$ in $\mathcal O_{X(3),e_{X(3)}}^{\rm an}$ (so that 
 $y_1(1)^{a_1}\cdots y_s(1)^{a_s}$ properly divides $y_1(2)^{b_1}\cdots y_s(2)^{b_s}$ in $\mathcal O_{X(3),e_{X(3)}}^{\rm an}$) we have an expression
 $$
 x_{r+l+1}(3)=P+y_1(3)^{\overline a_1}\cdots y_s(3)^{\overline a_s}F
 $$
 with $F\in \CC\{\{y_1(3),\ldots,y_s(3),y_{s+l+1}(3)\}\}$
 such that  $\mbox{ord }F(0,\ldots,0,y_{s+l+1}(3))=1$. Replacing $y_{s+l+1}(3)$ with $F$ (a transformation of type 5)) we get an expression of the form (\ref{eq16}) of the conclusions of Proposition \ref{Theorem1}.

\end{proof}

\begin{Proposition}\label{Theorem2}
Suppose that $(x,y)$ are prepared of type $(s,r,l)$  with $r+l<m$. Then there exists a sequence of transformations of  types  1) - 10) $(x,y)\rightarrow (x(1),y(1))$ such that $(x(1),y(1))$  are prepared of type $(s_1,r_1,l_1)$ with $(s_1,r_1,l_1)>(s,r,l)$ 
\end{Proposition}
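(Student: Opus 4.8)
The plan is to feed the two possible outputs of Proposition~\ref{Theorem1} into a procedure that kills the ``algebraic part'' $P$, and then read off the strict increase of $(s,r,l)$. First I would apply Proposition~\ref{Theorem1}. Either it already yields a sequence of transformations producing $(s_1,r_1,l_1)>(s,r,l)$, in which case we are done; or, after a sequence of transformations of types 1)--8) which (as usual) we may assume leaves $(s,r,l)$ fixed, we arrive (renaming $x(1),y(1)$ back to $x,y$) at
\[
x_{r+l+1}=P+y_1^{d_1}\cdots y_s^{d_s}
\]
with $P$ algebraic over $x_1,\ldots,x_{r+l}$ and $y_1^{d_1}\cdots y_s^{d_s}$ not algebraic over $x_1,\ldots,x_r$, or at
\[
x_{r+l+1}=P+y_1^{d_1}\cdots y_s^{d_s}y_{s+l+1}
\]
with $P$ algebraic over $x_1,\ldots,x_{r+l}$.

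Next I would \emph{remove $P$}. Because $P$ is algebraic over $x_1,\ldots,x_{r+l}$, in the decomposition (\ref{eq6}) it coincides with its own $[0]$-component $h_{[0]}$, so Proposition~\ref{Prop2} applies: after a transformation of type 2) (Lemma~\ref{Lemma22}) one has $P\in\CC\{\{x_1^{1/d},\ldots,x_r^{1/d},x_{r+1},\ldots,x_{r+l}\}\}$ for some $d\in\ZZ_{>0}$. I would then drive $P$ into $\CC\{\{x_1,\ldots,x_{r+l}\}\}$ by the same kind of induction used in Lemmas~\ref{Prop1}, \ref{Lemma2}, \ref{Lemma3}, \ref{Lemma4} and \ref{Lemma5}: monomializing the coefficients that occur in $P$ (Lemmas~\ref{Lemma21}, \ref{Lemma2}), making Tschirnhaus reductions (Lemma~\ref{Lemma9}, realized as transformations of types 3) and 10)), and clearing denominators and $d$-th roots by further transformations of type 2) together with the integral-closure argument of Lemma~\ref{Lemma11}; all of this preserves the two normal forms above and preserves algebraicity of the retained terms by Lemma~\ref{Lemma7}, and we may assume it produces no increase in $(s,r,l)$. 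Once $P$ is a genuine element of $\CC\{\{x_1,\ldots,x_{r+l}\}\}$, a transformation of type 10) with $\overline m=l+1$, replacing $x_{r+l+1}$ by $x_{r+l+1}-P$, removes it entirely.

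Then I would read off the increase. In the first case we now have $x_{r+l+1}=y_1^{d_1}\cdots y_s^{d_s}$, a monomial whose exponent vector lies outside the $\QQ$-span of the rows of $C$; hence the matrix obtained from $C$ by adjoining this row has rank $r+1$, and by Lemma~\ref{Lemma25} together with Lemma~\ref{Lemma1} the variables $x_1,\ldots,x_r,x_{r+l+1}$ are independent, so after reindexing the $x$-variables so that this new monomial joins the first block, $(x,y)$ is prepared of type $(s,r+1,l)>(s,r,l)$. In the second case $x_{r+l+1}=y_1^{d_1}\cdots y_s^{d_s}y_{s+l+1}$; since $y_1,\ldots,y_s,y_{s+l+1}$ are dependent, a transformation of type 6) (via Lemma~\ref{Lemma6}) turns $y_{s+l+1}$ into a monomial in the new variables $y_1(1),\ldots,y_s(1)$ times a unit of the form $y_{s+l+1}(1)+\alpha$, so $x_{r+l+1}$ becomes a monomial in $y_1(1),\ldots,y_s(1)$ times a unit; a transformation of type 9) (Lemma~\ref{Lemma35}) then produces $x_{r+l+1}(1)=y_{s+l+1}(1)$ while keeping the monomial relations and the simple relations $x_{r+j}(1)=y_{s+j}(1)$, $1\le j\le l$, intact, so (unless $s$ has already increased) $(x(1),y(1))$ is prepared of type $(s,r,l+1)$. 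In every case $(s_1,r_1,l_1)>(s,r,l)$, which is the assertion.

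The main obstacle is the middle step, the elimination of $P$. A priori $P$ is only a formal series in $y_1,\ldots,y_{s+l}$ obeying the combinatorial condition of Definition~\ref{Def500}; it is not literally a convergent series in $x_1,\ldots,x_{r+l}$ (its exponents in the monomial variables $x_1,\ldots,x_r$ can be negative or fractional), so it cannot be cancelled directly by a transformation of type 10). Forcing $P$ into $\CC\{\{x_1,\ldots,x_{r+l}\}\}$ requires the full strength of the series decomposition of Section~\ref{DecSeries} and of Lemmas~\ref{Prop1}, \ref{Lemma2}, \ref{Lemma3}, \ref{Lemma4}, \ref{Lemma5}, and one must check at each stage that these manoeuvres do not spoil the non-algebraicity of the surviving monomial term, which is exactly what guarantees the strict increase of $(s,r,l)$ at the end.
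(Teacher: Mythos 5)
There is a genuine gap in the middle step, and it is exactly at the point you flag as ``the main obstacle.'' You propose to drive $P$ into $\CC\{\{x_1,\ldots,x_{r+l}\}\}$ and then cancel it by a transformation of type 10). This cannot be done in general. Proposition \ref{Prop2} only places $P$ (being its own $h_{[0]}$) in $\CC\{\{x_1(1)^{1/d},\ldots,x_r(1)^{1/d},x_{r+1}(1),\ldots,x_{r+l}(1)\}\}$, and the fractional exponents are unavoidable: the exponent vectors of $P$ live in the lattice $G=\Phi^{-1}(\ZZ^s)$, which strictly contains $\ZZ^r$ in general, and a monomial SGMT is a unimodular substitution which maps $\ZZ^r$ to $\ZZ^r$ and hence cannot clear the denominators $1/d$. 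A minimal example is $x_1=y_1^2$ with $P=y_1=x_1^{1/2}$: $P$ is algebraic over $x_1$ in the sense of Definition \ref{Def500}, yet no transformation of the allowed types makes $P$ a convergent series in the $x$-variables. The lemmas you invoke do not close this gap; in particular Lemma \ref{Lemma11} has as hypothesis that $f\in\CC\{\{x_1,\ldots,x_m\}\}$ already, which $P$ does not satisfy (it is only known to lie in $\CC\{\{y_1,\ldots,y_{s+l}\}\}$), and the integral-closure argument there is precisely what fails for a genuine $d$-th-root element.

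The paper's proof circumvents this by never trying to cancel $P$ itself. It forms all Galois conjugates $S_{i_1,\ldots,i_r}$ of $P$ under $x_j(1)^{1/d}\mapsto\omega^{i_j}x_j(1)^{1/d}$ and takes the norm-like product $g=\prod_I\left(x_{r+l+1}(2)-S_I\right)$, which \emph{does} lie in $\CC\{\{x_1(2),\ldots,x_{r+l+1}(2)\}\}$, and then runs an induction on $t=\mbox{ord\,}g(0,\ldots,0,x_{r+l+1}(2))$: a Tschirnhaus transformation (type 10)) puts $g$ in the form (\ref{eq28}), the coefficients $\tau_i$ are monomialized by Lemma \ref{Prop1}, and a GMT in $x_1,\ldots,x_r,x_{r+l+1}$ (realized as a transformation of type 9) via Lemma \ref{Lemma35}) drops $t$; the base of the induction is the case $g=\tilde u(x_{r+l+1}-\Phi)^{\lambda}$ with $\Phi\in\CC\{\{x_1(4),\ldots,x_{r+l}(4)\}\}$, which is the only situation in which a type 10) cancellation of the kind you envisage is available, and from which the increase in $(s,r,l)$ is then extracted much as in your final paragraph. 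Your concluding step (reading off the increase from $x_{r+l+1}=y_1^{d_1}\cdots y_s^{d_s}$ or $=y_1^{d_1}\cdots y_s^{d_s}y_{s+l+1}$) is essentially right, but it is only reached after this norm-and-induction argument, which is the substantive content of the proposition and is absent from your proposal.
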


\begin{proof}  We may assume that all transformations of type 1) - 10) in the course of our proof do not give an increase in $(s,r,l)$; otherwise we have obtained the conclusions of the theorem and we can terminate our algorithms. By Proposition \ref{Theorem1},
there exists a sequence of transformations of types 1) - 8) $(x,y)\rightarrow (x(0),y(0))$ such that we have an expression (for $i=0$)
\begin{equation}\label{eq15*}
x_{r+l+1}(i)=P+y_1(i)^{d_1}\cdots y_s(i)^{d_s}
\end{equation}
with $P$ algebraic over $x_1(i),\ldots,x_{r+l}(i)$ and $y_1(i)^{d_1}\cdots y_s(i)^{d_s}$ not algebraic over $x_1(i),\ldots,x_r(i)$
or we have an expression
\begin{equation}\label{eq16*}
x_{r+l+1}(i)=P+y_1(i)^{d_1}\cdots y_s(i)^{d_s}y_{s+l+1}(i)
\end{equation}
with $P$ algebraic over $x_1(i),\ldots,x_{r+l}(i)$.

We will perform  sequences of transformations $(x,y)\rightarrow (x(i),y(i))$ in the course of this proof which preserve the respective expressions (\ref{eq15*}) or (\ref{eq16*}).

We will now construct a function $g$ (in equation (\ref{eq21})) using transformations which preserve the respective form (\ref{eq15*}) or (\ref{eq16*}). The function $g$ (or its strict transform) will play a major role in the proof.

The decomposition (\ref{eq6}) of $P$ is $P=h_{[0]}$ since $P$ is algebraic over $x_1(1),\ldots,x_{r+l}(1)$. There exists a transformation of type 2) $(x(0),y(0))\rightarrow (x(1),y(1))$ such that 
$$
P\in \CC\{\{x_1(1)^{\frac{1}{d}},\ldots,x_r(1)^{\frac{1}{d}},x_{r+1}(1),\ldots,x_{r+l}(1)\}\}
$$
 for some $d$ by Proposition \ref{Prop2}. 
Let $\omega$ be a primitive $d$-th root of unity in $\CC$. Let
$$
S_{i_1,\ldots,i_r}=P(\omega^{i_1}x_1(1)^{\frac{1}{d}},\ldots,\omega^{i_r}x_r(1)^{\frac{1}{d}},x_{r+1}(1),\ldots,x_{r+l}(1))
$$
for $1\le i_1,\ldots,i_r\le d$.  We have that
$$
S_{i_1,\ldots,i_r}\in \CC\{\{y_1(1)^{\frac{1}{d}},\ldots,y_s(1)^{\frac{1}{d}},y_{s+1}(1),\ldots,y_{s+l}(1)\}\}
$$ 
for all $i_1,\ldots,i_r$ since
$$
x_i(1)^{\frac{1}{d}}=\prod_{j=1}^s(y_j(1)^{\frac{1}{d}})^{c_{ij}(1)}\mbox{ for }1\le i\le r.
$$
Since $P\in \CC\{\{y_1(1),\ldots,y_{s+l}(1)\}\}$, we have that $S_{i_1,\ldots,i_r}\in \CC\{\{y_1(1),\ldots,y_{s+l}(1)\}$ for all $i_1,\ldots,i_r$. Further, $S_{i_1,\ldots,i_r}$ is algebraic over $x_1(1),\ldots,x_{r+l}(1)$ for all $i_1,\ldots,i_r$ since $P$ is.
Let
$$
R=\prod_{i_1,\ldots,i_r=1}^dS_{i_1,\ldots,i_r}\in \CC\{\{x_1(1),\ldots,x_{r+l}(1)\}\}.
$$
By Lemma \ref{Prop1}, there exists a sequence of transformations of types 2) - 4) $(x(1),y(1))\rightarrow (x(2),y(2))$ such that 
$$
R=x_1(2)^{m_1}\cdots x_r(2)^{m_r}u
$$
where $u\in \CC\{\{ x_1(2),\ldots,x_{r+l}(2)\}\}$ is a unit. Now $P$ divides $R$ in $\CC\{\{y_1(2),\ldots,y_{r+l}(2)\}\}$, so 
we have that 
\begin{equation}\label{eq20}
P=y_1(2)^{m_1}\cdots y_s(2)^{m_s}\tilde u
\end{equation}
where $\tilde u\in \CC\{\{y_1(2),\ldots,y_{s+l}(2)\}\}$ is a unit and by Lemma \ref{Lemma7} and since $P$ is algebraic over $x_1(2),\ldots,x_{r+l}(2)$, we have that $y_1(2)^{m_1}\cdots y_s(2)^{m_s}$ is algebraic over $x_1(2),\ldots,x_r(2)$.
Set
\begin{equation}\label{eq21}
g=\prod_{i_1,\ldots,i_r=1}^d(x_{r+l+1}(2)-S_{i_1,\ldots,i_r})\in \CC\{\{x_1(2),\ldots,x_{r+l+1}(2)\}\}.
\end{equation}
Let
\begin{equation}\label{eq29}
t=\mbox{ord }g(0,\ldots,0,x_{r+l+1}(2)).
\end{equation}
We have that $0<t\le d^r$.

The proof now proceedes by induction on $t$. We will make a series of transformations which will either give an increase in $(s,r,l)$ (establishing the proposition) or preserve the respective form (\ref{eq15*}) or (\ref{eq16*}) with a reduction in $t$ (which will remain positive) in the strict transform of $g$.  We first perform transformations which preserve the respective forms (\ref{eq15*}) or (\ref{eq16*}) and preserve $t=\mbox{ord }g(0,\ldots,0,x_{r+l+1}(i))$ which put $g$ into the  good form of  equation (\ref{eq23}) (in case (\ref{eq15*})) or in the good form of equation (\ref{eq24}) (in case (\ref{eq16*})).

 Set 
$$
Q_{i_1,\ldots,i_r}=P-S_{i_1,\ldots,i_r}
$$
which are algebraic over $x_1(2),\ldots,x_{r+l}(2)$.
By the argument leading to (\ref{eq20}), we can construct a sequence of transformations of types 2) - 4) $(x(2),y(2))\rightarrow (x(3),y(3))$ which preserve the expressions (\ref{eq20}), (\ref{eq21}), $t$ in equation (\ref{eq29}) and the expression (\ref{eq15*}) or (\ref{eq16*}) (in the variables $x(3)$ and $y(3)$) such that for all $I=(i_1,\ldots,i_r)$, 
\begin{equation}\label{eq22}
Q_I=y_1(3)^{n_1^I}\cdots y_s(3)^{n_s^I}u_I
\end{equation}
where $u_I\in \CC\{\{y_1(3),\ldots,y_{s+l}(3)\}\}$ are units and $y_1(3)^{n_1^I}\cdots y_s(3)^{n_s^I}$ are algebraic over $x_1(3),\ldots,x_r(3)$.
After a transformation of type 1) $(x(3),y(3))\rightarrow (x(4),y(4))$, we can principalize the ideals $(y_1(3)^{n_1^I}\cdots y_s(3)^{n_s^I},y_1(0)^{d_1}\cdots y_s(0)^{d_s})$ for all $I$ (by Lemma \ref{Lemma21}), giving us
the  possibilities 
$$
x_{r+l+1}(4)-S_{i_1,\ldots,i_r}=y_1(3)^{n_1^I}\cdots y_s(3)^{n_s^I}\overline u_I
$$
where $\overline u_I\in \CC\{\{y_1(4),\ldots,y_{s+l}(4)\}\}$ is a unit and $y_1(3)^{n_1^I}\cdots y_s(3)^{n_s^I}$ is algebraic over $x_1(4),\ldots,x_r(4)$
or
$$
x_{r+l+1}(4)-S_{i_1,\ldots,i_r}=y_1(0)^{d_1}\cdots y_s(0)^{d_s}\overline u_I
$$
where $\overline u_I\in \CC\{\{y_1(4),\ldots,y_{s+l}(4)\}\}$ is a unit and $y_1(0)^{d_1}\cdots y_s(0)^{d_s}$ is  not algebraic over $x_1(4),\ldots,x_r(4)$ if (\ref{eq15*}) holds
and giving us the  possibilities 
$$
x_{r+l+1}(4)-S_{i_1,\ldots,i_r}=y_1(3)^{n_1^I}\cdots y_s(3)^{n_s^I}G^I
$$
where $G^I\in \CC\{\{y_1(4),\ldots,y_{s+l+1}(4)\}\}$ is a unit and $y_1(3)^{n_1^I}\cdots y_s(3)^{n_s^I}$ is algebraic over $x_1(4),\ldots,x_r(4)$
or
$$
x_{r+l+1}(4)-S_{i_1,\ldots,i_r}=y_1(4)^{m_1^I}\cdots y_s(4)^{m_s^I}G^I
$$
where $G^I\in \CC\{\{y_1(4),\ldots,y_{s+l+1}(4)\}\}$ satisfies $\mbox{ord }G^I(0,\ldots,0,y_{r+l+1}(4))=1$ if (\ref{eq16*}) holds.
We have that
$$
x_{r+l+1}(4)-P=y_1(0)^{d_1}\cdots y_s(0)^{d_s}
$$
in case (\ref{eq15*}) and
$$
x_{r+l+1}(4)-P=y_1(0)^{d_1}\cdots y_s(0)^{d_s}y_{s+l+1}(4)
$$
in case (\ref{eq16*}). We thus have
\begin{equation}\label{eq23}
g=y_1(4)^{m_1}\cdots y_s(4)^{m_s}u
\end{equation}
where $u\in \CC\{\{y_1(4),\ldots, y_{s+l}(4)\}\}$ is a unit and $y_1(4)^{m_1}\cdots y_s(4)^{m_s}$ is not algebraic over $x_1(4),\ldots,x_r(4)$
in case (\ref{eq15*}) and 
\begin{equation}\label{eq24}
g=y_1(4)^{m_1}\cdots y_s(4)^{m_s}y_{s+l+1}(4)\prod_{I\ne (d,\ldots,d)} G^I
\end{equation}
where for all $I$, $G^I\in \CC\{\{y_1(4),\ldots, y_{s+l+1}(4)\}\}$ satisfies $\mbox{ord }G^I(0,\ldots,0,y_{s+l+1}(4))=1$ or 0 in case (\ref{eq16*}).

We now consider a special case, when $g$ has an expression of the form (\ref{eq25}) below, and show that after a few transformations we obtain the conclusions of the proposition.

Suppose that there exists $\Phi\in \CC\{\{x_1(4),\ldots,x_{r+l}(4)\}\}$ such that 
\begin{equation}\label{eq25}
g=\tilde u(x_{r+l+1}(4)-\Phi)^{\lambda}
\end{equation}
where $\lambda\in \ZZ_{>0}$ and $\tilde u\in \CC\{\{x_1(4),\ldots,x_{r+l+1}(4)\}\}$ is a unit series. 
Setting $P'=P-\Phi$, we have an expression
$$
x_{r+l+1}(4)-\Phi=P'+Q
$$
where $Q:=x_{r+l+1}(4)-P$ has the expression
$$
Q=\left\{
\begin{array}{ll}
y_1(0)^{d_1}\cdots y_s(0)^{d_s}\mbox{ of (\ref{eq15*}) or}\\
y_1(0)^{d_1}\cdots y_s(0)^{d_s}y_{s+l+1}(4)\mbox{ of (\ref{eq16*})}
\end{array}\right.
$$
and $P'$ is algebraic over $x_1(4),\ldots,x_{r+l}(4)$.
By Lemma \ref{Lemma2}, there exists a sequence of transformations of  types 1) - 4) $(x(4),y(4))\rightarrow (x(5),y(5))$ such that
$$
P'=y_1(5)^{a_1}\cdots y_s(5)^{a_s}u'
$$
where $u'\in\CC\{\{y_1(5),\ldots,y_{s+l}(5)\}\}$ is a unit series. We have that  $y_1(5)^{a_1}\cdots y_s(5)^{a_s}$ is algebraic over $x_1(5),\ldots,x_r(5)$ by Lemma \ref{Lemma7}. By Lemma \ref{Lemma21},
after a transformation of type 1), $(x(5),y(5))\rightarrow (x(6),y(6))$, we have  that in the case when (\ref{eq15*}) holds,
\begin{equation}\label{eq26}
x_{r+l+1}(6)-\Phi=y_1(6)^{n_1}\cdots y_s(6)^{n_s}\hat u
\end{equation}
with $\hat u\in \CC\{\{y_1(6),\ldots,y_{s+l+1}(6)\}\}$ a unit 
and in the case when (\ref{eq16*}) holds, we have
\begin{equation}\label{eq27}
x_{r+l+1}(6)-\Phi=
\left\{\begin{array}{ll}
y_1(6)^{n_1}\cdots y_s(6)^{n_s}\hat u&\mbox{ with $\hat u\in \CC\{\{y_1(6),\ldots,y_{s+l+1}(6)\}\}$ a unit}\\
&\mbox{ and $y_1(6)^{n_1}\cdots y_s(6)^{n_s}$ algebraic over }\\
&\mbox{ $x_1(6),\ldots,x_{r+l}(6)$, or}\\
y_1(6)^{n_1}\cdots y_s(6)^{n_s}F&\mbox{ with $F\in \CC\{\{y_1(6),\ldots,y_{s+l+1}(6)\}\}$}\\
&\mbox{ such that }\mbox{ord }F(0,\ldots,0,y_{s+l+1}(6)) = 1.
\end{array}\right.
\end{equation}

If Case (\ref{eq15*}) holds, we have from comparison of the equations (\ref{eq26}), (\ref{eq23}) and (\ref{eq25}) that
$$
y_1(4)^{m_1}\cdots y_s(4)^{m_s}=(y_1(6)^{n_1}\cdots y_s(6)^{n_s})^{\lambda}
$$
where
$y_1(4)^{m_1}\cdots y_s(4)^{m_s}$ is not algebraic over $x_1(6),\ldots,x_{r+l}(6)$. Thus $y_1(6)^{n_1}\cdots y_s(6)^{n_s}$ is also not algebraic over $x_1(6),\ldots,x_{r+l}(6)$. Making a change of variables replacing $x_{r+l+1}(6)$ with $x_{r+l+1}(6)-\Phi$ and $y_1(6),\ldots,y_s(6)$ with their products by appropriate units in $\CC\{\{y_1(6),\ldots,y_{s+l}(6)\}\}$ (transformations of types 10) and 8)), we get 
$$
x_{r+l+1}(6)=y_1(6)^{n_1}\cdots y_s(6)^{n_s}
$$
with $y_1(6)^{n_1}\cdots y_s(6)^{n_s}$  not algebraic over $x_1(6),\ldots,x_r(6)$
obtaining an increase in $r$ (and $(s,r,l)$), and so we have achieved the conclusions of Proposition \ref{Theorem2}.

If case (\ref{eq16*}) holds, then (\ref{eq24}), (\ref{eq27}) and (\ref{eq25}) hold, so we have that
$$
x_{r+l+1}(6)-\Phi=y_1(6)^{n_1}\cdots y_s(6)^{n_s}F
$$
where $F\in \CC\{\{y_1(6),\ldots,y_{s+l+1}(6)\}\}$ satisfies $\mbox{ord }F(0,\ldots,0,y_{s+l+1}(6))=1$. Then making changes of variables, replacing $y_{n+l+1}(6)$ with $F$ and
$x_{r+1+1}(6)$ with $x_{r+l+1}(6)-\Phi$ (transformations of types 5) and 10)), we have
$$
x_{r+l+1}(6)=y_1(6)^{n_1}\cdots y_s(6)^{n_s}y_{s+l+1}(6).
$$
If $y_1(6),\ldots,y_s(6),y_{s+l+1}(6)$ are independent, we have an increase in $s$ (and $(s,r,l)$). 
Otherwise, we perform a SGMT in $y_1(6),\ldots,y_s(6),y_{s+l+1}(6)$ giving a transformation of type 6) $(x(6),y(6))\rightarrow (x(7),y(7))$ such that
$$
x_{r+l+1}(7)=y_1(7)^{b_1}\cdots y_s(7)^{b_s}(y_{s+l+1}(7)+\alpha)
$$
for some $0\ne\alpha\in \CC$. If $y_1(7)^{b_1}\cdots y_s(7)^{b_s}$ is not algebraic over $x_1(7),\ldots,x_{r+l}(7)$, then we can make a change of variables in $y_1(7),\ldots,y_s(7)$, (a transformation of type 8) $(x(7),y(7))\rightarrow (x(8),y(8)))$, giving an expression
$$
x_{r+l+1}(8)=y_1(8)^{b_1}\cdots y_s(8)^{b_s},
$$
thus giving an increase in $r$ (and $(s,r,l)$).  If $y_1(7)^{b_1}\cdots y_s(7)^{b_s}$ is algebraic over $x_1(7),\ldots,x_{r+l}(7)$, then
$\nu_e(x_{r+l+1}(7))$ is rationally dependent on $\nu_e(x_1(7)),\ldots,\nu_e(x_{r+l}(7))$, and so  
$$
x_1(7),\ldots,x_{r+l}(7), x_{r+l+1}(7)
$$
 are dependent  by Lemma \ref{Lemma25}. Thus by Lemma \ref{Lemma6}, there exists a SGMT $(x(7))\rightarrow (x(8))$ defined by
$$
x_i(7)=\prod_{j=1}^rx_j(8)^{a_{ij}(8)}\mbox{ for $1\le i\le r$ and}
$$
$$
x_{r+l+1}(7)=\left(\prod_{j=1}^rx_j(8)^{a_{r+1,j}(8)}\right)(x_{r+l+1}(8)+\beta)
$$
with $0\ne\beta\in\CC$.

By Lemma \ref{Lemma35}, we can extend the SGMT $(x(7))\rightarrow (x(8))$ to a transformation $(x(7),y(7))\rightarrow (x(8),y(8))$ of type 9) (where $(y(7))\rightarrow (y(8))$ is a SGMT in $y_1(7),\ldots,y_s(7)$). We have
$$
\left(\prod_{j=1}^rx_j(8)^{a_{r+1,j}(8)}\right)(x_{r+l+1}(8)+\beta)
=\left(\prod_{j=1}^sy_j(8)^{b_j(8)}\right)(y_{r+l+1}(8)+\alpha),
$$
with $\alpha,\beta\ne 0$. Then
$$
\sum_{j=1}^ra_{r+1,j}(8)\nu_e(x_j(8))=\sum_{j=1}^sb_j(8)\nu_e(y_j(8)).
$$
The values $\nu_e(y_1(8)),\ldots,\nu_e(y_s(8))$ are rationally independent by Lemma \ref{Lemma25}, so 
$$
(a_{r+1,1},\ldots,a_{r+1,r})
\left(\begin{array}{ccc}
c_{11}(8)&\cdots&c_{1s}(8)\\
&\vdots&\\
c_{r1}(8)&\cdots&c_{rs}(8)
\end{array}\right)
=(b_1(8),\ldots,b_s(8)).
$$
Thus
$$
\prod_{j=1}^rx_j(8)^{a_{r+1.j}(8)}=\prod_{j=1}^sy_j(8)^{b_j(8)}
$$
and $\alpha=\beta$, so $x_{r+l+1}(8)=y_{r+l+1}(8)$, giving an increase in $r+l$ (and $(s,r,l)$).

 In all cases, we have reached the conclusions of Proposition \ref{Theorem2} (under the assumption that (\ref{eq25}) holds).

Now suppose that an expression (\ref{eq25}) does not hold. Then $t>1$ in (\ref{eq29}) (by the implicit function theorem).
Now we will use a Tschirnhaus transformation to put $g$ into a good form, and perform a sequence of transformations that preserve the respective forms (\ref{eq15*}) or (\ref{eq16*}) and lead to a decrease 
$$
0<\mbox{ord }\overline g(0,\ldots,0,x_{r+l+1}(i))<t
$$
where $\overline g$ is the strict transform of $g$. The conclusions of the proposition will then follow by induction on $t$.

  By Lemma \ref{Lemma9}, we can make a change of variables, replacing $x_{r+l+1}(4)$ with $x_{r+l+1}(4)-\Phi$ for some $\Phi\in \CC\{\{x_1(4),\ldots,x_{r+l}(4)\}\}$ (a transformation of type 10)) to get an expression
\begin{equation}\label{eq28}
g=\tau_0x_{r+l+1}(4)^t+\tau_2x_{r+l+1}(4)^{t-2}+\cdots+\tau_t
\end{equation}
where $\tau_0\in \CC\{\{x_1(4),\ldots,x_{r+l+1}(4)\}\}$ is a unit and $\tau_i\in \CC\{\{x_1(4),\ldots,x_{r+l}(4)\}\}$.
If all $\tau_i=0$ for $i\ge 2$ then we are in case (\ref{eq25}), so we may suppose that some $\tau_i\ne0$ with $i\ge 2$.

By Lemma \ref{Prop1}, there exists a sequence of transformations of types 1) - 4) $(x(4),y(4))\rightarrow (x(5),y(5))$ making
$$
\tau_i=x_1(5)^{a_1^i}\cdots x_s(5)^{a_s^i}\overline u_i 
$$
for $2\le i$, where $\overline u_i\in \CC\{\{x_1(5),\ldots,x_{s+l}(5)\}\}$ is either a unit or zero. The forms of equations (\ref{eq15*}) and (\ref{eq23}) or of
(\ref{eq16*}) and (\ref{eq24}) (in the variables $x(5)$ and $y(5)$) are preserved by these transformations.

Now apply the argument following (\ref{eq25}) to $x_{r+l+1}(5)$ (in the place  of $x_{r+l+1}(4)-\Phi$ in (\ref{eq25})) to construct a sequence of  transformations of types 1) - 4) $(x(5),y(5))\rightarrow (x(6),y(6))$ to get
in the case when (\ref{eq15*}) holds,
\begin{equation}\label{eq34}
x_{r+l+1}(6)=y_1(6)^{n_1}\cdots y_s(6)^{n_s}\hat u
\end{equation}
with $\hat u\in \CC\{\{y_1(6),\ldots,y_{s+l}(6)\}\}$ a unit 
and in the case when (\ref{eq16*}) holds, we have
\begin{equation}\label{eq30}
x_{r+l+1}(6)=
\left\{\begin{array}{ll}
y_1(6)^{n_1}\cdots y_s(6)^{n_s}\hat u&\mbox{ with $\hat u\in \CC\{\{y_1(6),\ldots,y_{s+l+1}(6)\}\}$ a unit}\\
&\mbox{ and $y_1(6)^{n_1}\cdots y_s(6)^{n_s}$ algebraic over $x_1(6),\ldots,x_{r+l}(6)$, or}\\
y_1(6)^{n_1}\cdots y_s(6)^{n_s}F&\mbox{ with $F\in \CC\{\{y_1(6),\ldots,y_{s+l+1}(6)\}\}$}\\
&\mbox{such that $\mbox{ord }F(0,\ldots,0,y_{s+l+1}(6)) = 1$}
\end{array}\right.
\end{equation}

Suppose that (\ref{eq15*}) and (\ref{eq34}) hold and $y_1(6)^{n_1}\cdots y_s(6)^{n_s}$ is not algebraic over $x_1(6),\ldots,x_{r+l}(6)$. Then 
after a transformation of type 8) we have an expression
$$
x_{r+l+1}(6)=y_1(6)^{n_1}\cdots y_s(6)^{n_s}
$$
giving us  an increase in $r$ (and $(s,r,l)$) in (\ref{eq5}), so we have obtained the conclusions of Proposition \ref{Theorem2}. 

Suppose that (\ref{eq16*}) and (\ref{eq30}) hold, and we have that
$x_{r+l+1}(6)=y_1(6)^{n_1}\cdots y_s(6)^{n_s}F$ with 
$$
\mbox{ord }F(0,\ldots,0,y_{n+l+1}(6))=1.
$$
 Then replacing $y_{s+l+1}(6)$ with $F$ (a transformation of type 5)), we have  relations (\ref{eq5}) with 
$$
x_{r+l+1}(6)=y_1(6)^{n_1}\cdots y_s(6)^{n_s}y_{s+l+1}(6).
$$
If $y_1(6),\ldots,y_s(6),y_{s+l+1}(6)$ are independent, we have an increase in $s$ (and in $(s,r,l)$), and we have achieved the conclusions of Proposition \ref{Theorem2}, so we may suppose that 
$$
y_1(6),\ldots,y_s(6),y_{s+l+1}(6)
$$
 are dependent. If $x_1(6),\ldots,x_r(6),x_{r+l+1}(6)$ are independent, then we perform a transformation of type 6) $(x(6),y(6))\rightarrow (x(7),y(7))$ (with $\overline m=l+1$) to get 
$$
x_{r+l+1}(7)=y_1(7)^{n_1}\cdots y_s(7)^{n_s}(y_{s+l+1}(7)+\alpha)
$$
with $0\ne \alpha\in \CC$. Since $x_1(7),\ldots,x_r(7),x_{r+l+1}(7)$ are independent (and so 
$$
\nu_e(x_1(7)),\ldots,\nu_e(x_r(7)),\ldots,\nu_e(x_{r+l+1}(7))
$$
 are rationally independent), we must have that 
$y_1(7)^{n_1}\cdots y_s(7)^{n_s}$ is not algebraic over $x_1(7),\ldots,x_r(7)$.

Thus after a change of variables, multiplying $y_i(7)$ by an appropriate unit for $1\le i\le s$ (a transformation of type 8)), we obtain an expression (\ref{eq5}), with an increase in $r$ (and $(s,r,l)$). 

The remaining case in (\ref{eq34}) and (\ref{eq30}) is when we have an expression
\begin{equation}\label{eq31}
x_{r+l+1}(6)=y_1(6)^{\overline m_1}\cdots y_s(6)^{\overline m_s}\hat u
\end{equation}
where $\hat u\in \CC\{\{y_1(6),\ldots,y_{s+l+1}(6)\}\}$ is a unit and $y_1(6)^{\overline m_1}\cdots y_s(6)^{\overline m_s}$ is algebraic over $x_1(6),\ldots,x_{r+l}(6)$. We will presume that this case holds.

From (\ref{eq31}), we see that $\nu_e(x_{r+l+1}(6))$ is rationally dependent on $\nu_e(x_1(6)),\ldots,\nu_e(x_r(6))$, so by Lemma \ref{Lemma25},  $x_1(6),\ldots,x_r(6), x_{r+l+1}(6)$ are dependent. Thus  there exists by Lemma \ref{Lemma6} a SGMT
\begin{equation}\label{eq32}
\begin{array}{lll}
x_1(6)&=& x_1(7)^{a_{11}(7)}\cdots x_r(7)^{a_{1r}(7)}\\
&\vdots&\\
x_r(6)&=& x_1(7)^{a_{r1}(7)}\cdots x_r(7)^{a_{rr}(7)}\\
x_{r+l+1}(6)&=& x_1(7)^{a_{1}(7)}\cdots x_r(7)^{a_r(7)}(x_{r+l+1}(7)+\alpha)
\end{array}
\end{equation}
with $0\ne \alpha\in \CC$. Substituting into (\ref{eq28}) and performing a (monomial) SGMT in $x_1(7),\ldots,x_r(7)$ (which we incorporate into $x(6)\rightarrow x(7)$) we obtain an expression
$$
g=x_1(7)^{b_1}\cdots x_s(7)^{b_s}\overline g
$$
where
\begin{equation}\label{eq33}
\mbox{ord }\overline g(0,\ldots,0,x_{r+l+1}(7))<t.
\end{equation}

By Lemma \ref{Lemma35}, we can extend the SGMT $(x(6))\rightarrow (x(7))$ to a transformation $(x(6),y(6))\rightarrow (x(7),y(7))$ of type 9) (where $(y(6))\rightarrow (y(7))$ is a SGMT in $y_1(6),\ldots,y_s(6)$).

Writing $\overline g=x_1(7)^{-b_1}\cdots x_s(7)^{-b_s}g$, we see from (\ref{eq23}) or (\ref{eq24}) that $\overline g$ is not a unit in $\CC\{\{y_1(7),\ldots,y_{s+l+1}(7)\}\}$. Thus
$$
\mbox{ord }\overline g(0,\ldots,0,x_{r+l+1}(7))>0.
$$
Now $x_{r+l+1}(7)$ continues to have a form (\ref{eq15*}) or (\ref{eq16*}), and
 $\overline g$ has a form (\ref{eq23}) (if (\ref{eq15*}) holds) or a form (\ref{eq24}) (if (\ref{eq16*}) holds), in terms of the variables $x(7)$, $y(7)$. Thus we are in the situation after (\ref{eq24}) (replacing $g$ with $\overline g$), but by (\ref{eq33}), we have a reduction of $t$ in (\ref{eq29}). By induction in $t$, continuing to run the algorithm following (\ref{eq24}), we must eventually obtain the conclusions of Proposition \ref{Theorem2}.

\end{proof}

\begin{Proposition}\label{Theorem3} Suppose that $\phi:Y\rightarrow X$ is a  morphism of  complex analytic manifolds, $E_Y$ is a simple normal crossings divisor on $Y$ and $e$ is an \'etoile over $Y$. Suppose that $\phi$ is quasi regular with respect to $e$. Then $\phi$ is regular at $e_Y$ and there exists a commutative diagram
$$
\begin{array}{rcl}
Y_e&\stackrel{\phi_e}{\rightarrow}&X_e\\
\pi_e\downarrow&&\downarrow\lambda_e\\
Y&\stackrel{\phi}{\rightarrow}&X
\end{array}
$$
of regular analytic morphisms such that the vertical arrows are products of local blow ups of nonsingular analytic subvarieties,  $Y_e\rightarrow Y\in e$ and $\phi_e$ is a monomial morphism for a toroidal structure $O_e$ on $Y_e$ at $p$.  Further, we have that $\pi_e^*(E_Y)$ is an effective divisor supported on  $O_e$  and the restriction of $\pi_e$ to $Y_e\setminus O_e$ is an open embedding.
\end{Proposition}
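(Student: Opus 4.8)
The plan is to run the inductive machinery of Sections \ref{SecGMT}--\ref{Mon}: first put $\phi$ into an initial configuration prepared of type $(s,0,0)$ that respects $E_Y$, then apply Proposition \ref{Theorem2} repeatedly until $r+l$ reaches $m=\dim X$, and finally read off monomiality, regularity and the divisor statement from the resulting terminal configuration. For the initial step, since $\phi$ is quasi regular with respect to $e$, on every diagram of the sort considered in Section \ref{SecTrans} the pullback $\tilde\phi^{*}$ is injective and quasi regularity is preserved (this is exactly what the ten transformation types are built to respect). Choose regular parameters $y_1,\dots,y_n$ at $e_Y$ with $E_Y\subset Z(y_1\cdots y_k)$ near $e_Y$. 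All $\nu_e(y_i)>0$, and whenever two of the divisor coordinates have $\QQ$-linearly dependent values I invoke Lemma \ref{Lemma25} to perform local blow ups of coordinate subvarieties lying in $e$ which replace the offending pair by monomials in new coordinates with $\QQ$-linearly independent values; since the rational rank of $\{\nu_e(y_i)\}$ is bounded, after finitely many such steps the pullback of $E_Y$ is supported on $Z(y_1\cdots y_a)$ with $y_1,\dots,y_a$ of $\QQ$-linearly independent (hence, by Lemma \ref{Lemma25}, independent) values. Reordering the remaining parameters so that $\nu_e(y_1),\dots,\nu_e(y_s)$ is a maximal $\QQ$-linearly independent subset of $\{\nu_e(y_i)\}$, the variables $y_1,\dots,y_s$ are independent while $y_1,\dots,y_s,y_i$ is dependent for every $i>s$ (again by Lemma \ref{Lemma25}, using that the residue field of $V_e$ is the algebraically closed field $\CC$). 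Taking $r=l=0$, $\tilde X=X$ with any regular system of parameters $x_1,\dots,x_m$, and $E_{\tilde Y}$ the pullback of $E_Y$, the data $(x,y)$ is prepared of type $(s,0,0)$ and satisfies (\ref{eq*}).

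As long as $r+l<m$, Proposition \ref{Theorem2} provides a sequence of transformations of types 1)--10)---each built from local blow ups of nonsingular subvarieties in $e$ over $Y$, local blow ups over $X$, and analytic coordinate changes, and each preserving (\ref{eq5}), quasi regularity and (\ref{eq*})---yielding a prepared configuration of strictly larger type $(s_1,r_1,l_1)>(s,r,l)$. Because the triple $(s,r,r+l)$ is componentwise nondecreasing, bounded above by $(n,m,m)$, and strictly increases in some component at each application, after finitely many steps we reach a prepared configuration with $r+l=m$. Write $\pi_e\colon Y_e\to Y$ (which lies in $e$), $\lambda_e\colon X_e\to X$ and $\phi_e\colon Y_e\to X_e$ for the resulting diagram, $x_1,\dots,x_m$, $y_1,\dots,y_n$ for the terminal regular parameters, $p=e_{Y_e}$ and $O_e=Z(y_1\cdots y_n)$. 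Then (\ref{eq5}) reads $\phi_e^{*}(x_i)=\prod_{j=1}^{s}y_j^{c_{ij}}$ for $i\le r$ and $\phi_e^{*}(x_{r+j})=y_{s+j}$ for $1\le j\le l$, a monomial form whose $m\times n$ exponent matrix has rank $r+l=m$ since $C$ has rank $r$; hence $\phi_e$ is a monomial morphism for the toroidal structure $O_e$ at $p$.

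It remains to deduce regularity and the divisor statement. The exponent matrix having rank $m$ shows that the generic rank of $\phi_e$ near $p$ is $m$ and that $\hat\phi_e^{*}\colon\CC[[x_1,\dots,x_m]]\to\CC[[y_1,\dots,y_n]]$ is injective, the images of the $x_i$ being monomials with $\QQ$-linearly independent exponent vectors and hence algebraically independent. Since quasi regularity is preserved by all the transformations, $\phi_e$ is quasi regular with respect to $e$, so $r_{p}^{\mathcal A}(\phi_e)=m$; combined with the chain (\ref{Gineq}) this forces all three Gabrielov ranks of $\phi_e$ at $p$ to equal $\dim X$, so $\phi_e$ is regular at $p$, and regular on $Y_e$ as $Y_e$ is connected. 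The morphisms $\pi_e$ and $\lambda_e$ are products of local blow ups of nonsingular subvarieties of manifolds, hence regular. Applying completion to $\phi\circ\pi_e=\lambda_e\circ\phi_e$ and using injectivity of $\hat\pi_e^{*}$, $\hat\lambda_e^{*}$ and $\hat\phi_e^{*}$ gives $\ker\hat\phi^{*}=0$, while the generic rank of $\phi$ near $e_Y$ equals that of $\phi_e$ near $p$ because $\pi_e$ and $\lambda_e$ are generic isomorphisms; together with $r_{e_Y}^{\mathcal A}(\phi)=\dim X$ from quasi regularity we conclude that $\phi$ is regular at $e_Y$. Finally (\ref{eq*}), which holds initially and is preserved by every transformation, gives that $\pi_e^{*}(E_Y)=E_{Y_e}$ is an effective divisor supported on $Z(y_1\cdots y_s)\subset O_e$ and that $Y_e\setminus Z(y_1\cdots y_s)\to Y$---and therefore its restriction to $Y_e\setminus O_e$---is an open embedding.

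The step I expect to be the main obstacle is the initial one: arranging simultaneously that the SNC divisor $E_Y$ becomes supported on coordinate hyperplanes whose coordinates can be placed inside a maximal independent set $y_1,\dots,y_s$, which forces the preliminary blow ups dictated by Lemma \ref{Lemma25} and a careful verification that afterwards $(x,y)$ is genuinely prepared of type $(s,0,0)$ and satisfies (\ref{eq*}). Everything past that is the finite iteration of Proposition \ref{Theorem2} together with the routine descent of regularity along the birational vertical maps.
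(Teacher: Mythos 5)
Your proposal is correct and follows essentially the same route as the paper: an initial normalization putting $(x,y)$ into prepared form of type $(s,0,0)$ with $E_Y$ supported on the independent coordinates, finitely many applications of Proposition \ref{Theorem2} until $r+l=m$, and then regularity read off from the rank of the exponent matrix together with (\ref{Gineq}) and the fact that $\pi_e,\lambda_e$ are open embeddings off nowhere dense sets. The only cosmetic difference is that you phrase the initial absorption of the dependent divisor coordinates directly through Lemma \ref{Lemma25}, whereas the paper packages the same blow ups as transformations of type 6); these are the same mechanism.
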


\begin{proof} Let $x_1,\ldots,x_m$ be regular parameters in $\mathcal O_{X,e_X}^{\rm an}$ and $y_1,\ldots,y_n$ be regular parameters in $\mathcal O_{Y,e_Y}^{\rm an}$ such that $E_Y$ is supported on the analytic set $Z(y_1y_2\cdots y_n)$ (in a neighborhood of $e_Y$ in $Y$). After reindexing the $y_i$ we may assume that $s\ge 1$ is such that $y_1,\ldots,y_s$ are independent and $y_1,\ldots,y_s,y_i$ are dependent for all $i$ with $s+1\le i\le n$. 
After performing SGMT of type 6) for $1\le \overline m\le n-s$, we may assume that $E_Y$ is supported on $Z(y_1y_2\cdots y_s)$. 
Then $(x,y)$ are prepared of type $(s',0,0)$ with $s'\ge s$. By successive application of Proposition \ref{Theorem2}, we construct a sequence of transformations $(x,y)\rightarrow (x',y')$ such that $r'+l'=m$, giving the conclusions of the theorem. 

The fact that $\phi_e$ is regular at $e_{Y_e}$ follows from the rank theorem (page 134 \cite{L}) and the inequality (\ref{Gineq}) applied to the monomial morphism $\phi_e$. Thus $\phi$ is regular at $e_Y$ as $\pi_{e}$
and $\lambda_e$  are products of local blowups, so that they are open embeddings away from nowhere dense closed analytic subspaces.
\end{proof}

We isolate as a corollary one of the conclusions of Proposition \ref{Theorem3}.

\begin{Corollary}\label{RegQR} Suppose that $\phi:Y\rightarrow X$ is a morphism of connected complex analytic manifolds, $e$ is an \'etoile on $Y$ and $\phi$ is quasi regular with respect to $e$. Then $\phi$ is regular.
\end{Corollary}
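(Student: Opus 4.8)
The plan is to deduce this immediately from Proposition \ref{Theorem3}. Since $\phi:Y\to X$ is quasi regular with respect to $e$, I would apply Proposition \ref{Theorem3}, taking $E_Y$ to be the empty (hence trivially simple normal crossings) divisor on $Y$. The proposition then yields, in particular, the first assertion of its statement: that $\phi$ is regular at the point $e_Y$, i.e. that $r_{e_Y}(\phi)=r_{e_Y}^{\mathcal F}(\phi)=r_{e_Y}^{\mathcal A}(\phi)=\dim X$ as in equation (\ref{eqout3}).

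It then remains only to propagate regularity from the single point $e_Y$ to all of $Y$. For this I would invoke the elementary fact recalled in Section \ref{Pre}: if $Y$ is a connected manifold and $\phi$ is regular at one point of $Y$, then $\phi$ is regular at every point of $Y$, and one then says simply that $\phi$ is regular. Since $Y$ is connected by hypothesis, this gives the conclusion.

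There is essentially no real obstacle here; the entire content of the corollary is carried by Proposition \ref{Theorem3}. The only things to verify are bookkeeping items: that the hypotheses of Proposition \ref{Theorem3} are met (connected complex analytic manifolds $Y$ and $X$, an \'etoile $e$ over $Y$, quasi regularity of $\phi$ with respect to $e$, and some choice of simple normal crossings divisor $E_Y$ on $Y$, for which the empty divisor suffices), and that the phrase ``$\phi$ is regular at $e_Y$'' appearing in Proposition \ref{Theorem3} refers precisely to the condition of equation (\ref{eqout3}), so that the propagation statement from Section \ref{Pre} applies verbatim. Hence $\phi$ is regular, as claimed.
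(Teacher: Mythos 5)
Your proposal is correct and is exactly the paper's argument: the text introduces Corollary \ref{RegQR} precisely by isolating the conclusion ``$\phi$ is regular at $e_Y$'' from Proposition \ref{Theorem3} (with a trivial choice of $E_Y$), and then regularity propagates from $e_Y$ to all of the connected manifold $Y$ by the remark in Section \ref{Pre}.
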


Corollary \ref{RegQR} can also be deduced from the local flattening theorem of Hironaka, Lejeune and Teissier \cite{HLT} and Hironaka \cite{H3}, as is shown in \cite{WLM}.

\begin{Theorem}\label{TheoremB} Suppose that $\phi:Y\rightarrow X$ is a morphism of reduced complex analytic spaces, $A$ is a closed  analytic  subspace of $Y$  and $e$ is an \'etoile over $Y$. Then there exists a commutative diagram of complex analytic morphisms
$$
\begin{array}{ccc}
Y_{e}&\stackrel{\phi_{e}}{\rightarrow}&X_e\\
\beta\downarrow &&\downarrow \alpha\\
Y&\stackrel{\phi}{\rightarrow}& X
\end{array}
$$
such that $\beta\in e$, the morphisms $\alpha$ and $\beta$ are finite products of local blow ups of nonsingular analytic sub varieties, $Y_e$ and $ X_e$ are nonsingular analytic  spaces and $\phi_e$ is a  monomial  analytic morphism for a toroidal structure $O_e$ on $Y_e$ at $e_{Y_e}$ such that the restriction $(Y_e\setminus O_e) \rightarrow Y$ is an open embedding.  
There exists a nowhere dense closed analytic subspace $F_e$ of $X_e$ such that $X_e\setminus F_e\rightarrow X$ is an open embedding and $\phi_e^{-1}(F_e)$ is nowhere dense in $Y_e$.
Further, either the preimage  of $A$ in $Y_e$ is equal to $Y_e$, or $\mathcal I_A\mathcal O_{Y_e}=\mathcal O_{Y_e}(-G)$ where $\mathcal I_A$ is the ideal sheaf in $\mathcal O_Y^{\rm an}$ of the analytic subspace $A$  of $Y$ and $G$ is an effective divisor which is supported on $O_e$. \end{Theorem}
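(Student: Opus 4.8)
The plan is to reduce Theorem~\ref{TheoremB} to Proposition~\ref{Theorem3} by means of Proposition~\ref{TheoremA}, carrying the closed subspace $A$ through the construction as an effective divisor and feeding its support into the ``$E_Y$'' input of Proposition~\ref{Theorem3}. First I would apply Proposition~\ref{TheoremA} to $\phi:Y\to X$, obtaining $\delta:Y_1\to Y\in e$ and $\gamma:X_1\to X$, both finite products of local blow ups of nonsingular subvarieties, with $Y_1,X_1$ smooth, a closed submanifold $Z_1\subseteq X_1$ containing $\phi_1(Y_1)$, the induced $\phi_1:Y_1\to Z_1$ quasi regular with respect to $e$, and the nowhere dense $F_1\subseteq X_1$. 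Then I would perform a further sequence of local blow ups $Y_2\to Y_1$ lying in $e$ (blow ups of coherent ideals above $e_{Y_1}$, keeping $Y_2$ smooth) which principalizes $\mathcal I_A\mathcal O_{Y_1}$: after this step either $\mathcal I_A\mathcal O_{Y_2}=0$, or $\mathcal I_A\mathcal O_{Y_2}=\mathcal O_{Y_2}(-E_A)$ for an effective divisor $E_A$, and in addition $\operatorname{Supp}E_A$ together with the exceptional locus $F_0$ of the composite $Y_2\to Y$ is contained in a simple normal crossings divisor $E_Y$ on $Y_2$ (standard embedded resolution/principalization). The composite $\phi_2:Y_2\to Z_1$ is again quasi regular with respect to $e$ by Lemma~\ref{LemmaQR}.

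Next I would apply Proposition~\ref{Theorem3} to $\phi_2:Y_2\to Z_1$ together with the SNC divisor $E_Y$. This yields $\pi_e:Y_e\to Y_2$ with $Y_e\to Y_2\in e$, a product of local blow ups of nonsingular subvarieties $Z_e'\to Z_1$, a morphism $\phi_e:Y_e\to Z_e'$ that is monomial for a toroidal structure $O_e$ on $Y_e$ at $e_{Y_e}$, with $\pi_e^{*}E_Y$ effective and supported on $O_e$ and $\pi_e$ restricting to an open embedding on $Y_e\setminus O_e$; moreover $\phi_2$ is regular at $e_{Y_e}$ by Corollary~\ref{RegQR}. I would then realize $Z_e'\to Z_1$ as the strict transform of $Z_1$ under a sequence of local blow ups $X_e\to X_1$ of nonsingular subvarieties: each center is a nonsingular subvariety of $Z_1$, hence of $X_1$, and the strict transform of $Z_1$ under its blow up in $X_1$ is its blow up in $Z_1$ and remains (near the relevant center) cut out by part of a coordinate system, so $\phi_e:Y_e\to Z_e'\hookrightarrow X_e$ is monomial as a morphism to $X_e$, the coordinates cutting out $Z_e'$ pulling back to $0$. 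Setting $\beta:Y_e\to Y_2\to Y_1\to Y$ and $\alpha:X_e\to X_1\to X$ gives finite products of local blow ups of nonsingular subvarieties with $\beta\in e$ and $Y_e,X_e$ nonsingular.

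It then remains to read off the remaining assertions. Since $\alpha$ is a sequence of local blow ups there is a closed analytic $F_e\subseteq X_e$ with $X_e\setminus F_e\to X$ an open embedding, and since $\phi$ is regular at $e_{Y_e}$ (Corollary~\ref{RegQR}) the set $\phi_e^{-1}(F_e)$ is nowhere dense in $Y_e$ by the remark following Theorem~\ref{TheoremB*}. Because $\operatorname{Supp}F_0\subseteq\operatorname{Supp}E_Y$ we have $\pi_e^{-1}(F_0)\subseteq\operatorname{Supp}(\pi_e^{*}E_Y)\subseteq O_e$, so $\pi_e$ maps $Y_e\setminus O_e$ isomorphically onto an open subset of $Y_2\setminus F_0$, which embeds openly in $Y$; hence $(Y_e\setminus O_e)\to Y$ is an open embedding. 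Finally, if $\mathcal I_A\mathcal O_{Y_2}=0$ then $\mathcal I_A\mathcal O_{Y_e}=0$ and the preimage of $A$ in $Y_e$ equals $Y_e$; otherwise, since $\mathcal O_{Y_2}(-E_A)$ is invertible, $\mathcal I_A\mathcal O_{Y_e}=\mathcal O_{Y_e}(-\pi_e^{*}E_A)$, and $G:=\pi_e^{*}E_A$ is effective with $\operatorname{Supp}G=\pi_e^{-1}(\operatorname{Supp}E_A)\subseteq\pi_e^{-1}(\operatorname{Supp}E_Y)\subseteq O_e$, as required.

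The substantive input of the paper is Proposition~\ref{Theorem3}; the difficulties in the present deduction are organizational. The main obstacle I anticipate is to carry out the preliminary resolutions (of the singularities of $Y$, of the indeterminacy of $Y\dashrightarrow X_1$, of the singularities of $X$, and the principalization of $\mathcal I_A$) entirely through sequences of \emph{local} blow ups lying in $e$, arranging a \emph{single} SNC divisor $E_Y$ whose support contains both $\operatorname{Supp}E_A$ and the whole exceptional locus over $Y$, so that one application of Proposition~\ref{Theorem3} produces all the required compatibilities simultaneously; a secondary nuisance is transferring the blow ups of $Z_1$ to blow ups of $X_1$ so that $\alpha$ is literally a product of local blow ups of nonsingular subvarieties.
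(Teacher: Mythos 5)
Your proposal is correct and follows essentially the same route as the paper: apply Proposition \ref{TheoremA} to reduce to a quasi regular morphism onto a smooth target, principalize $\mathcal I_A$ together with the ideal of the exceptional locus over $Y$ into a single SNC divisor, and then invoke Proposition \ref{Theorem3} with that divisor as the input $E_Y$. The one place you are more explicit than the paper — realizing the blow ups of $Z_1$ as local blow ups of $X_1$ via strict transforms so that $\alpha$ is literally a product of local blow ups and the coordinates cutting out $Z_e'$ pull back to $0$ in the monomial form — is a legitimate refinement of a step the paper handles by simply replacing $X_1$ with $Z$.
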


\begin{proof}
The proof  follows by first applying Proposition \ref{TheoremA} above to get a  morphism of smooth analytic spaces $Y_1\rightarrow X_1$, with closed analytic sub manifold $Z$ of $X_1$ such that $\phi_1(Y_1)\subset Z$ and if $\overline\phi_1:Y_1\rightarrow Z$ is the induced map, then $\overline\phi_1$ is quasi regular with respect to $e$. We may thus replace $X_1$ with $Z$ in the remainder of the proof, and assume that $\phi_1:Y_1\rightarrow X_1$ is quasi regular with respect to $e$ in the remainder of the proof. Either $\mathcal I_A\mathcal O_{Y_1}$ is the zero ideal sheaf, which holds if and only if the preimage of $A$ in $Y_1$ is $Y_1$, or $\mathcal I_A\mathcal O_{Y_1}$ is a nonzero ideal sheaf.
Let $B$ be a proper closed analytic subspace of $Y_1$ such that $Y_1\setminus B\rightarrow Y$ is an open embedding.  
Then applying  principalization of ideals and embedded resolution of singularities by blowing up nonsingular sub varieties to $\mathcal I_B$ if $\mathcal I_A= (0)$ and to
$\mathcal I_A \mathcal I_{B}$ if $\mathcal I_A\ne (0)$,
 we construct $Y_2\rightarrow X_1$ 
such that either $\mathcal I_A\mathcal O_{Y_2}=(0)$ and $\mathcal I_B\mathcal O_{Y_2}=\mathcal O_{Y_2}(-G)$ or $\mathcal I_A\mathcal  I_{B}\mathcal O_{Y_2}=\mathcal O_Y(-G)$ where $G$ is a simple normal crossings divisor on $Y_2$ which satisfies  the assumptions  of Proposition  \ref{Theorem3}, and $Y_2\setminus G\rightarrow Y$ is an open embedding. We then apply Proposition \ref{Theorem3}  to $Y_2\rightarrow X_1$ to obtain a monomial morphism at the center of $e$, satisfying the conclusions of the theorem.
\end{proof}

We obtain Theorem \ref{TheoremB*} of the introduction as an immediate consequence of Theorem \ref{TheoremB}.

\begin{Theorem}\label{TheoremC} Suppose that $\phi:Y\rightarrow X$ is a morphism of reduced  complex  analytic spaces, $A$ is a closed  analytic  subspace of $Y$  and $p\in Y$. Then there exists a finite number $t$ of commutative diagrams of complex  analytic morphisms
$$
\begin{array}{ccc}
Y_{i}&\stackrel{\phi_{i}}{\rightarrow}&X_i\\
\beta_i\downarrow &&\downarrow \alpha_i\\
Y&\stackrel{\phi}{\rightarrow}& X
\end{array}
$$
for $1\le i \le t$ 
such that each $\beta_i$ and $\alpha_i$ is a finite product of local blow ups of nonsingular analytic sub varieties, $Y_i$ and $ X_i$ are smooth analytic  spaces and $\phi_i$ is a  monomial  analytic morphism for a toroidal structure $O_i$ on $Y_i$. 
Either the preimage of $A$ in  $Y_i$ is $Y_i$ or $\mathcal I_A\mathcal O_{Y_i}=\mathcal O_{Y_i}(-G_i)$ where $\mathcal I_A$ is the ideal sheaf in $\mathcal O_Y^{\rm an}$ of the analytic subspace $A$  of $Y$, $G_i$ is an effective divisor which is supported on $O_i$, and has the further property that the restriction $(Y_i\setminus O_i) \rightarrow Y$ is an open embedding. Further, there exist compact subsets $K_i$ of $Y_i$ such that $\cup_{i=1}^t\beta_i(K_i)$ is a compact neighborhood of $p$ in $Y$. 
There exist nowhere dense closed analytic subspaces $F_i$ of $X_i$ such that $X_i\setminus F_i\rightarrow X$ are open embeddings and $\phi_i^{-1}(F_i)$ is nowhere dense in $Y_i$.
\end{Theorem}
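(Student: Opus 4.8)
The plan is to deduce Theorem \ref{TheoremC} from Theorem \ref{TheoremB} by a compactness argument on the voûte étoilée $\mathcal E_Y$, as sketched in the Introduction for Theorem \ref{TheoremC*}. Fix a compact neighborhood $K$ of $p$ in $Y$ (analytic spaces are locally compact). Since $P_Y:\mathcal E_Y\to Y$ is continuous, surjective and proper, $K'=P_Y^{-1}(K)$ is compact.

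First I would attach to every $e\in\mathcal E_Y$ a diagram as in the statement. Apply Theorem \ref{TheoremB} to $\phi$, $A$ and $e$ to get $\beta_e:Y_e\to Y$ in $e$ and $\alpha_e:X_e\to X$, both finite products of local blow ups of nonsingular subvarieties, with $\phi_e$ monomial at the center $q_e:=e_{Y_e}$ for a toroidal structure $O_e$, and with the stated conditions on $F_e$, on $A$ (an effective divisor $G_e$ supported on $O_e$, or the preimage of $A$ being all of $Y_e$), and with $Y_e\setminus O_e\to Y$ and $X_e\setminus F_e\to X$ open embeddings. Since ``monomial'' in Theorem \ref{TheoremB} is a condition at the point $q_e$, I would then shrink: choose an open neighborhood $U$ of $q_e$ in $Y_e$ on which the relevant regular parameters $y_1,\dots,y_n$ are coordinates, and an open $V$ in $X_e$ with $\phi_e(U)\subset V$ on which the parameters $x_1,\dots,x_t$ are coordinates, and replace $Y_e$ by $U$ and $X_e$ by $V$. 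An open immersion is a local blow up (that of the empty subvariety), so $\beta_e|_U:U\to Y$ and $\alpha_e|_V:V\to X$ remain finite products of local blow ups of nonsingular subvarieties; $\beta_e|_U$ is still in $e$ because $q_e\in U$, by the characterization of centers recalled in Section \ref{Pre}; $q_e$ remains the center of $e$ on the new $Y_e$; and the conditions on $F_e$, on $A$ and on $O_e$ are preserved, since nowhere density, open embeddings and the equality $\mathcal I_A\mathcal O_{Y_e}=\mathcal O_{Y_e}(-G_e)$ all restrict to open subsets. After this shrinking $\phi_e$ is a monomial morphism on all of $Y_e$ for the toroidal structure $O_e$, in the sense of Definition \ref{DefMon}.

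Next comes the compactness extraction. For $e\in K'$ let $L_e$ be a compact neighborhood of $q_e$ in $Y_e$, and set $\mathcal U_e=j_{\beta_e}\bigl(P_{Y_e}^{-1}(\operatorname{int}L_e)\bigr)$, where $j_{\beta_e}:\mathcal E_{Y_e}\xrightarrow{\sim}\mathcal E_{\beta_e}\subset\mathcal E_Y$ is Hironaka's homeomorphism. Then $\mathcal U_e$ is open in $\mathcal E_Y$, and $e\in\mathcal U_e$ because $P_{Y_e}(j_{\beta_e}^{-1}(e))=q_e\in\operatorname{int}L_e$. Hence $\{\mathcal U_e\}_{e\in K'}$ is an open cover of $K'$; by compactness choose $e_1,\dots,e_t\in K'$ with $K'\subset\bigcup_{i=1}^t\mathcal U_{e_i}$, and keep the diagrams indexed by $e_1,\dots,e_t$, with $\beta_i=\beta_{e_i}$, $\alpha_i=\alpha_{e_i}$, $\phi_i=\phi_{e_i}$, $O_i=O_{e_i}$, $F_i=F_{e_i}$, $G_i=G_{e_i}$ and $K_i=L_{e_i}$. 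All the asserted properties of these diagrams are then immediate from Theorem \ref{TheoremB} and the shrinking step; it remains to see that $\bigcup_{i=1}^t\beta_i(K_i)$ is a compact neighborhood of $p$. Given $y\in K$, surjectivity of $P_Y$ gives $e'\in\mathcal E_Y$ with $e'_Y=y$, so $e'\in K'$ and $e'\in\mathcal U_{e_i}$ for some $i$; writing $f'=j_{\beta_{e_i}}^{-1}(e')$, we get $P_{Y_i}(f')\in\operatorname{int}L_{e_i}\subset K_i$ and, from the commutative square $\beta_{e_i}\circ P_{Y_i}=P_Y\circ j_{\beta_{e_i}}$, $\beta_i(P_{Y_i}(f'))=P_Y(e')=y$. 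Thus $K\subset\bigcup_i\beta_i(K_i)$; the right-hand side is a finite union of compact subsets of the Hausdorff space $Y$, hence compact, and it contains the neighborhood $K$ of $p$.

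The main obstacle is not the compactness extraction — which is routine once the homeomorphisms $j_{\beta_e}$ and the properness of $P_Y$ and of the $P_{Y_e}$ are available — but the bookkeeping in the shrinking step: checking that restricting $\beta_e$ and $\alpha_e$ to open neighborhoods of the relevant centers keeps them products of local blow ups of nonsingular subvarieties, keeps $\beta_e$ in $e$, and preserves every divisorial and open-embedding condition carried by $A$, $O_e$ and $F_e$. These all reduce to the elementary facts that an open immersion is a local blow up, that the center of an étoile lies in the open set of any local blow up belonging to it, and that the relevant properties are local.
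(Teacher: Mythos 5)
Your proposal is correct and follows essentially the same route as the paper: apply Theorem \ref{TheoremB} to each \'etoile $e$, restrict to a relatively compact neighborhood of the center $e_{Y_e}$, use properness of $P_Y$ to get compactness of $K'=P_Y^{-1}(K)$, cover $K'$ by the basic open sets of $\mathcal E_Y$ determined by these restrictions, and extract a finite subcover. The only difference is presentational — you spell out the shrinking step and phrase the open cover via $j_{\beta_e}\bigl(P_{Y_e}^{-1}(\operatorname{int}L_e)\bigr)$ where the paper uses the equivalent basic open sets $\mathcal E_{\overline\pi_e}$ — which does not change the argument.
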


\begin{proof}
Let $\mathcal E_Y$ be the vo\^ute \'etoil\'ee over $Y$, with canonical map $P_Y:\mathcal E_Y\rightarrow Y$ defined by $P_Y(e)=e_Y$. We summarized in Section \ref{Pre}  properties of $\mathcal E_Y$ which we require in this proof. By Theorem \ref{TheoremB}, for each $e\in \mathcal E_Y$ we have a commutative diagram
$$
\begin{array}{rcl}
Y_e&\stackrel{\phi_e}{\rightarrow}&X_e\\
\pi_e\downarrow&&\downarrow\\
Y&\stackrel{\phi}{\rightarrow}&X
\end{array}
$$
such that $\phi_e$ is monomial at $e_{Y_e}$ and satisfies the other conditions of the conclusions of Theorem \ref{TheoremB}. Let $V_e$ be an open relatively compact neighborhood of $e_{Y_e}$ in $Y_e$. Let $\overline\pi_e:V_e\rightarrow Y$ be the induced maps. Let $K$ be a compact neighborhood of $p$ in $Y$ and $K'=P_Y^{-1}(K)$. The set $K'$ is compact since $P_Y$ is proper (Theorem 3.4 \cite{Hcar}). The sets 
$\mathcal E_{\overline \pi_e}$ (see equation (\ref{eqopen})) give an open cover of $K'$, so there is a finite subcover, which we reindex as $\mathcal E_{\overline \pi_{e_1}},\ldots,\mathcal E_{\overline \pi_{e_t}}$. For $1\le i\le t$, let $K_i$ be the closure of $V_{e_i}$ in $Y_{e_i}$ which is compact.  Since $P_Y$ is surjective and continuous, we have inclusions of compact sets 
$$
p\in K\subset \cup_{i=1}^t \pi_{e_i}(K_i) 
$$
giving the conclusions of the theorem.
\end{proof}

We obtain Theorem \ref{TheoremC*}  of the introduction as an immediate consequence of Theorem \ref{TheoremC}.

\section{Monomialization of real analytic maps}\label{Real}

In this section we prove local monomialization theorems for real analytic morphisms. 
We use the method of complexifications of real analytic spaces developed in Section 1 of \cite{H3}.

\begin{Remark}\label{RemarkR1}
Resolution of singularities of a germ of a complex analytic space $(X,x)$, which has a natural  auto conjugation, can be accomplished by blowing up smooth analytic sub varieties which are preserved by the auto conjugation. This follows by applying the basic theorem of resolution of singularities in \cite{H0} (or \cite{BM1}) to the spectrum of the invariant analytic local ring $\mbox{Spec}((\mathcal O_{X,x}^{\rm an})^{\sigma})$ by the action of the auto conjugation $\sigma$ of $X$ and then extending to  $\mbox{Spec}(\mathcal O_{X,x}^{\rm an})$. We also need the fact that a principalization of a sheaf of ideals  which is invariant under $\sigma$ can be obtained by blowing up smooth analytic sub varieties which are preserved by the auto conjugation  (this also follows from \cite{H0}).
\end{Remark}

\begin{Lemma}\label{LemmaR2}  Suppose that $Y$ is a smooth connected real analytic variety with a complexification $\tilde Y$ which is a smooth connected complex variety. Suppose that $Z\subset Y$ is  a closed real analytic subspace of $Y$ such that its complexification $\tilde Z\subset \tilde Y$ is a nowhere dense closed complex analytic subspace of $\tilde Y$. Then $Z$ is nowhere dense in $Y$ (in the Euclidean topology).
\end{Lemma}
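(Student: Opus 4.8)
The plan is to argue by contradiction, pushing the failure of nowhere density from $Y$ up to the complexification $\tilde Y$ by means of the local description of $\tilde Z$. Since $Z$ is closed in $Y$, to say that $Z$ is \emph{not} nowhere dense in the Euclidean topology is to say that $Z$ has an interior point. So I would suppose, for contradiction, that there is a point $p\in Z$ and an open neighbourhood $U$ of $p$ in $Y$ with $U\subset Z$.

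First I would pass to a local real analytic chart at $p$: after shrinking, identify a neighbourhood of $p$ in $Y$ with an open set $V\subset\RR^n$ and a neighbourhood of $p$ in $\tilde Y$ with an open set $\tilde V\subset\CC^n$ satisfying $\tilde V\cap\RR^n=V$, chosen so that $Z$ is defined in $V$ by finitely many real analytic functions $f_1,\ldots,f_k$ and, by the construction of the complexification in Section~1 of \cite{H3}, $\tilde Z$ is defined in $\tilde V$ by their holomorphic extensions $\tilde f_1,\ldots,\tilde f_k$. (Note that since $U\subset|Z|$ is open in $Y$, every germ of $\mathcal I_Z$ at a point of $U$ vanishes on a neighbourhood of that point, hence is zero; so $Z$ restricted to $U$ is $U$ itself, and correspondingly the ideal sheaf of $\tilde Z$ near $p$ is zero.)

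Now, because $U\subset Z$, each $f_i$ vanishes on the nonempty open subset $U\cap V$ of $\RR^n$, so every partial derivative of $f_i$ vanishes at $p$. The Taylor coefficients of $\tilde f_i$ at $p$ are, up to constants, exactly these derivatives, hence the power series of $\tilde f_i$ at $p$ is identically zero and $\tilde f_i$ vanishes on a full neighbourhood of $p$ in $\tilde V$. Therefore $\tilde Z$ contains an open neighbourhood of $p$ in $\tilde Y$, so $\tilde Z$ is not nowhere dense in $\tilde Y$, contradicting the hypothesis. Hence $Z$ is nowhere dense in $Y$.

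The only genuinely delicate point is the assertion used in the second paragraph that, near a real point, the complexification $\tilde Z\subset\tilde Y$ is cut out by the holomorphic extensions of local real analytic equations of $Z\subset Y$ — i.e. that forming the complexification is compatible with taking local equations. This is precisely the local form of the complexification functor as set up in Section~1 of \cite{H3}; granting it, the rest is just the identity theorem relating a real analytic function vanishing on an open subset of $\RR^n$ to its holomorphic extension. An essentially equivalent alternative would avoid the interior-point argument on $Y$ entirely: nowhere density of $\tilde Z$ forces $\tilde Z\neq\tilde Y$, hence $Z\neq Y$ (since the complexification of $Y$ is $\tilde Y$), and a proper closed real analytic subset of a connected real analytic manifold is automatically nowhere dense by the same identity-principle argument applied along $Y$.
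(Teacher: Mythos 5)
Your proof is correct, but it takes a genuinely different route from the paper's. The paper argues by dimension: since $\tilde Z$ is a nowhere dense closed analytic subspace of the complex manifold $\tilde Y$, one has $\dim_p\tilde Z=\dim\mathcal O^{\rm an}_{\tilde Z,p}<\dim\mathcal O^{\rm an}_{\tilde Y,p}=\dim_p\tilde Y$ at every $p$, while smoothness of $Y$ gives ${\rm T}$-$\dim_pY=\dim_p\tilde Y$ and $Z=\tilde Z\cap Y$ gives ${\rm T}$-$\dim_pZ\le\dim_p\tilde Z$; a closed subset of a manifold whose topological dimension is everywhere strictly smaller is nowhere dense. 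You instead run the identity principle in the opposite direction: an interior point of $Z$ in $Y$ forces the local real analytic equations of $Z$ to vanish on an open subset of $\RR^n$, hence to have zero Taylor expansion, hence their holomorphic extensions vanish on a full neighbourhood in $\CC^n$, so $\tilde Z$ would have interior in $\tilde Y$. Your argument is more elementary and self-contained (only the identity theorem plus the local description of the complexification by holomorphic extensions of local generators, which is indeed how Hironaka sets it up in Section 1 of \cite{H3}), whereas the paper's fits into the dimension-theoretic framework from \cite{L} and Section 5 of \cite{H3} that it reuses elsewhere (e.g.\ in the proof of Theorem \ref{TheoremC'}) and never needs to unwind local equations. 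Both proofs locate the role of smoothness correctly: in yours it enters through the chart identification $\tilde V\cap\RR^n=V$, which is exactly what fails for the Whitney umbrella, where a relatively open piece of $|Y|$ (the handle) is not open in $\RR^n$ and its complexification is a proper, nowhere dense subvariety.
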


The necessity that $Y$ be smooth in the lemma can be seen from consideration of the Whitney Umbrella $x^2-zy^2=0$.

\begin{proof} Since $\tilde Y$ and $Y$ are manifolds, for all $p\in Y$, the topological dimension of $Y$ at $p$, T-$\dim_pY$ (Remarks 5.16 and 5.17 \cite{H3} and Section \ref{Pre}), is equal to the  dimension $\dim_p \tilde Y$ of $\tilde Y$ (Section \ref{Pre}), which is equal to $\dim \mathcal O^{\rm an}_{\tilde Y,p}$. Since $\tilde Z$ is a nowhere dense analytic subspace of the manifold $\tilde Y$, we have that
$$
\dim_p\tilde Z=\dim \mathcal O_{\tilde Z,p}^{\rm an}<\dim \mathcal O_{\tilde Y,p}^{\rm an}=\dim_p\tilde Y.
$$
Since $Z=\tilde Z\cap Y$, we have that
T-$\dim_pZ\le \dim_p \tilde Z$ for all $p\in Z$. Since $Z$ is closed in $Y$, we have that  $Z$ is nowhere dense in $Y$.
\end{proof}

\begin{Lemma}\label{LemmaReg} Suppose that $\phi:Y\rightarrow X$ is a morphism of connected smooth real analytic varieties and $\tilde\phi:\tilde Y\rightarrow\tilde X$ is a complexification of $\phi$ (with $\tilde Y$ and $\tilde X$ smooth). Then $\phi$ is  regular if and only if $\tilde\phi$ is regular.
\end{Lemma}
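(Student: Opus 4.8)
The plan is to fix a point $q\in Y$, to put $p=\phi(q)=\tilde\phi(q)$, and to show that each of the three Gabrielov ranks of $\phi$ at $q$ agrees with the corresponding rank of $\tilde\phi$ at $q$, and that $\dim X=\dim\tilde X$. Granting this, $\phi$ is regular at $q$ (that is, all three ranks at $q$ equal $\dim X$, as in (\ref{eqout3})) if and only if $\tilde\phi$ is regular at $q$. Since $Y$ is connected, and since we may assume $\tilde Y$ is connected as well (as $Y$ is connected), the remark following (\ref{eqout3}) then shows that $\phi$ is regular if and only if $\tilde\phi$ is regular. The equality $\dim X=\dim\tilde X$ is built into the construction of a complexification.

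For the formal and analytic ranks I would argue by base change along $\RR\hookrightarrow\CC$. Choosing regular parameters $x_1,\ldots,x_m$ in $\mathcal O_{X,p}^{\rm an}$, which are also regular parameters in $\mathcal O_{\tilde X,p}^{\rm an}$, and splitting a convergent complex power series into the real and imaginary parts of its coefficients, the convergence criterion (\ref{eq42}) gives identifications $\mathcal O_{\tilde X,p}^{\rm an}=\mathcal O_{X,p}^{\rm an}\otimes_{\RR}\CC$ and $\hat{\mathcal O}_{\tilde X,p}^{\rm an}=\hat{\mathcal O}_{X,p}^{\rm an}\otimes_{\RR}\CC$, compatibly with the analogous identifications for $Y$ at $q$; under them $\tilde\phi^*=\phi^*\otimes_{\RR}\CC$ and $\hat{\tilde\phi}^*=\hat\phi^*\otimes_{\RR}\CC$, so that $\mathrm{Kernel}\,\tilde\phi^*=(\mathrm{Kernel}\,\phi^*)\otimes_{\RR}\CC$ and $\mathrm{Kernel}\,\hat{\tilde\phi}^*=(\mathrm{Kernel}\,\hat\phi^*)\otimes_{\RR}\CC$. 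Since $\CC$ is a finite integral extension of $\RR$, the inclusions $\mathcal O_{X,p}^{\rm an}/\mathrm{Kernel}\,\phi^*\hookrightarrow(\mathcal O_{X,p}^{\rm an}/\mathrm{Kernel}\,\phi^*)\otimes_{\RR}\CC$ and $\hat{\mathcal O}_{X,p}^{\rm an}/\mathrm{Kernel}\,\hat\phi^*\hookrightarrow(\hat{\mathcal O}_{X,p}^{\rm an}/\mathrm{Kernel}\,\hat\phi^*)\otimes_{\RR}\CC$ are integral, hence preserve Krull dimension, whence $r_q^{\mathcal A}(\phi)=r_q^{\mathcal A}(\tilde\phi)$ and $r_q^{\mathcal F}(\phi)=r_q^{\mathcal F}(\tilde\phi)$.

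For the generic rank, the Jacobian of $\tilde\phi$ in the coordinates above restricts along $Y$ to the real Jacobian of $\phi$, and the rank of a matrix with real entries is the same computed over $\RR$ or over $\CC$; this gives $r_q(\phi)\le r_q(\tilde\phi)$ at once. For the opposite inequality, set $\rho=r_q(\tilde\phi)$ and pick a small connected open neighborhood $\tilde U$ of $q$ in $\tilde Y$ on which the Jacobian of $\tilde\phi$ has rank at most $\rho$ and rank exactly $\rho$ at some point; then not all $\rho\times\rho$ minors of the Jacobian vanish identically on $\tilde U$, so some minor $h$ is a nonzero holomorphic function on $\tilde U$. Expanding $h$ in local real-analytic coordinates adapted to the real form shows that $h$ cannot vanish identically on $U=\tilde U\cap Y$, so there is a real point $q'\in U$ with $h(q')\ne 0$; at $q'$ the Jacobian of $\phi$ has rank at least $\rho$, so $r_q(\phi)\ge\rho$ and therefore $r_q(\phi)=r_q(\tilde\phi)$.

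I expect the only delicate point to be this last one: moving the maximal Jacobian rank from an arbitrary point of $\tilde U$ down to a real point, which relies on the real form $Y$ not being contained in any proper complex analytic subvariety of $\tilde Y$. The formal and analytic rank comparisons are, by contrast, a routine faithfully flat base change along $\RR\hookrightarrow\CC$, and the dimension equality is automatic from the construction of the complexification.
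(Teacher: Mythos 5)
Your proof is correct, and it proves more than the paper does. The paper's own argument handles only the generic rank: since the chain of inequalities (\ref{Gineq}) shows that regularity at $q$ is equivalent to the single condition $r_q(\phi)=\dim X$, the paper never needs to compare $r_q^{\mathcal F}$ or $r_q^{\mathcal A}$; it simply observes that if $\tilde\phi$ is regular then the degeneracy locus $\tilde Z=\{\tilde q\in\tilde Y\mid {\rm rank}(d\tilde\phi_{\tilde q})<n\}$ is a proper closed analytic subspace of $\tilde Y$, and that $\phi$ not regular would force $Y=\tilde Z\cap Y$, contradicting Lemma \ref{LemmaR2}. Your treatment of the generic rank is the same idea in a different package: where the paper invokes the dimension comparison of Lemma \ref{LemmaR2} to see that $Y$ cannot sit inside the proper analytic set $\tilde Z$, you use the equivalent fact that a nonzero holomorphic minor of the Jacobian cannot vanish identically on the totally real submanifold $Y$. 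You correctly identify this as the only delicate point. Your additional base-change computation $\mathcal O_{\tilde X,p}^{\rm an}=\mathcal O_{X,p}^{\rm an}\otimes_{\RR}\CC$, with kernels and Krull dimension preserved under the finite flat extension $\RR\hookrightarrow\CC$, is sound and yields the stronger statement that all three Gabrielov ranks are preserved at real points; it is simply unnecessary for the lemma as stated, and you could shorten your argument to the generic rank alone by citing (\ref{Gineq}) and the definition (\ref{eqout3}).
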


\begin{proof} Suppose that $\tilde\phi$ is regular. Let $n=\dim \tilde X$. Then the closed analytic subspace
$$
\tilde Z=\{\tilde q\in \tilde Y\mid {\rm rank}(d\tilde\phi_{\tilde q})<n\}
$$
is a proper subset of $\tilde Y$. Suppose that $\phi:Y\rightarrow X$ is not regular. Then
$$
Y=\{q\in Y\mid {\rm rank}(d\phi_q)<n\}=\tilde Z\cap Y,
$$
a contradiction to Lemma \ref{LemmaR2}.  

A simpler argument shows that if $\phi$ is regular then $\tilde\phi$ is regular.
\end{proof}

\begin{Proposition}\label{TheoremA'} Suppose that $\phi:Y\rightarrow X$ is a morphism of reduced real analytic spaces with complexification $\tilde\phi:\tilde Y\rightarrow \tilde X$, such that there are  auto conjugations $\sigma:\tilde X\rightarrow \tilde X$ and $\tau:\tilde Y\rightarrow \tilde Y$ such that $\tilde \phi\tau=\sigma\tilde\phi$. Let $e\in \mathcal E_{\tilde Y}$ be an \'etoile over $\tilde Y$. Then there exists a commutative diagram of morphisms 
$$
\begin{array}{rcl}
\tilde Y_e&\stackrel{\tilde \phi_e}{\rightarrow}&\tilde X_e\\
\tilde\delta\downarrow&&\downarrow\tilde\gamma\\
\tilde Y&\stackrel{\tilde \phi}{\rightarrow}&\tilde X
\end{array}
$$
such that $\tilde\delta\in e$,  $\tilde Y_e$ and $\tilde X_e$ are smooth analytic spaces, 
there exists a closed analytic sub manifold $\tilde Z_e$ of $\tilde X_e$ such that $\tilde \phi_e(\tilde Y_e)\subset \tilde Z_e$ and the induced analytic map $\tilde \phi_e:\tilde Y_e\rightarrow \tilde Z_e$ is  regular. 
Further, there exists a nowhere dense closed analytic subspace $\tilde F_e$ of $\tilde X_e$ such that $\tilde X_e\setminus \tilde F_e\rightarrow \tilde X$ is an open embedding and $\tilde\phi_e^{-1}(\tilde F_e)$ is nowhere dense in $\tilde Y_e$.

There exist auto conjugations $\sigma_e:\tilde X_e\rightarrow \tilde X_e$ and $\tau_e:\tilde Y_e\rightarrow \tilde Y_e$ which are compatible with the diagram. We  have that 
$\sigma_e(\tilde Z_e)=\tilde Z_e$ and $\sigma_e(\tilde F_e)=\tilde F_e$.

Further, we have a factorization of $\tilde\delta$ as
$$
\tilde Y_e=W_s\stackrel{\beta_{s-1}}{\rightarrow} W_{s-1}\rightarrow \cdots\rightarrow W_1\stackrel{\beta_0}{\rightarrow} W_0=\tilde Y
$$
where each $\beta_i$ is a local blow up $(U_i,E_i,\beta_i)$ where $E_i$ is a smooth sub variety of $U_i$ and there are auto conjugations $\tau_i:W_i\rightarrow W_i$  such that 
$\beta_i\tau_{i+1}=\tau_i\beta_i$ for all $i$.
We have that $\tau_0=\tau$ and  $\tau_s=\tau_e$ and   $\tau_i(U_i)=U_i$ and $\tau_i(E_i)=E_i$ for all $i$. Further, either the center $e_{W_i}$ of $e$ on $W_i$ is a real point $(\tau_i(e_{W_i})=e_{W_i})$ or $\tau_i(e_{W_i})\ne  e_{W_i}$ and $U_i$ is the disjoint union of two open subsets $S_i$ and $\tau_i(S_i)$ which are respective open neighborhoods of $e_{W_i}$ and $\tau_i(e_{W_i})$.  

We also have a factorization of $\tilde \gamma$ by 
$$
\tilde X_e=Z_r\stackrel{\alpha_{r-1}}{\rightarrow} Z_{r-1}\rightarrow \cdots \rightarrow Z_1\stackrel{\alpha_0}{\rightarrow}Z_0=\tilde X
$$
where each $\alpha_i$ is a local blow up $(V_i,H_i,\alpha_i)$ where $H_i$ is a smooth sub variety of $V_i$, and there are auto conjugations $\sigma_i:Z_i\rightarrow Z_i$ such that $\alpha_i\sigma_{i+1}=\sigma_i\alpha_i$, $\sigma_i(V_i)=V_i$ and $\sigma_i(H_i)=H_i$. Further either $q_i:=\alpha_i\cdots\alpha_{r-1}\tilde \phi_e(e_{\tilde Y_e})$ is a real point ($\sigma_i(q_i)=q_i$) or $\sigma_i(q_i)\ne q_i$ and $V_i$ is the disjoint union of two open subsets $T_i$ and $\sigma_i(T_i)$ which are respective open neighborhoods of $q_i$ and $\sigma_i(q_i)$.
We have that $\sigma_0=\sigma$ and $\sigma_r=\sigma_e$.

\end{Proposition}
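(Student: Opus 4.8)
The plan is to run the proof of Proposition~\ref{TheoremA} for the complexification $\tilde\phi\colon\tilde Y\to\tilde X$ — producing a tower of local blow ups after which the induced map onto a smooth analytic submanifold is quasi regular with respect to $e$, hence regular by Corollary~\ref{RegQR} — while arranging at every step that the blown up centers are compatible with the auto conjugations, so that lifted conjugations $\tau_i$ and $\sigma_i$ are produced all along the way. Concretely, I would first apply the complex Proposition~\ref{TheoremA} to $\tilde\phi$ and $e$ to obtain a (not yet equivariant) commutative square with vertical arrows products of local blow ups, a smooth submanifold $\tilde Z$ of the upstairs copy of $\tilde X$ containing the $\tilde\phi$-image, and an induced regular map onto $\tilde Z$; the task is then to dominate this square by an equivariant one. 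This is possible because $\tilde\phi$ intertwines $\tau$ and $\sigma$, so the kernel of $\tilde\phi^{*}$, and with it the locally irreducible analytic set $\tilde Z$ it cuts out near $\tilde\phi(e_{\tilde Y})$, is invariant under the conjugation (in the sense appropriate to whether $\tilde\phi(e_{\tilde Y})$ is a real point or not), and likewise the indeterminacy ideal of the equivariant rational map one resolves along the way is invariant; equivariant resolution and equivariant principalization of such invariant data are available by Remark~\ref{RemarkR1}.

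\textbf{Equivariant domination of one blow up.} The technical core is to replace a single prescribed local blow up $(U,E,\beta)$ — sitting over the center $e_{W_i}$ of $e$ on an already constructed $W_i$ with conjugation $\tau_i$ — by an equivariant sequence that dominates it. If $\tau_i(e_{W_i})=e_{W_i}$, I would take $U_i$ a $\tau_i$-invariant neighborhood of $e_{W_i}$ and equivariantly principalize the $\tau_i$-invariant ideal $\mathcal I_E\cdot\tau_i(\mathcal I_E)$ on $U_i$ (Remark~\ref{RemarkR1}); since the local rings of a smooth analytic space are factorial, making $\mathcal I_E\cdot\tau_i(\mathcal I_E)$ invertible forces $\mathcal I_E$ itself to become invertible, so by the universal property of blow up the resulting equivariant tower dominates the blow up of $E$, and $\tau_i$ lifts to $\tau_{i+1}$ on the new space. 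If $\tau_i(e_{W_i})\ne e_{W_i}$, I would pick $S_i\ni e_{W_i}$ with $S_i\cap\tau_i(S_i)=\emptyset$, perform the complex blow up over $S_i$ alone to get a smooth center $E_{S_i}\subset S_i$, and blow up $U_i=S_i\sqcup\tau_i(S_i)$ along $E_i=E_{S_i}\sqcup\tau_i(E_{S_i})$, which is smooth and $\tau_i$-invariant and again carries a lifted conjugation $\tau_{i+1}$; here $e_{W_{i+1}}$ lies over $e_{W_i}$ while $\tau_{i+1}(e_{W_{i+1}})$ lies over $\tau_i(e_{W_i})\ne e_{W_i}$, so the center of $e$ is non-real at this and every later stage. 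Iterating this over the finitely many blow ups in the $\tilde Y$-tower (these lie in $e$ since each $U_i$ contains $e_{W_i}$) and, entirely analogously, over the $\tilde X$-tower while tracking the points $q_i=\alpha_i\cdots\alpha_{r-1}\tilde\phi_e(e_{\tilde Y_e})$ and conjugations $\sigma_i$, and resolving the indeterminacy of the induced equivariant rational map equivariantly, produces the factorizations of $\tilde\delta$ and $\tilde\gamma$ asserted in the statement, with $\tau_0=\tau$, $\tau_s=\tau_e$, $\sigma_0=\sigma$, $\sigma_r=\sigma_e$, $\tau_i(U_i)=U_i$, $\tau_i(E_i)=E_i$, $\sigma_i(V_i)=V_i$, $\sigma_i(H_i)=H_i$, and the stated real/non-real alternative at each stage.

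\textbf{Assembling the conclusions.} Because $\tilde Y_e\to\tilde X_e$ dominates the square coming from the complex Proposition~\ref{TheoremA} and all the added blow ups are nowhere dense, Lemma~\ref{LemmaQR} gives $r^{\mathcal A}_{e_{\tilde Y_e}}(\tilde\phi_e)\le d_e(\tilde\phi)$, hence equality; thus the map $\tilde\phi_e\colon\tilde Y_e\to\tilde Z_e$ onto the strict transform $\tilde Z_e$ of $\tilde Z$ is quasi regular with respect to $e$, and regular by Corollary~\ref{RegQR}. Since $\tilde Z$ is invariant and the $\tilde X$-tower is equivariant, $\sigma_e(\tilde Z_e)=\tilde Z_e$. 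For $\tilde F_e$ I would take the union on $\tilde X_e$ of the total transforms of the invariant centers $H_i$ blown up in $\tilde\gamma$: this is $\sigma_e$-invariant, $\tilde X_e\setminus\tilde F_e\to\tilde X$ is an open embedding, and $\tilde\phi_e^{-1}(\tilde F_e)$ is nowhere dense in $\tilde Y_e$ — the last two being the content guaranteed by Proposition~\ref{TheoremA} for the complex picture, using that $\tilde Z_e$ is not contained in $\tilde F_e$ together with the regularity of $\tilde\phi_e$ onto $\tilde Z_e$.

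\textbf{Main obstacle.} The delicate step is the equivariant domination of an individual non-equivariant blow up at a \emph{real} center: the factoriality argument above (principalize $\mathcal I_E\cdot\tau_i(\mathcal I_E)$, deduce $\mathcal I_E$ principal) is the mechanism I expect to use, but one must also check that this extra blow up activity stays nowhere dense so that Lemma~\ref{LemmaQR} keeps $r^{\mathcal A}$ pinned at $d_e(\tilde\phi)$, and that the liftings of the $\tau_i$ and $\sigma_i$ remain mutually consistent and compatible with $\tilde\phi_e$ all the way up both towers. The bookkeeping of which neighborhoods and centers must be doubled, and of the propagation of non-realness of the center of $e$ through the exceptional fibers of blow ups at real points, is where the argument is most easily derailed.
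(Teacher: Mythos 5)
Your proposal is correct and follows essentially the route the paper intends: the paper's own proof is the single sentence that Proposition \ref{TheoremA'} is obtained ``by modifying the proof of Proposition \ref{TheoremA}, using Corollary \ref{RegQR} and Remark \ref{RemarkR1},'' and your argument is a faithful expansion of exactly that — run the construction of Proposition \ref{TheoremA} on the complexification, make every center and principalization equivariant via Remark \ref{RemarkR1}, keep the rank pinned with Lemma \ref{LemmaQR}, and upgrade quasi regular to regular with Corollary \ref{RegQR}. Your explicit mechanism for equivariantly dominating a non-equivariant blow up (principalizing $\mathcal I_E\cdot\tau_i(\mathcal I_E)$ at a real center, doubling the neighborhood at a non-real one) is sound detail that the paper leaves implicit.
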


We obtain the conclusions of  Proposition \ref{TheoremA'}, by modifying the proof of Proposition \ref{TheoremA}, using Corollary \ref{RegQR} and Remark \ref{RemarkR1}.

\begin{Proposition}\label{Theorem5} 
 Suppose that $\phi:Y\rightarrow X$ is a regular morphism of real analytic manifolds, $E_Y$ is a SNC divisor on $Y$ with complexification $\tilde\phi:\tilde Y\rightarrow \tilde X$ where $\tilde Y$ and $\tilde X$  are complex analytic manifolds and complexification $E_{\tilde Y}$ of $E_Y$ which is a SNC divisor on $\tilde Y$.
 Let $e$ be an \'etoile over $\tilde Y$. Then there exists a commutative diagram
$$
\begin{array}{rcl}
\tilde Y_e&\stackrel{\tilde \phi_e}{\rightarrow}&\tilde X_e\\
\tilde\pi_e\downarrow&&\downarrow\tilde\lambda_e\\
\tilde Y&\stackrel{\tilde \phi}{\rightarrow}&\tilde X
\end{array}
$$
of regular complex analytic morphisms such that the vertical arrows are products of local blow ups of nonsingular analytic subvarieties such that  
$\tilde Y_e\rightarrow \tilde Y\in e$.

The vertical arrows have factorizations by sequences of local blow ups
\begin{equation}\label{N}
\begin{array}{rcccccl}
\tilde Y_e=W_t&\stackrel{\beta_t}{\rightarrow}& \cdots &\rightarrow &W_1&\stackrel{\beta_1}{\rightarrow}& W_0=\tilde Y\\
\downarrow &&&&\downarrow&&\downarrow\\
\tilde X_e=V_t&\stackrel{\alpha_t}{\rightarrow}& \cdots& \rightarrow &V_1&\stackrel{\alpha_1}{\rightarrow}& V_0=\tilde X
\end{array}
\end{equation}
with complex auto conjugations of the $W_i$ and $V_i$ which are compatible with the above diagram, so that taking the invariants of these auto conjugations, we have an induced diagram of regular real analytic morphisms
$$
\begin{array}{rcl}
Y_e&\stackrel{\phi_e}{\rightarrow}& X_e\\
\pi_e\downarrow&&\downarrow\lambda_e\\
Y&\stackrel{\phi}{\rightarrow}& X
\end{array}
$$
 such that the vertical arrows are products of local blow ups of nonsingular real analytic subvarieties. The auto conjugations of $W_i$ induce auto conjugations of the preimage of $\tilde E_Y$ on $W_i$.

Either all $e_{W_i}$ (and $e_{V_i}$) are real points in the diagram (\ref{N}), and $\phi_e$ and $\tilde\phi_e$ are monomial morphisms for toroidal structures $ O_e$ on $Y_e$ with complexification $\tilde O_e$ on $\tilde Y_e$ or 
 $e_{\tilde Y_e}$ is not a real point, and $Y_e$ is the empty set.
 
 Further, we have that $\tilde\pi_e^*(E_{\tilde Y})$ is an effective divisor supported on $\tilde O_e$ and the restriction of $\tilde\pi_e$ to $\tilde Y_e\setminus \tilde O_e$ is an open embedding.  Also, $\pi^*(E_Y)$ is an effective divisor supported on $O_e$, and the restriction of $\pi_e$ to $Y_e\setminus O_e$ is an open embedding.
\end{Proposition}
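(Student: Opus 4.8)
The plan is to run the proof of Proposition \ref{Theorem3} for the complexification $\tilde\phi:\tilde Y\to\tilde X$ and the \'etoile $e$, while carrying along the pair of auto conjugations $\tau$ on $\tilde Y$ and $\sigma$ on $\tilde X$ with $\tilde\phi\tau=\sigma\tilde\phi$ coming from the fact that $\tilde\phi$ is a complexification of $\phi$, and checking that every blow up and every change of variables used there can be chosen equivariantly with respect to these auto conjugations. First I would note that $\tilde\phi$ is regular by Lemma \ref{LemmaReg}, hence quasi regular with respect to $e$, so that Proposition \ref{Theorem3} applies to $\tilde\phi$, $E_{\tilde Y}$ and $e$. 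I would then split into two cases according to whether the center $e_{\tilde Y}$ of $e$ on $\tilde Y$ is a real point, i.e. $\tau(e_{\tilde Y})=e_{\tilde Y}$; since conjugation is an involution compatible with equivariant blow ups, the center stays real (resp. non-real) on every intermediate space, so this dichotomy is the one in the statement.

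In the non-real case, at every stage the center is non-real, so I would choose each relatively compact neighborhood supplied by property 3) of an \'etoile to be disjoint from its conjugate, run the complex algorithm of Proposition \ref{Theorem3} on that piece, and adjoin the conjugate piece; each local blow up $(U_i,E_i,\beta_i)$ is then performed on $S_i\sqcup\tau_i(S_i)$ with the auto conjugation swapping the two pieces, exactly as in Proposition \ref{TheoremA'}. The resulting $\tilde Y_e$ is a disjoint union $S\sqcup\tau_e(S)$, so the fixed locus $Y_e$ of $\tau_e$ is empty and the asserted branch of the conclusion holds; the complex assertions (monomiality of $\tilde\phi_e$, the divisor statements for $\tilde\pi_e$) are those of Proposition \ref{Theorem3}.

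In the real case $e_{\tilde X}=\tilde\phi(e_{\tilde Y})$ is also real, so I would choose the regular parameters $x_1,\ldots,x_m$ at $e_{\tilde X}$ and $y_1,\ldots,y_n$ at $e_{\tilde Y}$ to be complexifications of real analytic coordinate functions, whence $E_{\tilde Y}$ is $\tau$-invariant and supported on $Z(y_1\cdots y_n)$, and then carry out the proof of Proposition \ref{Theorem3}, i.e.\ repeated application of Proposition \ref{Theorem2} (which rests on Lemmas \ref{Lemma1}--\ref{Lemma35} and \ref{Lemma51}--\ref{Lemma5} and on Propositions \ref{Theorem502} and \ref{Theorem1}), verifying for each of the ten transformation types that the choices can be made equivariantly: an SGMT in a subset of the real variables is the blow up of a monomial ideal, which is automatically $\tau$-invariant, and by Remark \ref{RemarkR1} its principalizations and the embedded resolutions used along the way can be carried out by blowing up $\tau$-invariant smooth centers, with the new coordinates again real since the blow up of a monomial ideal is a toric modification defined over $\RR$; the scalars $\alpha$ in the GMTs of types 4), 6), 9) are residues at real points of real functions, hence real; the functions $\Phi$ and $F$ produced by the Tschirnhaus transformation of Lemma \ref{Lemma9} applied to a $\tau$-invariant series are again $\tau$-invariant, so the changes of variables of types 3), 5), 7), 10) stay real; and in type 8) one first absorbs the sign of $\gamma(0)$ into a sign change of the $y_i$ before extracting the needed fractional powers $\gamma^{c_i}$. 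Since none of $s,r,r+l$ can decrease and they are bounded by $m$, the algorithm terminates with $r'+l'=m$; the monomial morphism $\tilde\phi_e$ then comes equipped with auto conjugations $\tau_e,\sigma_e$ and the equivariant factorization (\ref{N}), and passing to invariants yields the real diagram with $\phi_e$ monomial for the toroidal structure $O_e$ obtained as the real locus of $\tilde O_e$, since the relations $\tilde\phi_e^*(x_i)=\prod_j y_j^{c_{ij}}$ descend to the real coordinates; the divisor assertions for $\pi_e$ follow from those for $\tilde\pi_e$ by taking real loci.

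The main obstacle I expect is exactly this bookkeeping in the real case: confirming that every one of the ten transformation types --- and each of the blow ups, principalizations, and Tschirnhaus changes of variables hidden inside Proposition \ref{Theorem2} and its supporting lemmas --- can be realized equivariantly while keeping all coordinates and all GMT scalars real, the most delicate points being the reality of the Tschirnhaus adjustments and the handling of signs and branches of the fractional powers $\gamma^{c_i}$ appearing in transformations of type 8).
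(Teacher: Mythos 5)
Your overall strategy is the paper's: reduce to Proposition \ref{Theorem3} for the complexification (using Lemma \ref{LemmaReg}), run its algorithm equivariantly with respect to the auto conjugations working inside $\RR\{\{x\}\}\rightarrow\RR\{\{y\}\}$ when the centers are real, watch the signs and branches when extracting roots of unit series in transformations of type 8) (the paper phrases this as insisting that $\gamma(0)>0$ in Lemma \ref{Lemma10}, at the cost of $\pm 1$ factors absorbed by replacing some $x_i(1),y_j(1)$ by their negatives), and in the non-real situation pass to a disjoint union $U\sqcup\tau(U)$ whose real locus is empty.

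There is, however, one genuine gap: you assert that ``the center stays real (resp.\ non-real) on every intermediate space,'' and you use this to decide the dichotomy once and for all by looking at $e_{\tilde Y}$. Only half of this is true. If $e_{W_i}$ is real then $e_{W_{i-1}}=\beta_i(e_{W_i})$ is real because $\beta_i$ is equivariant, so non-reality persists downward; but reality does \emph{not} persist upward. The \'etoile $e$ is an arbitrary \'etoile over $\tilde Y$ with no assumed compatibility with $\tau$, and after blowing up a $\tau$-invariant center through a real point the \'etoile may select a non-real point of the exceptional fiber (e.g.\ blow up the origin of $\CC^2$ and let $e$ have center $[1:i]$ on the exceptional $\PP^1$). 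This is exactly why the statement is phrased as ``either \emph{all} $e_{W_i}$ are real \dots or $e_{\tilde Y_e}$ is not a real point,'' and why Proposition \ref{TheoremA'} allows both possibilities at each stage. The paper's proof handles this by running your ``real case'' argument only as long as the centers remain real, and, at the first stage $i$ where $e_{W_i}$ fails to be real, terminating the algorithm: one takes a neighborhood $U$ of $e_{W_i}$ containing no real points, sets $W_{i+1}=U\sqcup\sigma(U)$, $\tilde Y_e=W_{i+1}$, $\tilde X_e=V_i$, and concludes $Y_e=\emptyset$. Without this stopping rule your real-case bookkeeping breaks down as soon as a center leaves the real locus (the local ring is no longer $\RR\{\{y\}\}$, the GMT scalars $\alpha$ need not be real, etc.), so you should replace the initial case split by this stage-by-stage check.
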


\begin{proof} 
We inductively construct the diagram (\ref{N}) of local blow ups as in the proof of Proposition \ref{Theorem3}, with the following differences. 
If after construction of the local blow up $W_i\rightarrow W_{i-1}$ we find that $e_{W_i}$ is not a real point then we take a neighborhood $U$ of $e_{W_i}$ which contains no real points and set $W_{i+1}$ to be the (disjoint) union of $U$ and $\sigma(U)$ where $\sigma$ is the auto conjugation of $W_i$. We then terminate the algorithm, setting $\tilde Y_e=W_{i+1}$ and $\tilde X_e=V_i$. 

In our inductive construction of (\ref{N}), as long as $e_{W_j}$ are real points for $j\le i$, the sequences of local blow ups in (\ref{N}) are complexifications of sequences of real local blow ups of nonsingular real analytic subvarieties. This follows from the algorithms of Proposition \ref{Theorem3}, as we then work within the rings 
$$
\begin{array}{ccc}
\RR\{\{x_1,\ldots,x_m\}\}&\rightarrow &\RR\{\{y_1,\ldots,y_n\}\}\\
\downarrow&&\downarrow\\
\RR[[x_1,\ldots,x_m]]&\rightarrow &\RR[[y_1,\ldots,y_n]]
\end{array}
$$
instead of in the corresponding complexifications of these rings. 

The only modification which needs to be made in the algorithm (since we assume all centers of $e$ are real) is that a little more care is needed when taking roots of unit series. For instance, in Lemma \ref{Lemma10}, we must insist that the constant term of the unit $\gamma$ is positive. This leads to the introduction of factors of $\pm1$ in the equations of Lemmas \ref{Lemma6}, \ref{Lemma23} and \ref{Lemma35}. To preserve the monomial form (\ref{eq7}), we may have to  replace some of the $y_j(1)$ with their negatives $-y_j(1)$ and some of the $x_i(1)$ with their negatives $-x_i(1)$. We also need the conclusions of Lemma \ref{LemmaReg}.
\end{proof}

\begin{Proposition}\label{TheoremB'} Suppose that $\phi:Y\rightarrow X$ is a morphism of reduced real  analytic spaces and $A\subset Y$ is a closed analytic subspace of $Y$, with complexification $\tilde\phi:\tilde Y\rightarrow\tilde X$ of $\phi$ and complexification $\tilde A\subset \tilde Y$ of $A$. Let $e$ be an \'etoile over $\tilde Y$. Then there exists a commutative diagram
$$
\begin{array}{rcl}
\tilde Y_e&\stackrel{\tilde\phi_e}{\rightarrow}&\tilde X_e\\
\tilde \beta\downarrow&&\downarrow\tilde \alpha\\
\tilde Y&\stackrel{\tilde\phi}{\rightarrow}&\tilde X
\end{array}
$$
of complex analytic morphisms such that $\tilde Y_e$ and $\tilde X_e$ are smooth analytic spaces, $\tilde\beta\in e$ and
we have a factorization of $\tilde \beta$ as
$$
\tilde Y_e=W_s\stackrel{\beta_{s-1}}{\rightarrow} W_{s-1}\rightarrow \cdots\rightarrow W_1\stackrel{\beta_0}{\rightarrow} W_0=\tilde Y
$$
where each $\beta_i$ is a local blow up $(U_i,E_i,\beta_i)$ where $E_i$ is a smooth sub variety of $U_i$ and there are auto conjugations $\tau_i:W_i\rightarrow W_i$  such that 
$\beta_i\tau_{i+1}=\tau_i\beta_i$ for all $i$.
We have that $\tau_0=\tau$ and  $\tau_s=\tau_e$ and   $\tau_i(U_i)=U_i$ and $\tau_i(E_i)=E_i$ for all $i$. Further, either the center $e_{W_i}$ of $e$ on $W_i$ is a real point $(\tau_i(e_{W_i})=e_{W_i})$ or $\tau_i(e_{W_i})\ne  e_{W_i}$ and $U_i$ is the disjoint union of two open subsets $S_i$ and $\tau_i(S_i)$ which are respective open neighborhoods of $e_{W_i}$ and $\tau_i(e_{W_i})$.  

We also have a factorization of $\tilde \alpha$ by 
$$
\tilde X_e=Z_r\stackrel{\alpha_{r-1}}{\rightarrow} Z_{r-1}\rightarrow \cdots \rightarrow Z_1\stackrel{\alpha_0}{\rightarrow}Z_0=\tilde X
$$
where each $\alpha_i$ is  a local blow up $(V_i,H_i,\alpha_i)$ where $H_i$ is a smooth sub variety of $V_i$, and there are auto conjugations $\sigma_i:Z_i\rightarrow Z_i$ such that $\alpha_i\sigma_{i+1}=\sigma_i\alpha_i$, $\sigma_i(V_i)=V_i$ and $\sigma_i(H_i)=H_i$. Further either $q_i:=\alpha_i\cdots\alpha_{r-1}\tilde \phi_e(e_{\tilde Y_e})$ is a real point ($\sigma_i(q_i)=q_i$) or $\sigma_i(q_i)\ne q_i$ and $V_i$ is the disjoint union of two open subsets $T_i$ and $\sigma_i(T_i)$ which are respective open neighborhoods of $q_i$ and $\sigma_i(q_i)$.
We have that $\sigma_0=\sigma$ and $\sigma_r=\sigma_e$. Further, there exists a nowhere dense closed analytic subspace $\tilde F_e$ of $\tilde X_e$ such that $\sigma_e(\tilde F_e)=\tilde F_e$, $\tilde X_e\setminus \tilde F_e\rightarrow \tilde X$ is an open embedding and $\tilde\phi_e^{-1}(\tilde F_e)$ is nowhere dense in $\tilde Y_e$.

Taking the invariants of these auto conjugations, we have an induced diagram of real analytic morphisms
$$
\begin{array}{rcl}
Y_e&\stackrel{\phi_e}{\rightarrow}& X_e\\
\beta\downarrow&&\downarrow\alpha\\
Y&\stackrel{\phi}{\rightarrow}& X
\end{array}
$$
such that the vertical arrows are products of local blow ups of nonsingular real analytic subvarieties. 
Either all $e_{W_i}$ (and $e_{V_i}$) are real points  and $\phi_e$ and $\tilde\phi_e$ are monomial morphisms for toroidal structures $ O_e$ on $Y_e$ at $e_{\tilde Y_e}$ with complexification $\tilde O_e$ on $\tilde Y_e$ or 
 $e_{\tilde Y_e}$ is not a real point, and $Y_e$ is the empty set.
 
 Further, either the preimage of $\tilde A$ in $\tilde Y_e$ is equal to $\tilde Y_e$ or $\mathcal I_{\tilde A}\mathcal O_{\tilde Y_e}^{\rm an}=\mathcal O_{\tilde Y_e}^{\rm an}(-G)$ where $\mathcal I_{\tilde A}$ is the ideal sheaf in $\mathcal O_{\tilde Y}^{\rm an}$ of the analytic subspace $\tilde A$ of $\tilde Y$, $\tilde G$ is an effective divisor which is supported on $\tilde O_e$ and $\tilde Y_e\setminus \tilde O_e\rightarrow \tilde Y$ is an open embedding. 
 We have that $\tau_e(G)=G$.
 
 Suppose that $e_{\tilde Y_e}$ is real. Then $F_e=\tilde F_e\cap X_e$ is nowhere dense in $X_e$, $X_e\setminus F_e\rightarrow X$ is an open embedding and $\phi_e^{-1}(F_e)$ is nowhere dense in $Y_e$.

\end{Proposition}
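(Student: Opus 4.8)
The plan is to imitate the proof of Theorem \ref{TheoremB}, carrying out every construction equivariantly with respect to the auto conjugations, and then to pass to invariants. First I would apply Proposition \ref{TheoremA'} to $\tilde\phi:\tilde Y\to\tilde X$ and the \'etoile $e$, producing a commutative square $\tilde Y_1\to\tilde X_1$ together with a closed analytic submanifold $\tilde Z\subset\tilde X_1$ with $\sigma_1(\tilde Z)=\tilde Z$ such that the induced map $\tilde\phi_1:\tilde Y_1\to\tilde Z$ is regular (invoking Corollary \ref{RegQR} to upgrade quasi regular to regular), with compatible auto conjugations $\tau_1,\sigma_1$, the $\sigma_1$-invariant nowhere dense $\tilde F_1$, and the factorizations of the vertical maps into local blow ups with smooth $\tau_i$- and $\sigma_i$-invariant centers. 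I would then replace $\tilde X_1$ by $\tilde Z$, so that $\tilde\phi_1$ is regular from now on.

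Next I would treat the analytic subspace $\tilde A$ exactly as in Theorem \ref{TheoremB}: either $\mathcal I_{\tilde A}\mathcal O_{\tilde Y_1}=(0)$, or it is a nonzero $\tau_1$-invariant ideal sheaf. I would choose a proper closed analytic subspace $\tilde B\subset\tilde Y_1$ that is $\tau_1$-invariant (it can be taken to be the union of the preimages on $\tilde Y_1$ of the centers blown up in $\tilde\delta$, which are invariant) such that $\tilde Y_1\setminus\tilde B\to\tilde Y$ is an open embedding. Applying the equivariant principalization and embedded resolution of Remark \ref{RemarkR1} to $\mathcal I_{\tilde B}$ when $\mathcal I_{\tilde A}=(0)$, and to $\mathcal I_{\tilde A}\mathcal I_{\tilde B}$ otherwise, I obtain $\tilde Y_2\to\tilde X_1$ through blow ups of smooth invariant centers, with $\tilde G$ an SNC divisor which is $\tau_2$-invariant, with $\tilde Y_2\setminus\tilde G\to\tilde Y$ an open embedding, and which is the complexification of a real SNC divisor $G$ on $Y_2$; this $\tilde G$ satisfies the hypotheses of Proposition \ref{Theorem5}.

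Then I would apply Proposition \ref{Theorem5} to $\tilde\phi_2:\tilde Y_2\to\tilde X_1$ with the SNC divisor $\tilde G$. This yields the diagram (\ref{N}) of local blow ups with compatible auto conjugations, so that taking invariants gives an induced diagram of real analytic morphisms whose vertical arrows are products of local blow ups of nonsingular real subvarieties, and whose factorizations are the ones asserted in the statement. Proposition \ref{Theorem5} also delivers the dichotomy built into its own conclusion: either all centers $e_{W_i}$, $e_{V_i}$ stay real, in which case $\phi_e$ and $\tilde\phi_e$ are monomial for toroidal structures $O_e$, $\tilde O_e$; or at some stage the center of $e$ ceases to be real, the algorithm terminates, $e_{\tilde Y_e}$ is not real and $Y_e=\emptyset$. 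The statements about $\tilde A$, namely that the preimage of $\tilde A$ in $\tilde Y_e$ is $\tilde Y_e$ or $\mathcal I_{\tilde A}\mathcal O_{\tilde Y_e}^{\rm an}=\mathcal O_{\tilde Y_e}^{\rm an}(-G)$ with $G$ supported on $\tilde O_e$ and $\tau_e(G)=G$, follow from the corresponding statements in Theorem \ref{TheoremB} read equivariantly, together with the fact that Proposition \ref{Theorem5} pulls back $E_{\tilde Y}=\tilde G$ to a divisor supported on $\tilde O_e$ with $\tilde Y_e\setminus\tilde O_e\to\tilde Y$ an open embedding. Finally, in the case where $e_{\tilde Y_e}$ is real, I would deduce that $F_e=\tilde F_e\cap X_e$ is nowhere dense in $X_e$ from Lemma \ref{LemmaR2} (applicable since $\tilde X_e$ and $X_e$ are smooth), and likewise that $\phi_e^{-1}(F_e)$ is nowhere dense in $Y_e$; the open-embedding statements $\tilde X_e\setminus\tilde F_e\to\tilde X$ and $X_e\setminus F_e\to X$ descend by restriction to invariants.

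The main obstacle is bookkeeping: ensuring that every blow-up center produced along the way---by Proposition \ref{TheoremA'}, by the equivariant principalization and resolution of Remark \ref{RemarkR1}, and inside the algorithm of Proposition \ref{Theorem5}---is both smooth and invariant under the relevant auto conjugation, so that the claimed factorizations into local blow ups with compatible auto conjugations really exist at every level, including the careful tracking of $U_i$, $E_i$, $V_i$, $H_i$ and the conjugations $\tau_i$, $\sigma_i$. A secondary delicate point is the case where a center of $e$ becomes non-real, where one replaces the ambient $U_i$ by the disjoint union of a non-real neighborhood of $e_{W_i}$ and its conjugate; this is already handled inside Proposition \ref{Theorem5}, so here it only needs to be recorded, not re-proved.
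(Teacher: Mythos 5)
Your proposal follows essentially the same route as the paper, whose proof of this proposition is a one-line citation instructing the reader to argue as in Theorem \ref{TheoremB} using Proposition \ref{TheoremA'}, Remark \ref{RemarkR1}, Proposition \ref{Theorem5}, Lemma \ref{LemmaReg} and Lemma \ref{LemmaR2} — exactly the ingredients you assemble, in the same order. Your write-up is in fact more explicit than the paper's about the equivariant bookkeeping and the treatment of $\tilde A$, and it is correct.
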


We obtain Proposition \ref{TheoremB'} by arguing as in the proof of Theorem \ref{TheoremB}, using Proposition \ref{TheoremA'},  Remark \ref{RemarkR1}, Proposition \ref{Theorem5}, Lemma \ref{LemmaReg} and Lemma \ref{LemmaR2}.

We have the following theorem, which generalizes Theorem \ref{TheoremC} to a real analytic morphism from a real analytic manifold.

\begin{Theorem}\label{TheoremC'} Suppose that $Y$ is a real analytic manifold, $X$ is a reduced real analytic space, $\phi:Y\rightarrow X$ is a real analytic morphism, $A$ is a closed analytic  subspace of $Y$  and $p\in Y$. Then there exists a finite number $t$ of commutative diagrams of real  analytic morphisms
$$
\begin{array}{ccc}
Y_{i}&\stackrel{\phi_{i}}{\rightarrow}&X_i\\
\beta_i\downarrow &&\downarrow \alpha_i\\
Y&\stackrel{\phi}{\rightarrow}& X
\end{array}
$$
for $1\le i \le t$ 
such that each $\beta_i$  and $\alpha_i$ is a finite product of local blow ups of nonsingular analytic sub varieties, $Y_i$ and $ X_i$ are smooth analytic  spaces and $\phi_i$ is a monomial  analytic morphism for a toroidal structure $O_i$ on $Y_i$. Either the preimage of $A$ in $Y_i$ is $Y_i$, or $\mathcal I_A\mathcal O_{Y_i}=\mathcal O_{Y_i}(-G_i)$ where $\mathcal I_A$ is the ideal sheaf in $\mathcal O_Y^{\rm an}$ of the analytic subspace $A$  of $Y$, $G_i$ is an effective divisor which is supported on $O_i$, and has the further property that the restriction $(Y_i\setminus O_i) \rightarrow Y$ is an open embedding. Further, there exist compact subsets $K_i$ of $Y_i$ such that $\cup_{i=1}^t\beta_i(K_i)$ is a compact neighborhood of $p$ in $Y$.
There exist nowhere dense closed analytic subspaces $F_i$ of $X_i$ such that $X_i\setminus F_i\rightarrow X$ are open embeddings and $\phi_i^{-1}(F_i)$ is nowhere dense in $Y_i$.
\end{Theorem}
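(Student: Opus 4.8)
The plan is to transcribe the proof of Theorem \ref{TheoremC}, replacing Theorem \ref{TheoremB} by Proposition \ref{TheoremB'} and working over the vo\^ute \'etoil\'ee of a complexification. First I would fix a complexification $\tilde\phi\colon\tilde Y\to\tilde X$ of $\phi$; since $Y$ is a manifold we may take $\tilde Y$ to be a smooth complex analytic manifold, with auto conjugations $\tau\colon\tilde Y\to\tilde Y$, $\sigma\colon\tilde X\to\tilde X$ satisfying $\tilde\phi\tau=\sigma\tilde\phi$, and a complexification $\tilde A\subset\tilde Y$ of $A$. For each \'etoile $e\in\mathcal E_{\tilde Y}$, Proposition \ref{TheoremB'} furnishes a commutative diagram of complex analytic morphisms with $\tilde\beta=\tilde\pi_e\in e$, together with $\tau$-equivariant auto conjugations of all the intermediate spaces; taking invariants yields a real diagram $\phi_e\colon Y_e\to X_e$ over $\phi$ whose vertical arrows are products of local blow ups of nonsingular real analytic subvarieties, and exactly one of two cases occurs: either every center $e_{W_i}$ is real, $\phi_e$ is a monomial morphism for a toroidal structure $O_e$ on $Y_e$, $\pi_e^{-1}(A)$ is either all of $Y_e$ or an effective divisor supported on $O_e$, $Y_e\setminus O_e\to Y$ is an open embedding, and $\phi_e^{-1}(F_e)$ is nowhere dense in $Y_e$ (the ``real case''); or $e_{\tilde Y_e}$ is not real and $Y_e=\emptyset$ (the ``complex case'').

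Next I would run the compactness argument from Theorem \ref{TheoremC}. Let $K$ be a compact neighborhood of $p$ in $Y$; since the real locus $Y$ is closed in $\tilde Y$, $K$ is compact in $\tilde Y$, so $K'=P_{\tilde Y}^{-1}(K)$ is compact because $P_{\tilde Y}$ is proper. For each $e$ choose a $\tau_e$-invariant, open, relatively compact neighborhood $\tilde V_e$ of $e_{\tilde Y_e}$ in $\tilde Y_e$, small enough to lie inside the coordinate neighborhood on which $\phi_e$ is monomial (in the real case), and let $\overline{\tilde\pi}_e\colon\tilde V_e\to\tilde Y$ be the restriction of $\tilde\pi_e$; then $e\in\mathcal E_{\overline{\tilde\pi}_e}$, so $\{\mathcal E_{\overline{\tilde\pi}_e}\}_e$ is an open cover of $K'$ and admits a finite subcover indexed by $e_1,\dots,e_t$. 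I would then discard the indices in the complex case (for which $Y_{e_i}=\emptyset$); for an index $i$ in the real case, set $V_{e_i}=\tilde V_{e_i}\cap Y_{e_i}$, let $K_i$ be its closure in $Y_{e_i}$ (compact), replace $Y_{e_i}$ by the neighborhood of $K_i$ on which $\phi_{e_i}$ is monomial, and keep the resulting real diagram together with $O_i$, $G_i$ and $F_i$ obtained by taking invariants of the auto conjugations in Proposition \ref{TheoremB'}. All the assertions about $A$, about $O_i$, and about the $F_i$ are then inherited directly from Proposition \ref{TheoremB'} (using Lemmas \ref{LemmaReg} and \ref{LemmaR2} for regularity and nowhere-denseness on the real locus, which are already packaged there).

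It remains to show $K\subseteq\bigcup_i\beta_i(K_i)$, which suffices since the union is compact and $K$ is a neighborhood of $p$. This is the one genuinely new point. Through every real point $q$ of $Y$ passes a \emph{real \'etoile} $f$ of $\tilde Y$, i.e.\ a fixed point of the involution induced by $\tau$ on $\mathcal E_{\tilde Y}$ with $f_{\tilde Y}=q$: in real local coordinates $y_1,\dots,y_n$ at $q$ take the \'etoile obtained by iterating the monoidal transforms of real coordinate subspaces realizing a monomial valuation with rationally independent values, each defined over $\RR$ and keeping the center at the origin of a real chart. For such an $f$ with $q\in K$ we have $f\in K'$, hence $\overline{\tilde\pi}_{e_i}\in f$ for some $i$. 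Since the local blow up used in the complex case of Proposition \ref{TheoremB'} is a disjoint union of two open sets free of real points, it carries a fixed-point-free involution $\tau_{e}$, and the $\tau$-equivariance of $\overline{\tilde\pi}_e$ then forces the center of any $\tau$-invariant \'etoile on $\tilde V_e$ to be $\tau_e$-fixed, which is impossible; hence $e_i$ must be an index of the real case, the center $f_{\tilde V_{e_i}}$ is $\tau_{e_i}$-fixed, so it lies in $V_{e_i}\subseteq K_i$, and $q=\tilde\pi_{e_i}(f_{\tilde V_{e_i}})=\beta_i(f_{\tilde V_{e_i}})\in\beta_i(K_i)$. This gives $K\subseteq\bigcup_i\beta_i(K_i)$ and finishes the proof.

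The main obstacle is precisely the real-point bookkeeping of the last paragraph: one must know that every real point of $Y$ is the center of a $\tau$-invariant \'etoile of $\tilde Y$, and that such an \'etoile is never carried by the algorithm of Proposition \ref{TheoremB'} into the non-real branch (so that it genuinely contributes a real monomial model). Both facts rest on the explicit $\tau$-equivariant factorizations recorded in Propositions \ref{TheoremA'}, \ref{Theorem5} and \ref{TheoremB'}; granted these, the rest is a routine adaptation of the proof of Theorem \ref{TheoremC}.
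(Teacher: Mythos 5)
Your first two paragraphs follow the paper's proof of Theorem \ref{TheoremC'} essentially verbatim: complexify, apply Proposition \ref{TheoremB'} to each \'etoile over $\tilde Y$, extract a finite subcover of $K'=P_{\tilde Y}^{-1}(K)$ using properness of $P_{\tilde Y}$, and inherit the statements about $A$, $O_i$ and $F_i$ from Proposition \ref{TheoremB'}. The divergence, and the problem, is in your final step, which you correctly identify as the one genuinely new point.

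Your argument that $K\cap Y\subseteq\bigcup_i\beta_i(K_i)$ rests on the claim that through every real point $q$ of $Y$ there passes a $\tau$-invariant (``real'') \'etoile of $\tilde Y$. This is asserted, not proved, and it is not obvious. An \'etoile is a \emph{maximal} subcategory of the category of all sequences of local blow ups satisfying conditions 1)--3) of Definition \ref{EtoileDef}; the object you sketch --- iterated monoidal transforms of real coordinate subspaces following a monomial valuation --- is at best a $\tau$-invariant subcategory satisfying 1)--3), and while Zorn's lemma extends it to a maximal \emph{$\tau$-invariant} such subcategory, there is no reason that this is maximal among \emph{all} such subcategories, i.e.\ that it is an \'etoile. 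Equivalently: any \'etoile $f$ with $f_{\tilde Y}=q$ real has a conjugate \'etoile $\tau_*(f)$ with the same center, but nothing forces $f=\tau_*(f)$, and indeed the whole point of the ``complex case'' in Proposition \ref{TheoremB'} is that the center of an \'etoile over a real point can become non-real after a blow up. So the surjectivity of $P_{\tilde Y}$ restricted to the fixed locus of $\tau_*$ onto $Y$ is exactly what you would need, and it is established nowhere in the paper. The paper's proof deliberately avoids this issue by a density argument: it lets $\tilde H_{e_i}$ be the union of the preimages of the blown-up centers, so that $\tilde\beta_{e_i}$ is an open embedding off $\tilde H_{e_i}$; it shows (by the dimension comparison of Lemma \ref{LemmaR2} and the properness of the images $\tilde\beta_{e_i}(\tilde H_{e_i}\cap K_i)$) that $M_i=\tilde\beta_{e_i}(\tilde H_{e_i}\cap K_i)\cap Y$ is nowhere dense in $Y$; it observes that any real $p'\in K^*\setminus\bigcup_iM_i$ has a preimage $p_i\in K_i\setminus\tilde H_{e_i}$ which is real because $\tilde\beta_{e_i}$ is a conjugation-equivariant open embedding near $p_i$; and it concludes by noting that the dense subset $K^*\setminus\bigcup_iM_i$ of $K^*$ lies in the compact (hence closed) set $\bigcup_i\beta_{e_i}(K_i\cap Y_{e_i})$, so $K^*$ does too. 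You should either replace your third paragraph by this density argument or supply a genuine proof that conjugation-invariant \'etoiles exist over every real point; as written, the proof has a gap.
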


\begin{proof}
Let $\tilde\phi:\tilde Y\rightarrow \tilde X$ be a complexification of $\phi$ such that $\tilde Y$ is nonsingular. 

Suppose that 
$e\in \mathcal E_{\tilde Y}$ (the vo\^ute \'etoil\'ee and the notation used in this proof are reviewed in Section \ref{Pre}). Then we may construct a diagram  satisfying 
 the conclusions of Proposition \ref{TheoremB'}
 $$
 \begin{array}{rcl}
 \tilde Y_e&\stackrel{\tilde\phi_e}{\rightarrow}&\tilde X_e\\
 \tilde\beta_e\downarrow&&\downarrow \tilde\alpha_e\\
 \tilde Y&\stackrel{\tilde\phi}{\rightarrow}&\tilde X
 \end{array}
 $$
 with real part
 $$
 \begin{array}{rcl}
 Y_e&\stackrel{\phi_e}{\rightarrow}&X_e\\
 \beta_e\downarrow&&\downarrow\alpha_e\\
 Y&\stackrel{\phi}{\rightarrow}&X.
 \end{array}
 $$ 
 (We can have $Y_e=\emptyset$).
 
 Let $\tilde C_e$ be an open relatively compact neighborhood of $e_{\tilde Y_e}$ in $\tilde Y_e$ on which the auto conjugation acts.  Let $\overline\beta_e:\tilde C_e\rightarrow \tilde Y$ be the induced map. 
 
 Let $K$ be a compact neighborhood of $p$ in $\tilde Y$ and $K'=P_{\tilde Y}^{-1}(K)$. The set $K'$ is compact since $P_{\tilde Y}$ is proper (Theorem 3.4 \cite{Hcar}). The open sets $\mathcal E_{\overline \beta_e}$ for $e\in K'$  (defined in equation (\ref{eqopen})) give an open cover of $K'$, so there is a finite subcover, which we index as
$\mathcal E_{\overline \beta_{e_1}},\ldots,\mathcal E_{\overline \beta_{e_t}}$. Let $K_i$ be the closure of $\tilde C_{e_i}$ in $\tilde Y_{e_i}$ which is compact.  Since $P_{\tilde Y}$ is surjective and continuous, we have inclusions of compact sets $p\in K\subset \cup_{i=1}^t\tilde\beta_{e_i}(K_i)$.
Since $\tilde Y$ is nonsingular and each $\tilde\beta_{e_i}$ is a (finite) product of local blow ups of proper sub varieties, if $\tilde H_{e_i}$ is the union of the
preimages on $\tilde Y_{e_i}$ of these centers, then $\tilde H_{e_i}$ is a nowhere dense closed analytic subspace of $\tilde Y_{e_i}$ and $\tilde\beta_{e_i}$ is an open embedding of $\tilde Y_{e_i}\setminus \tilde H_{e_i}$ into $\tilde Y$. 

Suppose that $q\in Y$. Then ${\rm T}$-$\dim_qY=\dim_q\tilde Y$ since $Y$ is a manifold (Section \ref{Pre} and Section 5 of \cite{H3}). Suppose $i$ satisfies $1\le i\le t$. The set $\tilde H_{e_i}\cap K_i$ is compact and $\tilde\beta_{e_i}(\tilde H_{e_i}\cap K_i)$ is compact. Let $M_i=\tilde\beta_{e_i}(\tilde H_{e_i}\cap K_i)\cap Y$. Suppose $q\in M_i$. Then
$$
\dim_q\tilde\beta_{e_i}(\tilde H_{e_i}\cap K_i)<\dim \tilde Y
$$
by Theorem 1, page 254 \cite{L} or Corollary 1, page 255 \cite{L}. Thus
$$
{\rm T}\mbox{-}\dim_qM_i\le \dim_q\tilde\beta_{e_i}(\tilde H_{e_i}\cap K_i)<\dim_q\tilde Y={\rm T}\mbox{-}\dim_qY.
$$
Since $M_i$ is compact, we have that $M_i$ is nowhere dense in $Y$.

Let $K^*=K\cap Y$ which is a compact neighborhood of $p$ in $Y$. Let $p'\in K^*\setminus \cup_{i=1}^t\tilde \beta_{e_i}(\tilde H_{e_i}\cap K_i)$. Then there exist $i$ and $e\in \mathcal E_{\overline \beta_{e_i}}$ such that $e_{\tilde Y}=p'$ and $p_i=e_{\tilde Y_{e_i}}\in K_i\setminus \tilde H_{e_i}\subset \tilde Y_{e_i}$. Since $p_i\not\in 
\tilde H_{e_i}$, $\tilde \beta_{e_i}$ is an open embedding near $p_i$, and since $p'$ is real, $p_i\in Y_{e_i}$ is real. Thus $p'\in \beta_{e_i}(K_i\cap Y_{e_i})$. We thus have that the set
$K^*\setminus \cup_{i=1}^t\tilde \beta_{e_i}(\tilde H_{e_i}\cap K_i)$, which we have shown is dense in $K^*$, is contained in the compact set $\cup_{i=1}^t\beta_{e_i}(K_i\cap Y_{e_i})$. Thus its closure $K^*$ is contained in $\cup_{i=1}^t\beta_{e_i}(K_i\cap Y_{e_i})$, giving the conclusions of Theorem \ref{TheoremC'}.
\end{proof}
\vskip .2truein
Theorems \ref{TheoremC'*} and \ref{TheoremD*}  of the introduction follow from Theorem \ref{TheoremC'}.

\end{document}